\tikzset{nomorepostaction/.code={\let\tikz@postactions\pgfutil@empty}}
\newtheorem{thm}{Theorem}[section]
\newtheorem{cor}[thm]{Corollary}
\newtheorem{lem}[thm]{Lemma}
\newtheorem{prop}[thm]{Proposition}
\newtheorem{defn}[thm]{Definition}
\newcommand{\To}{\longrightarrow}
\newcommand{\C}{\mathbb{C}}
\newcommand{\Z}{\mathbb{Z}}
\newcommand{\V}{\mathcal{V}}
\newcommand{\D}{\mathcal{D}}
\newcommand{\0}{{\bf 0}}
\newcommand{\1}{{\bf 1}}
\DeclareMathOperator{\Hom}{Hom}
\DeclareMathOperator{\ev}{ev}
\begin{document}

\title{Itsy bitsy topological field theory}

\author{Daniel V. Mathews \\
Department of Mathematics,
Boston College
}%

\date{}

\maketitle

\begin{abstract}
We construct an elementary, combinatorial kind of topological quantum field theory, based on curves, surfaces, and orientations. The construction derives from contact invariants in sutured Floer homology and is essentially an elaboration of a TQFT defined by Honda--Kazez--Matic. This topological field theory stores information in binary format on a surface and has ``digital'' creation and annihilation operators, giving a toy-model embodiment of ``it from bit''.
\end{abstract}

\tableofcontents

\section{Introduction}

\subsection{Its and bits from TQFT and sutured Floer homology}

In this paper we construct a combinatorial kind of topological quantum field theory, based on curves, surfaces, and orientations. 

The elementary objects of this theory are squares, with a little combinatorial structure (signs on corners), which we call \emph{occupied squares}. These squares may be glued together along their edges to form more complicated \emph{occupied surfaces}. Conversely, occupied surfaces can be decomposed into occupied squares. Curves of a certain type, known as \emph{sutures}, may be drawn on these squares and surfaces. Algebraic data is associated to all of these very simple combinatorial constructions.

Our construction is in the spirit, although not strictly satisfying the usual definition, of a topological quantum field theory \cite{Witten88}. Related objects have been discussed by the author elsewhere \cite{Me09Paper, Me10_Sutured_TQFT}. Our work is essentially an elaboration of a TQFT defined by Honda--Kazez--Mati\'{c} \cite{HKM08}. A fundamental principle of topological quantum field theories is that, to topological objects, such as manifolds with some type of structure, are associated algebraic objects, such as vector spaces or modules. To ``fillings-in'' of that structure are associated particular elements, singled out of the algebraic objects.

To an occupied surface, we associate a vector space. To a decomposition of an occupied surface into occupied squares, we associate a tensor decomposition of the vector space into elementary vector spaces. To a map between occupied surfaces (a \emph{decorated occupied surface morphism}), we associate a map of the relevant vector spaces. And when sutures are drawn on an occupied surface, we single out an element of the relevant vector space. This set of associations is required to be coherent in a natural way.

This construction, which we call \emph{sutured quadrangulated field theory} or SQFT, is thus based purely on combinatorial constructions of surfaces and curves, and the principle that these should have algebraic objects representing them. Our main theorem in this paper is a structure theorem for SQFT.
\begin{thm}
\label{thm:main_thm}
Any map of vector spaces obtained in SQFT is a composition of digital creation operators and general digital annihilation operators.
\end{thm}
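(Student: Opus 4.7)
The plan is to establish the theorem by a reduction-and-factorization argument, decomposing an arbitrary SQFT morphism into a normal form consisting of (i) a ``creation'' phase that introduces elementary squares and (ii) an ``annihilation'' phase that performs all identifications. The coherence axioms built into the definition of SQFT (the tensor decomposition under square subdivision, and functoriality under composition of decorated occupied surface morphisms) will allow each phase to be read off as the corresponding operator on vector spaces.

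First I would argue that every decorated occupied surface morphism $f: \Sigma \to \Sigma'$ admits a factorization $f = g \circ h$, where $h$ enlarges $\Sigma$ by disjoint union with a finite collection of occupied squares, and $g$ is an identification morphism that only glues or collapses structure without introducing new cells. This is a combinatorial claim about the morphism category of occupied surfaces: it follows from the fact that any target surface $\Sigma'$ can be quadrangulated, and any morphism respects quadrangulations up to subdivision, so one may first pass to a common refinement by adjoining the squares of $\Sigma'$ not already in the image of $\Sigma$, then perform the needed identifications. The main subtlety here is bookkeeping with the corner-sign data; I would handle this by checking the move-by-move compatibility of the signs under each elementary enlargement and each elementary identification.

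Next, I would identify the algebraic meaning of each phase. The creation phase $h$ corresponds, under the tensor decomposition axiom, to tensoring the vector space of $\Sigma$ with the elementary vector spaces of the adjoined squares; by definition this is exactly a composition of digital creation operators, one for each square added. The annihilation phase $g$ is more delicate: a single gluing morphism may identify several edges, corners, or boundary components at once, and may involve orientation-reversing identifications. Here I would invoke the ``general'' in ``general digital annihilation operators'' — the statement is flexible enough to allow annihilation operators that act on several tensor factors simultaneously — and verify that each such $g$ is realized by a single such operator, or a short composition of them.

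The main obstacle I anticipate is the annihilation step: showing that arbitrary gluings, and not merely pairwise edge identifications, are captured by the officially defined family of general digital annihilation operators. To handle this, I would reduce further, expressing a general gluing as a composition of the most basic gluings (one pair of edges at a time) whenever possible, and accounting for ``degenerate'' identifications (e.g.\ collapsing a square to a bigon or further) as special cases that must already be present in the digital annihilation family. Functoriality of the SQFT assignment then guarantees that the composition of operators on the algebraic side matches the composition of morphisms on the topological side, completing the proof.
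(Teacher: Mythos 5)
Your topological factorization step is broadly in the spirit of the paper: the paper proves (via quadrangulations of the complement and a careful ordering of its squares) that every non-confining decorated morphism is a composition of decorated creations, standard gluings, folds and zips, and your ``creation phase plus identification phase'' is a coarser version of that. The creation half of your argument is also fine: a decorated creation tensors with a basis suture element $\0$ or $\1$, which is a digital creation operator essentially by the axioms.

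The genuine gap is in the annihilation phase, where you treat the algebraic identification as bookkeeping and ``invoke the flexibility'' of general digital annihilation operators. But a general digital annihilation operator is a quite rigid object --- it is exactly $a_\0 \otimes 1^{\otimes m}$ or $a_\1 \otimes 1^{\otimes m}$, where $a_\0, a_\1$ involve a \emph{sum over tensor factors} in which the ``wrong'' particle sits in the annihilated slot --- and nothing in the SQFT axioms, nor in sign bookkeeping on corners, produces that sum. The content of the theorem is precisely the computation that a fold (equivalently a decorated annihilation) and a zip induce maps of this form. In the paper this requires: (a) deriving the bypass relation $c(\Gamma_0)+c(\Gamma_1)+c(\Gamma_2)=0$ from the order-$3$ rotation of the quadrangulated hexagon, since after a fold the image sutures are no longer basic for any natural quadrangulation and must be resolved into sums of basic elements; (b) the slack square collapse analysis (the ``square collapse mechanics''), which shows that collapsing the slack square sends $\0 \otimes x \mapsto x$ and $\1 \otimes x \mapsto \sum_{x_i=\0}(\cdots)$, i.e.\ exactly the annihilation formula, with the sum indexed by the wedges around the swallowed vertex; and (c) Massot's theorem that confining sutures have vanishing suture element, which is needed to handle confining decorated morphisms (whose induced map is $0$) --- your proposal does not address this case at all. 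Without (a)--(c), the claim that each gluing/collapse ``is realized by a single such operator, or a short composition of them'' is an assertion of the theorem rather than a proof of it. You would also need the lemma that a standard (non-consecutive) gluing acts as the identity on the tensor decomposition, which is what lets all the nontrivial algebra concentrate in folds and zips.
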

We will define digital creation and annihilation operators in section \ref{sec:digital_creation_annihilation}.

The construction of SQFT is motivated by the study of contact elements in sutured Floer homology, and it is a description of that theory in the case of product manifolds. (Though it contains vacua, it does not arise from a vacuum!)
\begin{thm}
\label{thm:SFH_gives_SQFT}
The sutured Floer homology of sutured 3-manifolds $(\Sigma \times S^1, F \times S^1)$ (with $\Z_2$ coefficients), contact elements, and maps on $SFH$ induced by inclusions of surfaces $\Sigma \hookrightarrow \Sigma'$, form an SQFT.
\end{thm}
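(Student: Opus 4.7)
The plan is to verify each axiom of SQFT for the proposed data: to an occupied surface $(\Sigma, F)$ assign $V(\Sigma, F) = SFH(-\Sigma \times S^1, -F \times S^1; \Z_2)$; to a set of sutures $\Gamma$ on $(\Sigma, F)$ assign the contact element $c(\xi_\Gamma) \in V(\Sigma,F)$ of the $S^1$-invariant contact structure on $\Sigma \times S^1$ whose dividing set on each $\Sigma \times \{pt\}$ is $\Gamma$; and to a decorated inclusion $\Sigma \hookrightarrow \Sigma'$ assign the map on $SFH$ induced by filling in the complement with an $I$-invariant (hence $S^1$-invariant) contact structure, as defined by Honda--Kazez--Mati\'{c}. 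The strategy is to reduce each SQFT axiom to a known computational or gluing fact about $SFH$ of product manifolds.

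First, I would handle the base case. For a single occupied square, $(\Sigma,F) = (D^2,F)$ with appropriate corner decorations, the sutured manifold $D^2 \times S^1$ with longitudinal sutures has explicitly computable $SFH$; one checks it has the dimension SQFT assigns to an elementary vector space, and that contact elements of the finitely many dividing set configurations on the square reproduce the distinguished elements of SQFT. Second, I would establish the tensor decomposition: when $(\Sigma, F)$ is cut along a properly embedded arc into $(\Sigma_1, F_1)$ and $(\Sigma_2, F_2)$, the Honda--Kazez--Mati\'{c} gluing theorem yields $V(\Sigma, F) \cong V(\Sigma_1, F_1) \otimes V(\Sigma_2, F_2)$. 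Iterating along a quadrangulation produces the required tensor factorization.

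Third, functoriality of the inclusion-induced maps (compositions of inclusions go to compositions of maps, identities to identities) follows from the fact that the HKM construction depends only on the isotopy class of an $I$-invariant contact structure on the complement, so that stacking two such fillings is isotopic to a single filling. Fourth, and this is the main technical obstacle, I would verify naturality of contact elements under inclusions: if $\Gamma$ on $\Sigma$ extends compatibly to sutures $\Gamma'$ on $\Sigma'$, then the inclusion-induced map sends $c(\xi_\Gamma)$ to $c(\xi_{\Gamma'})$. This is essentially the statement that contact elements glue, which is exactly the content of Honda--Kazez--Mati\'{c}'s theorem, but in the SQFT setting one must carefully track the corner signs and orientation data that distinguish SQFT from a generic sutured TQFT; an isolated extension-by-$I$-invariant-product argument together with the explicit model on each square should suffice.

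Finally, coherence of the tensor decomposition with morphisms is checked by a compatible-gluing argument: an inclusion $\Sigma \hookrightarrow \Sigma'$ that respects a quadrangulation factors square-by-square, and the HKM gluing ensures the induced map on $SFH$ respects the tensor factorization. The entire verification is essentially a repackaging of the HKM TQFT, with the extra structure of SQFT (corner signs, digital elements) traced through the base case of the square and propagated by gluing.
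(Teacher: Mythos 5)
Your overall route is the same as the paper's: repackage the Honda--Kazez--Mati\'{c} TQFT, taking $\V(\Sigma,V) = SFH(-\Sigma\times S^1, -V\times S^1)$, suture elements to be contact invariants of $S^1$-invariant contact structures, tensor decompositions from cutting along vertical annuli (note this comes from Juh\'{a}sz's surface-decomposition theorem, quoted as HKM lemma 7.2, rather than from the gluing map itself being an isomorphism), and functoriality and naturality from HKM theorems 6.1 and 6.2 together with $\Phi_{\xi'}(c(\xi)) = c(\xi\cup\xi')\otimes x^{\otimes m}$. However, one point the paper must (and does) handle is absent from your sketch and would leave the construction incomplete: when the complement of the morphism has $m$ isolated components, the HKM map has target $\V(\Sigma',V')\otimes {\bf V}^{\otimes m}$, not $\V(\Sigma',V')$. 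To obtain the SQFT map one must argue that, since basic suture elements (contact elements) form a basis of $\V(\Sigma,V)$ and each is sent to $c(\xi\cup\xi^*)\otimes x^{\otimes m}$, the image lies in $\V(\Sigma',V')\otimes x^{\otimes m}$, so the extra factor can be stripped off to define a map $\V(\Sigma,V)\To\V(\Sigma',V')$. Without this step the assignment is not even of the correct type for isolating morphisms, and it is precisely here that $\Z_2$ coefficients and the basis-of-contact-elements fact do real work.

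Second, an SQFT is a functor on all of $\mathcal{DOS}$, whose morphisms include surjective maps (standard gluings, folds, zips) and maps identifying boundary edges or vertices; these are not inclusions into the interior, so the HKM construction does not apply to them directly. The paper resolves this by isotoping an arbitrary decorated morphism, through decorated isotopies that push glued edges apart and insert standard product sutures in the resulting gap, to an honest embedding into the interior, and then checking that the induced map is independent of the choice (isotopic decorated morphisms yield isotopic sutures, hence equal suture elements). Your proposal only treats inclusions, so this step, together with the unverified ``Euler class gives Euler grading'' axiom (which the paper discharges by citing the grading and bypass structure from the earlier papers), needs to be supplied before the verification is complete. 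The remaining items in your outline --- the explicit solid-torus computation for the square, naturality of contact elements, and compatibility of tensor decompositions with quadrangulations --- coincide with the paper's argument.
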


We explain this statement further in section \ref{sec:SFH_and_TQFT}. In any case we may then immediately interpret our main theorem as a theorem about sutured Floer homology.
\begin{cor}
\label{cor:SFH_digital}
Any map on sutured Floer homology $SFH(\Sigma \times S^1, F \times S^1) \To SFH(\Sigma' \times S^1, F \times S^1)$ (with $\Z_2$ coefficients) induced by a surface inclusion is a composition of digital creation operators and general digital annihilation operators.
\end{cor}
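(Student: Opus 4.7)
The plan is straightforward: the corollary is intended to follow by combining the two preceding theorems, so the work is really just verifying that a map induced by a surface inclusion is one of the morphisms that Theorem \ref{thm:main_thm} covers, and then translating its conclusion back into the language of sutured Floer homology.

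First I would unpack the hypothesis. By Theorem \ref{thm:SFH_gives_SQFT}, the assignment sending a sutured surface $\Sigma$ (with dividing data $F$) to $SFH(\Sigma \times S^1, F \times S^1; \Z_2)$, together with the maps induced on $SFH$ by inclusions $\Sigma \hookrightarrow \Sigma'$, is an instance of an SQFT. In particular, an inclusion of surfaces $\Sigma \hookrightarrow \Sigma'$ defines a decorated occupied surface morphism in the sense used to formulate SQFT, and the induced map on $SFH$ is by construction the SQFT map associated to this morphism. Thus any map of the form $SFH(\Sigma \times S^1, F \times S^1) \to SFH(\Sigma' \times S^1, F \times S^1)$ arising from an inclusion of surfaces is a ``map of vector spaces obtained in SQFT''.

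Second, I would invoke Theorem \ref{thm:main_thm}. That theorem asserts that every such SQFT map is a composition of digital creation operators and general digital annihilation operators. Applying it to the particular map of the corollary immediately yields a decomposition of the form described. Since the isomorphism of Theorem \ref{thm:SFH_gives_SQFT} identifies the SQFT vector spaces with the sutured Floer homology groups, and identifies the morphisms, the creation/annihilation decomposition at the SQFT level transports directly to a decomposition at the level of $SFH$.

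There is no real obstacle here beyond bookkeeping: the substance lies in Theorem \ref{thm:main_thm} (the structure theorem for SQFT) and Theorem \ref{thm:SFH_gives_SQFT} (the identification of $SFH$ with SQFT on product manifolds). Assuming both, the corollary is a one-line composition. The only point one should be careful about is that the definition of ``surface inclusion'' used in the corollary matches the class of decorated occupied surface morphisms to which Theorem \ref{thm:main_thm} applies; this is part of what Theorem \ref{thm:SFH_gives_SQFT} asserts, so it is already handled there.
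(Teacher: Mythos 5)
Your proposal is correct and is essentially the paper's own argument: the paper derives the corollary in one line by combining Theorem \ref{thm:SFH_gives_SQFT} (which identifies the SFH of product manifolds, contact/suture elements, and inclusion-induced maps with an SQFT) with the precise form of Theorem \ref{thm:main_thm}. Your added caution about matching "surface inclusion" to decorated morphisms is exactly the point the paper handles inside Theorem \ref{thm:SFH_gives_SQFT}, so nothing further is needed.
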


Note that our entire construction is carried out over $\Z_2$, so we effectively ignore signs in our algebra. The whole construction should also work with signs, with some minor complications, following constructions as in \cite{Me10_Sutured_TQFT}.

Our general conclusion is that the simple construction of SQFT possesses numerous curious physical analogies. In particular, occupied squares can be regarded as \emph{its}: combinatorial operations of adjoining them, folding them, gluing them, etc, give maps which are directly analogous to creation and annihilation operators in quantum field theory. On the other hand, occupied squares can be regarded as \emph{bits}: there are naturally \emph{two} distinct simplest ways to draw sutures on a square (figure \ref{fig:sutured_squares}), and the creation and annihilation operators can also be understood as performing information processing. This, combined with the extremely elementary and toy-model nature of our construction, explains our title. We refer not only to the children's spider, but also to the speculations of John Archibald Wheeler on fundamental physics \cite{Wheeler90}:
\begin{quotation}
\noindent \emph{It from bit.} Otherwise put, every \emph{it} --- every particle, every field of force, even the spacetime continuum itself --- derives its function, its meaning, its very existence entirely --- even if in some contexts indirectly --- from the apparatus-elicited answers to yes-or-no questions, binary choices, \emph{bits}.
\end{quotation}

\begin{figure}
\begin{center}

\begin{tikzpicture}[
scale=2, 
suture/.style={thick, draw=red}, 
]

\coordinate [label = above left:{$-$}] (1tl) at (0,1);
\coordinate [label = above right:{$+$}] (1tr) at (1,1);
\coordinate [label = below left:{$+$}] (1bl) at (0,0);
\coordinate [label = below right:{$-$}] (1br) at (1,0);

\draw (1bl) -- (1br) -- (1tr) -- (1tl) -- cycle;
\draw [suture] (0.5,0) to [bend left=45] (1,0.5);
\draw [suture] (0,0.5) to [bend right=45] (0.5,1);
\draw (0.2,0.8) node {$-$};
\draw (0.5,0.5) node {$+$};
\draw (0.8,0.2) node {$-$};
\draw (0.5,-0.5) node {$\0$};

\coordinate [label = above left:{$-$}] (2tl) at (3,1);
\coordinate [label = above right:{$+$}] (2tr) at (4,1);
\coordinate [label = below left:{$+$}] (2bl) at (3,0);
\coordinate [label = below right:{$-$}] (2br) at (4,0);

\draw (2bl) -- (2br) -- (2tr) -- (2tl) -- cycle;
\draw [suture] (3.5,1) to [bend right=45] (4,0.5);
\draw [suture] (3.5,0) to [bend right=45] (3,0.5);
\draw (3.8,0.8) node {$+$};
\draw (3.5,0.5) node {$-$};
\draw (3.2,0.2) node {$+$};
\draw (3.5,-0.5) node {$\1$};

\foreach \point in {1bl, 1br, 1tl, 1tr, 2bl, 2br, 2tl, 2tr}
\fill [black] (\point) circle (1pt);

\end{tikzpicture}

\caption{Sets of sutures on the occupied square.}
\label{fig:sutured_squares}
\end{center}
\end{figure}
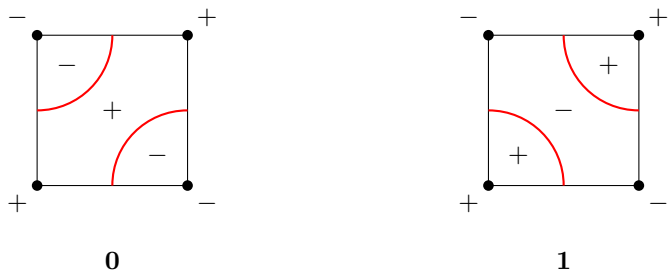

Curious physical analogies go even further. Bits of information stored discretely on a surface recall the \emph{holographic principle}. Our occupied surfaces, quadrangulated by itsy bitsy squares, obtain the structure of a ribbon graph, and sutures drawn on them can be interpreted in terms of $sl(2)$ representation theory, giving the structure of a \emph{spin network} (see section \ref{sec:spin_networks}). The ``stacking'' of surfaces discussed in the previous papers \cite{Me09Paper, Me10_Sutured_TQFT} produces a bilinear form which can be interpreted as an \emph{inner product} on the relevant vector spaces. Though we do not pursue all these questions in detail in this paper, we note them as they arise.

It goes without saying, but to be clear, we do not make any suggestion that our construction has anything to do with the physical world. This paper constructs a mathematical theory of occupied surfaces and sutures, and the algebraic representation of this category as a topological quantum field theory. Our construction is not even a topological quantum field theory, as the term is usually defined, though it is similar. Its objects are inherently 1- and 2-dimensional, so it is at best a toy model for other topological quantum field theories, which are completely understood in dimension 2 (e.g. \cite{Kock04}) and thus mostly interesting in dimensions 3 and above. But the physical analogies are worth noting, and may be of interest in the way they embody mathematically several ideas of fundamental theoretical physics. There are long-standing speculations and work on connections between topology, information, and quantum physics (e.g. \cite{Baez_Stay11, Finkelstein69, Freedman_et_al03}).

In any case, as expressed in corollary \ref{cor:SFH_digital}, our construction provides a combinatorial description of a large family of maps in sutured Floer homology, and may lead to a useful computational tool.

\subsection{Topology of digital creation and annihilation}
\label{sec:digital_creation_annihilation}

Two types of algebraic maps are central to the algebra of this paper. We call them \emph{digital creation} and \emph{digital annihilation}; as the names suggest, we consider them both as performing information processing, or alternatively as creating or destroying particles.

Let ${\bf V}$ be a $2$-dimensional vector space over the field $\Z_2$. (As mentioned, we work over $\Z_2$, but the construction should carry over to $\Z$ coefficients.) At the risk of confusion, but in order to suggest the information-theoretic content, we write a basis as $\{ {\bf 0}, {\bf 1} \}$. Thus ${\bf V} = \Z_2 {\bf 0} \oplus \Z_2 {\bf 1}$. (When we mean the zero element in the vector space, we will write $0$, without bold type.) All the vector spaces we consider will be tensor powers ${\bf V}^{\otimes n}$ of ${\bf V}$, with dimension $2^n$ and basis
\[
\0 \otimes \0 \otimes \cdots \otimes \0, \quad
\0 \otimes \0 \otimes \cdots \otimes \1, \quad
\ldots, \quad
\1 \otimes \1 \otimes \cdots \otimes \1.
\]

A \emph{digital creation operator} is a map ${\bf V}^{\otimes n} \To {\bf V}^{\otimes n} \otimes {\bf V} = {\bf V}^{\otimes (n+1)}$ and comes in two varieties: the $\0$-creation $a^*_\0$ and $\1$-creation $a^*_\1$. These maps ``create'' a $\0$ or $\1$, being defined as follows:
\[
a^*_\0 \; : \; x \mapsto x \otimes \0, \quad \quad \quad
a^*_\1 \; : \; x \mapsto x \otimes \1.
\]
Note that one of the tensor factors in ${\bf V}^{\otimes (n+1)}$ is singled out as the \emph{created} factor, and the newly created $\0$ or $\1$ lies in that factor. Thus there is a ``particle'' for each ${\bf V}$ factor, and there are two types of particle, $\0$ and $\1$

A \emph{digital annihilation operator} is a map ${\bf V}^{\otimes (n+1)} = {\bf V} \otimes {\bf V}^{\otimes n} \To {\bf V}^{\otimes n}$. Again one of the tensor factors is singled out, slated for annihilation, the \emph{annihilated} factor. Again there are two varieties, a $\0$-annihilation $a_0$ and a $\1$-annihilation $a_1$. A $\1$-annihilation attempts to delete a $\1$. If there is a $\1$ in the annihilated factor, the map is simple, $\1 \otimes x \mapsto x$. If there is a $\0$ in the annihilated factor, the factor is of course deleted, but the wrong type of particle has been annihilated. So the annihilation map seeks to compensate by changing a $\1$ to a $\0$ in other factors, so that the overall effect is to delete a $\1$; and it sums over the various possibilities. For instance,
\[
a_\1 \; : \; \0 \otimes (\0 \otimes \1 \otimes \1) \mapsto \0 \otimes \0 \otimes \1 + \0 \otimes \1 \otimes \0.
\]
In particular, $a_\1$ takes $\0 \otimes \0 \otimes \cdots \otimes \0 \mapsto 0$. 

Note that each term of the result has one fewer $\1$ and the same number of $\0$'s. The vector space ${\bf V}^{\otimes n}$ is graded by the numbers of $\0$'s and $\1$'s. Our annihilation and creation operators for $\0$'s and $\1$'s will respectively decrease or increase the relevant grading by $1$. 

Given that the fixed factor ${\bf V}$ is to be annihilated, but the correct ``particle'' may not be present there, this is a natural analogue of the usual annihilation operator in quantum field theory, which ``sums over deleting each particle of the type present''. In general $a_\1$ is given as follows.
\[
\begin{array}{ccccl}
a_1 & : & \0 \otimes x_1 \otimes x_2 \otimes \cdots \otimes x_n &\mapsto&
\sum_{x_i = \1} x_1 \otimes x_2 \otimes \cdots \otimes x_{i-1} \otimes \0 \otimes x_{i+1} \otimes \cdots \otimes x_n \\
& : & \1 \otimes x_1 \otimes x_2 \otimes \cdots \otimes x_n &\mapsto& x_1 \otimes x_2 \otimes \cdots \otimes x_n
\end{array}
\]
In either case, after applying $a_1$, each term has the same number of $\0$'s and one fewer $\1$.

Similarly, a $\0$-annihilation deletes a $\0$ in the annihilated factor, if it is present; else deletes the $\1$ there, and sums over changing $\0$'s to $\1$'s.
\[
\begin{array}{ccccl}
a_0 & : & \0 \otimes x_1 \otimes x_2 \otimes \cdots \otimes x_n &\mapsto& x_1 \otimes x_2 \otimes \cdots \otimes x_n \\
&& \1 \otimes x_1 \otimes x_2 \otimes \cdots \otimes x_n &\mapsto&
\sum_{x_i = \0} x_1 \otimes x_2 \otimes \cdots \otimes x_{i-1} \otimes \1 \otimes x_{i+1} \otimes \cdots \otimes x_n
\end{array}
\]

We will also allow ourselves to have annihilation operators which leave some factors unscathed. A \emph{general digital annihilation operator} is a map of the form
\begin{align*}
a_\0 \otimes 1^{\otimes m} \; &: \; {\bf V}^{\otimes (n+1)} \otimes {\bf V}^{\otimes m} \To {\bf V}^{\otimes n} \otimes {\bf V}^{\otimes m} \quad \text{ or } \\
a_\1 \otimes 1^{\otimes m} \; &: \; {\bf V}^{\otimes (n+1)} \otimes {\bf V}^{\otimes m} \To {\bf V}^{\otimes n} \otimes {\bf V}^{\otimes m}. \\
\end{align*}
(A digital creation operator is ``already general''.)

SQFT associates the above algebraic objects to occupied surfaces and sutures. To an occupied square will be associated the vector space ${\bf V}$. The two simplest sutures on the square, depicted in figure \ref{fig:sutured_squares}, are assigned the values $\0$ and $\1$ respectively. When we construct an occupied surface out of squares, we assign it the vector space ${\bf V}^{\otimes n}$, one factor associated to each square; and when sutures are drawn restricting to the basic sutures on each square, we assign it a tensor product of $\0$'s and $\1$'s accordingly. Thus an occupied surface constructed from $n$ occupied squares holds $n$ particles, or $n$ bits of information.

We can consider embedding one occupied surface inside another. More generally, and importantly for our results, we develop a notion of \emph{occupied surface morphism}. When we have such a morphism including one occupied surface inside another, and some sutures on the complement (which we call a \emph{decorated morphism}), we associate a map of vector spaces. When we create a new square, alongside an occupied surface, the associated map is a digital creation operator. When we close off part of an occupied surface by attaching a square along multiple edges, or fold up some edges, the map obtained is a general digital annihilation operator. Such operations on surfaces may complicate the topology of sutures and quadrangulations; we may need to manipulate and simplify a quadrangulation. We shall describe various elementary operations on occupied surfaces: we call them creations, annihilations, gluings, folds, and zips. Any morphism can be constructed out of these elementary operations. Investigating these details, we shall be able to conclude theorem \ref{thm:main_thm}.

We shall in fact prove something much more specific than theorem \ref{thm:main_thm}. The way in which digital creations and annihilations are composed corresponds in a precise way with the combinatorics of squares in occupied surfaces. In a certain sense, quadrangulations in an occupied surface morphism are nothing more than a diagram of how to apply digital creations and annihilations. In this vague sense, SQFT is the study of the topology of digital creations and annihilations.

\subsection{Quadrangulation and suture mechanics}

To illustrate the kind of situations we consider, see figures \ref{fig:topological_quadrangulation} and \ref{fig:its_and_bits}. In these diagrams we have occupied surfaces which are discs with $12$ or $10$ vertices (in green); let us call these surfaces $(D^2, V_{12})$ and $(D^2, V_{10})$ respectively, where $D^2$ is the surface and $V_{12}, V_{10}$ are the vertices. Occupied surfaces can have arbitrary topology, but we illustrate with a simple example. Each of these vertices has a sign $\pm$ attached, and they alternate around the boundary. An occupied surface is just a surface with alternating signs on boundary vertices in this way (definition \ref{def:occupied_surface_defn}). The $12$ vertices divide the boundary into $12$ \emph{boundary edges} (in thick black).

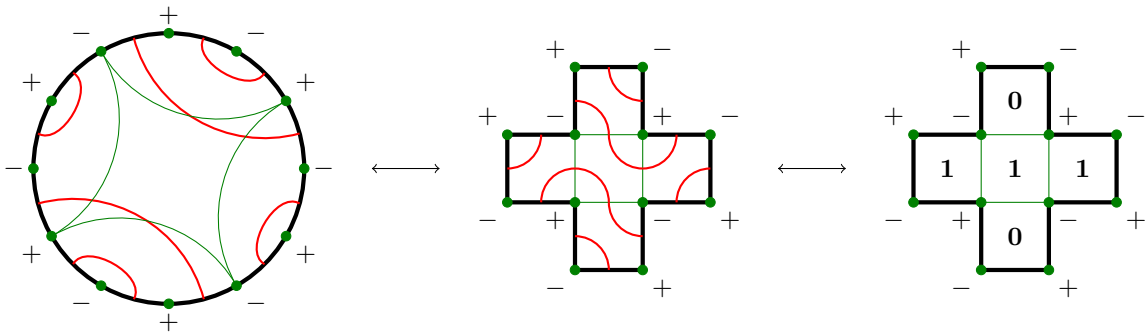
\begin{figure}
\begin{center}

\begin{tikzpicture}[
scale=0.9, 
suture/.style={thick, draw=red},
boundary/.style={ultra thick},
decomposition/.style={draw=green!50!black},
vertex/.style={draw=green!50!black, fill=green!50!black}]

\coordinate [label = above:{$+$}] (12) at (90:2);
\coordinate (1230) at (75:2);
\coordinate [label = above right:{$-$}] (1) at (60:2);
\coordinate (130) at (45:2);
\coordinate [label = above right:{$+$}] (2) at (30:2);
\coordinate (230) at (15:2);
\coordinate [label = right:{$-$}] (3) at (0:2);
\coordinate (330) at (-15:2);
\coordinate [label = below right:{$+$}] (4) at (-30:2);
\coordinate (430) at (-45:2);
\coordinate [label = below right:{$-$}] (5) at (-60:2);
\coordinate (530) at (-75:2);
\coordinate [label = below:{$+$}] (6) at (-90:2);
\coordinate (630) at (-105:2);
\coordinate [label = below left:{$-$}] (7) at (-120:2);
\coordinate (730) at (-135:2);
\coordinate [label = below left:{$+$}] (8) at (-150:2);
\coordinate (830) at (-165:2);
\coordinate [label = left:{$-$}] (9) at (-180:2);
\coordinate (930) at (-195:2);
\coordinate [label = above left:{$+$}] (10) at (-210:2);
\coordinate (1030) at (-225:2);
\coordinate [label = above left:{$-$}] (11) at (-240:2);
\coordinate (1130) at (-255:2);

\coordinate [label = above left:{$+$}] (12b) at (6,1.5);
\coordinate (1230b) at (6.5,1.5);
\coordinate [label = above right:{$-$}] (1b) at (7,1.5);
\coordinate (130b) at (7,1);
\coordinate [label = above right:{$+$}] (2b) at (7,0.5);
\coordinate (230b) at (7.5,0.5);
\coordinate [label = above right:{$-$}] (3b) at (8,0.5);
\coordinate (330b) at (8,0);
\coordinate [label = below right:{$+$}] (4b) at (8,-0.5);
\coordinate (430b) at (7.5,-0.5);
\coordinate [label = below right:{$-$}] (5b) at (7,-0.5);
\coordinate (530b) at (7,-1);
\coordinate [label = below right:{$+$}] (6b) at (7,-1.5);
\coordinate (630b) at (6.5,-1.5);
\coordinate [label = below left:{$-$}] (7b) at (6,-1.5);
\coordinate (730b) at (6,-1);
\coordinate [label = below left:{$+$}] (8b) at (6,-0.5);
\coordinate (830b) at (5.5,-0.5);
\coordinate [label = below left:{$-$}] (9b) at (5,-0.5);
\coordinate (930b) at (5,0);
\coordinate [label = above left:{$+$}] (10b) at (5,0.5);
\coordinate (1030b) at (5.5,0.5);
\coordinate [label = above left:{$-$}] (11b) at (6,0.5);
\coordinate (1130b) at (6,1);

\coordinate [label = above left:{$+$}] (12c) at (12,1.5);
\coordinate [label = above right:{$-$}] (1c) at (13,1.5);
\coordinate [label = above right:{$+$}] (2c) at (13,0.5);
\coordinate [label = above right:{$-$}] (3c) at (14,0.5);
\coordinate [label = below right:{$+$}] (4c) at (14,-0.5);
\coordinate [label = below right:{$-$}] (5c) at (13,-0.5);
\coordinate [label = below right:{$+$}] (6c) at (13,-1.5);
\coordinate [label = below left:{$-$}] (7c) at (12,-1.5);
\coordinate [label = below left:{$+$}] (8c) at (12,-0.5);
\coordinate [label = below left:{$-$}] (9c) at (11,-0.5);
\coordinate [label = above left:{$+$}] (10c) at (11,0.5);
\coordinate [label = above left:{$-$}] (11c) at (12,0.5);

\draw [boundary] (0,0) circle (2 cm);
\draw [suture] (1230) to [bend right=90] (130);
\draw [suture] (1130) to [bend right=45] (230);
\draw [suture] (330) to [bend right=90] (430);
\draw [suture] (530) to [bend right=45] (830);
\draw [suture] (630) to [bend right=90] (730);
\draw [suture] (930) to [bend right=90] (1030);
\draw [decomposition] (11) to [bend right=45] (2);
\draw [decomposition] (2) to [bend right=45] (5);
\draw [decomposition] (5) to [bend right=45] (8);
\draw [decomposition] (8) to [bend right=45] (11);

\draw[<->] (3,0) -- (4,0);

\draw [boundary] (5,0.5) -- (6,0.5) -- (6,1.5) -- (7,1.5) -- (7,0.5) -- (8,0.5) -- (8,-0.5) -- (7,-0.5) -- (7,-1.5) -- (6,-1.5) -- (6,-0.5) -- (5,-0.5) -- cycle;
\draw [decomposition] (6,0.5) -- (7,0.5) -- (7,-0.5) -- (6,-0.5) -- cycle;
\draw [suture] (1230b) arc (180:270:0.5);
\draw [suture] (1130b) arc (90:0:0.5);
\draw [suture] (6.5,0.5) arc (180:270:0.5);
\draw [suture] (230b) arc (0:-90:0.5);
\draw [suture] (330b) arc (90:180:0.5);
\draw [suture] (530b) arc (-90:-180:0.5);
\draw [suture] (6.5,-0.5) arc (0:90:0.5);
\draw [suture] (830b) arc (180:90:0.5);
\draw [suture] (630b) arc (0:90:0.5);
\draw [suture] (930b) arc (-90:0:0.5);

\draw[<->] (9,0) -- (10,0);

\draw [boundary] (12c) -- (1c) -- (2c) -- (3c) -- (4c) -- (5c) -- (6c) -- (7c) -- (8c) -- (9c) -- (10c) -- (11c) -- cycle;
\draw [decomposition] (11c) -- (2c) -- (5c) -- (8c) -- cycle;
\draw (12.5,1) node {$\0$};
\draw (11.5,0) node {$\1$};
\draw (12.5,0) node {$\1$};
\draw (13.5,0) node {$\1$};
\draw (12.5,-1) node {$\0$};

\foreach \point in {1, 2, 3, 4, 5, 6, 7, 8, 9, 10, 11, 12, 1b, 2b, 3b, 4b, 5b, 6b, 7b, 8b, 9b, 10b, 11b, 12b, 1c, 2c, 3c, 4c, 5c, 6c, 7c, 8c, 9c, 10c, 11c, 12c}
\fill [vertex] (\point) circle (2pt);

\end{tikzpicture}

\caption{Topological quadrangulation.}
\label{fig:topological_quadrangulation}
\end{center}
\end{figure}

Refer now to figure \ref{fig:topological_quadrangulation}. There are $4$ arcs drawn in the disc (in green) between vertices of opposite sign. These are \emph{decomposing arcs} and they cut the surface into $5$ occupied squares (i.e. discs with $4$ vertices), forming a \emph{quadrangulation}. The picture is shown in a ``round'' and ``square'' form, which are topologically equivalent. In section \ref{sec:quadrangulations} we study quadrangulations in detail. 

In general, we will show that a quadrangulation of an occupied surface $(\Sigma,V)$ has $G(\Sigma,V) = N - 2 \chi(\Sigma)$ decomposing arcs, where $N = \frac{1}{2} |V|$ (we call this the \emph{gluing number}, section \ref{sec:gluing_number}), and cuts $(\Sigma,V)$ into $I(\Sigma,V) = N - \chi(\Sigma)$ (we call this the \emph{index}, section \ref{sec:index}) occupied squares. The vector space associated to a quadrangulation is ${\bf V}^{\otimes I(\Sigma,V)}$, one tensor factor for each square of the quadrangulation. In this way the number of squares is the ``number of particles''; the ``it'' interpretation. Basic sutures correspond to a basis of the vector space.

In figure \ref{fig:topological_quadrangulation}, in addition to the quadrangulation, a set of curves is drawn in red: these are \emph{sutures} $\Gamma$. Note $\Gamma$ intersects each boundary edge in precisely one point; this is in general a requirement. The sutures divide the disc into several regions, and in each region all the vertices have the same sign; so we can in fact assign a sign to every complementary region of $\Gamma$. In general \emph{sutures} are curves whose complementary regions have coherent signs in this way.

In figure \ref{fig:topological_quadrangulation} we also note that the sutures intersect every decomposing arc in precisely one point. This is not in general a requirement (see e.g. the third row of figure \ref{fig:its_and_bits}); when it occurs we say the sutures are \emph{basic}. Then on each occupied square of the quadrangulation, the sutures come in one of the two standard forms of figure \ref{fig:sutured_squares}, and hence are assigned a value $\0$ or $\1 \in {\bf V}$. In this way sutures are interpreted ``as bits''. So (reading the squares from top to bottom and left to right) the \emph{suture element} of the sutures in the top row is $c(\Gamma) = \0 \otimes \1 \otimes \1 \otimes \1 \otimes \0$; the ``bit'' interpretation.

\begin{figure}
\begin{center}

\begin{tikzpicture}[
scale=1.2, 
suture/.style={thick, draw=red},
decomposition/.style={draw=green!50!black},
vertex/.style={draw=green!50!black, fill=green!50!black},
boundary/.style={ultra thick}]

\coordinate [label = above:{$+$}] (12) at (90:2);
\coordinate (1230) at (75:2);
\coordinate [label = above right:{$-$}] (1) at (60:2);
\coordinate (130) at (45:2);
\coordinate [label = above right:{$+$}] (2) at (30:2);
\coordinate (230) at (15:2);
\coordinate [label = right:{$-$}] (3) at (0:2);
\coordinate (330) at (-15:2);
\coordinate [label = below right:{$+$}] (4) at (-30:2);
\coordinate (430) at (-45:2);
\coordinate [label = below right:{$-$}] (5) at (-60:2);
\coordinate (530) at (-75:2);
\coordinate [label = below:{$+$}] (6) at (-90:2);
\coordinate (630) at (-105:2);
\coordinate [label = below left:{$-$}] (7) at (-120:2);
\coordinate (730) at (-135:2);
\coordinate [label = below left:{$+$}] (8) at (-150:2);
\coordinate (830) at (-165:2);
\coordinate [label = left:{$-$}] (9) at (-180:2);
\coordinate (930) at (-195:2);
\coordinate [label = above left:{$+$}] (10) at (-210:2);
\coordinate (1030) at (-225:2);
\coordinate [label = above left:{$-$}] (11) at (-240:2);
\coordinate (1130) at (-255:2);

\coordinate [label = above:{$+$}] (12b) at ($ (7,0) + (90:2) $);
\coordinate (1230b) at ($ (7,0) + (75:2) $);
\coordinate [label = above right:{$-$}] (1b) at ($ (7,0) + (60:2) $);
\coordinate (130b) at ($ (7,0) + (45:2) $);
\coordinate [label = above right:{$+$}] (2b) at ($ (7,0) + (30:2) $);
\coordinate (230b) at ($ (7,0) + (15:2) $);
\coordinate [label = right:{$-$}] (3b) at ($ (7,0) + (0:2) $);
\coordinate (330b) at ($ (7,0) + (-15:2) $);
\coordinate [label = below right:{$+$}] (4b) at ($ (7,0) + (-30:2) $);
\coordinate (430b) at ($ (7,0) + (-45:2) $);
\coordinate [label = below right:{$-$}] (5b) at ($ (7,0) + (-60:2) $);
\coordinate (530b) at ($ (7,0) + (-75:2) $);
\coordinate [label = below:{$+$}] (6b) at ($ (7,0) + (-90:2) $);
\coordinate (630b) at ($ (7,0) + (-105:2) $);
\coordinate [label = below left:{$-$}] (7b) at ($ (7,0) + (-120:2) $);
\coordinate (730b) at ($ (7,0) + (-135:2) $);
\coordinate [label = below left:{$+$}] (8b) at ($ (7,0) + (-150:2) $);
\coordinate (830b) at ($ (7,0) + (-165:2) $);
\coordinate [label = left:{$-$}] (9b) at ($ (7,0) + (-180:2) $);
\coordinate (930b) at ($ (7,0) + (-195:2) $);
\coordinate [label = above left:{$+$}] (10b) at ($ (7,0) + (-210:2) $);
\coordinate (1030b) at ($ (7,0) + (-225:2) $);
\coordinate [label = above left:{$-$}] (11b) at ($ (7,0) + (-240:2) $);
\coordinate (1130b) at ($ (7,0) + (-255:2) $);

\draw [boundary] (0,0) circle (2 cm);
\draw [decomposition] (11b) to [bend right=45] (2b);
\draw [decomposition] (2b) to [bend right=45] (5b);
\draw [decomposition] (5b) to [bend right=45] (8b);
\draw [decomposition] (8b) to [bend right=45] (11b);
\draw ($ (7,0) + (175:1.5) $) node {$\1$};
\draw (7,0) node {$\1$};
\draw ($ (7,0) + (75:1.5) $) node {$\0$};
\draw ($ (7,0) + (-15:1.5) $) node {$\1$};
\draw ($ (7,0) + (-105:1.5) $) node {$\0$};
\draw [<->, >=triangle 60] (105:2.4) arc (30:210:0.6);

\draw[<->] (3,0) -- (4,0);

\draw [boundary] (7,0) circle (2 cm);
\draw [suture] (1230) to [bend right=90] (130);
\draw [suture] (1130) to [bend right=45] (230);
\draw [suture] (330) to [bend right=90] (430);
\draw [suture] (530) to [bend right=45] (830);
\draw [suture] (630) to [bend right=90] (730);
\draw [suture] (930) to [bend right=90] (1030);
\draw [decomposition] (11) to [bend right=45] (2);
\draw [decomposition] (2) to [bend right=45] (5);
\draw [decomposition] (5) to [bend right=45] (8);
\draw [decomposition] (8) to [bend right=45] (11);
\draw [<->, >=triangle 60] ($ (7,0) + (105:2.4) $) arc (30:210:0.6);

\foreach \point in {1, 2, 3, 4, 5, 6, 7, 8, 9, 10, 11, 12, 1b, 2b, 3b, 4b, 5b, 6b, 7b, 8b, 9b, 10b, 11b, 12b}
\fill [vertex] (\point) circle (1.5pt);

\end{tikzpicture}

\begin{tikzpicture}[
scale=1.2, 
suture/.style={thick, draw=red},
decomposition/.style={draw=green!50!black},
vertex/.style={draw=green!50!black, fill=green!50!black},
boundary/.style={ultra thick}]

\definecolor{newPurple}{rgb}{0.5,0,0.5}

\coordinate (12) at (90:2);
\coordinate (1230) at (75:2);
\coordinate [label = above right:{$-$}] (1) at (60:2);
\coordinate (130) at (45:2);
\coordinate [label = above right:{$+$}] (2) at (30:2);
\coordinate (230) at (15:2);
\coordinate [label = right:{$-$}] (3) at (0:2);
\coordinate (330) at (-15:2);
\coordinate [label = below right:{$+$}] (4) at (-30:2);
\coordinate (430) at (-45:2);
\coordinate [label = below right:{$-$}] (5) at (-60:2);
\coordinate (530) at (-75:2);
\coordinate [label = below:{$+$}] (6) at (-90:2);
\coordinate (630) at (-105:2);
\coordinate [label = below left:{$-$}] (7) at (-120:2);
\coordinate (730) at (-135:2);
\coordinate [label = below left:{$+$}] (8) at (-150:2);
\coordinate (830) at (-165:2);
\coordinate [label = left:{$-$}] (9) at (-180:2);
\coordinate (930) at (-195:2);
\coordinate (10) at (-210:2);
\coordinate (1030) at (-225:2);
\coordinate [label = above left:{$+$}] (11) at (-240:2);
\coordinate (1130) at (-255:2);
\coordinate [label = below left:{$-$}] (extra) at (120:1.5);

\coordinate (12b) at ($ (7,0) + (90:2) $);
\coordinate (1230b) at ($ (7,0) + (75:2) $);
\coordinate [label = above right:{$-$}] (1b) at ($ (7,0) + (60:2) $);
\coordinate (130b) at ($ (7,0) + (45:2) $);
\coordinate [label = above right:{$+$}] (2b) at ($ (7,0) + (30:2) $);
\coordinate (230b) at ($ (7,0) + (15:2) $);
\coordinate [label = right:{$-$}] (3b) at ($ (7,0) + (0:2) $);
\coordinate (330b) at ($ (7,0) + (-15:2) $);
\coordinate [label = below right:{$+$}] (4b) at ($ (7,0) + (-30:2) $);
\coordinate (430b) at ($ (7,0) + (-45:2) $);
\coordinate [label = below right:{$-$}] (5b) at ($ (7,0) + (-60:2) $);
\coordinate (530b) at ($ (7,0) + (-75:2) $);
\coordinate [label = below:{$+$}] (6b) at ($ (7,0) + (-90:2) $);
\coordinate (630b) at ($ (7,0) + (-105:2) $);
\coordinate [label = below left:{$-$}] (7b) at ($ (7,0) + (-120:2) $);
\coordinate (730b) at ($ (7,0) + (-135:2) $);
\coordinate [label = below left:{$+$}] (8b) at ($ (7,0) + (-150:2) $);
\coordinate (830b) at ($ (7,0) + (-165:2) $);
\coordinate [label = left:{$-$}] (9b) at ($ (7,0) + (-180:2) $);
\coordinate (930b) at ($ (7,0) + (-195:2) $);
\coordinate (10b) at ($ (7,0) + (-210:2) $);
\coordinate (1030b) at ($ (7,0) + (-225:2) $);
\coordinate [label = above left:{$+$}] (11b) at ($ (7,0) + (-240:2) $);
\coordinate (1130b) at ($ (7,0) + (-255:2) $);
\coordinate [label = below left:{$-$}] (extrab) at ($ (7,0) + (120:1.5) $);

\draw [boundary] (0,0) circle (2 cm);
\draw [suture] (1230) to [bend right=90] (130);
\draw [suture] (330) to [bend right=90] (430);
\draw [suture] (530) to [bend right=45] (830);
\draw [suture] (630) to [bend right=90] (730);
\draw [suture] (10) .. controls (150:1.7) and ($ 0.5*(11) + 0.5*(extra) + (210:0.2) $)  .. ($ 0.5*(11) + 0.5*(extra) $) .. controls  ($ 0.5*(11) + 0.5*(extra) + (30:0.4) $) and ($ (60:1) + (90:0.4) $) .. (60:1) .. controls ($ (60:1) + (-90:0.6) $) and (15:1.8) .. (230);
\draw [decomposition] (11) -- (extra);
\draw [decomposition] (extra) to [bend right=30] (2);
\draw [decomposition] (2) to [bend right=30] (5);
\draw [decomposition] (5) to [bend right=30] (8);
\draw [decomposition] (8) to [bend right=30] (extra);
\draw [->, >=triangle 60] ($ 0.1*(1) + 0.9*(extra) $) -- ($ 0.9*(1) + 0.1*(extra) $);

\draw[<->] (3,0) -- (4,0);

\draw [boundary] (7,0) circle (2 cm);
\draw [decomposition] (11b) -- (extrab);
\draw [decomposition] (extrab) to [bend right=30] (2b);
\draw [decomposition] (2b) to [bend right=30] (5b);
\draw [decomposition] (5b) to [bend right=30] (8b);
\draw [decomposition] (8b) to [bend right=30] (extrab);
\draw ($ (7,0) + (175:1.5) $) node {$\1$};
\draw (7,0) node {$\1$};
\draw ($ (7,0) + (75:1.5) $) node {$\0$};
\draw ($ (7,0) + (-15:1.5) $) node {$\1$};
\draw ($ (7,0) + (-105:1.5) $) node {$\0$};
\draw [->, >=triangle 60] ($ 0.1*(1b) + 0.9*(extrab) $) -- ($ 0.9*(1b) + 0.1*(extrab) $);

\foreach \point in {1, 2, 3, 4, 5, 6, 7, 8, 9, 11, 1b, 2b, 3b, 4b, 5b, 6b, 7b, 8b, 9b, 11b}
\fill [vertex] (\point) circle (2pt);

\fill [newPurple] (extra) circle (2pt);
\fill [newPurple] (extrab) circle (2pt);

\end{tikzpicture}

\begin{tikzpicture}[
scale=1.2, 
suture/.style={thick, draw=red},
decomposition/.style={draw=green!50!black},
vertex/.style={draw=green!50!black, fill=green!50!black},
boundary/.style={ultra thick}]

\coordinate (12) at (90:2);
\coordinate (1230) at (75:2);
\coordinate [label = above right:{$-$}] (1) at (60:2);
\coordinate (130) at (45:2);
\coordinate [label = above right:{$+$}] (2) at (30:2);
\coordinate (230) at (15:2);
\coordinate [label = right:{$-$}] (3) at (0:2);
\coordinate (330) at (-15:2);
\coordinate [label = below right:{$+$}] (4) at (-30:2);
\coordinate (430) at (-45:2);
\coordinate [label = below right:{$-$}] (5) at (-60:2);
\coordinate (530) at (-75:2);
\coordinate [label = below:{$+$}] (6) at (-90:2);
\coordinate (630) at (-105:2);
\coordinate [label = below left:{$-$}] (7) at (-120:2);
\coordinate (730) at (-135:2);
\coordinate [label = below left:{$+$}] (8) at (-150:2);
\coordinate (830) at (-165:2);
\coordinate [label = left:{$-$}] (9) at (-180:2);
\coordinate (930) at (-195:2);
\coordinate (10) at (-210:2);
\coordinate (1030) at (-225:2);
\coordinate [label = above left:{$+$}] (11) at (-240:2);
\coordinate (1130) at (-255:2);

\coordinate (12b) at ($ (7,0) + (90:2) $);
\coordinate (1230b) at ($ (7,0) + (75:2) $);
\coordinate [label = above right:{$-$}] (1b) at ($ (7,0) + (60:2) $);
\coordinate (130b) at ($ (7,0) + (45:2) $);
\coordinate [label = above right:{$+$}] (2b) at ($ (7,0) + (30:2) $);
\coordinate (230b) at ($ (7,0) + (15:2) $);
\coordinate [label = right:{$-$}] (3b) at ($ (7,0) + (0:2) $);
\coordinate (330b) at ($ (7,0) + (-15:2) $);
\coordinate [label = below right:{$+$}] (4b) at ($ (7,0) + (-30:2) $);
\coordinate (430b) at ($ (7,0) + (-45:2) $);
\coordinate [label = below right:{$-$}] (5b) at ($ (7,0) + (-60:2) $);
\coordinate (530b) at ($ (7,0) + (-75:2) $);
\coordinate [label = below:{$+$}] (6b) at ($ (7,0) + (-90:2) $);
\coordinate (630b) at ($ (7,0) + (-105:2) $);
\coordinate [label = below left:{$-$}] (7b) at ($ (7,0) + (-120:2) $);
\coordinate (730b) at ($ (7,0) + (-135:2) $);
\coordinate [label = below left:{$+$}] (8b) at ($ (7,0) + (-150:2) $);
\coordinate (830b) at ($ (7,0) + (-165:2) $);
\coordinate [label = left:{$-$}] (9b) at ($ (7,0) + (-180:2) $);
\coordinate (930b) at ($ (7,0) + (-195:2) $);
\coordinate (10b) at ($ (7,0) + (-210:2) $);
\coordinate (1030b) at ($ (7,0) + (-225:2) $);
\coordinate [label = above left:{$+$}] (11b) at ($ (7,0) + (-240:2) $);
\coordinate (1130b) at ($ (7,0) + (-255:2) $);

\draw [boundary] (0,0) circle (2 cm);
\draw [suture] (12) to [bend right=90] (130);
\draw [suture] (330) to [bend right=90] (430);
\draw [suture] (530) to [bend right=60] (830);
\draw [suture] (630) to [bend right=90] (730);
\draw [suture] (10) to [bend right=30] (230);
\draw [decomposition] (2) to [bend right=30] (5);
\draw [decomposition] (5) to [bend right=30] (8);
\draw [decomposition] (8) to [bend right=30] (1);

\draw[<->] (3,0) -- (4,0);

\draw [boundary] (7,0) circle (2 cm);
\draw [decomposition] (2b) to [bend right=30] (5b);
\draw [decomposition] (5b) to [bend right=30] (8b);
\draw [decomposition] (8b) to [bend right=30] (1b);
\draw ($ (7,0) + (135:1) $) node {$\0/\1$};
\draw ($ (7,0) + (-60:0.8) $) node {$\1/\0$};
\draw ($ (7,0) + (-15:1.5) $) node {$\1$};
\draw ($ (7,0) + (-105:1.5) $) node {$\0$};

\foreach \point in {1, 2, 3, 4, 5, 6, 7, 8, 9, 11, 1b, 2b, 3b, 4b, 5b, 6b, 7b, 8b, 9b, 11b}
\fill [vertex] (\point) circle (1.5pt);

\end{tikzpicture}

\begin{tikzpicture}[
scale=0.6, 
suture/.style={thick, draw=red},
decomposition/.style={draw=green!50!black},
vertex/.style={draw=green!50!black, fill=green!50!black},
boundary/.style={ultra thick}]

\coordinate (12) at (90:2);
\coordinate (1230) at (75:2);
\coordinate [label = above right:{$-$}] (1) at (60:2);
\coordinate (130) at (45:2);
\coordinate [label = above right:{$+$}] (2) at (30:2);
\coordinate (230) at (15:2);
\coordinate [label = right:{$-$}] (3) at (0:2);
\coordinate (330) at (-15:2);
\coordinate [label = below right:{$+$}] (4) at (-30:2);
\coordinate (430) at (-45:2);
\coordinate [label = below right:{$-$}] (5) at (-60:2);
\coordinate (530) at (-75:2);
\coordinate [label = below:{$+$}] (6) at (-90:2);
\coordinate (630) at (-105:2);
\coordinate [label = below left:{$-$}] (7) at (-120:2);
\coordinate (730) at (-135:2);
\coordinate [label = below left:{$+$}] (8) at (-150:2);
\coordinate (830) at (-165:2);
\coordinate [label = left:{$-$}] (9) at (-180:2);
\coordinate (930) at (-195:2);
\coordinate (10) at (-210:2);
\coordinate (1030) at (-225:2);
\coordinate [label = above left:{$+$}] (11) at (-240:2);
\coordinate (1130) at (-255:2);

\coordinate (12b) at ($ (7,0) + (90:2) $);
\coordinate (1230b) at ($ (7,0) + (75:2) $);
\coordinate [label = above right:{$-$}] (1b) at ($ (7,0) + (60:2) $);
\coordinate (130b) at ($ (7,0) + (45:2) $);
\coordinate [label = above right:{$+$}] (2b) at ($ (7,0) + (30:2) $);
\coordinate (230b) at ($ (7,0) + (15:2) $);
\coordinate [label = right:{$-$}] (3b) at ($ (7,0) + (0:2) $);
\coordinate (330b) at ($ (7,0) + (-15:2) $);
\coordinate [label = below right:{$+$}] (4b) at ($ (7,0) + (-30:2) $);
\coordinate (430b) at ($ (7,0) + (-45:2) $);
\coordinate [label = below right:{$-$}] (5b) at ($ (7,0) + (-60:2) $);
\coordinate (530b) at ($ (7,0) + (-75:2) $);
\coordinate [label = below:{$+$}] (6b) at ($ (7,0) + (-90:2) $);
\coordinate (630b) at ($ (7,0) + (-105:2) $);
\coordinate [label = below left:{$-$}] (7b) at ($ (7,0) + (-120:2) $);
\coordinate (730b) at ($ (7,0) + (-135:2) $);
\coordinate [label = below left:{$+$}] (8b) at ($ (7,0) + (-150:2) $);
\coordinate (830b) at ($ (7,0) + (-165:2) $);
\coordinate [label = left:{$-$}] (9b) at ($ (7,0) + (-180:2) $);
\coordinate (930b) at ($ (7,0) + (-195:2) $);
\coordinate (10b) at ($ (7,0) + (-210:2) $);
\coordinate (1030b) at ($ (7,0) + (-225:2) $);
\coordinate [label = above left:{$+$}] (11b) at ($ (7,0) + (-240:2) $);
\coordinate (1130b) at ($ (7,0) + (-255:2) $);

\coordinate (12c) at ($ (14,0) + (90:2) $);
\coordinate (1230c) at ($ (14,0) + (75:2) $);
\coordinate [label = above right:{$-$}] (1c) at ($ (14,0) + (60:2) $);
\coordinate (130c) at ($ (14,0) + (45:2) $);
\coordinate [label = above right:{$+$}] (2c) at ($ (14,0) + (30:2) $);
\coordinate (230c) at ($ (14,0) + (15:2) $);
\coordinate [label = right:{$-$}] (3c) at ($ (14,0) + (0:2) $);
\coordinate (330c) at ($ (14,0) + (-15:2) $);
\coordinate [label = below right:{$+$}] (4c) at ($ (14,0) + (-30:2) $);
\coordinate (430c) at ($ (14,0) + (-45:2) $);
\coordinate [label = below right:{$-$}] (5c) at ($ (14,0) + (-60:2) $);
\coordinate (530c) at ($ (14,0) + (-75:2) $);
\coordinate [label = below:{$+$}] (6c) at ($ (14,0) + (-90:2) $);
\coordinate (630c) at ($ (14,0) + (-105:2) $);
\coordinate [label = below left:{$-$}] (7c) at ($ (14,0) + (-120:2) $);
\coordinate (730c) at ($ (14,0) + (-135:2) $);
\coordinate [label = below left:{$+$}] (8c) at ($ (14,0) + (-150:2) $);
\coordinate (830c) at ($ (14,0) + (-165:2) $);
\coordinate [label = left:{$-$}] (9c) at ($ (14,0) + (-180:2) $);
\coordinate (930c) at ($ (14,0) + (-195:2) $);
\coordinate (10c) at ($ (14,0) + (-210:2) $);
\coordinate (1030c) at ($ (14,0) + (-225:2) $);
\coordinate [label = above left:{$+$}] (11c) at ($ (14,0) + (-240:2) $);
\coordinate (1130c) at ($ (14,0) + (-255:2) $);

\coordinate (12d) at ($ (21,0) + (90:2) $);
\coordinate (1230d) at ($ (21,0) + (75:2) $);
\coordinate [label = above right:{$-$}] (1d) at ($ (21,0) + (60:2) $);
\coordinate (130d) at ($ (21,0) + (45:2) $);
\coordinate [label = above right:{$+$}] (2d) at ($ (21,0) + (30:2) $);
\coordinate (230d) at ($ (21,0) + (15:2) $);
\coordinate [label = right:{$-$}] (3d) at ($ (21,0) + (0:2) $);
\coordinate (330d) at ($ (21,0) + (-15:2) $);
\coordinate [label = below right:{$+$}] (4d) at ($ (21,0) + (-30:2) $);
\coordinate (430d) at ($ (21,0) + (-45:2) $);
\coordinate [label = below right:{$-$}] (5d) at ($ (21,0) + (-60:2) $);
\coordinate (530d) at ($ (21,0) + (-75:2) $);
\coordinate [label = below:{$+$}] (6d) at ($ (21,0) + (-90:2) $);
\coordinate (630d) at ($ (21,0) + (-105:2) $);
\coordinate [label = below left:{$-$}] (7d) at ($ (21,0) + (-120:2) $);
\coordinate (730d) at ($ (21,0) + (-135:2) $);
\coordinate [label = below left:{$+$}] (8d) at ($ (21,0) + (-150:2) $);
\coordinate (830d) at ($ (21,0) + (-165:2) $);
\coordinate [label = left:{$-$}] (9d) at ($ (21,0) + (-180:2) $);
\coordinate (930d) at ($ (21,0) + (-195:2) $);
\coordinate (10d) at ($ (21,0) + (-210:2) $);
\coordinate (1030d) at ($ (21,0) + (-225:2) $);
\coordinate [label = above left:{$+$}] (11d) at ($ (21,0) + (-240:2) $);
\coordinate (1130d) at ($ (21,0) + (-255:2) $);

\draw [boundary] (0,0) circle (2 cm);
\draw [suture] (330) to [bend right=90] (430);
\draw [suture] (630) to [bend right=90] (730);
\draw [suture] (10) to [bend right=60] (12);
\draw [suture] (830) to [bend right=30] (130);
\draw [suture] (230) to [bend right=60] (530);
\draw [decomposition] (2) to [bend right=30] (5);
\draw [decomposition] (5) to [bend right=30] (8);
\draw [decomposition] (8) to [bend right=30] (1);

\draw (3.5,0) node {$+$};

\draw [boundary] (7,0) circle (2 cm);
\draw [decomposition] (2b) to [bend right=30] (5b);
\draw [decomposition] (5b) to [bend right=30] (8b);
\draw [decomposition] (8b) to [bend right=30] (1b);
\draw [suture] (330b) to [bend right=90] (430b);
\draw [suture] (630b) to [bend right=90] (730b);
\draw [suture] (130b) to [bend right=90] (230b);
\draw [suture] (12b) -- (530b);
\draw [suture] (830b) to [bend right=90] (10b);

\draw [<->] (10,0) -- (11,0);

\draw [boundary] (14,0) circle (2 cm);
\draw [decomposition] (2c) to [bend right=30] (5c);
\draw [decomposition] (5c) to [bend right=30] (8c);
\draw [decomposition] (8c) to [bend right=30] (1c);
\draw ($ (14,0) + (135:1) $) node {$\1$};
\draw ($ (14,0) + (-60:0.8) $) node {$\0$};
\draw ($ (14,0) + (-15:1.5) $) node {$\1$};
\draw ($ (14,0) + (-105:1.5) $) node {$\0$};

\draw (17.5,0) node {$+$};

\draw [boundary] (21,0) circle (2 cm);
\draw [decomposition] (2d) to [bend right=30] (5d);
\draw [decomposition] (5d) to [bend right=30] (8d);
\draw [decomposition] (8d) to [bend right=30] (1d);
\draw ($ (21,0) + (135:1) $) node {$\0$};
\draw ($ (21,0) + (-60:0.8) $) node {$\1$};
\draw ($ (21,0) + (-15:1.5) $) node {$\1$};
\draw ($ (21,0) + (-105:1.5) $) node {$\0$};

\foreach \point in {1, 2, 3, 4, 5, 6, 7, 8, 9, 11, 1b, 2b, 3b, 4b, 5b, 6b, 7b, 8b, 9b, 11b, 1c, 2c, 3c, 4c, 5c, 6c, 7c, 8c, 9c, 11c, 1d, 2d, 3d, 4d, 5d, 6d, 7d, 8d, 9d, 11d}
\fill [vertex] (\point) circle (3pt);

\end{tikzpicture}

\caption{Its and bits.}
\label{fig:its_and_bits}
\end{center}
\end{figure}
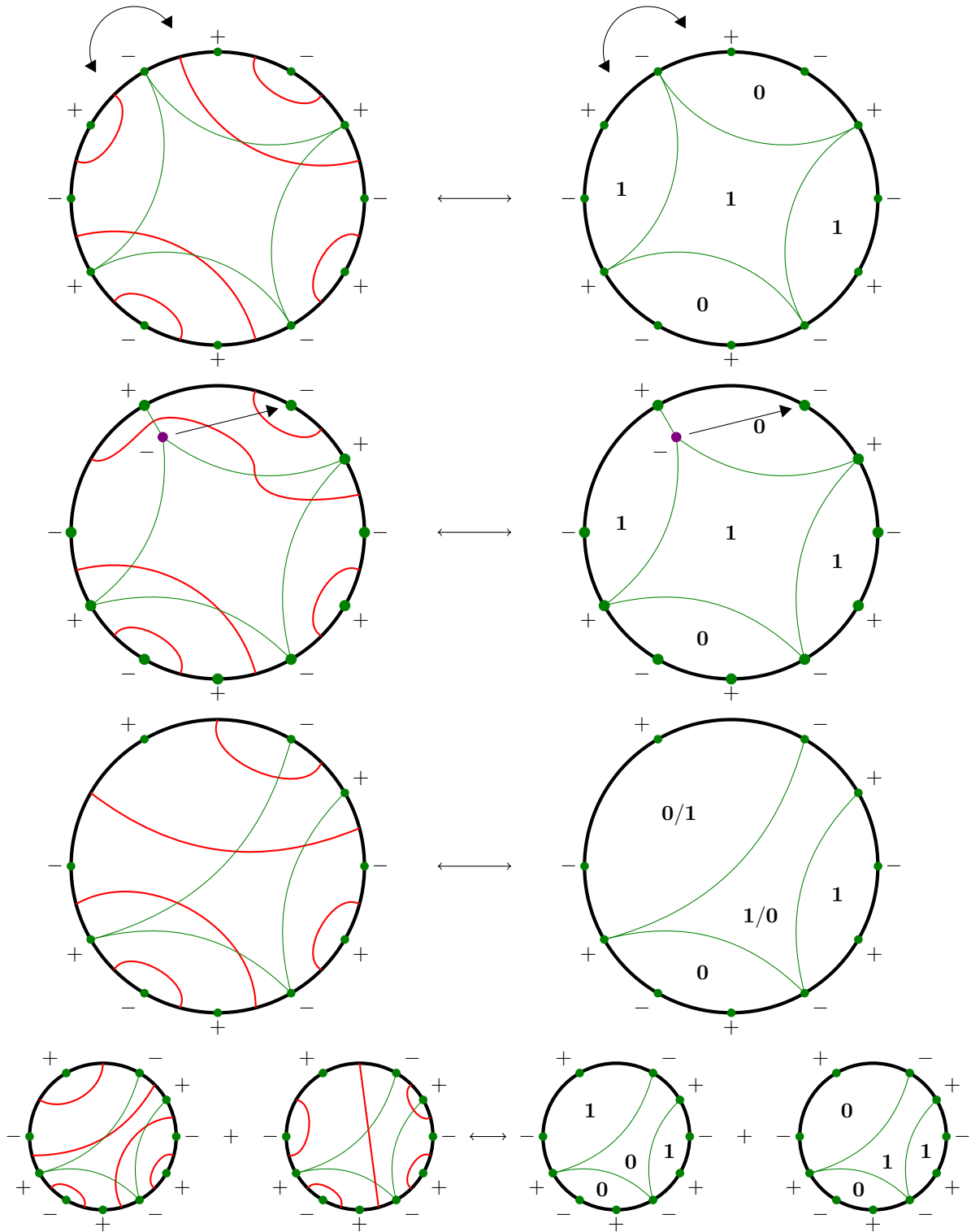

In the top left diagram of figure \ref{fig:its_and_bits}, we see the situation of figure \ref{fig:topological_quadrangulation}, with an arrow between two consecutive boundary edges. These edges are \emph{folded} together to obtain the diagrams on the second row. This fold map is an example of an \emph{occupied surface morphism} $(D^2, V_{12}) \To (D^2, V_{10})$. The fold is a surjective map, but morphisms need not be surjective in general. In essence a morphism is an embedding on the interior of an occupied surface, but edges and vertices can be glued together in coherent fashion; the details ``occupy'' section \ref{sec:morphisms}. The fold results in the arcs and sutures depicted in the second row. This introduces an \emph{internal vertex} (in purple), and what we call a \emph{slack quadrangulation}. The surface is still split into occupied squares, and on each square are basic sutures, so there are still ``bitsy'' interpretations of each.

To get rid of the internal vertex and restore ourselves to a bona fide quadrangulation, we consider moving the internal vertex across a square to a boundary vertex of the same sign, illustrated by an arrow. In doing so we can carry all vertices and edges along the way, and collapse the square we have pushed across out of existence. This \emph{slack square collapse} is the mechanism for ``annihilation of particles'' and results in annihilation operators. 

The result of the slack square collapse is shown in the third row of figure \ref{fig:its_and_bits}. We have a quadrangulation of $(D^2, V_{10})$. Although no sutures are collapsed, the sutures are no longer basic: one of the decomposing arcs intersects the sutures in $3$ points. So there are no longer specific bits associated to the two adjacent squares, ``the squares are entangled''. Note however that with respect to other quadrangulations, the sutures \emph{are} basic; purity/entanglement is in the eye of the quadrangulator--observer.

We have a method, however, for resolving non-basic sutures to basic ones. This is the \emph{bypass relation}, shown in figure \ref{fig:bypass_relation}. Three sets of sutures that are related as shown in that diagram, have suture elements which sum to zero. Note that this is not an axiom of SQFT, but it is \emph{derived} from the mere assignments of vector spaces and suture elements in coherent fashion (we give a precise statement of SQFT and its axioms in section \ref{sec:SQFT_defn}). In any case, using the bypass relation allows us to express the sutures in the third row of figure \ref{fig:its_and_bits} as a sum or ``superposition'' of the two sets of sutures in the fourth row; these are both basic, with bits as shown.

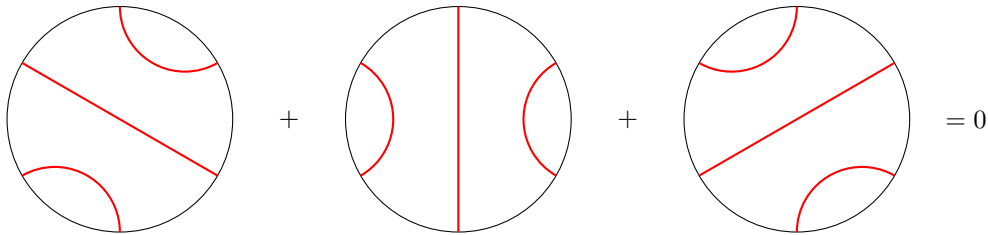
\begin{figure}
\begin{center}

\begin{tikzpicture}[
scale=1.5, 
suture/.style={thick, draw=red}]

\draw (3,0) circle (1 cm); 	
\draw (0,0) circle (1 cm);
\draw (-3,0) circle (1 cm);

\draw [suture] (30:1) arc (120:240:0.57735);
\draw [suture] (0,1) -- (0,-1);
\draw [suture] (210:1) arc (-60:60:0.57735);

\draw [suture] (3,0) ++ (150:1) arc (-120:0:0.57735);
\draw [suture] (3,0) ++ (30:1) -- ($ (3,0) + (210:1) $);
\draw [suture] (3,0) ++ (-30:1) arc (60:180:0.57735);

\draw [suture] (-3,1) arc  (180:300:0.57735);
\draw [suture] ($ (-3,0) + (-30:1) $) -- ($ (-3,0) +  (150:1) $);
\draw [suture] (-3,-1) arc (0:120:0.57735);

\draw (-1.5,0) node {$+$};
\draw (1.5,0) node {$+$};
\draw (4.5,0) node {$=0$};

\end{tikzpicture}

\caption{Bypass relation.}
\label{fig:bypass_relation}
\end{center}
\end{figure}

We conclude this illustration then by considering the result of all the operations considered on suture elements. The effect is (again reading top to bottom, left to right):
	\[
\0 \otimes \1 \otimes \1 \otimes \1 \otimes \0 \mapsto \1 \otimes \0 \otimes \1 \otimes \0 + \0 \otimes \1 \otimes \1 \otimes \0 = (\1 \otimes \0 + \0 \otimes \1) \otimes \1 \otimes \0.
\]
In fact, the operation is ${\bf V}^{\otimes 5} \To {\bf V}^{\otimes 4}$, precisely:
\[
a_\1 \otimes 1^{\otimes 2} \; : \; \left( {\bf V} \otimes {\bf V}^{\otimes 2} \right) \otimes {\bf V}^{\otimes 2}
\To
{\bf V}^{\otimes 2} \otimes {\bf V}^{\otimes 2}.
\]
That is, the effect is a general digital annihilation operator. The factor corresponding to the collapsed square has been annihilated, and the annihilation operator has been applied to the squares which were adjacent to the collapsed internal vertex. The annihilation is a $\1$-annihilation, corresponding to the fact that the collapsed internal vertex had a $-$ sign.

The point of this paper is to develop a theory to make the above discussion rigorous, and demonstrate that the above phenomena are general for occupied surfaces, quadrangulations, and sutures.

\subsection{Sutured Floer homology and TQFT}
\label{sec:SFH_and_TQFT}

As mentioned, all the constructions in this paper are inspired by the study of contact elements in sutured Floer homology ($SFH$) of product manifolds; in particular, sutured 3-manifolds of the form $(\Sigma \times S^1, F \times S^1)$, where $\Sigma$ is a surface with nonempty boundary and $F \subset \partial \Sigma$ is a finite set.

The study of the sutured Floer homology of these manifolds, and morphisms between them, was initiated by Honda--Kazez--Mati\'{c} in \cite{HKM08}. In that paper, Honda--Kazez--Mati\'{c} proved that any \emph{inclusion} of sutured manifolds $(M, \Gamma) \hookrightarrow (M', \Gamma')$, one into the interior of the other, together with a contact structure $\xi$ on the complement sutured manifold $(M' \backslash M, \Gamma \cup \Gamma')$, gives rise to a map on sutured Floer homology, $SFH(-M, -\Gamma) \To SFH(-M', -\Gamma')$. (The minus signs indicate an issue with orientations which is irrelevant for our purposes.) They then considered the \emph{dimensionally-reduced} case where the manifolds are of the form $(\Sigma \times S^1, F \times S^1)$ and the inclusion of sutured 3-manifolds is induced by an inclusion of a surface into the interior of another, $\Sigma \hookrightarrow \Sigma'$.

This study was continued by the author in \cite{MyThesis, Me09Paper, Me10_Sutured_TQFT}. In \cite{MyThesis, Me09Paper} the author was able to describe all the contact elements in $SFH(D^2 \times S^1, F \times S^1)$ completely and explicitly. In \cite{Me10_Sutured_TQFT} the author found various curious connections to noncommutative quantum field theory in $SFH(D^2 \times S^1, F \times S^1)$, although the algebra there is different from the framework of tensor products based on quadrangulations which we pursue here. (In particular the creation and annihilation operators defined there are quite different from the ones defined here.) This paper continues that study.

Theorem \ref{thm:SFH_gives_SQFT}, that SQFT describes $SFH$ of product manifolds, is in essence a reformulation of section 4 of \cite{Me10_Sutured_TQFT}, adapted to a slightly different context. In fact, our notion of occupied surface morphism includes surface inclusions, but is broader, so theorem \ref{thm:main_thm} is slightly more general than corollary \ref{cor:SFH_digital} suggests.

The sutured manifolds $(\Sigma \times S^1, F \times S^1)$ considered here are a very specific class. Connections with physics are pervasive in Floer homology and gauge theories, and of course more generally in symplectic and contact geometry; but the connections examined here seem to be of quite a different nature. We wonder whether such connections extend to a more general class of sutured $3$-manifolds or to related theories such as embedded contact homology. Mild generalisations of manifolds of this type are considered, for instance, by Golovko in \cite{Golovko09, Golovko10}, where sutures have a non-vertical slope, and by Wendl in \cite{Wendl10}; contact structures on our product manifolds essentially give the trivial-monodromy case of \emph{planar torsion domains}. Can digital creations and annihilations help us to understand contact elements in sutured Floer homology in more generality?

\subsection{Structure of this paper}

In order to prove the main theorem \ref{thm:main_thm}, a substantial amount of development is necessary. In section \ref{sec:occupied_surfaces} we define occupied surfaces and various useful notions related to them. In section \ref{sec:morphisms} we define a broad class of morphisms between these surfaces, prove that they form a category, discuss various properties they have, and introduce a set of simple elementary morphisms from which all others will be constructed. In section \ref{sec:quadrangulations}, we study quadrangulations of occupied surfaces in detail, and use them to show how to build surfaces and morphisms out of elementary pieces. In section \ref{sec:sutures} we study sutures, in detail; and then in section \ref{sec:quadrangulations_and_sutures} we study how they relate to the structure of occupied and quadrangulated surfaces.

So, not until section \ref{sec:decorated_morphisms} can we introduce decorated morphisms, which involve both occupied surfaces and sutures. It is decorated morphisms which are represented algebraically by SQFT, which is defined and studied in section \ref{sec:SQFT}, proving our main theorems.

\section{Occupied surfaces}
\label{sec:occupied_surfaces}

\subsection{Definitions}
\label{sec:occupied_surface_definitions}

\begin{defn}
\label{def:occupied_surface_defn}
An \emph{occupied surface} $(\Sigma,V)$ is a pair $(\Sigma, V)$, where $\Sigma$ is an oriented surface (possibly disconnected), every component of $\Sigma$ has nonempty boundary, and $V$ is a finite set of points on $\partial \Sigma$ called \emph{vertices}, each labelled \emph{positive} or \emph{negative}. Each component of $\partial \Sigma$ must contain vertices of $V$, and along each component of $\partial \Sigma$, vertices must alternately be labelled as positive and negative.
\end{defn}

\begin{figure}
\centering
\includegraphics[scale=0.4]{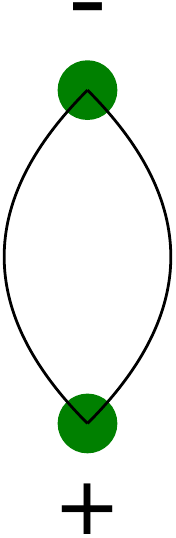}
\quad \quad
\includegraphics[scale=0.4]{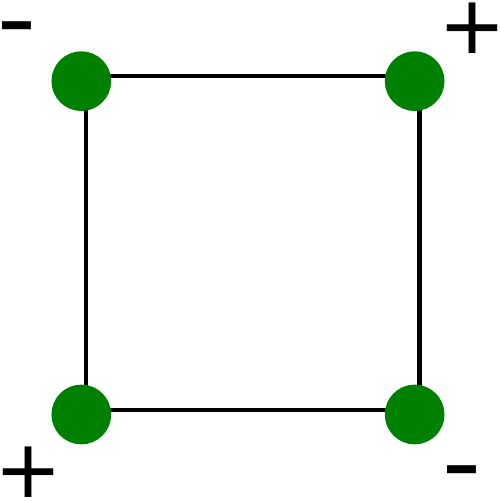}
\quad \quad 
\includegraphics[scale=0.4]{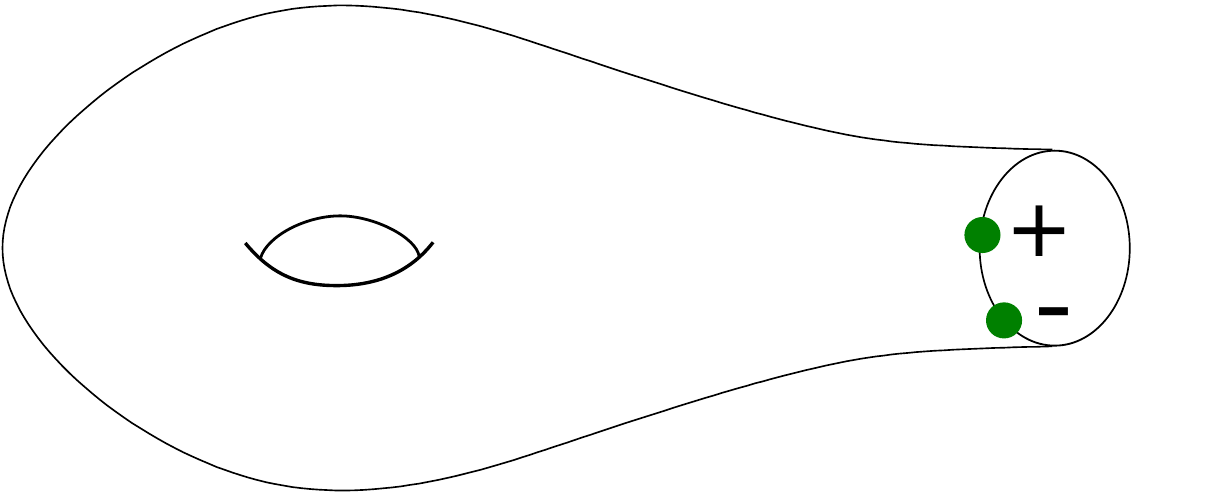}
\caption{Occupied surfaces}
\label{fig:occupied_surfaces}
\end{figure}

Figure \ref{fig:occupied_surfaces} shows examples of occupied surfaces. We can think of the vertices as corners: in an occupied square there are signs on every corner. The set of positive vertices is denoted $V_+$ and the set of negative vertices is denoted $V_-$, so $V = V_+ \sqcup V_-$. The alternating sign property implies a positive even number of vertices on each boundary component. We shall usually write $N$ for the number of positive vertices, so that there are also $N$ negative vertices, $2N$ total.

We can speak of a homeomorphism of occupied surfaces, being a surface homeomorphism which maps vertices bijectively and preserves their signs.

The connected components of an occupied surface carry the structure of occupied surfaces.

The arcs of $\partial \Sigma$ connecting consecutive vertices, we call \emph{boundary edges}. A boundary edge has one endpoint in $V_-$ and one endpoint in $V_+$, hence is naturally oriented from $V_-$ to $V_+$. As $\Sigma$ is oriented, the boundary inherits an orientation, and around each boundary component, the boundary edges alternate between agreeing and disagreeing in orientation with the boundary orientation.
\begin{defn}
A boundary edge of $(\Sigma, V)$ which disagrees in orientation with $\partial \Sigma$ is \emph{incoming}. A boundary edge which agrees in orientation with $\partial \Sigma$ is \emph{outgoing}.
\end{defn}

\begin{defn}[Simple occupied surfaces]\
\begin{enumerate}
\item
The \emph{occupied vacuum} $(\Sigma^\emptyset, V^\emptyset)$ is the disc with two vertices.
\item
The \emph{occupied square} $(\Sigma^\square, V^\square)$ is the disc with four vertices.
\end{enumerate}
\end{defn}
The occupied vacuum is a bigon, with one vertex of each sign. The vertices of the occupied square alternate, opposite corners have the same sign. Both are shown in figure \ref{fig:occupied_surfaces}.

We will often prefer occupied surfaces in which no component is the occupied vacuum: we say such surfaces are \emph{without vacua}.

\subsection{Arcs in occupied surfaces}

\begin{defn}
A \emph{decomposing arc} in an occupied surface $(\Sigma,V)$ is a properly embedded arc in $(\Sigma,V)$, with one endpoint in $V_+$, the other endpoint in $V_-$, and interior in the interior of $\Sigma$.
\end{defn}

\begin{figure}
\begin{center}
\begin{tabular}{c}
\begin{tikzpicture}[
scale=1.2, 
decomposition/.style={thick, draw=green!50!black}, 
vertex/.style = {draw=green!50!black, fill=green!50!black}
]
\draw (0:1) -- (60:1) -- (120:1) -- (180:1) -- (240:1) -- (300:1) -- cycle;
\draw [decomposition] (0:1) -- (180:1);
\foreach \angle in {0, 60, 120, 180, 240, 300}
\fill [vertex] (\angle:1) circle (2pt);
\foreach \angle in {0,120,240}
\draw (\angle:1.3) node {$-$};
\foreach \angle in {60,180,300}
\draw (\angle:1.3) node {$+$};
\end{tikzpicture}
\end{tabular}
\begin{tabular}{c}
\includegraphics[scale=0.4]{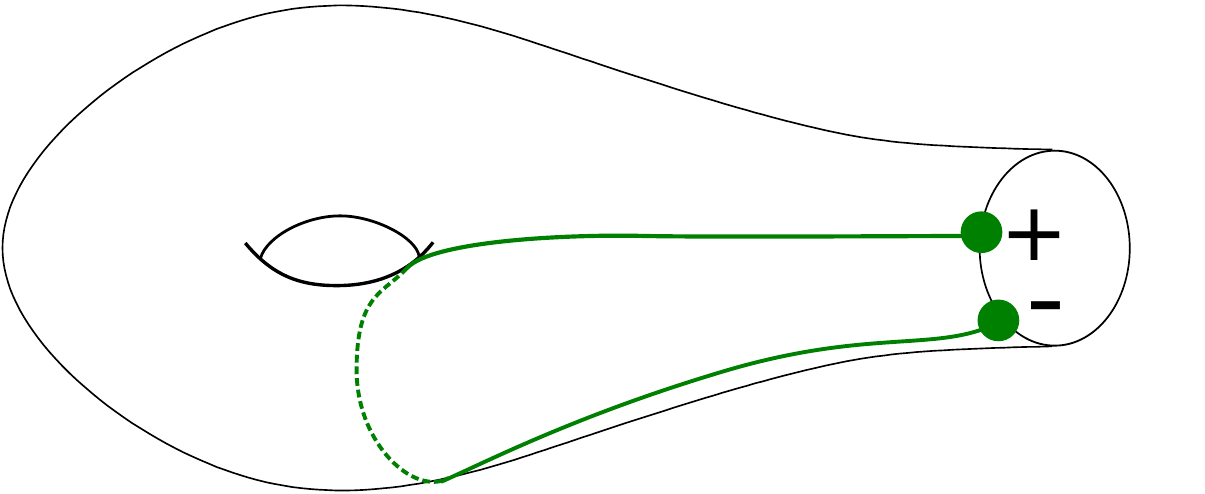}
\end{tabular}
\caption{Decomposing arcs}
\label{fig:decomposing_arcs}
\end{center}
\end{figure}

Given two consecutive vertices $v_-, v_+$ of an occupied surface $(\Sigma,V)$, there are decomposing arcs $a$ connecting them, which are isotopic (rel endpoints) to a boundary edge between $v_-$ and $v_+$. Cutting $(\Sigma, V)$ along such a decomposing arc gives two occupied surfaces, one homeomorphic to $(\Sigma,V)$, and the other an occupied vacuum. Any occupied surface thereby decomposes into itself and a vacuum; this is a trivial kind of decomposition, and we call such a decomposing arc \emph{trivial}.

Some nontrivial decomposing arcs are shown in figure \ref{fig:decomposing_arcs}. Note that every decomposing arc in an occupied vacuum, or occupied square, is trivial. The converse is also true.
\begin{lem}
\label{lem:nontrivial_arc_exists}
Let $(\Sigma,V)$ be a connected occupied surface. There exists a nontrivial decomposing arc on $(\Sigma,V)$ if and only if $(\Sigma,V)$ is not the vacuum or the occupied square.
\end{lem}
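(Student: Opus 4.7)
The plan is to prove both directions by breaking into cases based on the topological type of $\Sigma$, with the forward (``only if'') direction being essentially immediate and the reverse direction requiring separate constructions for the disc, the multi-boundary, and the positive-genus cases.

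For the ``only if'' direction, I would note that in the occupied vacuum and occupied square, $\Sigma = D^2$, and arcs in a disc are determined up to isotopy rel endpoints by their endpoints. In the vacuum there is only one vertex of each sign, consecutive, so the unique decomposing arc is isotopic to a boundary edge and is trivial. In the occupied square the $+$ and $-$ vertices alternate at positions $1,2,3,4$, so every $+/-$ pair is consecutive and every decomposing arc is again isotopic to a boundary edge, hence trivial.

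For the ``if'' direction, I would split into two cases. \emph{Case A: $\Sigma = D^2$.} Since $(\Sigma,V)$ is not the vacuum or square, $2N \geq 6$. Labeling vertices cyclically so that signs alternate, vertices at positions $1$ and $4$ have opposite signs and are non-consecutive. The unique (up to isotopy rel endpoints) properly embedded arc joining them separates $D^2$ into two discs each carrying at least two additional vertices, so neither piece is a vacuum, and the arc is nontrivial. \emph{Case B: $\Sigma$ is not a disc.} Then $\chi(\Sigma) \leq 0$, and either $\Sigma$ has at least two boundary components, or exactly one boundary component of positive genus. In the multi-boundary subcase, choose $v_+ \in V_+$ and $v_- \in V_-$ on \emph{different} boundary components; any properly embedded arc joining them cannot cobound a disc with a boundary edge, since its endpoints lie on distinct components of $\partial \Sigma$. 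In the positive-genus subcase, a standard fact about orientable surfaces produces a non-separating properly embedded arc; by ambient isotopy one can slide its endpoints along the single boundary circle so that they land on any two chosen distinct points, and in particular at a chosen $v_+$ and $v_-$. Since boundary edges are separating, a non-separating arc is never isotopic to one, hence nontrivial.

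The main obstacle — really the only nontrivial step — is the construction in the positive-genus subcase: producing a non-separating decomposing arc with prescribed endpoints on $V_+ \cup V_-$. I would handle this by first producing any non-separating arc (e.g., by cutting along a dual to a non-separating simple closed curve, or directly on a handle), and then adjusting the endpoints: concatenating the arc with boundary sub-arcs running from its endpoints to the desired vertices, and smoothing to obtain a properly embedded arc with the specified endpoints, non-separation being preserved since concatenating with boundary arcs does not change the homology class modulo the boundary. All other steps are essentially combinatorial statements about the disc, and the division into these cases exhausts the connected occupied surfaces other than the vacuum and the square.
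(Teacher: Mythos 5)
Your proof is correct and takes essentially the same route as the paper: the same three-case split for the ``if'' direction (disc with more than four vertices, at least two boundary components, positive genus), with the ``only if'' direction handled by the observation that all arcs in the vacuum and square are boundary-parallel. You merely supply details the paper leaves implicit, namely the vertex count showing neither piece cut off in the disc case is a vacuum, and the endpoint-sliding argument placing the non-separating handle arc at vertices of opposite sign.
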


\begin{proof}
If $\Sigma$ has nonzero genus, there is a nontrivial decomposing arc around a handle. If $\Sigma$ has more than one boundary component, there is a nontrivial decomposing arc connecting distinct boundary components. If $\Sigma$ is a disc with more than $4$ vertices, there is a decomposing arc connecting non-consecutive vertices, hence nontrivial. 
\end{proof}

\subsection{Index}
\label{sec:index}

\begin{defn}
\label{def:index}
The \emph{index} of an occupied surface $(\Sigma,V)$ is $I(\Sigma,V) = N - \chi(\Sigma)$.
\end{defn}

Here $\chi(\Sigma)$ denotes the Euler characteristic of $\Sigma$. For $\Sigma$ connected, with genus $g$ and $B$ boundary components, then $I(\Sigma,V) = N - 2 + 2g + B$. In general, if $\Sigma$ has $C$ connected components, $B$ boundary components and total genus $g$, then $I(\Sigma,V) = N - 2C + 2g + B$. Note $B \geq C$ (every component of $\Sigma$ has boundary) and $N \geq B$ (each boundary component has vertices), so $I(\Sigma,V) \geq 0$ always.

Since $N$ and $\chi$ are additive under disjoint union, we have immediately that
\[
I \left( \sqcup_i \left( \Sigma_i, V_i \right) \right) = \sum_i I \left( \Sigma_i, V_i \right).
\]

The occupied vacuum has index $0$. Conversely, if a connected occupied surface has index $0$ then $N-2+2g+B =0$, so $N=B=1$ and $g=0$; hence it is the occupied vacuum. As vacua have index $0$ and may be peeled off occupied surfaces at will, we can view them as ``ephemeral'' or ``nonexistent''.

The occupied square has index $1$. Conversely, if a connected occupied surface has index $1$ then $N-2+2g+B =1$; using $N \geq B \geq 1$ this implies $g=0$, $N=2$, $B=1$. So it is the occupied square. Given their index $1$ and, as we shall see shortly, the ability to decompose any occupied surface without vacua into occupied squares, we regard them as ``atomic''.

\subsection{Gluing number}
\label{sec:gluing_number}

\begin{defn}
The \emph{gluing number} of an occupied surface $(\Sigma,V)$ is $G(\Sigma,V) = N - 2 \chi(\Sigma)$.
\end{defn}
The gluing number will tell us how many times we have glued squares together to obtain a surface. With notation as above, for connected $(\Sigma,V)$ we have $G(\Sigma,V) = N+4g+2B-4$ and in general $G(\Sigma,V)= N + 4g + 2B - 2C$.

Since $N$ and $\chi$ are additive under disjoint union, we have
\[
G \left( \sqcup_i \left( \Sigma_i, V_i \right) \right) = \sum_i G \left( \Sigma_i, V_i \right).
\]

The gluing number of the occupied vacuum is $-1$. Conversely, a connected occupied surface with $G(\Sigma,V)<0$ satisfies $N+4g+2B-4<0$, which implies $g=0$, $N=1$, $B=1$; so it is the occupied vacuum and the gluing number is  $-1$.

The gluing number of the occupied square is $0$. Conversely, a connected occupied surface with $G(\Sigma,V)=0$ satisfies $N+4g+2B-4=0$, which with $N \geq B \geq 1$ implies $g=0$, $N=2$, $B=1$; so it is the occupied square.

Thus the gluing number of any connected occupied surface other than the vacuum or square is positive. Moreover, an occupied surface without vacua has gluing number zero if and only if it is a disjoint union of squares.

\subsection{Decomposition and gluing}
\label{sec:decomposition_and_gluing}

Cutting an occupied surface $(\Sigma,V)$ along a decomposing arc $a$ produces another occupied surface $(\Sigma', V')$. Write $g,B,N,C$ and $g',B',N',C'$ for their respective topological data, as above. Any $a$ falls into precisely one of the following possibilities.
\begin{enumerate}
\item
The arc $a$ connects two distinct components of $\partial \Sigma$. Then $a$ is nonseparating and $C'=C$, $g'=g$, $B'=B-1$, $N'=N+1$.
\item
The arc $a$ has both endpoints on the same component of $\partial \Sigma$, and is nonseparating. Then $a$ is not boundary parallel, and $C'=C$, $g'=g-1$, $B'=B+1$, $N'=N+1$.
\item
The arc $a$ has both endpoints on the same component of $\partial \Sigma$, and is separating. Thus $a$ is boundary parallel and cutting along $a$ cuts off a disc; this includes the case of $a$ trivial. We have $C'=C+1$, $g'=g$, $B'=B+1$, $N'=N+1$.
\end{enumerate}

We immediately verify that $N'=N+1$ and $\chi(\Sigma',V') = \chi(\Sigma,V)+1$ in all cases. 
\begin{lem}
\label{lem:index_invariant}
Cutting along a decomposing arc preserves index, $I(\Sigma',V') = I(\Sigma,V)$, and decreases gluing number by one, $G(\Sigma',V') = G(\Sigma,V) - 1$.
\qed
\end{lem}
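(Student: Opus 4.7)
The lemma is essentially a one-line consequence of the case analysis already carried out in the paragraph immediately preceding the statement. My plan is simply to observe that the three enumerated cases (nonseparating arc between distinct boundary components; nonseparating arc on a single boundary component; separating arc) together cover every possibility for a decomposing arc $a$, and in each case one reads off $N' = N+1$ and, via $\chi(\Sigma) = 2C - 2g - B$, the identity $\chi(\Sigma') = \chi(\Sigma) + 1$. The author already notes this immediate verification, so I would not redo the case work; I would just cite it.

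Given those two equalities, I would conclude by direct substitution into the definitions. For the index,
\[
I(\Sigma', V') = N' - \chi(\Sigma') = (N+1) - (\chi(\Sigma) + 1) = N - \chi(\Sigma) = I(\Sigma, V),
\]
and for the gluing number,
\[
G(\Sigma', V') = N' - 2\chi(\Sigma') = (N+1) - 2(\chi(\Sigma) + 1) = N - 2\chi(\Sigma) - 1 = G(\Sigma, V) - 1.
\]
Both identities are just arithmetic once the $N$ and $\chi$ increments are in hand.

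There is no real obstacle here, since the topological work (tracking how $g$, $B$, $C$, $N$ change under cutting in each of the three cases, and noting that these exhaust the possibilities) has already been done explicitly above. If one wanted a sanity check on the $\chi$ computation without appealing to the case list, one could give a Mayer--Vietoris / inclusion--exclusion argument: cutting along $a$ is the same as gluing $\Sigma'$ back along the two boundary copies of $a$ to recover $\Sigma$, so $\chi(\Sigma) = \chi(\Sigma') - \chi(a) = \chi(\Sigma') - 1$, confirming $\chi(\Sigma') = \chi(\Sigma) + 1$ uniformly. Either way, the substitution above completes the proof.
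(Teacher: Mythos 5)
Your proposal is correct and follows exactly the paper's route: the paper's proof is precisely the case analysis in the preceding paragraph together with the observation that $N' = N+1$ and $\chi(\Sigma') = \chi(\Sigma)+1$ in all cases, after which the two identities follow by substitution into the definitions, just as you write. Your Mayer--Vietoris remark is a valid (and slightly cleaner) way to see the Euler characteristic increment uniformly, but it is not needed beyond what the paper already records.
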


Consider now the inverse procedure of \emph{gluing}. Take an occupied surface $(\Sigma,V)$ and distinct boundary edges $e_1, e_2$. Glue them together, respecting signs of vertices; this gluing map is orientation-reversing, and the result $\Sigma'$ orientable, if and only if one of $e_1, e_2$ is incoming and the other is outgoing. 

Let $V' \subset \Sigma'$ denote the image of $V$ under this gluing; each point of $V'$ inherits a sign. Note that if $e_1, e_2$ were consecutive edges then there are elements of $V'$ in the interior of $\Sigma'$: in this case we say that the gluing has \emph{swallowed} those vertices. However if $e_1, e_2$ are not consecutive edges, then $V' \subset \partial \Sigma'$, and $\partial \Sigma'$ consists of the image of boundary edges of $(\Sigma,V)$ naturally oriented from $V'_-$ to $V'_+$. 
\begin{lem}
Let $e_1,e_2$ be non-consecutive boundary edges, one incoming and one outgoing, of $(\Sigma,V)$. Then the result of gluing $e_1$ to $e_2$, respecting signs, is an occupied surface $(\Sigma',V')$.
\qed
\end{lem}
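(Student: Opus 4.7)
The plan is to verify that $(\Sigma', V')$ meets every requirement in Definition \ref{def:occupied_surface_defn}. Two of the conditions have essentially been dispatched by the preceding discussion: orientability of $\Sigma'$, because one of $e_1, e_2$ is incoming and the other outgoing, so the sign-respecting identification is orientation-reversing on the induced boundary orientations and $\Sigma'$ inherits a global orientation; and the inclusion $V' \subset \partial\Sigma'$, because $e_1, e_2$ are non-consecutive and so no element of $V$ is ``swallowed'' --- the only identifications among vertices are the pairs $v_- \sim w_-$ and $v_+ \sim w_+$, which still lie on $\partial \Sigma'$.

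What remains is to verify the three structural conditions: every component of $\Sigma'$ has nonempty boundary, every component of $\partial\Sigma'$ carries at least one vertex, and the vertices of $V'$ alternate in sign around each component of $\partial\Sigma'$. For the first two, I would use the alternating-sign property of $(\Sigma,V)$. Traversing $\partial\Sigma$ along either side of $e_1 \cup e_2$ from $v_-$ to $w_-$, both of which are negative, alternation forces an \emph{odd} number $k \geq 1$ of intervening vertices, and hence at least $k+1 \geq 2$ surviving boundary edges on that side; a symmetric statement holds on the $v_+$-to-$w_+$ side. Consequently no boundary circle of $\Sigma'$ is degenerate, each such circle contains at least the glued vertex through which it passes plus one further pre-existing vertex, and every connected component of $\Sigma'$ retains some of its pre-existing boundary.

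For alternation, at every vertex of $V'$ other than the two glued vertices $u_- = [v_- = w_-]$ and $u_+ = [v_+ = w_+]$, alternation is inherited unchanged from $(\Sigma, V)$. At $u_-$, the two edges of $\partial \Sigma'$ incident to it are the unique boundary edges of $\Sigma$ adjacent to $v_-$ and to $w_-$ other than $e_1$ and $e_2$; alternation around $\partial \Sigma$ forces both of their opposite endpoints to be positive, so the neighbors of $u_-$ in $\partial\Sigma'$ are both positive, and symmetrically the two neighbors of $u_+$ are negative. The only place I expect care to be needed is the bookkeeping of whether $e_1, e_2$ lie on the same versus on different boundary components of $\Sigma$: in the former case a single circle of $\partial \Sigma$ splits into two circles of $\partial \Sigma'$, each passing through both $u_-$ and $u_+$; in the latter two circles merge into one. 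The local analysis at $u_\pm$ and the intervening-vertex count apply identically in both cases, so the conclusion goes through uniformly, completing the verification that $(\Sigma', V')$ is an occupied surface.
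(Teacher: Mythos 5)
Your verification is essentially the paper's (implicit) argument: the paper states this lemma without proof, relying on the preceding observations that the sign-respecting identification of one incoming with one outgoing edge is orientation-reversing, that non-consecutiveness prevents any vertex from being swallowed, and that alternation persists because every surviving boundary edge still joins a negative to a positive vertex --- which is exactly your local check at $u_\pm$. Your extra bookkeeping of boundary circles is fine in substance, but one descriptive claim is backwards: when $e_1,e_2$ lie on the same boundary circle, that circle does split into two, but the $v_-$-to-$w_-$ arc closes up at $u_-$ alone and the $v_+$-to-$w_+$ arc closes up at $u_+$ alone, so each new circle passes through exactly \emph{one} of the glued vertices, not both; it is in the different-components (merge) case that the single resulting circle contains both $u_-$ and $u_+$. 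Likewise your odd intervening-vertex count only makes literal sense in the same-component case; in the merge case there is no boundary arc from $v_-$ to $w_-$, though non-degeneracy there is immediate since the merged circle already carries $u_-$ and $u_+$. Neither slip affects the conclusions you actually use, so the proof stands once these descriptions are corrected.
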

In the glued surface $(\Sigma',V')$, the glued boundary edges $e_1, e_2$ are identified to a decomposing arc.

\section{Morphisms of occupied surfaces}
\label{sec:morphisms}

\subsection{Definition and properties}

We now wish to consider useful types of maps between occupied surfaces. For our purposes, we would like to allow maps which are something like embeddings, but slightly more general; they are something of a combinatorial version of an embedding. We shall call these \emph{occupied surface morphisms}. For their eventual use we shall need to combine occupied surface morphisms with sutures to obtain the notion of a \emph{decorated morphism}.

\begin{figure}
\begin{center}
\begin{tabular}{c}
\includegraphics[scale=0.3]{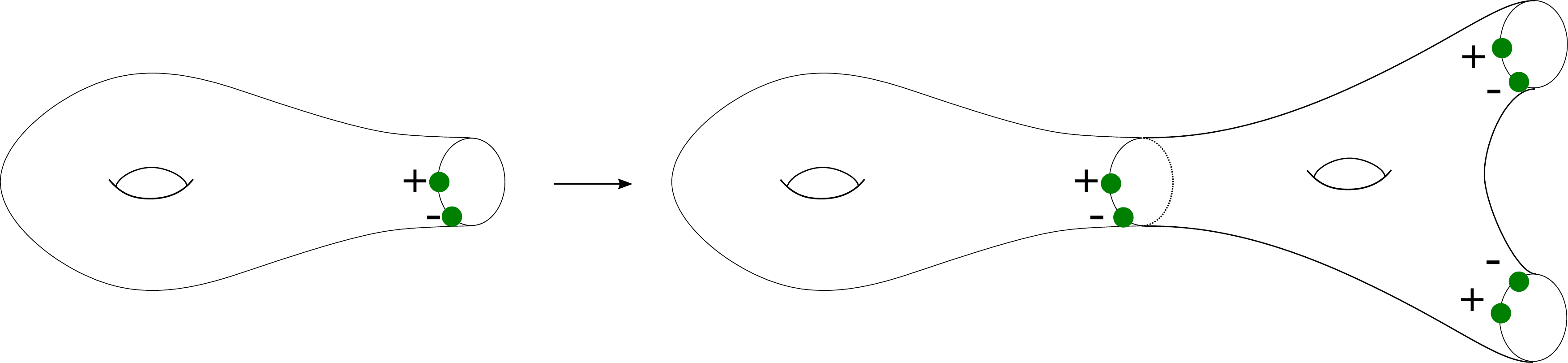}\\
\includegraphics[scale=0.4]{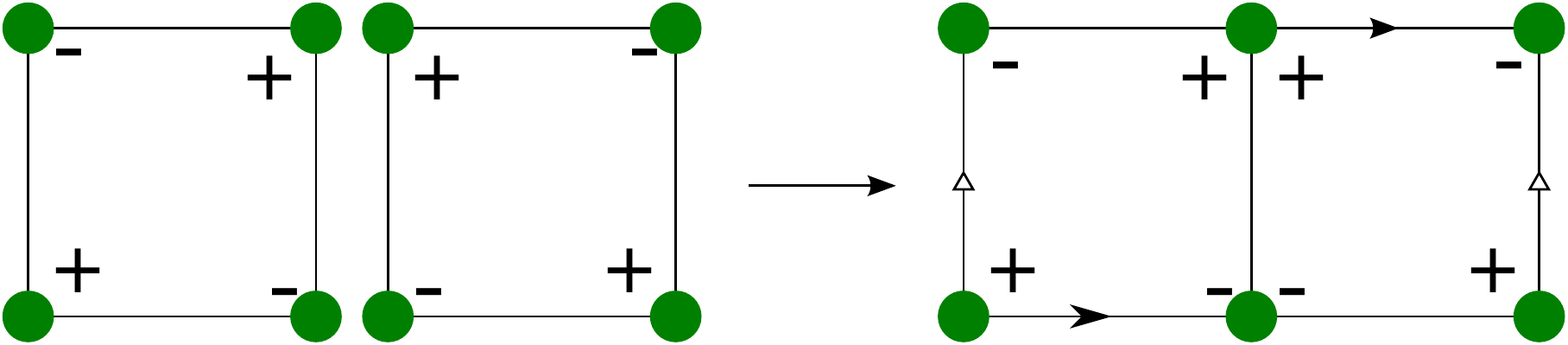}\\
\includegraphics[scale=0.4]{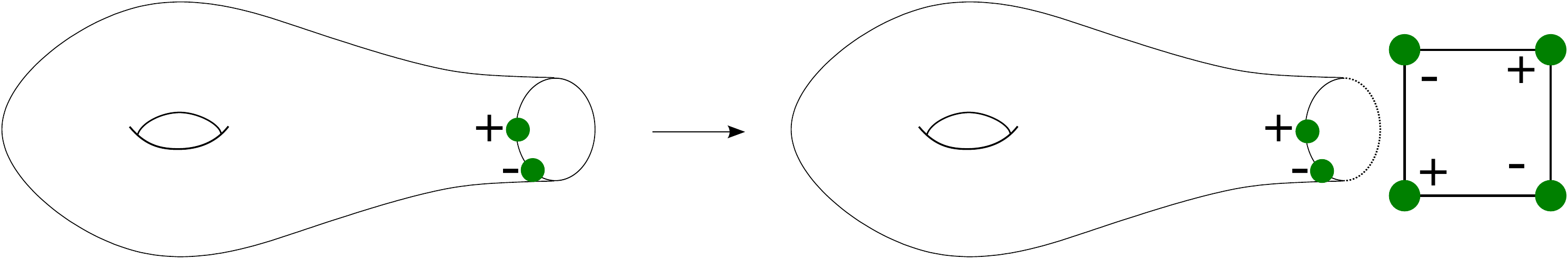}\\
\includegraphics[scale=0.4]{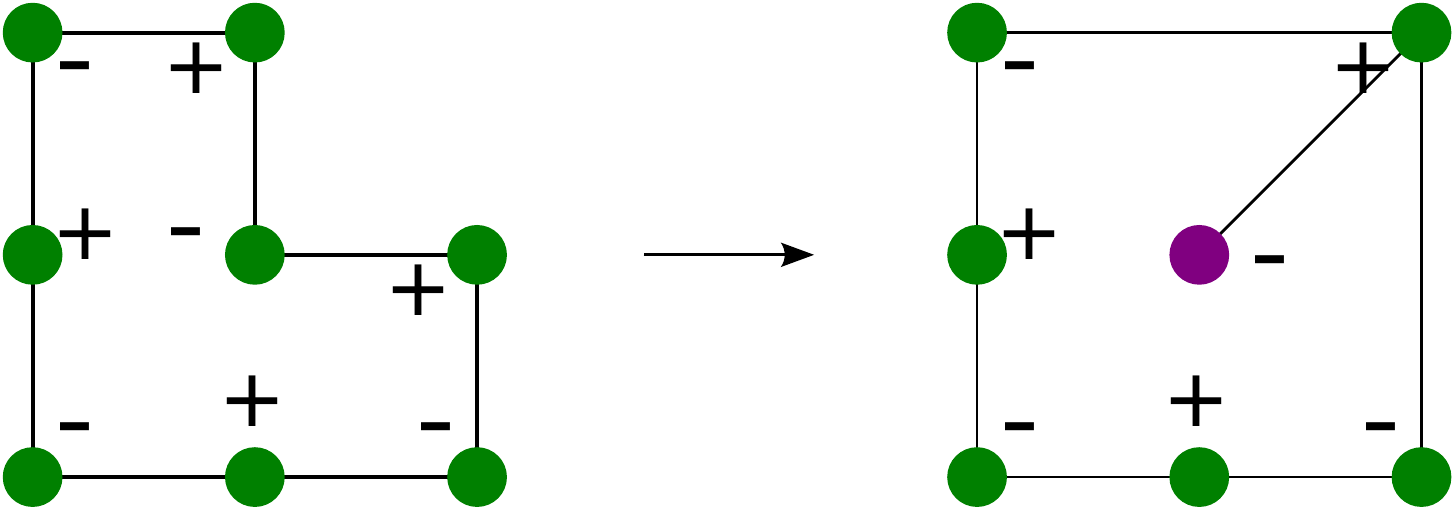}
\end{tabular}
\caption{Examples of occupied surface morphisms.}
\label{fig:examples_of_morphisms}
\end{center}
\end{figure}

Take any occupied surfaces $(\Sigma,V)$, $(\Sigma',V')$ and a continuous map $\phi: (\Sigma,V) \To (\Sigma',V')$. There are then distinguished points $\phi(V) \cup V'$ in $\Sigma'$, which can be given signs, and distinguished arcs between them, whose union is $\phi(\partial \Sigma) \cup \Sigma'$.

We shall require a morphism to be an embedding of the interior of $\Sigma$ into the interior of $\Sigma'$. We shall permit a range of behaviour on the boundary, but we require that the essential structure of signs on vertices are preserved, and edges between them must not behave badly. Non-consecutive edges may be glued as discussed above; but also consecutive edges may be glued. In this latter case, edges $e_1, e_2$ which share a vertex $v$ are ``folded'' together and $\phi(v)$ lies in the interior of $\phi(\Sigma)$, swallowed by $\phi$. 
\begin{defn}
A vertex $v \in V$ is \emph{swallowed} by $\phi$ is $\phi(v)$ lies in the interior of $\phi(\Sigma)$.
\end{defn}

\begin{defn}
\label{defn:morphism}
A \emph{morphism} $\phi: (\Sigma, V) \To (\Sigma', V')$ is a continuous map $\Sigma \To \Sigma'$ such that:
\begin{enumerate}
\item
$\phi$ is an embedding of the interior of $\Sigma$ into $\Sigma'$.
\item
For each boundary edge $e$ of $(\Sigma, V)$, $\phi|_e$ is a homeomorphism onto its image.
\item
Distinguished arcs in $\Sigma'$ (boundary edges of $(\Sigma',V')$ or images of boundary edges of $(\Sigma,V)$) which intersect other than at endpoints, coincide.
\item
Distinguished points have well-defined signs: positive signed points $\phi(V_+) \cup V'_+$ on $\Sigma'$ are disjoint from negative ones $\phi(V_-) \cup V'_-$.
\end{enumerate}
\end{defn}

Figure \ref{fig:examples_of_morphisms} shows several examples of morphisms.

Consider the conditions on edges. Take distinct boundary edges $e_1, e_2$ of $(\Sigma,V)$. As homeomorphic images of oriented arcs running from negative to positive vertices, $\phi(e_1)$ and $\phi(e_2)$ are both oriented embedded arcs in $\Sigma'$, running from negative to positive signed points. The intersection condition says they are either disjoint, or they intersect at one or both endpoints, or they coincide. Two edges $e_1, e_2$ may therefore be glued together by $\phi$, and since $(\Sigma',V')$ is by definition orientable, the gluing map must be orientation-reversing, so one of $e_1, e_2$ must be incoming and the other outgoing. Edges which are glued together may be non-consecutive, producing a gluing as discussed previously; or they may be consecutive and have the effect of folding the edges together, swallowing a vertex. It is not however possible for three distinct edges to be glued together; there are only two sides of an arc in a surface, and $\phi$ must be an embedding on the interior. 

Consider the conditions on vertices. Take a vertex $v \in V$. Its image $\phi(v)$ may be swallowed, lying in the interior of $\phi(\Sigma)$; in this case the edges adjacent to $v$ are both glued (though not necessarily to each other). If it is not swallowed, then it lies on the boundary of $\phi(\Sigma)$. The image $\phi(v)$ may or may not lie on $\partial \Sigma'$, but if $\phi(v) \in \partial \Sigma'$, then $\phi(v) \in V'$, since a vertex lying in the interior of a boundary edge of $(\Sigma',V')$ contradicts the condition on edge intersections. Many vertices of $V$ can be mapped by $\phi$ to the same point.

We can justify the category-theoretic usage of ``morphism''. The identity map $(\Sigma,V) \To (\Sigma,V)$ is the identity morphism on $(\Sigma,V)$.
\begin{lem}
Let $\phi_1: (\Sigma^1, V^1) \To (\Sigma^2, V^2)$ and $\phi_2: (\Sigma^2, V^2) \To (\Sigma^3, V^3)$ be morphisms. Then the composition $\phi_2 \circ \phi_1: (\Sigma^1, V^1) \To (\Sigma^3, V^3)$ is a morphism.
\end{lem}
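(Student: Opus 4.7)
The plan is to verify, one at a time, that the composition $\psi := \phi_2 \circ \phi_1$ satisfies each of the four conditions of Definition \ref{defn:morphism}. Before starting, I would establish a preliminary observation used throughout: any morphism $\phi : (\Sigma, V) \to (\Sigma', V')$ sends $\operatorname{int} \Sigma$ into $\operatorname{int} \Sigma'$. This is a local invariance-of-domain argument: a small open-disc neighborhood of an interior point is embedded by $\phi$ (condition (i)), and an open $2$-disc cannot sit inside a $2$-manifold-with-boundary so as to contain a boundary point (a local-homology or local-topology check shows the two local models are incompatible). A corollary is that each $\phi(v)$ for $v \in V$ lies in $\operatorname{int} \Sigma' \cup V'$: if $\phi(v)$ sat in the interior of some boundary edge $e'$ of $\Sigma'$, then taking a boundary edge $e$ of $\Sigma$ incident to $v$, the distinguished arc $\phi(e)$ would meet the distinguished arc $e'$ at $\phi(v)$, which is an endpoint of the former but an interior point of the latter, violating condition (iii) for $\phi$.

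With this in hand, conditions (i) and (ii) for $\psi$ are quick. For (i), the preliminary observation gives $\phi_1(\operatorname{int} \Sigma^1) \subset \operatorname{int} \Sigma^2$, so $\psi$ restricted to $\operatorname{int} \Sigma^1$ factors as a composition of embeddings and is itself an embedding. For (ii), I would apply condition (iii) for $\phi_1$ to each boundary edge $e$ of $\Sigma^1$: the distinguished arc $\phi_1(e)$ in $\Sigma^2$ either has its interior in $\operatorname{int} \Sigma^2$, or coincides with some boundary edge $e'$ of $\Sigma^2$. In the first case, $\psi|_e$ is the composition of the homeomorphism $\phi_1|_e$ (onto its image) with the embedding $\phi_2|_{\operatorname{int} \Sigma^2}$; in the second case, $\psi|_e = \phi_2|_{e'} \circ \phi_1|_e$ is a composition of two homeomorphisms onto image by condition (ii) for $\phi_1$ and $\phi_2$.

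Condition (iii) is the main piece of combinatorial bookkeeping. Given two distinguished arcs $a, b$ in $\Sigma^3$ with respect to $\psi$, each is either a boundary edge of $\Sigma^3$ or the $\psi$-image of a boundary edge of $\Sigma^1$. Splitting into the three resulting cases and further subdividing using the above classification of each $\phi_1(e)$ (interior arc of $\Sigma^2$ vs.\ boundary edge of $\Sigma^2$), every possible intersection pattern is controlled either by condition (iii) for $\phi_2$ directly in $\Sigma^3$, or by condition (iii) for $\phi_1$ in $\Sigma^2$ and then transported by the embedding $\phi_2$. For condition (iv), the preliminary observation places each $\phi_1(v)$ in $\operatorname{int} \Sigma^2$ or in $V^2$ with its original sign, so $\psi(V^1_\pm)$ lands in $\phi_2(\operatorname{int} \Sigma^2) \subset \operatorname{int} \Sigma^3$ or in $\phi_2(V^2_\pm)$; disjointness of the positive and negative distinguished points of $\psi$ in $\Sigma^3$ then follows from the injectivity of $\phi_2$ on $\operatorname{int} \Sigma^2$, a second application of the preliminary observation to $\phi_2$ (so $\phi_2(\operatorname{int} \Sigma^2)$ avoids $V^3 \subset \partial \Sigma^3$), and condition (iv) for $\phi_2$ applied to the vertex part.

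The main obstacle is the enumeration in condition (iii): one must systematically list the ways a distinguished arc of $\psi$ in $\Sigma^3$ can arise --- as a boundary edge of $\Sigma^3$, as $\phi_2$ of a boundary edge of $\Sigma^2$, or as $\phi_2$ of an arc $\phi_1(e)$ whose interior lives in $\operatorname{int} \Sigma^2$ --- and apply the appropriate condition on $\phi_1$ or $\phi_2$ in each subcase. No new topological input is needed beyond invariance of domain and the given morphism conditions on $\phi_1$ and $\phi_2$.
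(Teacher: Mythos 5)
Your overall strategy coincides with the paper's: check conditions (i)--(iv) of definition \ref{defn:morphism} one at a time, using that interior points map to interior points and that a non-swallowed vertex lands in the vertex set of the target with its sign, and splitting according to whether $\phi_1(e)$ is a boundary edge of $(\Sigma^2,V^2)$ or has its interior in $\operatorname{int}\Sigma^2$. However, your verification of condition (ii) has a genuine gap in the second of these cases. Writing $\psi=\phi_2\circ\phi_1$: when $\phi_1(e)$ has one or both endpoints on $\partial\Sigma^2$ (at vertices of $V^2$), $\psi|_e$ is \emph{not} a composition of $\phi_1|_e$ with the embedding $\phi_2|_{\operatorname{int}\Sigma^2}$, since the endpoints lie outside the domain of that embedding; injectivity of $\psi|_e$ at the endpoints is exactly what must be argued. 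It is not automatic: morphisms routinely identify distinct vertices (folds and gluings do), so you must use that the two endpoints of a boundary edge have opposite signs, whence condition (iv) for $\phi_2$ forces their images to be distinct --- otherwise $\psi(e)$ could close up into a loop and $\psi|_e$ would fail to be a homeomorphism onto its image. One also needs that the $\phi_2$-image of a vertex of $V^2$ (possibly swallowed into $\operatorname{int}\Sigma^3$) cannot coincide with the $\phi_2$-image of an interior point of $\Sigma^2$; this follows from continuity together with injectivity on the interior (essentially your invariance-of-domain observation), but it should be said. These are precisely the points the paper's proof supplies in its cases where $\phi_1(e)$ meets $\partial\Sigma^2$ in one or both endpoints.

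Condition (iii), which you correctly identify as the main bookkeeping, is left as a plan rather than carried out. The paper's argument there hinges on the observation that $\phi_1,\phi_2$ can fail to be injective only along the boundary, by identifying vertices or whole edges; hence an intersection of $\psi(e)$ and $\psi(e')$ at an interior point forces either an interior intersection of $\phi_1(e)$ and $\phi_1(e')$ in $\Sigma^2$ (apply (iii) for $\phi_1$, then push forward) or that both $\phi_1(e)$ and $\phi_1(e')$ are boundary edges of $(\Sigma^2,V^2)$ (apply (iii) for $\phi_2$). Your enumeration would go through along these lines, so the deficiency there is one of execution rather than idea; the substantive omission is the endpoint/sign argument needed for (ii).
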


\begin{proof}
We must verify the four conditions of the definition of morphism. The first is immediate: since $\phi_1$ and $\phi_2$ embed interiors, so does $\phi_2 \circ \phi_1$.

If (iv) does not hold, then $\phi_2 \circ \phi_1$ maps two vertices $v_- \in V^1_-$, $v_+ \in V^1_+$ to the same point, or maps a vertex $v \in V^1$ to a vertex of $V^3$ of opposite sign.

In the first case, $\phi_2 \circ \phi_1 (v_+) = \phi_2 \circ \phi_1 (v_-)$. As $\phi_1$ satisfies (iv), $\phi_1 (v_+) \neq \phi_1 (v_-)$. If one of these is in the interior of $\Sigma^2$, then $\phi_2$ embeds it into the interior of $\Sigma^3$, disjoint from the image of $\phi_1(v_-)$, but $\phi_2$ maps another point there also, a contradiction. Otherwise both $\phi_1 (v_+), \phi_1 (v_-)$ lie in $\partial \Sigma^2$, hence $\phi_1(v_+) \in V^2_+$ and $\phi_1 (v_-) \in V^2_-$; then as $\phi_2$ satisfies (iv), $\phi_2(\phi_1(v_+)) \neq \phi_2(\phi_1(v_-))$, a contradiction.

In the second case, suppose without loss of generality $v \in V^1_-$ and $\phi_2 \circ \phi_1 (v) \in V^3_+$. If $\phi_1(v)$ lies in the interior of $\Sigma^2$, then $\phi_2$ embeds it into the interior of $\Sigma^3$, hence not into $V^3$. So $\phi_1(v)$ lies in $\partial \Sigma^2$, hence in $V^2_-$. And then as $\phi_2$ maps $\phi_1(v)$ to $V^3$, it must preserve sign, and $\phi_2 \circ \phi_1 (v) \in V^3_-$, a contradiction.

For (ii): Let $e$ be an edge of $(\Sigma^1, V^1)$; so $\phi_1$ maps $e$ homeomorphically onto its image. By property (iii) of $\phi_1$, there are four possibilities for $\phi_1(e)$: it lies in the interior of $\Sigma^2$; or it intersects $\partial \Sigma^2$ in a single endpoint; or in both endpoints; or $\phi_1(e)$ is a boundary edge of $(\Sigma^2, V^2)$.

If $\phi_1(e)$ lies in the interior of $\Sigma^2$, where $\phi_2$ is an embedding, then it is mapped by $\phi_2$ homeomorphically onto its image. If $\phi_1(e)$ has precisely one endpoint $v^2$ in $\partial \Sigma^2$, then $\phi_2$ is obviously a homeomorphism on $\phi_1(e) \backslash \{v^2\}$; which extends to a homeomorphism of $\phi_1 (e)$. If $\phi_2(e)$ intersects $\partial \Sigma^2$ in both endpoints, then $\phi_2$ is a homeomorphism on the interior of $e$; as $\phi_2$ respects signs of vertices, the two endpoints of $\phi_1(e)$ are mapped to distinct points in $\Sigma^2$ and hence $\phi_2$ is a homeomorphism on $\phi_1(e)$. If $\phi_1(e)$ is a boundary edge of $(\Sigma^2, V^2)$, then by property (ii) of $\phi_2$, $\phi_2$ is homeomorphic on $\phi_1(e)$.

For (iii): Suppose $e, e'$ are distinct boundary edges of $(\Sigma^1, V^1)$, and suppose $\phi_2 \circ \phi_1 (e)$, $\phi_2 \circ \phi_1 (e')$ intersect at an interior point of $\phi_2 \circ \phi_1(e)$. We note that $\phi_1$, $\phi_2$ can only fail to be injective along the boundary, by identifying vertices or edges. So if $\phi_1 (e)$ is not a boundary edge of $(\Sigma^2, V^2)$, then all its interior points are embedded by $\phi_2$, and hence $\phi_1(e')$ must intersect $\phi_1(e)$ in the interior; by property (iii) of $\phi_1$ then $\phi_1(e)$ and $\phi_1(e')$ coincide, so $\phi_2 \circ \phi_1 (e)$ and $\phi_2 \circ \phi_1(e')$ coincide. On the other hand, if $\phi_1(e)$ is a boundary edge of $(\Sigma^2, V^2)$, then by property (ii) of $\phi_2$, $\phi_2$ is homeomorphic on $\phi_1(e)$, and $\phi_2$ can only fail to be injective along $\phi_1(e)$ by identifying the edge $\phi_1(e)$ with another boundary edge of $(\Sigma^2, V^2)$. If $\phi_1(e')$ is not a boundary edge, then it can only intersect $\partial \Sigma^2$ at vertices of $V^2$, which are not mapped under $\phi_2$ to the interior of $\phi_2(\phi_1(e))$, so there is no intersection of $\phi_2(\phi_1(e'))$ with the interior of $\phi_2(\phi_1(e))$. Thus $\phi_1(e')$ is also a boundary edge of $(\Sigma^2, V^2)$, and by property (iii) of $\phi_2$, since $\phi_2 \circ \phi_1(e)$ and $\phi_2 \circ \phi_1(e')$ intersect other than at endpoints, they coincide.
\end{proof}

\begin{prop}
The set of occupied surfaces, and morphisms (including identity morphisms) between them, as defined above, form a category, the \emph{category of occupied surfaces}, denoted $\mathcal{OS}$.
\qed
\end{prop}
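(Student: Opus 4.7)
The proposition is essentially a formal wrap-up: the hard work, showing that morphisms are closed under composition, has already been carried out in the preceding lemma. So the plan is to verify the remaining category axioms, namely (a) that the identity map on an occupied surface is a morphism, and (b) that composition of morphisms is associative and unital.

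For (a), I would take $(\Sigma,V)$ and check the four clauses of Definition \ref{defn:morphism} for the identity map $1_{(\Sigma,V)} : (\Sigma,V) \To (\Sigma,V)$. Clauses (i) and (ii) are immediate because the identity embeds the interior and restricts to a homeomorphism on each boundary edge. For (iii), the distinguished arcs in the target are exactly the boundary edges of $(\Sigma,V)$, and two distinct boundary edges of $(\Sigma,V)$ share at most a single endpoint by Definition \ref{def:occupied_surface_defn}; so the intersection condition is vacuous. For (iv), positive and negative vertex sets $V_+$ and $V_-$ are disjoint by definition, so the images agree.

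For (b), associativity and the identity laws are inherited from the fact that morphisms are, underneath, continuous maps between topological spaces, and composition of continuous maps is strictly associative and unital. Given three composable morphisms $\phi_1, \phi_2, \phi_3$, the preceding lemma guarantees that $\phi_2 \circ \phi_1$ and $\phi_3 \circ \phi_2$ are morphisms, that $\phi_3 \circ (\phi_2 \circ \phi_1)$ and $(\phi_3 \circ \phi_2) \circ \phi_1$ are morphisms, and the two expressions coincide as continuous maps on $\Sigma^1$. Similarly $\phi \circ 1_{(\Sigma,V)} = \phi = 1_{(\Sigma',V')} \circ \phi$ at the level of underlying maps.

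I do not expect any real obstacle here: the genuinely nontrivial content, preservation of the four morphism axioms under composition, is entirely absorbed into the lemma immediately preceding the proposition. The proof of the proposition itself reduces to the verifications just described, and may be disposed of with essentially the remark that composition and identity are those of the underlying category of continuous maps of surfaces, restricted to the subclass of occupied surfaces and their morphisms.
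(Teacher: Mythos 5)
Your proposal is correct and matches the paper, which treats the proposition as immediate from the remark that the identity map is a morphism together with the preceding composition lemma, so no further argument is given there. (One tiny inaccuracy: in the occupied vacuum two boundary edges share \emph{both} endpoints, not at most one, but condition (iii) only constrains intersections away from endpoints, so your conclusion for the identity map still holds.)
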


We note that if we impose the additional condition that morphisms not swallow vertices, then we obtain a subcategory of $\mathcal{OS}$. That is, the composition of two morphisms which do not swallow vertices is again a morphism which does not swallow vertices. Given $v \in V^1$, $\phi_1(v)$ lies in $V^2$ or has a neighbourhood containing points in and out of $\phi_1(\Sigma^1)$ (possibly both). In the first case $v$ is not swallowed by $\phi_2 \circ \phi_1$ by the non-swallowing property of $\phi_2$; in the second case a neighbourhood of $\phi_2 \circ \phi_1(v)$ contains points in and out of $\phi_2 \circ \phi_1 (\Sigma^1)$.

\subsection{Image and complement of a morphism}

The image $\phi(\Sigma)$ of a morphism $\phi: (\Sigma,V) \To (\Sigma',V')$, with the subspace topology from $\Sigma'$, need not be a surface. Several vertices of $V$ may be glued under $\phi$, and a neighbourhood of this point in $\phi(\Sigma)$ with the subspace topology need not be a disc. A similar phenomenon may occur with the complement of the image of $\Sigma$ in $\Sigma'$.

However, if we cut $\Sigma'$ along the boundary of $\phi(\Sigma)$, and then glue the boundary back in, we do obtain a surface. In the process, some vertices may be cut apart, and the number of vertices may increase. We can do the same with the complement of $\phi(\Sigma)$ in $\Sigma'$. In fact both the image and complement surfaces can be given the structure of occupied surfaces.

Precisely, take a Riemannian metric on $\Sigma'$, and restrict it to the interior of $\phi(\Sigma)$. Taking the metric completion of this open surface gives a surface $\Sigma^\phi$. The boundary of $\Sigma^\phi$ consists naturally of edges and vertices $V^\phi$: the edges are images of edges of $(\Sigma,V)$, and $V^\phi$ consists of images of vertices of $V$, possibly split apart. (Note that as edges may be glued together and vertices swallowed by $\phi$, the boundary edges and vertices of $\Sigma^\phi$ do not consist of \emph{all} images under $\phi$ of edges and vertices of $(\Sigma,V)$.) The vertices $V^\phi$ inherit well-defined signs from $\phi$, and each boundary edge of $\Sigma^\phi$ runs between vertices of $V^\phi$ of opposite sign, giving us the following.
\begin{lem}
The surface $\Sigma^\phi$, with signed vertices $V^\phi$, has the structure of an occupied surface.
\qed
\end{lem}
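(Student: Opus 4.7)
The plan is to verify each condition of Definition \ref{def:occupied_surface_defn} for the pair $(\Sigma^\phi, V^\phi)$, drawing throughout on axioms (i)--(iv) of Definition \ref{defn:morphism}.

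First, I would establish that $\Sigma^\phi$ is an oriented 2-manifold with boundary. Its interior is, by construction, the topological interior of $\phi(\Sigma)$ in $\Sigma'$, which contains the open set $\phi(\text{int}(\Sigma))$---an injectively embedded open surface by axiom (i)---together with any points of $\phi(\partial \Sigma)$ that happen to lie in $\text{int}(\phi(\Sigma))$ (such as swallowed vertices and folded edges approached from both sides). All such interior points inherit orientation from $\Sigma'$. The metric completion then adjoins ideal boundary points, each of which I claim has a half-disc neighborhood: this local tameness is guaranteed by axioms (ii) and (iii), since every boundary edge of $\Sigma$ is a homeomorphism onto its image and distinguished arcs in $\Sigma'$ either coincide or meet only at endpoints, so near any completion point the ideal boundary is locally a single tame arc approachable from one side of the interior.

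Next I would verify that every connected component of $\Sigma^\phi$ has nonempty boundary. Otherwise, the corresponding region of $\phi(\Sigma)$ in $\Sigma'$ would be open-and-closed in $\Sigma'$---a union of components of $\Sigma'$---contradicting the requirement that every component of $\Sigma'$ has nonempty boundary. For the vertex data, $V^\phi$ consists of those points of $\partial \Sigma^\phi$ arising as completion-limits of non-swallowed vertices of $V$, and is finite because $V$ is. Each point of $V^\phi$ may represent one or several identified vertices of $V$, but these all carry the same sign by axiom (iv), so $V^\phi$ is well-signed. For alternation, each boundary edge of $\Sigma^\phi$ is a homeomorphic image by axiom (ii) of a boundary edge of $\Sigma$, hence its endpoints have opposite signs; so signs alternate around every boundary component. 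Finally, each boundary component contains a vertex: it is a cyclic concatenation of boundary edges of $\Sigma^\phi$ joined at points of $V^\phi$, and we have already shown every component of $\Sigma^\phi$ contributes at least one such boundary edge.

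The main obstacle is the first step: verifying the local half-disc structure of $\Sigma^\phi$ at completion points. While the combinatorial conditions (alternation, vertices on each boundary component) follow more or less formally from axioms (ii) and (iv) together with the topological constraint on $\Sigma'$, the claim that the metric completion of $\text{int}(\phi(\Sigma))$ is a genuine 2-manifold with boundary---rather than some wilder quotient space---requires a careful local analysis at each point of $\phi(\partial \Sigma)$. Axiom (iii) plays the key role here by ruling out pathological edge configurations (such as three distinct edges sharing an arc, or distinct edges meeting transversely in their interiors). Once this local picture is established, the remaining axioms for an occupied surface fall out by inspection.
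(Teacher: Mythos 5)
Your verification is correct and matches the paper's treatment: the paper states this lemma with no separate proof, treating it as immediate from the metric-completion construction together with the morphism axioms (edges map homeomorphically, distinguished arcs meet only at endpoints or coincide, signs are consistent), which is exactly the routine check you carry out in more detail. Your only imprecisions are cosmetic — at a point of $V^\phi$ the ideal boundary is locally two arcs meeting at a corner rather than a single arc, and a vertex of $V$ may also be \emph{split} into several completion points — and neither affects the argument.
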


\begin{defn}
$(\Sigma^\phi, V^\phi)$ is the \emph{image occupied surface} of $\phi$.
\end{defn}

Similarly, we may consider the complement of $\phi(\Sigma)$ in $\Sigma'$. Although it need not be a surface (see figure \ref{fig:complementary_occupied}), taking its completion with respect to a Riemannian metric on $\Sigma'$, we obtain a surface $\Sigma^c$.

The boundary of $\Sigma^c$ breaks into edges and vertices $V^c$. The boundary edges of $\Sigma^c$ coincide with boundary edges of $(\Sigma',V')$ not in the image of $\phi$, and boundary edges of $(\Sigma^\phi, V^\phi)$ not in $\partial \Sigma'$. Each vertex in $V^c$ is in $\phi(V)$ or $V'$ (possibly both), although vertices may again split apart. Not all vertices of $\phi(V)$ or $V'$ need lie in $V^c$. The vertices again inherit well-defined signs, and the edges again run between vertices of opposite sign. We obtain the following.

\begin{lem}
The surface $\Sigma^c$, with signed vertices $V^c$, has the structure of an occupied surface.
\qed
\end{lem}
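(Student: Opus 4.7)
The plan is to verify, in order, the conditions of Definition \ref{def:occupied_surface_defn}, modelling the argument on the one just given for $(\Sigma^\phi, V^\phi)$.

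First I would observe that $\Sigma^c$ is an oriented surface. By construction $\Sigma^c$ is the metric completion of $\Sigma' \setminus \phi(\Sigma)$, which is an open subset of the oriented surface $\Sigma'$; metric completion of such an open surface yields an oriented surface whose boundary is obtained by attaching frontier points, so $\Sigma^c$ is naturally oriented. Next I would check that every component of $\Sigma^c$ has nonempty boundary. A component $C$ of $\Sigma^c$ sits inside a component $\Sigma'_0$ of $\Sigma'$; since $\Sigma'_0$ has nonempty boundary (as $(\Sigma',V')$ is an occupied surface), either $\partial \Sigma'_0 \cap C$ is nonempty, contributing to $\partial C$, or $C$ is bounded in $\Sigma'_0$ by part of $\phi(\partial \Sigma)$, which again contributes to $\partial C$ upon completion. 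Either way $\partial C \neq \emptyset$.

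Next, $V^c$ is finite because $\phi(V) \cup V'$ is finite and $V^c$ consists of (possibly split) images of these points. For well-definedness of signs, each point $p \in V^c$ arises from a point in $\phi(V) \cup V'$, which has a well-defined sign by morphism condition (iv) (which guarantees $\phi(V_+) \cup V'_+$ and $\phi(V_-) \cup V'_-$ are disjoint); when $p$ is obtained by splitting a single point of $\Sigma'$ into several boundary points of $\Sigma^c$, each resulting point simply inherits the same sign.

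The main combinatorial step is to check that each boundary edge of $\Sigma^c$ has endpoints in $V^c$ of opposite sign. By the preceding discussion, the boundary edges of $\Sigma^c$ are of two kinds: arcs of $\partial \Sigma'$ not in $\phi(\Sigma)$, and arcs of $\phi(\partial \Sigma)$ not lying in $\partial \Sigma'$. Arcs of the first kind are boundary edges of $(\Sigma',V')$ (or subarcs thereof cut off at intersections with $\phi(\partial\Sigma)$), so by morphism condition (iii) their endpoints lie in $V' \cup \phi(V)$; being a subarc of a boundary edge running from $V'_-$ to $V'_+$, and since cutting can only occur at points of $\phi(V)$ whose signs are compatible by condition (iv), its two endpoints in $V^c$ are of opposite signs. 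Arcs of the second kind are images under $\phi$ of subarcs of boundary edges of $(\Sigma,V)$, again cut only at points of $V' \cup \phi(V)$ by condition (iii), and the same argument shows that their endpoints in $V^c$ have opposite signs.

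Once this is established, alternation of signs along each boundary component of $\Sigma^c$ follows automatically: since consecutive boundary edges share a vertex in $V^c$ and each edge has endpoints of opposite sign, traversing a boundary component toggles the sign at each vertex. Finally, every boundary component of $\Sigma^c$ contains at least one vertex of $V^c$, because each boundary component is a union of such edges (any full boundary circle with no vertex would have to be entirely contained in the interior of a boundary edge of $(\Sigma',V')$ or of some $\phi(e)$, which is impossible for a closed curve). I expect the only mildly delicate step to be the bookkeeping in the ``boundary edge'' step above, since boundary edges of $(\Sigma',V')$ and of $(\Sigma,V)$ can be cut into smaller arcs by intersection with other distinguished arcs; the rest is essentially a translation of morphism conditions (iii) and (iv) into the statement that $V^c$ and its edges behave coherently.
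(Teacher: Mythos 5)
Your route is the same as the paper's: the lemma is there justified by the discussion immediately preceding it, which identifies the boundary edges of $(\Sigma^c,V^c)$ as whole distinguished arcs (boundary edges of $(\Sigma',V')$ not in the image of $\phi$, or boundary edges of $(\Sigma^\phi,V^\phi)$ not in $\partial\Sigma'$) and its vertices as possibly split points of $\phi(V)\cup V'$, with signs well defined by condition (iv) of Definition \ref{defn:morphism}; each edge then joins vertices of opposite sign and alternation follows. Most of your verification (orientability, nonempty boundary of each component, finiteness and well-definedness of signs on split vertices, alternation via the toggling argument, vertices on every boundary component) matches this and is fine.

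The one step that does not hold up as written is your treatment of ``subarcs cut off at intersections with $\phi(\partial\Sigma)$''. Condition (iv) does not make the signs at such cut points ``compatible'': if a boundary edge of $(\Sigma',V')$ running from $V'_-$ to $V'_+$ really were cut at an interior point by some $\phi(v)$, then whatever sign $\phi(v)$ carries, one of the two resulting subarcs would have both endpoints of the same sign, and the opposite-sign property you need would fail for that piece. What actually saves the argument is that this case never occurs: by condition (iii) (as the paper notes just after Definition \ref{defn:morphism}), distinguished arcs either coincide, are disjoint, or meet only at endpoints, so a point of $\phi(V)$ cannot lie in the interior of a boundary edge of $(\Sigma',V')$, nor can a point of $V'$ or of $\phi(V)$ lie in the interior of some $\phi(e)$ forming part of $\partial\Sigma^c$. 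Hence every boundary edge of $\Sigma^c$ is an entire boundary edge of $(\Sigma',V')$ or an entire image $\phi(e)$, and no vertex of $V^c$ sits in the interior of such an edge --- which is also what your toggling argument for alternation implicitly requires. Once the subarc case is eliminated this way, rather than argued away via (iv), your proof is sound and is essentially the paper's.
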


\begin{defn}
$(\Sigma^c, V^c)$ is the \emph{complementary occupied surface} of $\phi$.
\end{defn}

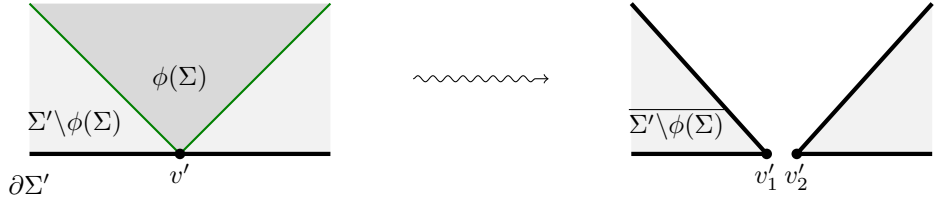
\begin{figure}
\begin{center}

\begin{tikzpicture}[
scale=2, fill = gray!10,
decomposition/.style={thick, draw=green!50!black}, 
boundary/.style={ultra thick} ]

\coordinate [label = below:{$v'$}] (v) at (1,0);

\fill (0,1) -- (v) -- (0,0) -- cycle;
\fill (2,1) -- (v) -- (2,0) -- cycle;
\fill [gray!30] (0,1) -- (v) -- (2,1) -- cycle;
\draw [boundary] (0,0) -- (2,0);
\draw [decomposition] (0,1) -- (v) -- (2,1);
\draw (1,0.5) node {$\phi(\Sigma)$};
\draw (0,-0.2) node {$\partial \Sigma'$};
\draw (0.3,0.2) node {$\Sigma' \backslash \phi(\Sigma)$};

\draw [shorten >=1mm, -to, decorate, decoration={snake,amplitude=.4mm, segment length = 2mm, pre=moveto, pre length = 1mm, post length = 2mm}]
(2.5,0.5) -- (3.5,0.5);

\coordinate [label = below:{$v'_1$}] (v1) at (4.9,0);
\coordinate [label = below:{$v'_2$}] (v2) at (5.1,0);

\fill (4,1) -- (v1) -- (4,0) -- cycle;
\fill (6,1) -- (v2) -- (6,0) -- cycle;
\draw [boundary] (4,1) -- (v1) -- (4,0);
\draw [boundary] (6,1) -- (v2) -- (6,0);
\draw (4.3,0.2) node {$\overline{\Sigma' \backslash \phi(\Sigma)}$};

\foreach \point in {v, v1, v2}
\fill [black] (\point) circle (1pt);

\end{tikzpicture}
\end{center}
\label{fig:complementary_occupied}
\caption{Vertices in the complementary occupied surface of a morphism.}
\end{figure}

It will be useful subsequently to classify the boundary edges and vertices of $(\Sigma^c, V^c)$ according to the above discussion.
\begin{defn}
Those boundary edges of $(\Sigma^c, V^c)$ which lie in $\partial \phi(\Sigma)$ we call \emph{$\Sigma$-type}. Those which lie in $\partial \Sigma'$ we call \emph{$\Sigma'$-type}.
\end{defn}

\begin{defn}
Vertices of $(\Sigma^c, V^c)$ which arise from vertices in:
\begin{enumerate}
\item
$\partial \phi(\Sigma) \backslash \partial \Sigma'$ are called \emph{$\Sigma$-type};
\item
$\partial \phi(\Sigma) \cap \partial \Sigma'$ are called \emph{$(\Sigma,\Sigma')$-type};
\item
$\partial \Sigma' \backslash \partial \phi(\Sigma)$ are called \emph{$\Sigma'$-type}.
\end{enumerate}
\end{defn}

We observe that in $(\Sigma^c, V^c)$, a vertex of $\Sigma$-type is adjacent to two boundary edges of $\Sigma$-type. Similarly, a vertex of $\Sigma'$-type is adjacent to two boundary edges of $\Sigma'$-type. A vertex of $(\Sigma,\Sigma')$-type may be adjacent to two boundary edges of $\Sigma$-type, or to one each of $\Sigma$- and $\Sigma'$-types.

(One might think that a vertex of $(\Sigma, \Sigma')$-type must be adjacent to boundary edges of both types, one of $\Sigma$-type and one of $\Sigma'$-type. But this is not true; for instance consider figure \ref{fig:punctured_torus_decomposition}, which depicts an occupied once-punctured torus with one boundary component and two vertices. Let $\phi$ be the inclusion of an occupied square to the upper square of that diagram. Then $(\Sigma^c, V^c)$ is an occupied square, with all vertices of type $(\Sigma, \Sigma')$; however $3$ of its boundary edges are of type $\Sigma$, so that there are vertices of type $(\Sigma, \Sigma')$ adjacent to $\Sigma$-type edges on both sides.)

\subsection{Isolating and vacuum-leaving morphisms}

We introduce two notions which describe properties of the complement of a morphism.

\begin{defn}
Let $\phi: (\Sigma,V) \To (\Sigma',V')$ be a morphism. A component of the complement of $\phi$ with no edges of $\Sigma'$-type (i.e. all edges of $\Sigma$-type) is called an \emph{isolated component} of $\phi$. A morphism whose complement has an isolated component is \emph{isolating}.
\end{defn}
An isolated component cannot ``escape'' out of $\Sigma'$ through a boundary edge. (Note that an isolated component can have a vertex on $\partial \Sigma'$; this is not good enough for an escape route.)

\begin{defn}
A morphism $\phi: (\Sigma, V) \To (\Sigma', V')$ \emph{leaves a vacuum} if its complement $(\Sigma^c, V^c)$ has a component which is the occupied vacuum.
\end{defn}
If $\phi$ leaves a vacuum $(\Sigma^\emptyset, V^\emptyset)$, that vacuum has two edges which may be of $\Sigma$- or $\Sigma'$-type. If both edges are $\Sigma'$-type, then in fact $(\Sigma^\emptyset, V^\emptyset)$ is a component of $(\Sigma', V')$. If both edges are $\Sigma$-type, then the $(\Sigma^\emptyset, V^\emptyset)$ is an isolated component of $\phi$; there is an isotopy of $\phi$ which pushes those edges together and squeezes the vacuum out of existence. If one edge of $(\Sigma^\emptyset, V^\emptyset)$ is $\Sigma$- and the other is $\Sigma'$-type, then $\phi$ can be isotoped so that the two boundary edges coincide and the vacuum disappears.

In a similar way, it often possible to perform an isotopy through morphisms in order to simplify the complement. Morphisms are ``simpler'' when more vertices and boundary edges of $(\Sigma,V)$ are mapped to vertices and boundary edges of $(\Sigma',V')$; we think of vertices and edges in the interior of $\Sigma'$ as ``slack'', and pushing them out to $\partial \Sigma'$ through an isotopy as ``tightening'' the morphism.

In fact, it can be proved that is is possible to perform an isotopy of morphisms, successively pushing vertices or edges of $\Sigma$-type to the boundary, until the only vacua left are components of $(\Sigma',V')$, and every non-swallowed vertex of $\Sigma$-type is adjacent to an isolated component of the complement $(\Sigma^c, V^c)$, or a component of the complement which is a disc with no vertices of $\Sigma'$-type.

\subsection{Simple morphisms}
\label{sec:simple_morphisms}
\label{sec:creation_and_annihilation}

We now consider some simple examples of morphisms; it will turn out that these morphisms are elementary examples from which all morphisms can be constructed.

\bigskip

\noindent \emph{Creations.} We can conjure a square $(\Sigma^\square, V^\square)$ out of the void, and place it beside our occupied surface. This is called creation; one is depicted in the third picture of figure \ref{fig:examples_of_morphisms}.	
\begin{defn}
\label{def:creation}
A morphism $\phi: (\Sigma,V) \To (\Sigma,V) \sqcup (\Sigma^\square, V^\square)$ which is the identity on $(\Sigma,V)$ is a \emph{creation}. The square $(\Sigma^\square, V^\square)$ is the \emph{created square}.
\end{defn}

We observe that, with notation as above, $N' = N + 2$ and $\chi(\Sigma') = \chi(\Sigma) + 1$, so $I(\Sigma',V') = I(\Sigma,V) + 1$. Creations increase index by $1$.

\bigskip

\noindent \emph{Annihilations.} An annihilation is a map $(\Sigma,V) \To (\Sigma',V')$ which does not actually ``destroy'' or ``forget'' part of $(\Sigma,V)$; the analogy to annihilation is to close off some vertices of $V$ by covering them over.

\begin{defn}
\label{def:annihilation}
A morphism $\phi: (\Sigma,V) \To (\Sigma',V')$ which is non-isolating and whose complement is an occupied square $(\Sigma^\square, V^\square)$, called the \emph{annihilator square}, such that $3$ consecutive boundary edges of the complement $(\Sigma^\square, V^\square)$ are glued to $3$ consecutive boundary edges of the image $(\Sigma^\phi, V^\phi)$.
\end{defn}
Note that as an annihilation is non-isolating, and has three edges of $\Sigma$-type by definition, the fourth edge of the annihilator square must be $\Sigma'$-type. See figure \ref{fig:annihilation}. The $2$ vertices and $3$ edges of $\Sigma$-type are called \emph{annihilated} vertices and edges.

Observe that $N' = N-1$ and $\chi(\Sigma') = \chi(\Sigma)$; indeed $\Sigma$ and $\Sigma'$ are homeomorphic; so $I(\Sigma',V') = I(\Sigma,V)-1$. Annihilations decrease index by $1$.

For both creations and annihilations, $(\Sigma',V')$ consists of the image $(\Sigma^\phi, V^\phi)$, together with the created/annihilator square; both are non-isolating and leave no vacua. 

We also note that after performing a creation, there is an annihilation which ``undoes it'' by immediately covering over the newly created vertices and annihilating them to a vacuum.

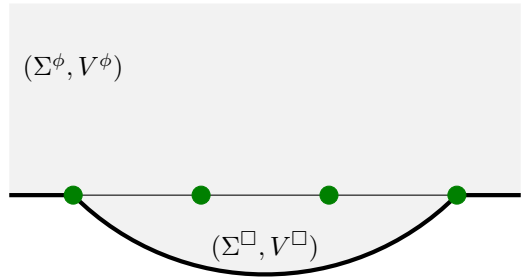
\begin{figure}
\begin{center}

\begin{tikzpicture}[
scale=1.7, 
fill = gray!10,
boundary/.style={ultra thick}, 
decomposition/.style={thick, draw=green!50!black}, 
vertex/.style={fill = green!50!black, draw=green!50!black}
]

{
\fill (-0.5,0) -- (0,0) to [bend right=45] (3,0) -- (3.5,0) -- (3.5,1.5) -- (-0.5,1.5) -- cycle;

\coordinate (p1) at (0,0);
\coordinate (q1) at (1,0);
\coordinate (q2) at (2,0);
\coordinate (p4) at (3,0);

\draw [boundary] (-0.5,0) -- (p1) to [bend right=45] (p4) -- (3.5,0);
\draw (p1) -- (p4);
\draw (0,1) node {$(\Sigma^\phi, V^\phi)$};
\draw (1.5,-0.4) node {$(\Sigma^\square, V^\square)$};

\foreach \point in {p1, q1, q2, p4}
\fill [vertex] (\point) circle (2pt);
}

\end{tikzpicture}

\caption{An annihilation.}
\label{fig:annihilation}
\end{center}
\end{figure}

\bigskip

\noindent \emph{Gluings.} If we glue two non-consecutive edges $e_1, e_2$ of an occupied surface $(\Sigma,V)$, respecting their orientations, then as discussed previously the result is an occupied surface $(\Sigma',V')$. The identification map $(\Sigma,V) \To (\Sigma',V')$ is obviously a morphism.
\begin{defn}
A morphism $\phi: (\Sigma,V) \To (\Sigma',V')$ obtained by gluing two non-consecutive boundary edges of $(\Sigma,V)$ is a \emph{standard gluing} or just \emph{gluing} morphism.
\end{defn}
(Since ``fold'' and ``zip'' morphisms, which we define next, can also be regarded as gluing maps, we use ``standard gluing'' to emphasise that we mean this particular type of gluing.) Note that $N'=N-1$ and $\chi(\Sigma') = \chi(\Sigma) - 1$ so $I(\Sigma',V') = I(\Sigma,V)$. Standard gluings preserve index.

\bigskip

\noindent \emph{Folds.} We also consider gluing edges $e_1, e_2$, again respecting orientations; but now we consider the case where $e_1, e_2$ are consecutive around some boundary component $C$ of $\Sigma$. If $e_1, e_2$ are not the only edges on $C$, then they intersect at a single vertex $v$, and the gluing folds the edges $e_1, e_2$ around $v$, swallowing $v$ in the process. The result $(\Sigma',V')$ is an occupied surface.
\begin{defn}
A morphism $\phi: (\Sigma, V) \To (\Sigma', V')$ obtained by identifying two consecutive edges $e_1, e_2$ on a boundary component of $(\Sigma,V)$ with more than $2$ edges is a \emph{fold}.
\end{defn}
One pair of vertices of $V$ is identified, and $v$ is swallowed, so $N' = N-1$; and $\Sigma'$ is homeomorphic to $\Sigma$, so $\chi(\Sigma') = \chi(\Sigma)$; so $I(\Sigma', V') = I(\Sigma,V) - 1$. Folds decrease index by $1$.

A fold is \emph{positive} or \emph{negative} accordingly as the swallowed vertex $v$ has positive or negative sign. The final picture of figure \ref{fig:examples_of_morphisms} shows a negative fold.

\bigskip

\noindent \emph{Zips.} Finally we consider gluing edges $e_1, e_2$ which form an entire boundary component of $(\Sigma,V)$. Again we glue while respecting orientations, so the effect is to ``zip up'' the boundary component. The resulting surface will be an occupied surface if and only if there are at least two boundary components on $(\Sigma,V)$; then $\Sigma'$ will have the same genus as $\Sigma$ but one fewer boundary component. The endpoints of $e_1, e_2$ are both swallowed.
\begin{defn}
Let $(\Sigma,V)$ be an occupied surface with at least two boundary components, including a boundary component consisting of only two edges $e_1, e_2$. A \emph{zip} is a morphism $\phi: (\Sigma, V) \To (\Sigma', V')$ obtained by gluing $e_1$ and $e_2$ together.
\end{defn}
We have $N' = N-1$ and $\chi(\Sigma') = \chi(\Sigma) + 1$, so $I(\Sigma',V') = I(\Sigma,V) - 2$. Zips decrease index by $2$, and are the most violent of the examples we consider here.

\begin{prop}
\label{prop:surjective_morphism_gluings}
Any surjective morphism is a composition of gluings, folds and zips. 
\end{prop}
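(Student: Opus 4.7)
The plan is to induct on the number $k$ of pairs of boundary edges of $(\Sigma, V)$ that $\phi$ identifies. Since $\phi$ embeds the interior of $\Sigma$ into $\Sigma'$ and at most two boundary edges of $(\Sigma, V)$ can be glued together under $\phi$ (as remarked after Definition \ref{defn:morphism}), any non-injectivity of $\phi$ on $\partial \Sigma$ consists precisely of such pairs, so $k$ is well-defined. In the base case $k = 0$, no boundary edges are identified; surjectivity then forces every boundary edge of $(\Sigma, V)$ to map to a boundary edge of $(\Sigma', V')$, for otherwise its image would be an arc in the interior of $\Sigma'$, and points on the far side of this arc would require preimages in $\Sigma$ approaching the arc, which could only come from another boundary edge identified with it. Together with the sign condition (iv), a similar argument rules out distinct vertices being identified, so $\phi$ is a homeomorphism of occupied surfaces, expressible as the empty composition of elementary morphisms.

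For the inductive step with $k \geq 1$, I would pick a pair $\{e_1, e_2\}$ of boundary edges identified by $\phi$ and classify by its position in $\partial \Sigma$: either (a) the edges are non-consecutive, (b) they are consecutive on a boundary component with at least three edges, or (c) they together form an entire bigon boundary component. Let $\psi: (\Sigma, V) \to (\Sigma_1, V_1)$ be the elementary morphism that performs only this single identification, which is a standard gluing, fold, or zip respectively. Because the edge identification that $\psi$ performs is already performed by $\phi$, the map $\phi$ descends to a continuous map $\phi_1: (\Sigma_1, V_1) \to (\Sigma', V')$, and properties (i)--(iv) of Definition \ref{defn:morphism} for $\phi_1$ follow by inheritance from $\phi$. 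Since $\phi_1$ identifies exactly the $k-1$ pairs left over from $\phi$, the induction hypothesis expresses $\phi_1$ as a composition of elementary morphisms, whence so is $\phi = \phi_1 \circ \psi$.

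The main obstacle is ensuring that case (c) does not trap us: the definition of zip requires the connected component of $(\Sigma, V)$ containing the bigon to have at least one other boundary component, and a priori every remaining pair might be a zip violating this condition. The strategy is greedy --- always perform a morphism of type (a) or (b) first when one is available, since these are unconditionally valid, and postpone zips. Once only case (c) pairs remain, I claim at least one is a valid zip. Suppose otherwise: every remaining pair is a zip whose containing connected component has, in the current intermediate surface, only that bigon as its boundary. Then pushing the remaining identifications through to $\phi$, each such connected component of $\Sigma$ would map to a connected compact $2$-manifold without boundary embedded in $\Sigma'$; by invariance of domain this embedded surface is open and closed in $\Sigma'$, hence a boundaryless connected component of $\Sigma'$, contradicting Definition \ref{def:occupied_surface_defn}. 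Therefore a valid elementary morphism can always be chosen at each step, and the induction closes.
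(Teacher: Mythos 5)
Your proof is correct and takes essentially the same route as the paper: express the surjective morphism as a succession of single edge identifications, each realized by a standard gluing, fold, or zip, noting that surjectivity forces vertex identifications to be swept up by the edge identifications. The only addition is your careful treatment of when a zip is legitimate (the clopen/invariance-of-domain argument ruling out closing off a component), a point the paper's proof passes over silently, so your argument is a slightly more rigorous version of the same approach rather than a different one.
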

(This includes a homeomorphism, which we regard as an identity map, and the null composition.)

\begin{proof}
Let $\phi: (\Sigma,V) \To (\Sigma',V')$ be surjective; as an embedding of the interior of $\Sigma$ into $\Sigma'$, $\phi$ can only fail to be injective by identifying vertices or identifying edges in pairs, always respecting signs. In fact, if $\phi$ identifies two vertices then, being surjective onto a surface $\Sigma'$, it must identify a pair of edges adjacent to those vertices. Thus in identifying the edges identified by $\phi$, we also identify all the vertices identified by $\phi$.

Identifying non-adjacent edges is done by a standard gluing, and adjacent edges by a fold or zip. Thus we may successively glue together edges until we have identified all pairs of edges identified by $\phi$; and the composition of these glues, folds and zips can be taken to be $\phi$.
\end{proof}

We will prove in proposition \ref{prop:morphism_composition_of_atomics} that any morphism (not necessarily surjective) is a composition of creations, gluings, folds and zips. (Annihilations are strictly therefore not necessary, but they are useful to consider in any case.)

\section{Quadrangulations}
\label{sec:quadrangulations}

\subsection{Definition}

\begin{defn}
A \emph{quadrangulation} $A$ of an occupied surface $(\Sigma,V)$ is a set of decomposing arcs on $\Sigma$, cutting along which decomposes $(\Sigma,V)$ into a set of disjoint occupied squares, the \emph{squares of the quadrangulation}. A \emph{quadrangulated surface} is a pair $(\Sigma,A)$ where $A$ is a quadrangulation of an occupied surface $(\Sigma,V)$.
\end{defn}

Our notion of a quadrangulation is very similar to the \emph{bipartite quadrangulations} of Nakamoto in \cite{Nakamoto96} and Negami--Nakamoto in \cite{Negami_Nakamoto93}; the main difference is that we prefer our vertices to lie entirely on the boundary of the surface.

Note that every decomposing arc in a quadrangulation is nontrivial. We can alternately think of a quadrangulation as being given by decomposing arcs, or by squares.

We also call the decomposing arcs of a quadrangulation its \emph{internal edges}. So each edge of a square of a quadrangulation of $(\Sigma,V)$ is an internal edge or a boundary edge.

Taking a connected occupied surface other than a vacuum or square, by lemma \ref{lem:nontrivial_arc_exists} it has a nontrivial decomposing arc. Cutting along this arc holds the index $I$ constant and decreases the gluing number $G$ by $1$ (lemma \ref{lem:index_invariant}). And such a cut cannot introduce a vacuum, so each component has non-negative gluing number. Thus we may successively cut along nontrivial decomposing arcs, decreasing $G$ by $1$ at each stage, until we arrive at $G=0$ (but no vacua), hence a disjoint union of squares.
\begin{prop}
\label{prop:build_quadrangulation}
Any occupied surface without vacua $(\Sigma,V)$ has a quadrangulation. Any quadrangulation of $(\Sigma,V)$ has $G(\Sigma,V)$ internal edges and $I(\Sigma,V)$ squares.
\qed
\end{prop}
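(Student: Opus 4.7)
The plan is to prove existence by induction on the gluing number $G(\Sigma,V)$, and then to deduce the counts for an arbitrary quadrangulation from the additivity of $I$ and $G$ together with Lemma~\ref{lem:index_invariant}.

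For existence, I would argue as follows. Since $(\Sigma,V)$ has no vacua, every component has non-negative gluing number, so $G(\Sigma,V) \geq 0$. If $G(\Sigma,V) = 0$, then by the characterisation in Section~\ref{sec:gluing_number}, every component is an occupied square, and the empty set of decomposing arcs is already a quadrangulation. Otherwise some component has $G > 0$; by the same characterisation it is neither a vacuum nor a square, so by Lemma~\ref{lem:nontrivial_arc_exists} it admits a nontrivial decomposing arc $a$. Crucially, by the definition of ``trivial'' given in Section~\ref{sec:occupied_surfaces}, cutting along the nontrivial arc $a$ does not produce a vacuum component, so the cut surface is again without vacua. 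By Lemma~\ref{lem:index_invariant}, the cut preserves index and decreases $G$ by $1$. By induction on $G(\Sigma,V)$, the cut surface admits a quadrangulation; re-assembling, the arc $a$ together with that quadrangulation gives a quadrangulation of $(\Sigma,V)$.

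For the counts, let $A$ be \emph{any} quadrangulation of $(\Sigma,V)$, with $k$ internal edges, cutting into $s$ squares. I would perform the cuts along the arcs of $A$ one at a time, in any order. Each individual cut is a cut along a decomposing arc in the current occupied surface, so Lemma~\ref{lem:index_invariant} applies: after all $k$ cuts we have an occupied surface $(\Sigma'',V'')$ with
\[
I(\Sigma'',V'') = I(\Sigma,V), \qquad G(\Sigma'',V'') = G(\Sigma,V) - k.
\]
But $(\Sigma'',V'')$ is by hypothesis a disjoint union of $s$ occupied squares. Using the additivity of $I$ and $G$ under disjoint union, and the fact that the occupied square has index $1$ and gluing number $0$, we get $I(\Sigma'',V'') = s$ and $G(\Sigma'',V'') = 0$. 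Comparing yields $s = I(\Sigma,V)$ and $k = G(\Sigma,V)$, as claimed.

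The only subtle point, and the one I would be most careful about, is the claim that cutting along a \emph{nontrivial} arc cannot introduce a vacuum component; this is what keeps the induction inside the vacuum-free world and allows Lemma~\ref{lem:nontrivial_arc_exists} to be applied at every stage. This follows directly from the definition of triviality: a decomposing arc is trivial precisely when one of the two resulting pieces is a vacuum, so a nontrivial arc produces two non-vacuum pieces. Everything else is bookkeeping with the additive invariants $N$, $\chi$, $I$, and $G$.
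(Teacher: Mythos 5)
Your proposal is correct and follows essentially the same route as the paper: repeatedly cut along nontrivial decomposing arcs (which exist by Lemma \ref{lem:nontrivial_arc_exists} whenever a component is not a vacuum or square), use Lemma \ref{lem:index_invariant} to see that each cut preserves $I$ and decreases $G$ by one, note that a nontrivial cut cannot create a vacuum, and stop at $G=0$, which characterises disjoint unions of squares; the edge and square counts then follow exactly as in your bookkeeping with the additivity of $I$ and $G$ and the values $I=1$, $G=0$ for the square. Your explicit induction on $G$ and the spelled-out justification that nontrivial arcs leave the vacuum-free world are just a more detailed packaging of the paper's argument.
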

Indeed, no matter how we find decomposing arcs, we eventually arrive at a quadrangulation. Any collection of non-parallel decomposing arcs extends to a quadrangulation. Build decomposing arcs, and a quadrangulation will come.

In a quadrangulation, edges of a square are never glued together.
\begin{lem}
\label{lem:arcs_distinct}
Let $(\Sigma^\square, V^\square)$ be a square of the quadrangulated surface $(\Sigma, A)$. The four boundary edges of $(\Sigma^\square, V^\square)$ are distinct edges of the quadrangulation.
\end{lem}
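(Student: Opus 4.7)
The plan is to show that if two of the four boundary edges of $S$ were to coincide in the quadrangulation, then either the orientability of $\Sigma$ would fail or some vertex would have to lie in the interior of $\Sigma$—both contradicting our standing hypotheses. First I would note that when we cut along the decomposing arcs in $A$, each boundary edge of $\Sigma$ survives as a single edge in the cut surface, so two boundary edges of $S$ cannot coincide via a common boundary edge of $\Sigma$. Hence any coincidence must come from two boundary edges of $S$ being identified to a single decomposing arc $\alpha \in A$; equivalently, $\alpha$ has $S$ on both of its sides in $\Sigma$.

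From here I would split into two cases based on the combinatorial position of the two coincident edges in $S$. If they are opposite (non-adjacent) in $S$, then since ``incoming'' and ``outgoing'' alternate around $\partial S$, these two edges are either both incoming or both outgoing with respect to the induced boundary orientation. But according to the gluing discussion of section \ref{sec:decomposition_and_gluing}, a sign-respecting identification of two boundary edges produces an orientable surface if and only if exactly one of them is incoming and the other outgoing; this gluing would force the ambient surface to be non-orientable, contradicting the definition of an occupied surface.

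In the remaining case the two coincident edges are adjacent in $S$ and share a vertex $v$. Near $v$ inside $S$, these two edges bound a corner wedge of angular measure $\pi/2$; the sign-respecting identification of the two edges (dictated by their common target $\alpha$ oriented from $V_-$ to $V_+$) collapses this wedge into a cone whose apex $v$ has a full open disk neighborhood in the quotient. Since $\Sigma$ is a $2$-manifold, this already exhausts the angular room at the image of $v$, so no other square may contribute at $v$ without violating the manifold property. Hence $v$ lies in the interior of $\Sigma$, contradicting $v \in V \subset \partial \Sigma$ (recall that the endpoints of any decomposing arc lie in $V$).

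The delicate point is the local argument in the adjacent-edges case: I would verify carefully, using a small coordinate neighborhood around $v$ in the cut surface, that identifying the two sides of the $\pi/2$-wedge really produces a disk neighborhood of $v$ (rather than some partial wedge that could be completed by wedges from other squares), so that the conclusion ``$v$ is forced into the interior'' is unavoidable no matter what the rest of the quadrangulation looks like elsewhere.
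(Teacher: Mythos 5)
Your proposal is correct and follows essentially the same approach as the paper: opposite edges are both incoming or both outgoing, so a sign-respecting identification would violate orientability, while identifying adjacent edges would swallow their common vertex into the interior of $\Sigma$, contradicting $v \in V \subset \partial\Sigma$. Your extra preliminary (ruling out coincidence along a boundary edge) and the detailed link-of-$v$ argument just spell out what the paper leaves implicit.
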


\begin{proof}
If two adjacent edges of $(\Sigma^\square, V^\square)$ are glued together in $(\Sigma,V)$, then their common vertex is swallowed and does not lie on the boundary of $\Sigma$. Two opposite edges of $(\Sigma^\square, V^\square)$ are either both incoming or both outgoing; gluing them respecting signs cannot give an orientable result.
\end{proof}

A quadrangulation may contain several decomposing arcs running between the same endpoints; but such arcs cannot be parallel, as they would then cut out a vacuum. For instance figure \ref{fig:punctured_torus_decomposition} shows a quadrangulation of the once-punctured torus with $2$ vertices on the boundary; all edges run between the same vertices.

\begin{figure}
\begin{center}

\begin{tabular}{c}
\begin{tikzpicture}[
scale=2, 
boundary/.style={ultra thick}, 
decomposition/.style={thick, draw=green!50!black}, 
vertex/.style={draw=green!50!black, fill=green!50!black},
>=triangle 90, 
decomposition glued1/.style={thick, draw=green!50!black, postaction={nomorepostaction,decorate, decoration={markings,mark=at position 0.5 with {\arrow{>}}}}},
decomposition glued2/.style={thick, draw = green!50!black, postaction={nomorepostaction, decorate, decoration={markings,mark=at position 0.5 with {\arrow{>>}}}}}
]

\coordinate [label = right:{$-$}] (0) at (0:1);
\coordinate [label = above right:{$+$}] (1) at (60:1);
\coordinate [label = above left:{$-$}] (2) at (120:1);
\coordinate [label = left:{$+$}] (3) at (180:1);
\coordinate [label = below left:{$-$}] (4) at (240:1);
\coordinate [label = below right:{$+$}] (5) at (300:1);

\fill [gray!10] (0) -- (1) -- (2) -- (3) -- (4) -- (5) -- cycle;

\draw [decomposition glued2] (0) -- (1);
\draw [decomposition glued1] (2) -- (1);
\draw [boundary] (2) -- (3);
\draw [decomposition glued2] (4) -- (3);
\draw [decomposition glued1] (4) -- (5);
\draw [boundary] (5) -- (0);
\draw [decomposition] (0) to (3);

\foreach \point in {0, 1, 2, 3, 4, 5}
\fill [vertex] (\point) circle (1pt);

\end{tikzpicture}
\end{tabular}
\begin{tabular}{c}
\includegraphics[scale=0.5]{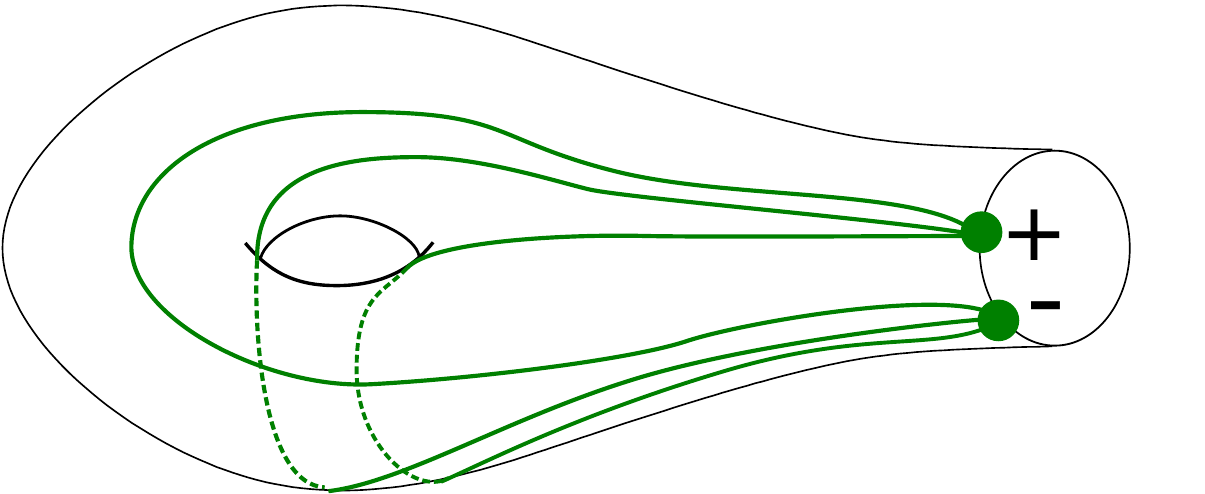}
\end{tabular}
\caption{Quadrangulation of a punctured torus with $2$ vertices; the two pictures are equivalent.}
\label{fig:punctured_torus_decomposition}
\end{center}
\end{figure}

\subsection{Building occupied surfaces with quadrangulations}

A quadrangulation allows us to build an occupied surface by gluing squares together. We can construct a quadrangulated surface $(\Sigma,A)$ from nothing by creating the squares of the quadrangulation, then gluing them together. In fact these gluing maps can all be taken to be either standard gluings or folds (never zips), as we now prove.
\begin{lem}
\label{lem:quadrangulation_ordering}
Let $A$ be a quadrangulation of an occupied surface $(\Sigma,V)$ with squares $(\Sigma^\square_1, V^\square_1), \ldots, (\Sigma^\square_n, V^\square_n)$. There exists an ordering of the squares such that successively gluing on the squares in order to form $(\Sigma,V)$, no square is ever glued in along all four of its edges.
\end{lem}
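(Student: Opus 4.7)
The plan is to prove the lemma by running the gluing in reverse: I construct a removal sequence $S_n, S_{n-1}, \ldots, S_1$ for the squares of $A$, one at a time, and then reverse it to obtain the gluing order. For this to work, I need that at each stage of removal some square $S$ in the current remaining set $R$ has a \emph{free} edge --- one that is either a boundary edge of $(\Sigma,V)$ or an internal edge of $A$ shared with a square in $A \setminus R$. In the forward direction such an edge will be an edge of $S_k = S$ not identified with the partial surface $S_1 \cup \cdots \cup S_{k-1}$, so $S_k$ is glued in along at most three edges.

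The whole argument reduces to the following claim: for any non-empty subset $R$ of the squares of $A$, some square in $R$ has a free edge in the sense above. To prove it, pick any $S \in R$ and any vertex $v$ of $S$. Since $v \in V \subseteq \partial \Sigma$, its neighbourhood in $\Sigma$ is a half-disc, and the squares of $A$ meeting $v$ are arranged in a linear sequence $T_1, T_2, \ldots, T_\ell$ in which $T_1$ and $T_\ell$ each have one boundary edge of $\Sigma$ incident to $v$, and each consecutive pair $T_i, T_{i+1}$ is separated at $v$ by an internal edge of $A$. Writing $S = T_j$ and letting $[a, b]$ be the maximal interval of consecutive in-$R$ sectors containing $T_j$, the edge of $T_a$ at $v$ on the $T_{a-1}$-side is either a boundary edge of $\Sigma$ (if $a = 1$) or an internal edge shared with $T_{a-1} \notin R$ (if $a > 1$). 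In either case this is a free edge belonging to $T_a \in R$, proving the claim.

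Given the claim, the rest is straightforward: starting with $R = \{S_1, \ldots, S_n\}$, iteratively pick a square with a free edge from the current $R$, remove it, and label it as the next square in the removal sequence. After $n$ steps the removal sequence is complete; reversing it gives the desired gluing order. Each $S_k$ in this order carries the free edge witnessed at its removal stage, and that edge is either on $\partial \Sigma$ or shared with some $S_j$ with $j > k$, so it is not identified with $S_1 \cup \cdots \cup S_{k-1}$. Hence $S_k$ is glued along at most three of its four edges. The only potential obstacle --- that the greedy peeling might become stuck --- is ruled out by the claim, whose proof uses only the defining property of an occupied surface that every vertex lies on $\partial \Sigma$.
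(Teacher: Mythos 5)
Your proof is correct and takes essentially the same approach as the paper: a reverse-peeling argument in which one repeatedly removes a square having an edge on the boundary of the current remaining surface (your ``free edge'' condition is exactly this) and then reverses the removal order so that each square is attached along at most three edges. The only difference is that you justify the existence of such a square at each stage by the half-disc sector structure at a vertex $v \in V \subset \partial\Sigma$, a step the paper treats as obvious.
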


\begin{proof}
Obviously some square of $A$ has an edge on $\partial \Sigma$. Remove this square to reduce to a smaller surface; and successively remove squares to reduce to the empty occupied surface, at each stage removing a square with an edge on the boundary of the surface. Order the squares so that we removed, in order, $(\Sigma^\square_n, V^\square_n), \ldots, (\Sigma^\square_1, V^\square_1)$.

Now we perform the reverse procedure. Start from $(\Sigma^\square_1, V^\square_1)$, and glue on $(\Sigma^\square_2, V^\square_2), \ldots, (\Sigma^\square_n, V^\square_n)$. At each stage, our ordering guarantees that no square is glued in along all its edges; when $(\Sigma^\square_i, V^\square_i)$ is glued on, at least one edge is a boundary edge.
\end{proof}

\begin{prop}
\label{prop:quadrangulated_surface_construction}
Let $(\Sigma,V)$ be an occupied surface. Then there exists a sequence of morphisms
\[
\emptyset = (\Sigma_0, V_0) \stackrel{\phi_1}{\To} (\Sigma_1, V_1) \stackrel{\phi_2}{\To} \cdots \stackrel{\phi_n}{\To} (\Sigma_n, V_n) = (\Sigma,V)
\]
where each $\phi_i$ is a creation, standard gluing, or fold.
\end{prop}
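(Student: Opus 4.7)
I would argue by strong induction on the gluing number $G(\Sigma,V)$. A disconnected occupied surface can be built by constructing each connected component independently, since creations, standard gluings, and folds can each be performed on a single component without affecting the others; so it suffices to construct connected $(\Sigma,V)$, and I induct on $G$ there.

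The base cases are handled directly. For a connected occupied vacuum ($G=-1$), perform a creation followed by a fold that identifies two consecutive edges of the newly created square; the fold is legal since the square has four boundary edges, and it swallows the common vertex to produce the desired bigon. For a connected occupied square ($G=0$), a single creation suffices.

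Suppose now $(\Sigma,V)$ is connected with $G(\Sigma,V) \geq 1$; then $(\Sigma,V)$ is neither the vacuum nor the occupied square, so Lemma \ref{lem:nontrivial_arc_exists} supplies a nontrivial decomposing arc $a$. Cutting along $a$ produces $(\Sigma',V')$ with $G(\Sigma',V') = G(\Sigma,V) - 1$ by Lemma \ref{lem:index_invariant}. Nontriviality of $a$ prevents any component of $(\Sigma',V')$ from being a vacuum: such a bigon would consist of a side of $a$ together with a boundary edge of $(\Sigma,V)$ sharing its two endpoints, forcing $a$ to be boundary-parallel. Therefore every component of $(\Sigma',V')$ has non-negative gluing number strictly less than $G(\Sigma,V)$. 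By the inductive hypothesis applied to each component, $(\Sigma',V')$ can be built from $\emptyset$ by a sequence of creations, standard gluings, and folds. Appending one final morphism — the identification of the two copies $a_1, a_2$ of $a$ in $\partial\Sigma'$ — recovers $(\Sigma,V)$.

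The main obstacle is verifying that this last morphism is a standard gluing rather than a fold or zip. The case analysis in section \ref{sec:decomposition_and_gluing} shows that cutting raises the vertex count by two, i.e., both endpoints of $a$ are duplicated; consequently $a_1$ and $a_2$ have four pairwise distinct endpoints in $V'$ and share no vertex, so they are non-consecutive boundary edges of $(\Sigma',V')$. Orientability of $(\Sigma',V')$ forces one of $a_1, a_2$ to be incoming and the other outgoing, so the gluing respects orientations. Hence identifying $a_1$ with $a_2$ satisfies the definition of a standard gluing, and concatenating with the inductively constructed sequence for $(\Sigma',V')$ yields the desired sequence of morphisms $\emptyset \to (\Sigma,V)$.
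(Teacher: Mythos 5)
Your proof is correct, but it takes a genuinely different route from the paper. The paper first fixes a quadrangulation (proposition \ref{prop:build_quadrangulation}), orders its squares via lemma \ref{lem:quadrangulation_ordering} so that no square is ever attached along all four edges, and then creates and attaches the squares one at a time, using a case analysis of ``troublesome'' consecutive edges to ensure the attachments can be done with standard gluings and folds (never zips). You instead run a strong induction on the gluing number: cut along a nontrivial decomposing arc (lemma \ref{lem:nontrivial_arc_exists}), which by lemma \ref{lem:index_invariant} drops $G$ by one and, by nontriviality, creates no vacua, and observe that the single re-gluing at the end is always a \emph{standard} gluing because cutting duplicates both endpoints of the arc, so its two copies are non-consecutive edges, one incoming and one outgoing. (One small wording fix: what forces one copy to be incoming and the other outgoing is the orientation of $\Sigma$ --- equivalently, that gluing them respecting signs yields the orientable surface $\Sigma$ --- not orientability of $(\Sigma',V')$ itself.) Your argument is shorter and more self-contained: it bypasses lemma \ref{lem:quadrangulation_ordering} and the troublesome-edge analysis, it explicitly covers vacuum components (which the paper's proof, starting from a quadrangulation, quietly excludes), and it in fact shows the slightly stronger fact that folds are only needed to produce vacua. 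What the paper's construction buys in exchange is structure that gets reused later: it carries an explicit quadrangulation at every stage and controls exactly which edges of each newly created square are glued, and it is this relative, square-by-square version (not just bare existence of some factorization) that underlies the construction of decorated morphisms and the SQFT operator computations in sections \ref{sec:building_decorated_morphisms} and \ref{sec:SQFT}.
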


\begin{proof}
Take any quadrangulation of $(\Sigma,V)$, and then order the squares $(\Sigma^\square_1, V^\square_1), \ldots, (\Sigma^\square_n, V^\square_n)$ as in the preceding lemma. Proceeding in order from $i=1$ to $n$, we conjure up $(\Sigma^\square_i, V^\square_i)$ using a creation, then attach its edges to any edges of the previously created squares as in the quadrangulation of $(\Sigma,V)$. We only need show that this edge attachment can be done with standard gluings and folds.

Let $(\Sigma_p, V_p)$ be the union of the previously created and glued squares. By the lemma, we need to glue $(\Sigma^\square_i, V^\square_i)$ to $(\Sigma_p, V_p)$ along at most $3$ edges. The problem is to ensure we never glue the edges in such a way as to zip up a boundary component; this can happen when consecutive edges of $(\Sigma^\square_i, V^\square_i)$ are glued to consecutive edges of $(\Sigma_p, V_p)$. So call a boundary edge $e$ of $(\Sigma^\square_i, V^\square_i)$ \emph{troublesome} if $e$, and a consecutive edge along the boundary of $(\Sigma^\square_i, V^\square_i)$, are glued to two consecutive edges of $(\Sigma_p, V_p)$. The troublesome edges of $(\Sigma^\square_i, V^\square_i)$ are then either:
\begin{enumerate}
\item
three consecutive edges, glued to three consecutive boundary edges of $(\Sigma_p, V_p)$, with the fourth edge not glued;
\item
two consecutive edges, glued to two consecutive edges of $(\Sigma_p, V_p)$, with other edges not troublesome;
\item
no troublesome edges.
\end{enumerate}
If there is trouble, use a standard gluing to attach one of the troublesome edges. Then the consecutive troubling edges can be glued with folds. Non-troublesome edges can then be glued by standard gluings. This gives a construction with the desired properties.
\end{proof}

\subsection{Slack quadrangulations}

If we allow vertices in the interior, we obtain a different type of quadrangulation, essentially equivalent to the bipartite quadrangulations of \cite{Nakamoto96, Negami_Nakamoto93} (which consider only closed surfaces). We call such quadrangulations ``slack''.

\begin{defn}
A \emph{slack quadrangulation} $Q$ of an occupied surface $(\Sigma,V)$ is an embedded $1$-complex in $\Sigma$, such that:
\begin{enumerate}
\item
Each vertex of $Q$ is signed, and each edge of $Q$ connects vertices of opposite sign.
\item
$Q \cap \partial \Sigma = V$, i.e. the vertices of $Q$ on $\partial \Sigma$ are precisely the vertices $V$, and agree with their signs.
\item
$Q$ cuts $\Sigma$ into squares, with each edge a $1$-cell of $Q$ or a boundary edge of $(\Sigma,V)$.
\end{enumerate}
\end{defn}
Vertices of $Q$ in the interior of $\Sigma$ are \emph{internal vertices}. If there are no internal vertices then we obtain a bona fide quadrangulation.

We will develop a procedure to ``tighten up'' a slack quadrangulation into a bona fide quadrangulation, by performing what we call \emph{slack square collapse}; in \cite{Nakamoto96} and \cite{Negami_Nakamoto93} it is called \emph{face contraction}. This involves removing an internal vertex by collapsing one of the squares incident at that vertex.

More precisely, given a slack quadrangulation $Q$ of $(\Sigma,V)$, let $v$ be an internal vertex and let $(\Sigma^\square, V^\square)$ be a square of $Q$ incident at $v$. Let $w$ be the vertex of $(\Sigma^\square, V^\square)$ opposite to $v$ (hence the same sign). We first show that it is possible to arrange the situation so that $w \in V$.

\begin{lem}
Let $Q$ be a slack quadrangulation on the connected occupied surface $(\Sigma,V)$ which contains a positive internal vertex. Then there exists a square of the quadrangulation with an internal positive vertex at one corner, and a vertex of $V_+$ at the opposite corner.
\end{lem}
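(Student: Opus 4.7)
The plan is to run a connectivity argument on an auxiliary graph $H$ built from $Q$. Let the vertices of $H$ be all positive vertices of $Q$ (both internal vertices and vertices of $V_+$), and for each square $(\Sigma^\square, V^\square)$ of $Q$ put a single edge in $H$ joining its two positive corners (which are diagonally opposite by the alternating-sign condition on squares).

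First I would verify that $H$ is connected. The dual graph of $Q$, whose nodes are the squares and whose edges record sharing a $1$-cell, is connected because $\Sigma$ is connected. Given two squares $T_{i-1}, T_i$ sharing a $1$-cell $e$, one endpoint $p$ of $e$ is positive, and $p$ is then a corner of both squares; in each of $T_{i-1}$ and $T_i$, the $H$-edge from that square joins $p$ to the other positive corner. Chaining this observation along a sequence of edge-adjacent squares $T_0, T_1, \ldots, T_k$, all positive corners appearing in any of those squares lie in a single component of $H$. Since every positive vertex of $Q$ is a corner of some square, $H$ is connected. Note that $V_+$ is non-empty because $(\Sigma,V)$ is an occupied surface: each boundary component is non-empty and signs alternate along it, forcing at least one positive vertex in $V$.

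With the hypothesised positive internal vertex $v_0$ and any $w \in V_+$, pick a path in $H$ from $v_0$ to $w$. Since the path starts at an internal vertex and ends in $V_+$, some edge along the path must join an internal positive vertex $v'$ to a vertex $w' \in V_+$. By construction this edge comes from a square of $Q$ whose two (diagonally opposite) positive corners are $v'$ and $w'$, which is exactly the square sought.

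The main obstacle is the connectivity of $H$: an edge of $H$ is contributed by a single square rather than by a pair of squares sharing a side, so connectivity does not fall out directly from connectedness of $\Sigma$, and one has to argue that the positive endpoint of a shared $1$-cell links the ``positive-opposite partners'' contributed by the two adjacent squares. Once that is in place, the rest is a one-line traversal of a path in $H$.
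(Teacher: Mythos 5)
Your proof is correct and follows essentially the same route as the paper: both construct the graph of diagonals joining the two (opposite) positive corners of each square and reduce the lemma to connectivity of that graph, after which a path from an internal positive vertex to a vertex of $V_+$ must contain a mixed edge, i.e.\ the desired square. The only cosmetic difference is that you establish connectivity via connectedness of the dual graph and the shared positive corner of edge-adjacent squares, whereas the paper argues it by building the quadrangulation up one square at a time — the same underlying observation.
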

(Obviously there is a corresponding result with negative signs.)

\begin{proof}
Consider in each square of $Q$ drawing in the diagonal connecting the positive signed vertices. These diagonals form an embedded $1$-complex $D$ in $(\Sigma,V)$ whose vertices are the positive vertices of $Q$. We claim $D$ is connected. To see this, consider starting from a single square and successively gluing on squares to form the slack quadrangulation $Q$, constructing $(\Sigma,V)$; at each stage we draw the diagonals between positive vertices to construct $D$, and at each stage this $1$-complex remains connected.

Now $D$ is a connected 1-complex whose vertices are all the positive vertices of $Q$; each vertex is either internal or in $V_+$. Thus there is an edge of $D$ connecting an internal vertex to a vertex of $V_+$; and there is then a square of $Q$ with these vertices in opposite corners.
\end{proof}

So take $v$, an internal vertex, and $w$ the vertex opposite to $v$ in $(\Sigma^\square, V^\square)$, as is now guaranteed. First suppose that the two edges of $(\Sigma^\square, V^\square)$ adjacent to $v$ are not identified in $Q$, and also the two edges adjacent to $w$ are not identified in $Q$. Then in the slack square collapse, the two distinct edges of $(\Sigma^\square, V^\square)$ incident to $v$ are isotoped onto the two distinct edges incident to $w$, so that the four edges of $(\Sigma^\square, V^\square)$ are collapsed to two. All the other squares of the quadrangulation remain intact, and the two vertices $v,w$ are identified. See figure \ref{fig:slack_square_collapse}. The result is another (possibly slack) quadrangulation of $(\Sigma,V)$ with fewer internal vertices.

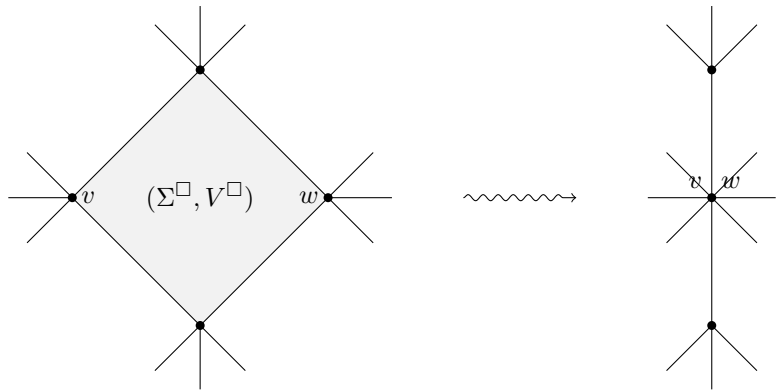
\begin{figure}
\begin{center}
\begin{tikzpicture}[
scale=1.7, 
fill = gray!10,
]

\fill (1,0) -- (2,1) -- (3,0) -- (2,-1) -- cycle;

\coordinate [label = right:{$v$}] (v) at (1,0);
\coordinate [label = left:{$w$}] (w) at (3,0);
\coordinate (1top) at (2,1);
\coordinate (1bot) at (2,-1);

\draw (v) -- (1bot) -- (w) -- (1top) -- cycle;
\foreach \angle in {0, 45, 90} {
\draw (v) -- ($ (1,0) + (135 + \angle : 0.5) $) ;
\draw (1bot) -- ($ (2,-1) + (225 + \angle : 0.5)  $) ;
\draw (w) -- ($ (3,0) + (-45 + \angle: 0.5 ) $) ;
\draw (1top) -- ($ (2,1) + (45 + \angle: 0.5 ) $) ; }
\draw (2,0) node {$(\Sigma^\square, V^\square)$};

\draw [shorten >=1mm, -to, decorate, decoration={snake,amplitude=.4mm, segment length = 2mm, pre=moveto, pre length = 1mm, post length = 2mm}] (4,0) -- (5,0);

\coordinate [label = above left:{$v$}, label = above right:{$w$}] (v2) at (6,0);
\coordinate (2top) at (6,1);
\coordinate (2bot) at (6,-1);

\draw (2top) -- (2bot);
\foreach \angle in {0, 45, 90} {
\draw (v2) -- ($ (v2) + (135 + \angle : 0.5) $) ;
\draw (2bot) -- ($ (2bot) + (225 + \angle : 0.5)  $) ;
\draw (v2) -- ($ (v2) + (-45 + \angle: 0.5 ) $) ;
\draw (2top) -- ($ (2top) + (45 + \angle: 0.5 ) $) ; }

\foreach \point in {v, w, 1top, 1bot, v2, 2top, 2bot}
\fill [black] (\point) circle (1pt);

\end{tikzpicture}

\end{center}
\caption{Slack square collapse.}
\label{fig:slack_square_collapse}
\end{figure}

It is also possible that the two edges of $(\Sigma^\square, V^\square)$ adjacent to $v$ are identified. Then we again isotope $v$ to $w$, and collapse the edges of $(\Sigma^\square, V^\square)$ down to a single edge, as shown in figure \ref{fig:degenerate_slack_square_collapse}. Effectively we have isotoped away a vacuum. Similarly if the two edges adjacent to $w$ are identified.

\begin{figure}
\begin{center}
\begin{tikzpicture}[
scale=1.7, 
fill = gray!10,
]

\fill (1,0) to [bend left=60] (3,0) to [bend left=60] (1,0);

\coordinate (u) at (1,0);
\coordinate [label = right:{$v$}] (v) at (2,0);
\coordinate [label = left:{$w$}] (w) at (3,0);

\draw (u) to [bend left=60] (w);
\draw (u) to [bend right=60] (w);
\draw (u) -- (v);

\foreach \angle in {0, 45, 90} {
\draw (u) -- ($ (u) + (135 + \angle : 0.5) $) ;
\draw (w) -- ($ (w) + (-45 + \angle: 0.5 ) $) ;
}
\draw (2,0.3) node {$(\Sigma^\square, V^\square)$};

\draw [shorten >=1mm, -to, decorate, decoration={snake,amplitude=.4mm, segment length = 2mm, pre=moveto, pre length = 1mm, post length = 2mm}] (4,0) -- (5,0);

\coordinate [label = above left:{$v$}, label = below left:{$w$}] (v2) at (8,0);
\coordinate (u2) at (6,0);

\draw (u2) -- (v2);
\foreach \angle in {0, 45, 90} {
\draw (u2) -- ($ (u2) + (135 + \angle : 0.5) $) ;
\draw (v2) -- ($ (v2) + (-45 + \angle: 0.5 ) $) ;
}

\foreach \point in {u, v, w, u2, v2}
\fill [black] (\point) circle (1pt);

\end{tikzpicture}

\end{center}
\caption{Degenerate slack square collapse.}
\label{fig:degenerate_slack_square_collapse}
\end{figure}
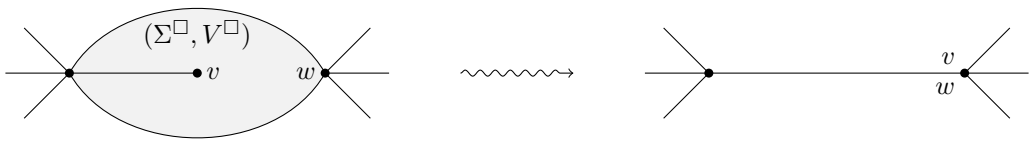

In any case, after collapsing enough slack squares we arrive at a bona fide quadrangulation of $(\Sigma,V)$.
\begin{prop}
\label{prop:slack_square_collapse_exists}
In any slack quadrangulation $Q$ of $(\Sigma,V)$, slack square collapses can successively be performed taking $Q$ to a bona fide quadrangulation of $(\Sigma,V)$.
\qed
\end{prop}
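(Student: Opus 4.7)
The plan is to induct on the number of internal vertices of the slack quadrangulation $Q$. If $Q$ has no internal vertices, it is already a bona fide quadrangulation and there is nothing to prove. Otherwise, without loss of generality $Q$ contains a positive internal vertex (else apply the same argument with signs reversed; note that if a slack quadrangulation has any internal vertex, it must have one of each sign, as every edge connects opposite signs and every internal vertex is incident to edges of both types). The preceding lemma then produces a square $(\Sigma^\square, V^\square)$ of $Q$ with an internal vertex $v$ at one corner and a vertex $w \in V_+$ at the opposite corner. I would perform a slack square collapse on $(\Sigma^\square, V^\square)$, as described in figures \ref{fig:slack_square_collapse} and \ref{fig:degenerate_slack_square_collapse}, splitting into the generic case (no two edges of $(\Sigma^\square, V^\square)$ identified) and the degenerate cases (edges adjacent to $v$ or to $w$ identified).

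The heart of the proof is showing that the result $Q'$ of this collapse is again a slack quadrangulation of the \emph{same} occupied surface $(\Sigma, V)$, with strictly fewer internal vertices. The collapse only modifies the interior of $(\Sigma^\square, V^\square)$: the neighbouring squares of $Q$ are carried along by the isotopy collapsing $(\Sigma^\square, V^\square)$, but their combinatorial structure is unchanged apart from the identification $v \sim w$. Since $v$ and $w$ carry the same sign, the signed $1$-complex condition is preserved; since $w \in V$, the identified vertex lies on $\partial \Sigma$, so $Q' \cap \partial \Sigma = V$ still holds; and the squares of $Q'$ are just the squares of $Q$ other than $(\Sigma^\square, V^\square)$, which still tile $\Sigma$. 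In the degenerate cases, $(\Sigma^\square, V^\square)$ was bounded by a bigon's worth of edges (a ``vacuum'' inside $Q$), and the collapse simply deletes this bigon from the $1$-complex, which is easily checked to leave a valid slack quadrangulation.

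Finally, I would observe that $v$ is no longer internal in $Q'$ (it has been pushed onto $\partial \Sigma$ by identification with $w$), and no new internal vertices have been introduced. Hence the number of internal vertices has strictly decreased, and the induction hypothesis applies to $Q'$. Iterating, the process terminates in finitely many steps at a bona fide quadrangulation of $(\Sigma, V)$.

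The main obstacle is the case analysis in verifying that the collapse genuinely produces a slack quadrangulation, in particular ruling out hidden pathologies in the degenerate cases and confirming that adjacent squares of $Q$ remain embedded (not folded onto themselves) after the isotopy. One has to use in an essential way that $v \neq w$ and that the four boundary edges of $(\Sigma^\square, V^\square)$ behave analogously to lemma \ref{lem:arcs_distinct}: opposite edges cannot be identified (orientation) and if one pair of adjacent edges is identified, the other pair is automatically not (else $(\Sigma^\square, V^\square)$ would collapse entirely and $v,w$ would already coincide). Once these cases are enumerated, the verification is essentially local and combinatorial.
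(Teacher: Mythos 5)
Your proposal is correct and follows essentially the same route as the paper: the preceding lemma supplies a square with an internal vertex at one corner and a same-signed vertex of $V$ at the opposite corner, the (possibly degenerate) slack square collapse removes that internal vertex while leaving a slack quadrangulation of the same surface, and induction on the number of internal vertices finishes. (Only your parenthetical aside---that a slack quadrangulation with an internal vertex must have internal vertices of both signs---is unjustified, since the oppositely-signed neighbours of an internal vertex need not themselves be internal; but it is not needed, as the sign-reversed version of the lemma already handles a negative internal vertex.)
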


\subsection{Building morphisms with quadrangulations}
\label{sec:building_morphisms}

We return to the question, raised in section \ref{sec:simple_morphisms}, of building general types of morphisms out of our elementary types of creation, annihilation, gluing, fold, and zip. The construction of proposition \ref{prop:quadrangulated_surface_construction} shows how to construct a morphism from the empty occupied surface to any occupied surface; we now extend this to general morphisms. In fact the following proposition is really just a relative version of proposition \ref{prop:quadrangulated_surface_construction}; and so we begin with a relative version of lemma \ref{lem:quadrangulation_ordering}.

\begin{lem}
\label{lem:building_non-isolating_morphisms}
Let $\phi: (\Sigma, V) \To (\Sigma', V')$ be a non-isolating morphism which leaves no vacua, with a quadrangulated complement $(\Sigma^c, A^c)$. Then there exists an ordering of the squares of $A^c$, $(\Sigma^\square_1, V^\square_1)$, $\ldots$, $(\Sigma^\square_n, V^\square_n)$, such that successively gluing the squares on to $(\Sigma,V)$ in order and constructing $(\Sigma',V')$, no square is ever glued along all four of its edges.
\end{lem}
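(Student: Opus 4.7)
Our plan is to mimic the strategy of Lemma \ref{lem:quadrangulation_ordering} in the relative setting: rather than constructing the ordering directly, we build it by reverse-peeling. Starting from the complement $(\Sigma^c, A^c)$, we successively remove squares one at a time, producing a nested sequence of quadrangulated subsurfaces $\Sigma^c = R_n \supset R_{n-1} \supset \cdots \supset R_0 = \emptyset$. The square removed from $R_k$ to form $R_{k-1}$ is labelled $(\Sigma^\square_k, V^\square_k)$, so the peel order $n, n-1, \ldots, 1$ is the reverse of the forward gluing order.

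The heart of the argument is to choose the peels so as to maintain the following invariant on $R_k$: every connected component of the current remaining region contains at least one square with an edge which is either $\Sigma'$-type or shared with an already-peeled square. Call such edges \emph{effective}. The base case $R_n = \Sigma^c$ follows immediately from the non-isolating hypothesis on $\phi$: every component of $\Sigma^c$ has a $\Sigma'$-type edge. For the inductive step, we select a square $s \subset R_k$ with an effective edge (available by the invariant) and peel it. Any component of $R_{k-1} = R_k \setminus s$ that is also a component of $R_k$ retains the invariant trivially; and for any component $D$ of $R_{k-1}$ arising from $C \setminus s$ (where $C$ is the component of $R_k$ containing $s$), the connectedness of $C$ forces $D$ to contain a square edge-adjacent to $s$ in the quadrangulation $A^c$, so that edge, now shared with the freshly peeled $s$, is effective. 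The main obstacle is this last verification; it rests on the observation that at any quadrangulation vertex of a manifold subsurface the incident sectors are contiguous, so within any connected component of $R_k$, edge-adjacency and topological connectedness agree.

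Finally, we translate back to the forward direction. When $(\Sigma^\square_k, V^\square_k)$ is glued onto the partial surface $(\Sigma_p, V_p) = (\Sigma, V) \cup \bigcup_{j < k} (\Sigma^\square_j, V^\square_j)$, the edges of $(\Sigma^\square_k, V^\square_k)$ identified with edges of $(\Sigma_p, V_p)$ are precisely its $\Sigma$-type edges (glued to $\partial \phi(\Sigma)$) and its internal edges shared with some $(\Sigma^\square_j, V^\square_j)$ for $j < k$. The effective edge selected at the corresponding peel step is either $\Sigma'$-type --- and hence remains on $\partial \Sigma'$, never identified --- or shared with some $(\Sigma^\square_j, V^\square_j)$ for $j > k$, which has not yet been introduced at step $k$. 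Either way, this edge is not glued at step $k$, so $(\Sigma^\square_k, V^\square_k)$ is glued along at most three of its four edges, as required.
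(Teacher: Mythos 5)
Your proof is correct and follows essentially the same route as the paper: you peel squares of $(\Sigma^c, A^c)$ that have a free edge (on $\partial\Sigma'$ or exposed by an earlier peel), maintain the invariant that every remaining component still contains such a square, and then reverse the order — which is exactly the paper's ``successively remove squares with $\Sigma'$-type edges and reverse'' argument relative to the shrinking surface. The only difference is presentational: you make explicit the inductive invariant and the edge-adjacency/connectedness point that the paper leaves implicit.
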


\begin{proof}
As $\phi$ leaves no vacua, $(\Sigma^c, V^c)$ has no vacuum components. As $\phi$ is non-isolating, there exists a square with a $\Sigma'$-type edge. Remove this square; we obtain a morphism with a lower-index complement that is still non-isolating and leaves no vacua, and successively remove squares with $\Sigma'$-type edges. Now as in lemma \ref{lem:quadrangulation_ordering}, reverse this process, gluing the squares on to $(\Sigma,V)$ in the reverse order; no square is ever glued along all four edges.
\end{proof}

\begin{prop}
\label{prop:morphism_composition_of_atomics}
Let $\phi: (\Sigma, V) \To (\Sigma', V')$ be a morphism. Then $\phi$ is a composition of creations, folds, gluings, and zips.
\end{prop}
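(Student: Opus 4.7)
The plan is to factor $\phi$ through its image and handle each factor separately, using the two tools already established. First I would write $\phi = \iota \circ \pi$, where $\pi \colon (\Sigma, V) \To (\Sigma^\phi, V^\phi)$ is the corestriction of $\phi$ to its image occupied surface (automatically surjective) and $\iota \colon (\Sigma^\phi, V^\phi) \hookrightarrow (\Sigma', V')$ is the inclusion of the image as a sub-occupied-surface. Proposition \ref{prop:surjective_morphism_gluings} immediately expresses $\pi$ as a composition of standard gluings, folds, and zips, so only $\iota$ remains.

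To express $\iota$ as a composition of creations, standard gluings, folds, and zips, I would mimic the construction of Proposition \ref{prop:quadrangulated_surface_construction}, now relative to $(\Sigma^\phi, V^\phi)$. After isotoping $\iota$ to eliminate any vacuum component of $(\Sigma^c, V^c)$ that is not a component of $(\Sigma', V')$, and setting aside any vacuum components which \emph{are} components of $(\Sigma', V')$ (to be produced by a single creation each, at the end), the remaining complement admits a quadrangulation by Proposition \ref{prop:build_quadrangulation}. Order its squares $(\Sigma^\square_1, V^\square_1), \ldots, (\Sigma^\square_n, V^\square_n)$; then for each $i$ in turn, create $(\Sigma^\square_i, V^\square_i)$ disjointly from the accumulated surface via a creation morphism, and then identify its appropriate boundary edges with the corresponding edges of the accumulated surface via standard gluings, folds, or zips according to the local situation. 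The composition of these elementary morphisms realises $\iota$.

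The main obstacle is choosing the ordering of squares so that, at each stage, the edges that must be identified with the accumulated surface can be realised one at a time by our elementary operations rather than requiring some impossible simultaneous gluing. In the non-isolating, no-vacua case, Lemma \ref{lem:building_non-isolating_morphisms} already provides an ordering in which no square is ever attached along all four of its edges; the attachments can then be done with standard gluings and folds alone (zips are not needed). In the isolating case, some component of $(\Sigma^c, V^c)$ has no $\Sigma'$-type boundary edge, so at some point the last free boundary component of such a component must be closed off, which is precisely where a zip becomes unavoidable. Verifying that the attachments at each intermediate stage indeed decompose into a legal sequence of standard gluings, folds, and zips (for instance, that three consecutive edges to be attached along the same boundary component can be handled by one standard gluing followed by folds, as in the proof of Proposition \ref{prop:quadrangulated_surface_construction}) is the essential bookkeeping content of the proof, parallel to the proof of Lemma \ref{lem:quadrangulation_ordering} in this relative setting.
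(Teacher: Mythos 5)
Your overall strategy --- factor off the surjective part via Proposition \ref{prop:surjective_morphism_gluings}, then quadrangulate the complement and attach its squares one at a time by creations, gluings and folds, reserving zips for isolated pieces --- is the same as the paper's, but two of your steps have concrete problems. First, a creation produces an occupied \emph{square}, not a vacuum (definition \ref{def:creation}), so a vacuum component of $(\Sigma',V')$ cannot be ``produced by a single creation''; you need a creation followed by a fold of two of its edges. Relatedly, isotoping $\iota$ to squeeze out the other vacuum components only replaces $\phi$ by an isotopic morphism (and, when both edges of such a vacuum are $\Sigma$-type, it introduces an edge identification you had already factored out), so it does not realise $\phi$ itself as a composition; the paper instead builds each such vacuum explicitly --- create a square, standard-glue the $\Sigma$-type edge if there is one, fold two edges together, and zip the final edge when both vacuum edges are $\Sigma$-type.

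Second, and more seriously, the isolating case --- the only place zips are genuinely forced --- is asserted rather than argued. Lemma \ref{lem:building_non-isolating_morphisms} provides an ordering only for \emph{non-isolating} components leaving no vacua, so for an isolated component you have no ordering to appeal to, and ``the last free boundary component must be closed off, which is precisely where a zip becomes unavoidable'' is not justified: a zip only applies when the two edges to be identified constitute an entire two-edge boundary component, and a priori the final closures of an isolated component could require identifications matching none of the three allowed moves. The paper's missing idea here is a splitting trick: choose a common edge $e$ of the isolated component and the previously constructed surface, split the two apart along $e$; the attachment along all remaining edges is then a non-isolating morphism, so the ordering lemma applies and standard gluings and folds suffice; at the end the two split-open copies of $e$ form a two-edge boundary component, and re-identifying them is a single zip. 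Supplying this argument (and fixing the vacuum step) is what turns your outline into the paper's proof.
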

(Note annihilations do not appear here; we will see later in section \ref{sec:defns_decorated_morphisms} that annihilations and folds are, in a certain sense, interchangeable.)

\begin{proof}
First, if any boundary edges of $(\Sigma,V)$ are identified under $\phi$, then as in proposition \ref{prop:surjective_morphism_gluings} we may express this as a composition of gluings, folds and zips. Having done that, we may assume $\phi$ does not identify boundary edges of $(\Sigma,V)$. (It may still identify vertices.)

We consider the components of the complement of $\phi$, and construct them one by one. At each stage, we have a previously existing surface $(\Sigma_p, V_p)$ constructed from $(\Sigma,V)$ by creating squares and gluing or folding or zipping them together; to this we adjoin a component $(\Sigma^T, V^T)$ of the complement by more creations, gluings, folds and zips.

First suppose $(\Sigma^T, V^T)$ is a vacuum. If may have edges both of $\Sigma'$-type, one each $\Sigma$- and $\Sigma'$-type, or both $\Sigma$-type. In the first case our vacuum is a component of $(\Sigma', V')$; we create a square and fold two of its edges together. In the second case we create a square, attach the $\Sigma$-type edge with a standard gluing, and fold two edges together. In the third case we create the square, attach one edge with a standard gluing, fold two edges together, and zip the final edge. We may now assume $\phi$ leaves no vacua.

Now suppose $(\Sigma^T, V^T)$ is a non-isolating component. Using lemma \ref{lem:building_non-isolating_morphisms} on the morphism which adds $(\Sigma^T, V^T)$ to the previously constructed surface $(\Sigma_p, V_p)$, this morphism can be constructed by successively creating squares and gluing their edges. As in the proof of proposition \ref{prop:quadrangulated_surface_construction}, each new square either has three consecutive troublesome edges, or two consecutive troublesome edges, or no troublesome edges, and in every case we can attach edges of the new square with standard gluings and folds.

Finally, suppose $(\Sigma^T, V^T)$ is an isolating component. Take a common edge $e$ of $(\Sigma^T, V^T)$ and $(\Sigma_p, V_p)$, i.e. an edge along which $(\Sigma^T, V^T)$ is to be glued. Consider splitting apart $(\Sigma_p, V_p)$ and $(\Sigma^T, V^T)$ at $e$; the morphism which attaches $(\Sigma^T, V^T)$ to $(\Sigma_p, V_p)$ along all desired edges except $e$ is a non-isolating morphism, and so by the above we construct it with creations, gluings and folds. Having done this, we glue the two split-open edges back to $e$ with a zip.
\end{proof}

Note that the sequence of creations, gluings, folds and zips is by no means canonical; there may be many choices of quadrangulation of $(\Sigma^c, V^c)$, and many ways to order its squares to make a good gluing order. 

However we can count the number of each type of elementary morphism from the topology of $\phi$. For instance, if $\phi$ leaves no vacua, then the number of creations is $I(\Sigma^c, V^c)$; swallowed vertices are produced precisely one for each fold and two for each zip; the number of zips is the number of isolated components plus number of collapsed boundary components of $(\Sigma,V)$; and the number of gluings + folds + zips is the number of edges glued along, which is the number of  $\Sigma$-type edges (common edges of $(\Sigma,V)$ and $(\Sigma^c, V^c)$), plus $G(\Sigma^c,V^c)$ (internal edges of $(\Sigma^c,V^c)$), plus the number of pairs of edges of $(\Sigma,V)$ identified under $\phi$.

\subsection{Quadrangulations and ribbon graphs}
\label{sec:ribbon_graphs}

A quadrangulation $A$ of an occupied surface $(\Sigma,V)$ without vacua describes $(\Sigma,V)$ as a 2-complex, with $V$ the $0$-skeleton, and all $2$-cells being squares. The $1$-skeleton is a graph consisting of internal and boundary edges. This graph has the extra structure that around each vertex $v \in V$ the edges are totally ordered, anticlockwise, from one boundary edge incident to $v$, through internal edges, to the other boundary edge incident to $v$. This is similar to the structure of a ribbon graph, where edges incident to a given vertex are cyclically ordered. Similarly, a slack quadrangulation also describes $(\Sigma,V)$ as a $2$-complex with all cells being squares. At the boundary vertices $V$, incident edges are totally ordered. At internal vertices, incident edges are cyclically ordered. This ribbon graph structure is enough to reconstruct the (bona fide or slack) quadrangulated $(\Sigma,V)$. So the data of a (bona fide or slack) quadrangulated occupied surface is equivalent to the data of a ribbon-type graph, with cyclic and/or total orderings of edges around vertices.

In the case of a bona fide quadrangulation, this ribbon-type graph, or thickened 1-skeleton, has $G(\Sigma,V)$ internal edges, $2N$ boundary edges, and $I(\Sigma,V) + B$ boundary components.

There is also a \emph{dual graph} to a quadrangulation. This graph has a vertex for each square of the quadrangulation, and an edge connecting any two vertices corresponding to squares whose sides are glued. So the dual graph has $I(\Sigma,V)$ vertices and $G(\Sigma,V)$ edges; it also has the structure of a ribbon graph, with a cyclic ordering of edges around each vertex. Each vertex has degree at most $4$. In fact $\Sigma$ deformation retracts onto the dual graph, and the ribbon graph, considered as a thickened graph, is homeomorphic to $\Sigma$.

\subsection{Adjusting quadrangulations}
\label{sec:adjusting_quadrangulations}

A given occupied surface $(\Sigma,V)$ can have many quadrangulations, possibly infinitely many. However they are all related by a particular elementary move, which we shall call a \emph{diagonal slide} (following graph theory literature e.g. \cite{Nakamoto96, Negami_Nakamoto93, Nakamoto_Suzuki10}). 

Consider an occupied disc with $6$ vertices, i.e. a hexagon. A quadrangulation of the hexagon is given by a single decomposing arc running between opposite vertices of the hexagon. Passing from the quadrangulation obtained from one of these diagonals to another is called a \emph{diagonal slide}; see figure \ref{fig:diagonal_slide}. In a regular hexagon, the diagonal is rotated $60^\circ$ clockwise or anticlockwise; in general we may speak of a clockwise or anticlockwise diagonal slide. 

We can perform a diagonal slide in any two adjacent squares in a quadrangulated occupied surface; it is a local adjustment of a quadrangulation. It has order $3$, in the sense that taking a quadrangulation and repeating a diagonal slide in the same hexagon in the same direction $3$ times yields the original quadrangulation.

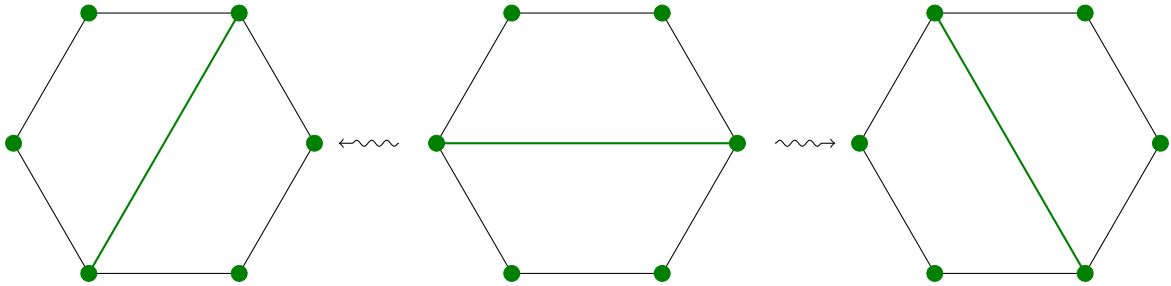
\begin{figure}
\begin{center}

\begin{tikzpicture}[
scale=2, 
boundary/.style={ultra thick}, 
vertex/.style={draw=green!50!black, fill=green!50!black},
decomposition/.style={thick, draw=green!50!black}, 
]

\foreach \x/\rot in {-80/60, 0/0, 80/-60}
{
\draw [xshift=\x, rotate=\rot] (0:1) -- (60:1) -- (120:1) -- (180:1) -- (240:1) -- (300:1) -- cycle;
\draw [xshift=\x, rotate=\rot, decomposition] (0:1) -- (180:1);

\foreach \angle in {0, 60, 120, 180, 240, 300}
\fill [vertex, xshift=\x, rotate=\rot] (\angle:1) circle (1.5pt);
}

\draw [shorten >=1mm, -to, decorate, decoration={snake,amplitude=.4mm, segment length = 2mm, pre=moveto, pre length = 1mm, post length = 2mm}]
(1.2,0) -- (1.7,0);
\draw [shorten >=1mm, -to, decorate, decoration={snake,amplitude=.4mm, segment length = 2mm, pre=moveto, pre length = 1mm, post length = 2mm}] (-1.2,0) -- (-1.7,0);

\end{tikzpicture}

\caption{Diagonal slides.}
\label{fig:diagonal_slide}
\end{center}
\end{figure}

\begin{thm}
\label{thm:quadrangulations_related_by_slides}
Let $(\Sigma,V)$ be an occupied surface without vacua. Then any two quadrangulations $A,A'$ of $(\Sigma,V)$ are related by diagonal slides.
\end{thm}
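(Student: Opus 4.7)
The plan is to induct on the gluing number $G = G(\Sigma, V)$. The base case $G = 0$ is immediate: by Proposition \ref{prop:build_quadrangulation}, any quadrangulation of $(\Sigma, V)$ has exactly $G(\Sigma, V)$ internal arcs, so $G = 0$ forces the empty quadrangulation, and there is nothing to prove.

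For the inductive step, place $A$ and $A'$ in general position within their isotopy classes so that the total geometric intersection $|A \cap A'|$ is minimal. First suppose $A$ and $A'$ share a decomposing arc $\alpha$ up to isotopy rel endpoints. Since $\alpha$ is nontrivial, cutting $(\Sigma, V)$ along $\alpha$ produces an occupied surface $(\Sigma_0, V_0)$ without vacua, and by Lemma \ref{lem:index_invariant} its gluing number is $G - 1$. The arcs $A \setminus \alpha$ and $A' \setminus \alpha$ are quadrangulations of $(\Sigma_0, V_0)$, so by the inductive hypothesis they are related by a sequence of diagonal slides; each slide is supported in an interior hexagon disjoint from the boundary edges created by the cut, so the sequence lifts back to diagonal slides on $(\Sigma, V)$.

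It remains to treat the case when $A$ and $A'$ share no arc. Then $|A \cap A'| > 0$: otherwise some arc $\alpha \in A$ would lie inside a single square of $A'$, forcing triviality by Lemma \ref{lem:nontrivial_arc_exists} and contradicting that quadrangulation arcs are nontrivial. The goal is to exhibit a diagonal slide on $A$ that either introduces an arc shared with $A'$ (returning to the previous case) or strictly decreases $|A \cap A'|$. The strategy is an outermost-subarc argument: some subarc of some $\alpha' \in A'$ must enter a square $S$ of $A$ and cut off a corner, i.e., enter and exit through a pair of adjacent edges of $S$ rather than opposite edges. In the hexagon formed by $S$ together with the square $S''$ of $A$ adjacent across the edge through which the subarc exits, a clockwise or anticlockwise diagonal slide, chosen according to the corner-cutting configuration, replaces an arc of $A$ by one isotopic into a piece of $\alpha'$, after which a standard bigon-elimination step strictly decreases $|A \cap A'|$.

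The principal obstacle is making this reducing-slide step rigorous. Two difficulties stand out: the adjacent squares $S$ and $S''$ may fail to be embedded as a genuine hexagon in $(\Sigma, V)$ because their edges or vertices may be identified globally; and one must show that a corner-cutting subarc always exists when $|A \cap A'| > 0$. For the first point I would work in a regular neighborhood of $S \cup S''$, where the hexagon is embedded, perform the slide there, and verify via the arc count in Proposition \ref{prop:build_quadrangulation} that the result is still a quadrangulation. For the second, I would first handle the planar disc case separately (two quadrangulations of a $2N$-gon being diagonal-slide-equivalent by an induction on $N$ using ear-cutting) and then use this as a local tool in the general surface case, paralleling the strategy in \cite{Nakamoto96, Negami_Nakamoto93}.
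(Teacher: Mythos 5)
Your overall scheme (induct on the gluing number, cut along a shared arc, and otherwise find a slide that reduces $|A\cap A'|$) is a reasonable plan, and the easy parts are fine: the base case, the observation that disjoint quadrangulations sharing no arc are impossible, and the lifting of slides from the cut surface (a slide is a local move in the union of two adjacent squares, and those squares are squares of both quadrangulations). But the heart of the proof is the reducing-slide step, and there you have only named the difficulty, not resolved it. Two separate claims are left unestablished. First, that whenever $|A\cap A'|>0$ some subarc of an arc of $A'$ cuts off a corner of a square of $A$: on a surface with genus or many boundary components an arc of $A'$ can be essential, and away from its endpoints every crossing of a square of $A$ may well run between opposite edges; the only subarcs you control are the terminal ones ending at a vertex of $V$, and those end at a corner rather than cutting one off, so the configuration analysis (which signs, which edges, whether the adjacent square across the exit edge is even distinct from $S$) still has to be done. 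Second, and more seriously, that the slide you then perform strictly decreases $|A\cap A'|$ (or produces a shared arc): a diagonal slide in the hexagon $S\cup S''$ is constrained to the three main diagonals compatible with the signs, and the new diagonal can pick up intersections with \emph{other} arcs of $A'$, or with other strands of the same arc $\alpha'$ passing through the hexagon, so monotone decrease of the total intersection number is not automatic and needs either a carefully chosen complexity or an ordering argument. Since you explicitly defer both points (``I would work in a regular neighborhood\dots'', ``I would first handle the planar disc case\dots''), the proof as written has a genuine gap exactly where the theorem's content lies; note that the paper itself gives the elementary ear-cutting argument only for the disc and states that beyond that point ``things become a little more difficult'', which is precisely the regime your sketch does not handle.

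It is also worth knowing that the paper's actual proof avoids this combinatorial surgery altogether: it adapts Penner's decorated Teichm\"uller space argument \cite{Penner87}, realizing each quadrangulation from a decorated hyperbolic structure by expanding horocycles and recording intersections of oppositely signed horocycles, and then showing that along a generic path in the (connected, indeed contractible) decorated Teichm\"uller space the associated quadrangulation changes only by diagonal slides. If you want to complete your route instead, the place to look is the diagonal-slide literature for bipartite quadrangulations \cite{Nakamoto96, Negami_Nakamoto93}, but you would need to adapt those arguments to vertices on the boundary and supply the intersection-reduction lemma in full.
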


Our proof of this theorem uses a technique similar to Penner's proof of the related fact that any two ideal triangulations of a punctured surface are related by a finite sequence of elementary moves which switch diagonals in a square.

When $\Sigma$ is a disc, we can give an elementary proof.
\begin{proof}[Sketch of proof, when $\Sigma$ is a disc]
Given a vertex $v \in V$ and two edges $e,e'$ of a quadrangulation $A$ incident to $v$, we may use diagonal slides to adjust until $e,e'$ are consecutive at $v$. Using this fact, given any four distinct vertices $v_1, v_2, v_3, v_4$ successively connected by edges $e_1, e_2, e_3$ (internal or boundary) of a quadrangulation $A$, we can make edges $e_1, e_2$ consecutive at $v_2$ and $e_2, e_3$ consecutive at $v_3$; then we can obtain an edge connecting $v_1$ to $v_4$, isotopic to the path $e_1 \cup e_2 \cup e_3$; see figure \ref{fig:length_3_path}. So if we have distinct vertices and edges around the boundary $v_1 \stackrel{e_1}{\leftrightarrow} v_2 \stackrel{e_2}{\leftrightarrow} v_3 \stackrel{e_3}{\leftrightarrow}v_4$, we can perform diagonal slides to obtain a quadrangulation in which there is a square with vertices $v_1, v_2, v_3, v_4$, three edges $e_1, e_2, e_3$, and a final edge connecting $v_1$ to $v_4$.

Now in any quadrangulated disc $(\Sigma,V)$, there is always a square with three edges on $\partial \Sigma$. So given two quadrangulations $A,A'$, take a square $(\Sigma^\square, V^\square)$ of $A$ which has three boundary edges, and by the above we may adjust $A'$ until it also contains $(\Sigma^\square, V^\square)$. In this way we reduce to a smaller case and are done by induction, the simplest cases being easy.
\end{proof}

The above proof applies equally to more complicated surfaces, to reduce to the case where each boundary component contains $2$ vertices; however at this point things become a little more difficult, and so we take a different approach.

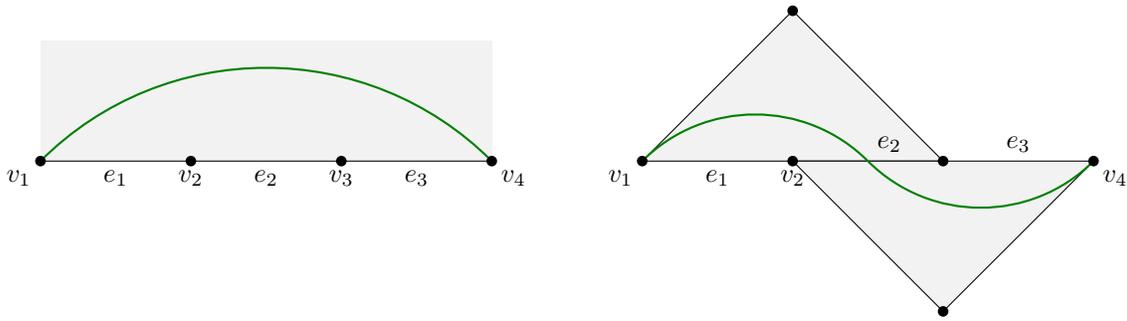
\begin{figure}
\begin{center}

\begin{tikzpicture}[
scale=2, 
fill = gray!10,
decomposition/.style={thick, draw=green!50!black} ]

\coordinate [label = below left:{$v_1$}] (v1) at (0,0);
\coordinate [label = below:{$v_2$}] (v2) at (1,0);
\coordinate [label = below:{$v_3$}] (v3) at (2,0);
\coordinate [label = below right:{$v_4$}] (v4) at (3,0);

\fill (0,0) rectangle (3,0.8);
\draw (v1) -- node [midway, below] {$e_1$} (v2) -- node [midway, below] {$e_2$} (v3) -- node [midway, below] {$e_3$} (v4);
\draw [decomposition] (v1) to [bend left=45] (v4);

\coordinate [label = below left:{$v_1$}] (w1) at (4,0);
\coordinate [label = below:{$v_2$}] (w2) at (5,0);
\coordinate [label = below:{$v_3$}] (w3) at (6,0);
\coordinate [label = below right:{$v_4$}] (w4) at (7,0);
\coordinate (upper) at (5,1);
\coordinate (lower) at (6,-1);

\filldraw (w1) -- node [midway, below] {$e_1$} (w2) -- node [midway, above right] {$e_2$} (w3) -- (upper) -- cycle;
\filldraw (w2) -- (w3) -- node [midway, above] {$e_3$} (w4) -- (lower) -- cycle;
\draw [decomposition] (w1) to [bend left=45] ($ (w2) ! 0.5 ! (w3) $)
to [bend right=45] (w4);

\foreach \point in {v1, v2, v3, v4, w1, w2, w3, w4, lower, upper}
\fill [black] (\point) circle (1pt);

\end{tikzpicture}

\end{center}
\caption{Joining two vertices by an arc of a quadrangulation.}
\label{fig:length_3_path}
\end{figure}

We will describe Penner's technique from \cite{Penner87} first, then our own use of similar techniques. We think of vertices as punctures in a surface.

Recall the Teichmuller space $\mathcal{T}$ of a punctured surface is the space of marked hyperbolic structures on the surface, with cusps at the punctures. Decorate each vertex puncture with a horocycle; the space of such decorated hyperbolic structures is the decorated Teichmuller space $\tilde{\mathcal{T}}$. Penner shows $\tilde{\mathcal{T}}$ is a ball.

A generic point in $\tilde{\mathcal{T}}$ determines an ideal triangulation of the surface. This triangulation can be obtained by expanding the horocycles about the punctures and noting when they intersect. Each new intersection determines an arc between punctures, which is added to the triangulation; eventually enough arcs are seen to form an ideal triangulation. Generically these intersections all occur at different times in the expansion of horocycles, so there is no ambiguity in which arcs to choose.

Moving about in $\tilde{\mathcal{T}}$, different ideal triangulations are obtained, as different horocycle intersections are seen in different orders. Along a generic path in $\tilde{\mathcal{T}}_{(\Sigma,V)}$, horocycle intersections are seen at equal times only in pairs, and only at a finite set of decorated hyperbolic structures along the path.

As we cross a point on this generic path in $\tilde{\mathcal{T}}_{(\Sigma,V)}$ where two horocycles intersect at the same time, the construction of the ideal triangulation changes; a certain horocycle intersection now appears earlier, while another horocycle intersection now appears later. This change in the order of appearance of horocycle intersections may or may not affect the final ideal triangulation achieved; if it does, then the arc corresponding to the horocycle intersection which now appears earlier is added to the triangulation, while the arc corresponding to the horocycle intersection which now appears later is deleted form the triangulation. It is seen that the change in triangulation is a local move, which switches a diagonal within a square of the triangulation.

For any triangulation, there is a point in $\tilde{\mathcal{T}}$ which gives that triangulation. Given two triangulations, we may take two points in $\tilde{\mathcal{T}}$ which give those triangulations, and connect them by a generic path. As we traverse that path and consider the triangulations obtained at points on that path, we see it changes only by local moves which switch diagonals in squares. In this way it is shown that any two triangulations are related by these moves.

\begin{proof}[Sketch of proof]
We consider the Teichmuller space of hyperbolic structures on $(\Sigma,V)$, with cusps at the vertices, and geodesic edges between vertices. Decorate each vertex puncture with a horocycle; the space of such decorated hyperbolic structures is the decorated Teichmuller space $\tilde{\mathcal{T}}_{(\Sigma,V)}$. It is simply connected, in fact a ball, for the same reason as in Penner: taking a triangulation of $(\Sigma,V)$ by arcs running between vertices/punctures, the arcs may be given $\lambda$-lengths which may be chosen freely as positive reals; and so a homeomorphism constructed between $\tilde{\mathcal{T}}_{(\Sigma,V)}$ and $\mathbb{R}_+^{\# \text{arcs}}$.

Now, a point in $\tilde{\mathcal{T}}_{(\Sigma,V)}$, i.e. a choice of marked hyperbolic structure and horocycles about vertices, determines a \emph{quadrangulation} of $(\Sigma, V)$ as follows. As above we consider the horocycles expanding from the punctures. But we now distinguish between the \emph{positive} horocycles around positive vertices (from $V_+$), and \emph{negative} horocycles around negative vertices (from $V_-$). As we expand the horocycles, we take note of where horocycles \emph{of opposite sign} meet. Each such intersection determines a geodesic arc between vertices of opposite sign, hence a decomposing arc. We only take note of those decomposing arcs which are nontrivial, i.e. are not boundary edges. We know that adding enough decomposing arcs (and all horoballs eventually meet when expanded far enough), we eventually arrive at a quadrangulation.

Again, for a generic point in $\tilde{\mathcal{T}}_{(\Sigma,V)}$, the intersections of (oppositely signed) horocycles (not along boundary edges) occur at distinct times in the expansion of horocycles, so there is no ambiguity in which arcs to choose. And again, along a generic path in $\tilde{\mathcal{T}}_{(\Sigma,V)}$, simultaneous (opposite sign) horocycle intersections (not along boundary edges) are seen only at a finite set of decorated marked hyperbolic structures along the path; at one of this finite set of decorated marked hyperbolic structures, there is only one time at which multiple intersections occur, and at this time only two pairs of horocycles simultaneously intersect.

As we cross a point on this generic path in $\tilde{\mathcal{T}}_{(\Sigma,V)}$ where horocycles simultaneously intersect, the construction of the quadrangulation changes; a certain horocycle appears earlier, and a certain horocycle appears later. If the final quadrangulation changes, the effect is that the arc corresponding to the horocycle intersection which now appears earlier is added to the quadrangulation, and the arc corresponding to the horocycle intersection which now appears later is deleted from the quadrangulation.

Thus the effect on the quadrangulation is to remove one arc and add another, with the result being another quadrangulation. Removing the arc creates a hexagonal complementary region, where we have just removed a main diagonal; the new arc added must be a different main diagonal. Hence the effect on the quadrangulation is a diagonal slide.

For any quadrangulation $A$, there is a point in $\tilde{\mathcal{T}}_{(\Sigma,V)}$ which gives that quadrangulation $A$. For by adding extra arcs, $A$ can be extended to an ideal triangulation $I$. Just as in the punctured case (Penner section 6), if we set $\lambda$-lengths on all the edges to be $1$, then the corresponding decorated marked hyperbolic structure has the property that expanding horocycles gives intersections which determine the arcs of $I$. If we expand the same horocycles in the same way but only pay attention to non-boundary intersections between horocycles of opposite sign, then we obtain the quadrangulation $A$.

So, given two quadrangulations $A,A'$, we take two points in $\tilde{\mathcal{T}}_{(\Sigma,V)}$ which determine those quadrangulations, and connect them by a generic path. We traverse the path and consider the quadrangulations obtained. The quadrangulations are constant along the path, except at a finite set of points where the quadrangulation changes by a diagonal slide. Thus $A,A'$ are related by a sequence of diagonal slides.
\end{proof}

\section{Sutures}
\label{sec:sutures}

\subsection{Sutured background background}

Here we follow definitions from \cite{Me10_Sutured_TQFT}. (See also the similar ideas, but different terminology, of \cite{Zarev09, Zarev10}.) Sutures on a surface are a set of curves which divide the surface into signed regions, in a coherent way.

\begin{defn}
A \emph{sutured surface} $(\Sigma, \Gamma)$ is an oriented (topological) surface $\Sigma$ (possibly disconnected) with a properly embedded oriented 1-submanifold $\Gamma \subset \Sigma$, such that:
\begin{enumerate}
\item
$\Sigma \backslash \Gamma = R_+ \cup R_-$, where $R_\pm$ are surfaces oriented as $\pm \Sigma$;
\item
$\overline{\partial R_\pm \backslash \partial \Sigma} = \Gamma$ as oriented 1-manifolds; and
\item
For every component $C$ of $\partial \Sigma$, $C \cap \Gamma \neq \emptyset$.
\end{enumerate}
\end{defn}
Condition (i) says that $\Gamma$ cuts $\Sigma$ into positive and negative regions; (ii) says that the positive and negative regions are coherent; (iii) says that sutures hit every boundary component. (The requirement (iii) is not standard, but makes sense in our context.) Figure \ref{fig:sutured_nonsutured_surfaces} shows an example and a non-example.

\begin{figure}
\centering
\includegraphics[scale=0.4]{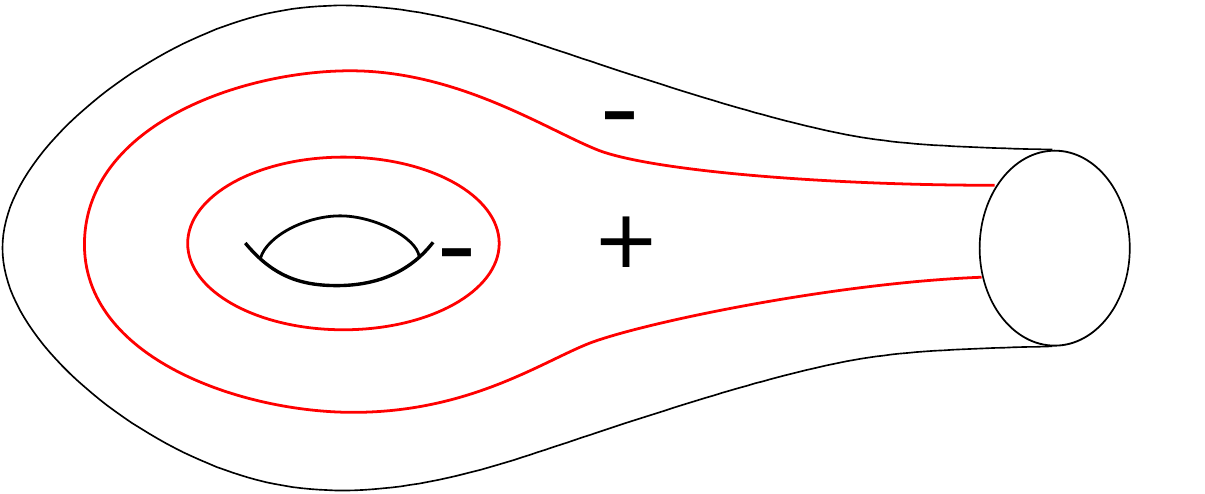}
\includegraphics[scale=0.4]{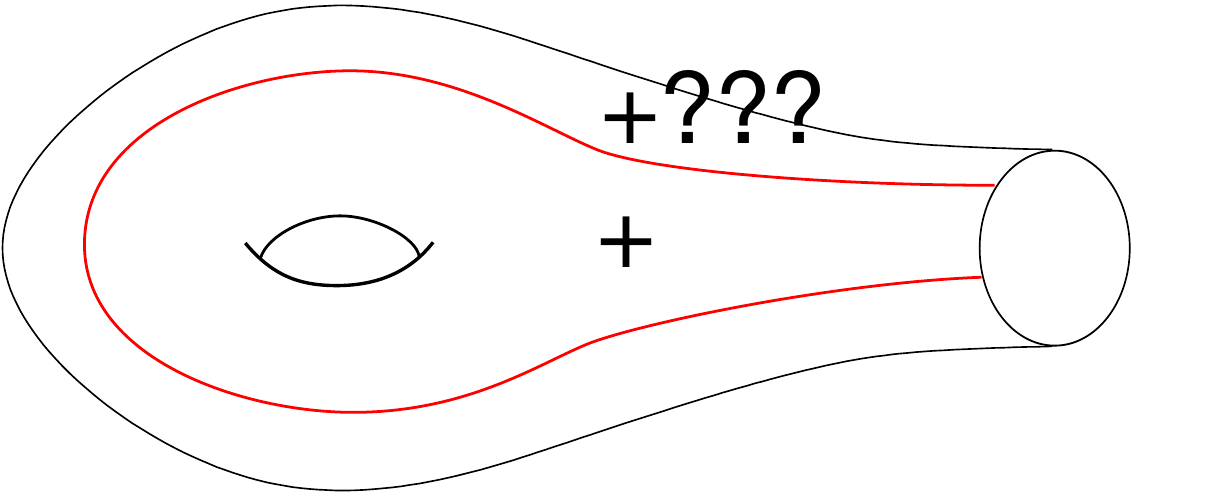}
\caption{An example and a non-example of a sutured surface.}
\label{fig:sutured_nonsutured_surfaces}
\end{figure}

We say that $\Gamma$ is a \emph{set of sutures} on $\Sigma$, and a component of $\Gamma$ is called a \emph{suture}. So as we cross a suture, we pass from a positive region $R_+$ into a negative region $R_-$, or vice versa.

Consider the boundary data of a sutured surface $(\Sigma, \Gamma)$. It is just a distinguished set of signed points $F = \partial \Gamma$ on the boundary, and signed intervals $C_\pm = \partial R_\pm$.
\begin{defn}
A \emph{sutured background} $(\Sigma,F)$ is an oriented (topological) surface $\Sigma$ with nonempty boundary, and a finite set $F$ of signed points on $\partial \Sigma$, such that
\begin{enumerate}
\item
$\partial \Sigma \backslash F = C_+ \cup C_-$ where $C_\pm$ is oriented as $\pm \partial \Sigma$;
\item
$\partial C_\pm = - F$ as signed points. (Here $\partial \Sigma$ inherits an orientation from $\Sigma$.)
\item
For every component $C$ of $\partial \Sigma$, $C \cap F \neq \emptyset$.
\end{enumerate}
\end{defn}
Note that each component $C$ of $\partial \Sigma$ contains an even number of points, at least $2$, of $F$, alternating in sign; crossing a point of $F$, we pass from a positive arc $C_+$ into a negative arc $C_-$ or vice versa. 

It's easy to verify that the boundary data $F = \partial \Gamma$, $C_\pm = \partial R_\pm \cap \partial \Sigma$ of a sutured surface determines a sutured background. See figure \ref{fig:sutured_background}. A \emph{set of sutures on a sutured background $(\Sigma,F)$} is a set of sutures which restricts to $(\Sigma,F)$ on the boundary in this way. Thus, sutures are a way of ``filling in'' the boundary data of a sutured background, ``joining the dots'' of $F$ with arcs and closed curves compatible with signs. Note that for a point $f \in F$, there is precisely one suture of $\Gamma$ ending at $f$; if there were more than one, $\Gamma$ would not be properly embedded.

\begin{figure}
\centering
\includegraphics[scale=0.4]{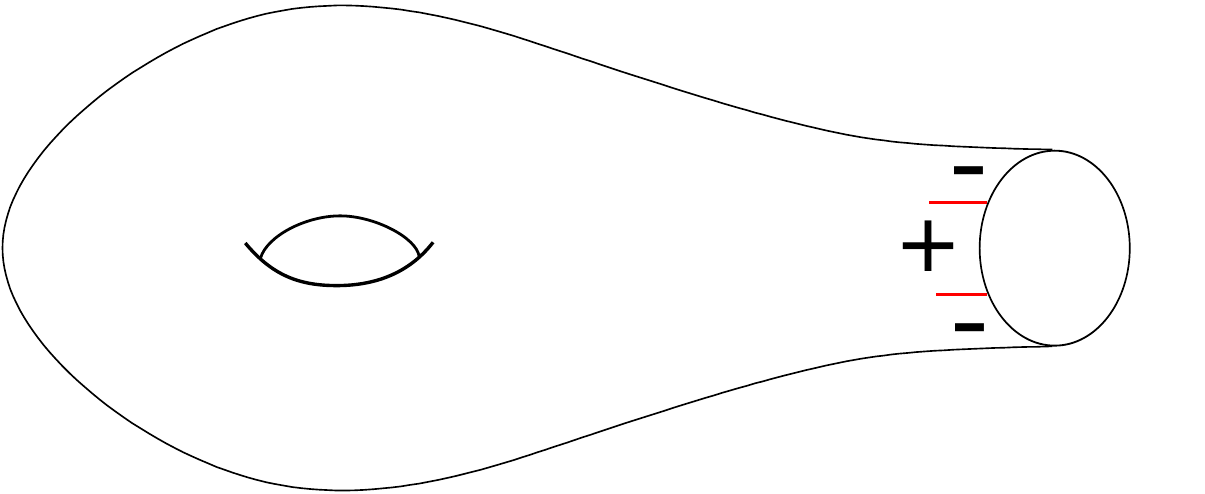}
\caption{A sutured background surface.}
\label{fig:sutured_background}
\end{figure}

A neighbourhood of a suture which is a properly embedded arc looks like figure \ref{fig:neighbourhood_of_arc_suture} below. (The reader might wonder why we set $\partial C_\pm = -F$, rather than $F$. The answer is that, if we want $\partial R_\pm \cap \Gamma = C_\pm$ and $\partial \Gamma = F$, then $C_\pm$ is naturally oriented from $F_+$ to $F_-$. We cannot have both $F = \partial \Gamma$ and $F = \partial C_\pm$.)

\begin{figure}
\begin{center}

\begin{tikzpicture}[
scale=2, 
fill = gray!10,
decomposition/.style={thick, draw=green!50!black}, 
vertex/.style={draw=green!50!black, fill=green!50!black},
suture/.style={thick, draw=red, postaction={nomorepostaction,decorate, decoration={markings,mark=at position 0.5 with {\arrow{>}}}}},
midarrow/.style={thick, postaction={nomorepostaction,decorate, decoration={markings,mark=at position 0.5 with {\arrow{>}}}}} ]

\coordinate [label = below left:{$V_-$}] (bl) at (0,0);
\coordinate [label = left:{$F_-$}] (ml) at (0,1);
\coordinate [label = above left:{$V_+$}] (tl) at(0,2);
\coordinate [label = below right:{$V_-$}] (br) at (3,0);
\coordinate [label = right:{$F_+$}] (mr) at (3,1);
\coordinate [label = above right:{$V_+$}] (tr) at (3,2);

\fill (0,0) rectangle (3,2);

\draw [suture] (ml) to node [midway, above] {$\Gamma$} (mr);
\draw [midarrow] (bl) to node [midway, left] {$C_-$} (ml);
\draw [midarrow] (tl) to node [midway, left] {$C_+$} (ml);

\draw [midarrow] (mr) to node [midway, right] {$C_-$} (br);
\draw [midarrow] (mr) to node [midway, right] {$C_+$} (tr);

\draw (1.5,0.5) node {$R_-$};
\draw (1.5,1.5) node {$R_+$};

\foreach \point in {tl, bl, tr, br}
\fill [vertex] (\point) circle (2pt);
\foreach \point in {ml, mr}
\fill [red] (\point) circle (2pt);

\end{tikzpicture}

\end{center}
\caption{Neighbourhood of a suture which is an arc.}
\label{fig:neighbourhood_of_arc_suture}
\end{figure}
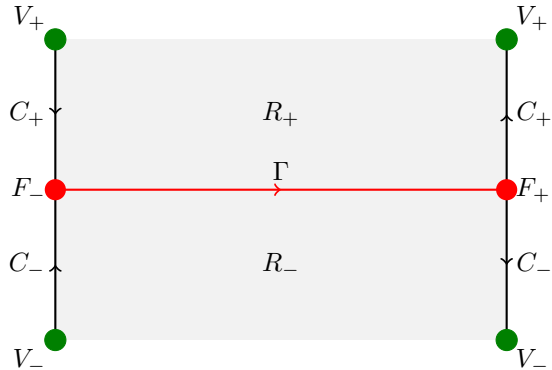

An embedding of sutured surfaces $(\Sigma, \Gamma) \To (\Sigma', \Gamma')$ is an embedding $\Sigma \To \Sigma'$ which sends $\Gamma \mapsto \Gamma'$ and $R_\pm \mapsto R'_\pm$. Similarly we may speak of a homeomorphism of sutured surfaces. A homeomorphism of sutured background surfaces $(\Sigma,F) \To (\Sigma',F')$ is a homeomorphism $\Sigma \To \Sigma'$ which sends $F \mapsto F'$ preserving signs.  

Two sets of sutures on $(\Sigma, F)$ are \emph{equivalent} if they are isotopic rel boundary in $(\Sigma, F)$.  

We now introduce two important properties of sutures. Both involve subsurfaces bounded by sutures.
\begin{defn}
A suture is \emph{trivial} if it is a contractible closed curve. A set of sutures is \emph{trivial} if it contains a trivial suture.
\end{defn}

\begin{defn}
A set of sutures $\Gamma$ is \emph{confining} if there is a component of $\Sigma \backslash \Gamma$ which does not intersect $\partial \Sigma$. \end{defn}
Confining sutures thus confine a region of $\Sigma \backslash \Gamma$, which cannot escape to $\partial \Sigma$. The disc bounded by a trivial suture is a special case, so trivial implies confining, and nonconfining implies nontrivial.

\subsection{Cutting and gluing sutures}

Sutured surfaces can be cut and glued together. 

To cut, we may take a properly embedded arc, or closed curve $c$ in a sutured surface $(\Sigma, \Gamma)$. Assuming that $c$ is transverse to $\Gamma$, and does not intersect $c$ on $\partial \Sigma$, then cutting along $c$ produces a surface $\Sigma'$ with a properly embedded 1-submanifold $\Gamma'$, and subsurfaces $\Sigma' \backslash \Gamma' = R'_- \sqcup R'_+$ with the required orientation properties, but $\Gamma'$ may not intersect every component of $\partial \Sigma'$. However, if $c$ intersects $\Gamma$, then this is guaranteed.

Conversely, to glue, we may take two disjoint arcs $c,c' \subset \partial \Sigma$ on a (possibly disconnected) sutured surface $(\Sigma, \Gamma)$, and a homeomorphism $\phi: c \To c'$ which preserves the sutured structure, $c \cap \Gamma \mapsto c' \cap \Gamma$ and $c \cap R_\pm \mapsto c' \cap R_\pm$. Gluing via $\phi$ produces a surface $\Sigma'$ with a properly embedded 1-submanifold $\Gamma'$. In order that $\Sigma'$ be orientable, $\phi$ must be orientation-reversing, where $c,c'$ inherit orientations from $\partial \Sigma$. Then we obtain subsurfaces $R'_\pm$ with coherent signs, but $\Gamma'$ may not intersect every component of $\partial \Sigma'$. However if $|c \cap \Gamma| = |c' \cap \Gamma| = 1$, and these intersection points are not consecutive points around $\partial \Sigma$, then $\Gamma'$ intersects every component of $\partial \Sigma'$.

To summarise:
\begin{lem}
\label{lem:cutting_sutures}
\label{lem:gluing_sutures}
Let $(\Sigma,\Gamma)$ be a (possibly disconnected) sutured surface.
\begin{enumerate}
\item
If $c$ is a simple closed curve or properly embedded arc in $\Sigma$ which is transverse to $\Gamma$ and intersects $\Gamma$ in at least one point in its interior, then cutting along $c$ gives a sutured surface.
\item
If $c,c' \subset \partial \Sigma$ are disjoint arcs, each inheriting an orientation from $\partial \Sigma$ and intersecting $\Gamma$ precisely once at non-consecutive points of $\partial \Gamma$ around $\partial \Sigma$, and $\phi: c \To c'$ is an orientation-reversing homeomorphism sending $c \cap R_\pm \mapsto c' \cap R_\pm$ and $c \cap \Gamma \mapsto c' \cap \Gamma$, then gluing $\Sigma$ along $\phi$ gives a sutured surface.
\end{enumerate}
\qed
\end{lem}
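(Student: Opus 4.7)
The plan is, for each case, to verify axioms (i)--(iii) of the definition of a sutured surface for the constructed object $(\Sigma', \Gamma')$. The paragraphs preceding the lemma statement already indicate why axioms (i) and (ii) hold in both cases, so the substantive content is in verifying axiom (iii): that $\Gamma'$ meets every component of $\partial \Sigma'$.

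For part (i), transversality of $c$ with $\Gamma$ at interior points ensures $\Gamma$ descends to a properly embedded 1-submanifold $\Gamma' \subset \Sigma'$, and the decomposition $R_\pm$ restricts to $R'_\pm$ with the correct orientations, giving (i) and (ii). For (iii), each component $C'$ of $\partial \Sigma'$ either coincides with a pre-existing component of $\partial \Sigma$---in which case $C'$ meets $\Gamma'$ by axiom (iii) for $(\Sigma, \Gamma)$---or contains at least one of the ``new'' boundary arcs produced by cutting along $c$. In the latter case, each interior intersection point $p \in c \cap \Gamma$ is duplicated into two points lying on $\Gamma'$, one on each side of the cut; since each side of $c$ embeds as a connected piece of some component of $\partial \Sigma'$, any such $C'$ must contain at least one such duplicate point. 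The hypothesis $c \cap \Gamma \neq \emptyset$ in the interior of $\Sigma$ is exactly what makes this step work.

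For part (ii), the orientation-reversing property of $\phi$ makes $\Sigma'$ orientable; the conditions $\phi(c \cap \Gamma) = c' \cap \Gamma$ and $\phi(c \cap R_\pm) = c' \cap R_\pm$ give a properly embedded $\Gamma' \subset \Sigma'$ and a decomposition into coherently signed regions $R'_\pm$. The main obstacle is condition (iii), which requires genuine casework. The components of $\partial \Sigma'$ are obtained from $\partial \Sigma \setminus (c \cup c')$ by identifying the endpoints of $c$ with those of $c'$ via $\phi$, so each such component is a union of arcs of $\partial \Sigma \setminus (c \cup c')$. I would split into two principal cases: (a) $c$ and $c'$ lie on distinct components of $\partial \Sigma$, in which case the two components merge into one component of $\partial \Sigma'$ that trivially inherits sutures from both; (b) $c$ and $c'$ lie on the same component $C$ of $\partial \Sigma$, in which case the four endpoints of $c, c'$ subdivide $C$ into four arcs, and gluing recombines these into one or two components of $\partial \Sigma'$ according to whether the endpoints of $c,c'$ are interleaved around $C$ and the orientation chosen by $\phi$. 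In each subcase of (b), the non-consecutivity hypothesis---that the two points of $F$ on $c$ and $c'$ are separated around $\partial \Sigma$ by at least one other point of $F$---is exactly what guarantees that every resulting component of $\partial \Sigma'$ contains at least one point of $F \setminus \{c \cap \Gamma, c' \cap \Gamma\}$, hence meets $\Gamma'$. Once the subcases of (b) are enumerated, each verification is elementary.
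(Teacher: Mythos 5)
Your proposal is correct and follows essentially the same route as the paper, which proves this lemma via the discussion immediately preceding it: cutting or gluing automatically yields properly embedded $\Gamma'$ and coherently signed $R'_\pm$, and the only issue is condition (iii), which is rescued exactly by the interior intersection point (for cutting) and the non-consecutivity hypothesis (for gluing), since each complementary arc of $c\cup c'$ must then carry a point of $\Gamma$. The only small imprecision is in your case (b): disjoint arcs $c,c'$ on the same boundary circle can never have interleaved endpoints, and the required orientation-reversing $\phi$ always closes each of the two complementary arcs into its own boundary circle, so there is really just one configuration to check -- but this does not affect the validity of your argument.
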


\subsection{Euler class}

\begin{defn}
The \emph{Euler class} $e$ of a sutured surface $(\Sigma, \Gamma)$ is $e = \chi(R_+) - \chi(R_-)$.
\end{defn}
(This terminology comes from contact geometry. A sutured surface $(\Sigma, \Gamma)$ determines a contact structure $\xi$ in a neighbourhood of $\Sigma$ embedded in a 3-manifold, and $e(\Gamma)$ is the evaluation of its Euler class on $\Sigma$. See \cite{Gi91}.)

By additivity of Euler characteristic, we have immediately under disjoint union,
\[
e \left( (\Sigma, \Gamma) \sqcup (\Sigma', \Gamma') \right) = e(\Sigma, \Gamma) + e(\Sigma', \Gamma').
\]
Cutting or gluing sutured surfaces often preserves Euler class; however it's not difficult to see that cutting along or gluing together arcs which intersect an even number of points, changes the Euler class. The following lemma is clear upon taking an appropriate cell decomposition of $\Sigma$, compatible with $R_+$ and $R_-$.
\begin{lem}
\label{lem:euler_class_invariant}
Let $(\Sigma, \Gamma)$ and $(\Sigma', \Gamma')$ be sutured surfaces where $(\Sigma', \Gamma')$ is obtained from $(\Sigma, \Gamma)$ by:
\begin{enumerate}
\item
cutting along a simple closed curve transverse to $\Gamma$ (which necessarily intersects $\Gamma$ in an even number of points);
\item
cutting along a properly embedded arc in $\Sigma$ which intersects $\Gamma$ in an odd number of points;
\item
gluing together two boundary components $C,C'$ of $\partial \Sigma$ (which necessarily intersect $\Gamma$ in an even number of points each), via an orientation-reversing homeomorphism $\phi: C \To C'$, which sends $C \cap R_\pm \mapsto C' \cap R_\pm$, $C \cap \Gamma \mapsto C' \cap \Gamma$;
\item
gluing together two disjoint arcs $c,c' \subset \partial \Sigma$ (oriented from $\partial \Sigma$), such that $|c \cap \Gamma| = |c' \cap \Gamma|$ is odd, via an orientation-reversing homeomorphism $\phi: c \To c'$ which sends $c \cap R_\pm \mapsto c' \cap R_\pm$, $c \cap \Gamma \mapsto c' \cap \Gamma$.
\end{enumerate}
Then $e(\Gamma') = e(\Gamma)$.
\qed
\end{lem}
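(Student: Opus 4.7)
The plan is to fix a CW decomposition of $\Sigma$ in which $\Gamma$, $\partial \Sigma$, and the cutting curve or glued boundary are all subcomplexes, with $F$ and any intersection points with $\Gamma$ appearing as $0$-cells. Since $R_+ \cup R_- = \Sigma$ and $R_+ \cap R_- = \Gamma$, each open $2$-cell of $\Sigma$ lies in either $R_+$ or $R_-$; I will compute $\chi(R_\pm)$ as alternating cell counts of the closed subsurface, compare before and after each of the four operations, and verify that $\chi(R_+)$ and $\chi(R_-)$ change by the same integer, so that $e = \chi(R_+) - \chi(R_-)$ is unaffected.

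The book-keeping rests on two elementary facts. First, cutting a surface along a single properly embedded arc with both endpoints on the boundary adds exactly $1$ to its Euler characteristic: the interior vertices and edges of the arc are duplicated, and the two boundary-endpoint vertices also split, yielding a net $+1$. Second, identifying two disjoint boundary arcs of a surface via a homeomorphism decreases Euler characteristic by exactly $1$ (two arcs of $\chi = 1$ are replaced by a single one). These two facts will be applied to $R_+$ and $R_-$ separately for each operation.

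For each case one then counts how many arcs of the cutting or gluing locus lie in $R_+$ versus $R_-$. In (i), the $2k$ crossings of the simple closed curve $c$ with $\Gamma$ partition $c$ into $2k$ sub-arcs, alternating sides, giving $k$ sub-arcs in each of $R_+$ and $R_-$, so cutting adds $+k$ to each $\chi(R_\pm)$. In (ii), odd intersection parity forces the endpoints of $c$ to lie on opposite signs (one on $C_+$, one on $C_-$), so the $2k+1$ crossings divide $c$ into $2k+2$ sub-arcs with $k+1$ on each side, and cutting adds $+(k+1)$ to each $\chi(R_\pm)$. In (iii), the suture-preserving homeomorphism identifies the $k$ arcs of $C$ on the positive side with the $k$ on the positive side of $C'$ and similarly on the negative side, so each $\chi(R_\pm)$ decreases by $k$. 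In (iv), odd parity again forces $k+1$ arcs on each side of both $c$ and $c'$, and the gluing decreases each $\chi(R_\pm)$ by $k+1$. In every case the change is symmetric in $R_+$ and $R_-$, so $e(\Gamma') = e(\Gamma)$. The main (mild) obstacle is justifying the ``equal counts on each side'' claim, which is precisely where the parity hypotheses in (i)--(iv) are used: combined with the alternation of signs of $R_\pm$ across $\Gamma$ and across points of $F$ on $\partial \Sigma$, they ensure that each operation treats the two sides symmetrically.
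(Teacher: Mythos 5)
Your proposal is correct, and it is the same argument the paper has in mind: the paper simply asserts the lemma is "clear upon taking an appropriate cell decomposition of $\Sigma$, compatible with $R_+$ and $R_-$," and your write-up fills in exactly that bookkeeping — counting the sub-arcs of the cut/glued locus lying in $\overline{R_+}$ and $\overline{R_-}$ and noting the parity hypotheses force equal counts, so $\chi(R_+)$ and $\chi(R_-)$ shift by the same amount and $e$ is preserved.
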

(Note that the assumption that cutting or gluing produces a sutured surface implies that not all topological gluings or cuttings described above are possible. For instance in case (i) the simple closed curve intersects $\Gamma$ nontrivially. Other restrictions apply in cases (iii) and (iv).)

\subsection{Adjusting sutures}

We consider now the elementary surgery on sutures, \emph{bypass surgery}, introduced by Honda in \cite{Hon00I}.

A \emph{bypass disc} is a sutured disc homeomorphic to one of the discs of figure \ref{fig:bypass_surgery}. \emph{Bypass surgery} is performed on a sutured surface $(\Sigma, \Gamma)$ along an embedded bypass disc, by removing the bypass disc and replacing it with a different bypass disc, as depicted in figure \ref{fig:bypass_surgery}. There are two possible replacements, up to equivalence; we call the operation \emph{upwards} or \emph{downwards} surgery as shown.

A bypass disc arises as the neighbourhood of an arc $c$ intersecting sutures in three points, drawn in figure \ref{fig:bypass_surgery}. An \emph{attaching arc} (see \cite{Hon00I}) is an embedded arc which intersects $\Gamma$ at its endpoints and at precisely one interior point. Isotopy classes of attaching arcs correspond bijectively to isotopy classes of bypass discs; a bypass disc determines an attaching arc connecting its sutures, and an attaching arc thickens to a bypass disc. So we may speak of bypass surgery along an attaching arc.

Obviously if two bypass discs are isotopic, or equivalently two attaching arcs are isotopic, then the sutures resulting from upwards (resp. downwards) bypass surgery along them are equivalent.

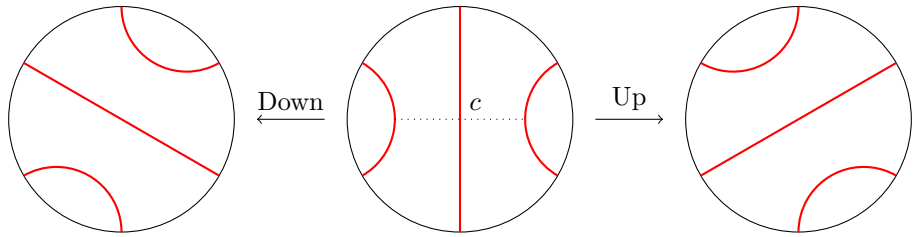
\begin{figure}
\begin{center}

\begin{tikzpicture}[
scale=1.5, 
suture/.style={thick, draw=red}]

\draw (3,0) circle (1 cm); 	
\draw (0,0) circle (1 cm);
\draw (-3,0) circle (1 cm);

\draw [suture] (30:1) arc (120:240:0.57735);
\draw [suture] (0,1) -- (0,-1);
\draw [suture] (210:1) arc (-60:60:0.57735);
\draw [dotted] (-0.57735,0) -- node [above right] {$c$} (0.57735,0);

\draw [suture] (3,0) ++ (150:1) arc (-120:0:0.57735);
\draw [suture] (3,0) ++ (30:1) -- ($ (3,0) + (210:1) $);
\draw [suture] (3,0) ++ (-30:1) arc (60:180:0.57735);

\draw [suture] (-3,1) arc  (180:300:0.57735);
\draw [suture] ($ (-3,0) + (-30:1) $) -- ($ (-3,0) +  (150:1) $);
\draw [suture] (-3,-1) arc (0:120:0.57735);

\draw[->] (-1.2,0) -- node [above] {Down} (-1.8,0);
\draw[->] (1.2,0) -- node [above] {Up} (1.8,0);

\end{tikzpicture}

\caption{Bypass surgery.}
\label{fig:bypass_surgery}
\end{center}
\end{figure}

Sets of sutures related by bypass surgeries naturally come in triples, which we call \emph{bypass triples}. Bypass surgery has order $3$, in the sense that performing bypass surgery in the same direction $3$ times on the same disc results in sutures equivalent to the original. Upwards and downwards surgeries on the same disc are inverse operations.

Taking appropriate cell decompositions, we have:
\begin{prop}
\label{prop:bypass_surgery_preserves_Euler}
Bypass surgery preserves $\chi(R_+)$, $\chi(R_-)$, and Euler class $e$.
\qed
\end{prop}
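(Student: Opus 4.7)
The plan is to exploit the fact that bypass surgery is a strictly local operation: it modifies sutures only inside the bypass disc $D$, leaving everything outside $D$ (including the sutures on $\partial D$) untouched. So I would first set up an inclusion–exclusion/Mayer–Vietoris style decomposition that isolates the contribution of $D$, and then check directly that this contribution is the same for all three elements of a bypass triple.

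Concretely, I would choose a CW decomposition of $\Sigma$ in which $\Gamma$, $\partial D$, and the sutures produced by any of the three bypass configurations are all subcomplexes. Writing $\overline{R_\pm}$ for the closures of the positive and negative regions, and decomposing $\overline{R_\pm} = \bigl(\overline{R_\pm}\setminus\mathrm{int}\,D\bigr) \cup \bigl(\overline{R_\pm}\cap D\bigr)$ with overlap $\overline{R_\pm}\cap\partial D$, I get
\[
\chi(\overline{R_\pm}) \;=\; \chi\bigl(\overline{R_\pm}\setminus\mathrm{int}\,D\bigr) \;+\; \chi\bigl(\overline{R_\pm}\cap D\bigr) \;-\; \chi\bigl(\overline{R_\pm}\cap\partial D\bigr).
\]
The first term is invariant under bypass surgery by locality. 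The third term is also invariant, because bypass surgery does not move the six points $\Gamma\cap\partial D$ nor change which of the six boundary arcs of $\partial D$ lie in $R_+$ versus $R_-$. So proving that $\chi(R_\pm)$, and hence $e = \chi(R_+)-\chi(R_-)$, is preserved reduces to the local claim that $\chi(\overline{R_\pm}\cap D)$ is the same for all three configurations.

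To finish, I would inspect the three pictures in figure \ref{fig:bypass_surgery} directly. In each case the three sutures in $D$ are non-crossing chords/arcs joining the six boundary points, and they divide $D$ into four regions; in each configuration one verifies that exactly two of these regions are positive and two are negative, and each region is a topological disc. Hence $\chi(\overline{R_+}\cap D) = \chi(\overline{R_-}\cap D) = 2$ uniformly across the three configurations, which delivers the desired invariance of $\chi(R_+)$, $\chi(R_-)$, and $e$.

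I do not expect any serious obstacle: the conceptual ingredient is just the locality of bypass surgery, and the rest is a finite combinatorial check on the three standard local pictures. The only mild care required is in the bookkeeping at $\partial D$, to make sure that the boundary term really is independent of the chosen configuration—but this follows immediately from the fact that all three pictures share the same six boundary intersection points and the same signed arcs between them.
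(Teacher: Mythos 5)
Your proposal is correct and is essentially the paper's argument: the paper simply asserts the proposition "taking appropriate cell decompositions", and your locality-plus-inclusion–exclusion setup, with the finite check that each of the three local configurations in figure \ref{fig:bypass_surgery} consists of four disc regions, two positive and two negative, is exactly the fleshing-out of that remark. The boundary bookkeeping you flag is fine, since all three configurations share the same six points of $\Gamma\cap\partial D$ and the same signed boundary arcs, and preservation of $e=\chi(R_+)-\chi(R_-)$ follows immediately.
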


\subsection{Maintaining nontrivial sutures}

There are situations in which, provided we begin with nontrivial sutures $\Gamma$, we can ensure that a bypass surgery results again in nontrivial sutures.

Throughout this section, let $(\Sigma, \Gamma)$ be a sutured surface with $\Gamma$ nontrivial, and $(D, \Gamma_3)$ an embedded bypass disc. As in figure \ref{fig:distinct_sutures_surgery}, let the three components of $\Gamma_3$ be $\gamma_1, \gamma_2, \gamma_3$. Each $\gamma_i$ forms part of a suture $\delta_i$ of $\Gamma$, where the $\delta_i$ need not be distinct.

\begin{lem}
\label{lem:distinct_sutures_surgery}
If $\delta_1, \delta_2, \delta_3$ are distinct sutures of $\Gamma$, then bypass surgery on $(\Sigma, \Gamma)$ at $(D, \Gamma_3)$, in either direction, gives a nontrivial set of sutures.
\end{lem}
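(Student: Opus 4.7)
The plan is to argue by contradiction: suppose the new set of sutures (after, say, upward bypass surgery at $(D,\Gamma_3)$) contains a trivial suture $\gamma$; I will show that one of $\delta_1,\delta_2,\delta_3$ must already have been trivial in $\Gamma$, violating the hypothesis that $\Gamma$ is nontrivial. The downward case follows by symmetry, or by observing that downward surgery is the inverse of upward surgery.

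First, I would localize where $\gamma$ can live. Since bypass surgery modifies $\Gamma$ only inside $D$, any suture of the new set disjoint from $D$ coincides with a suture of $\Gamma$, which is nontrivial by hypothesis. Therefore $\gamma$ must meet $D$, and in particular must contain at least one of the new interior arcs of $\Gamma_3^{+}$.

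Second, I would identify the global structure of $\gamma$. For each $i$ write $\alpha_i := \delta_i \setminus \gamma_i$, which is a single arc outside $D$ connecting the two endpoints of $\gamma_i$ on $\partial D$ if $\delta_i$ is a closed curve, or a pair of arcs terminating on $\partial \Sigma$ if $\delta_i$ is an arc. Because $\delta_1,\delta_2,\delta_3$ are three \emph{distinct} sutures, the six endpoints of the $\gamma_i$ on $\partial D$ are paired up outside $D$ by the $\alpha_i$ in exactly the same way as the $\gamma_i$ pair them inside $D$. The new arcs of $\Gamma_3^{+}$ pair the same six boundary points differently. Tracing the new sutures, one finds in the closed-curve case that $\gamma$ is a single closed curve built out of all three $\alpha_i$ and all three new interior arcs; the open-arc case is analogous, with $\gamma$ still incorporating all of the outside pieces.

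Third, and most importantly, I would exploit a disc $D' \subset \Sigma$ with $\partial D' = \gamma$ to construct a disc bounded by some $\delta_i$. Isotope $D'$ to meet $D$ transversely with minimal intersection. Since $\partial D'$ meets $\partial D$ only at the six endpoints of the new arcs, the boundary of $D'$ alternates between arcs running inside $D$ (the new $\gamma_i^{+}$'s) and arcs running through $\Sigma \setminus D$ (the $\alpha_i$'s). For an appropriate index $i$, I would cut $D'$ along an arc of $D \cap D'$ to isolate a sub-disc $D'_i \subset \overline{D' \setminus D}$ whose boundary is $\alpha_i$ together with a sub-arc of $\partial D$ connecting the two endpoints of $\alpha_i$. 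Splicing $D'_i$ to the sub-bigon of $D$ cut off by $\gamma_i$ (and bounded together with the same $\partial D$-arc) yields an embedded disc in $\Sigma$ with boundary $\alpha_i \cup \gamma_i = \delta_i$. Hence $\delta_i$ is trivial, contradicting the nontriviality of $\Gamma$.

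The delicate step is the third: verifying that a suitable index $i$ exists and that the splice is an embedded disc. This amounts to tracking the combinatorial pattern in which the six arcs of $\partial D$ between the points $\{1,\dots,6\}$ are assigned to ``inside-$D$'' versus ``outside-$D$'' sides of $D'$, and using the distinctness of $\delta_1,\delta_2,\delta_3$ (which rules out degenerate matchings where the $\alpha$-endpoints on $\partial D$ coincide or the $\alpha_i$ share components) to guarantee that at least one $\delta_i$ admits such a splicing. The structural statement from Proposition \ref{prop:bypass_surgery_preserves_Euler} is in the background as a sanity check that no gross topological obstruction appears, but the contradiction ultimately comes from the concrete cut-and-paste construction.
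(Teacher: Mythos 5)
Your proposal is correct and is essentially the paper's own argument: in the decisive all-closed-curves case, the merged contractible curve bounds a disc which is cut by the bypass disc into pieces, and splicing an outside piece (bounded by some $\alpha_i$ and an arc of $\partial D$) with the bigon of $D$ cut off by $\gamma_i$ exhibits a disc bounded by $\delta_i$ — this is exactly the paper's $E_1,\dots,E_4$ cut-and-paste. The one slip is your remark that in the open-arc case $\gamma$ ``still incorporates all of the outside pieces'': a trivial suture is a closed curve and cannot contain pieces terminating on $\partial \Sigma$, and in fact when some $\delta_i$ is an arc no new closed suture arises at all, which is how the paper dispatches those cases immediately.
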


\begin{proof}
Each suture $\delta_i$ is either a properly embedded arc in $\Sigma$, or a closed curve.

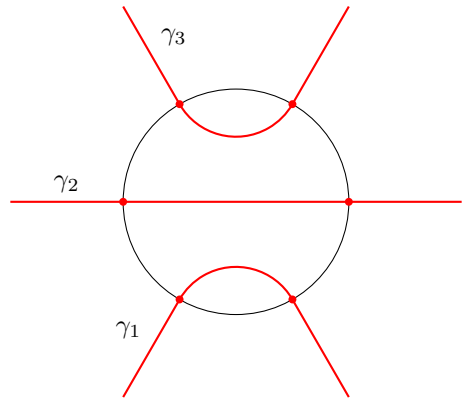
\begin{figure}
\begin{center}

\begin{tikzpicture}[
scale=1.5, 
suture/.style={thick, draw=red}]

\coordinate (a) at (-60:1);
\coordinate (b) at (0:1);
\coordinate (c) at (60:1);
\coordinate (d) at (120:1);
\coordinate (e) at (180:1);
\coordinate (f) at (240:1);

\draw (0,0) circle (1 cm);

\draw [suture] (120:2) -- node [above right] {$\gamma_3$} (120:1) arc (-150:-30:0.57735) -- (60:2);
\draw [suture] (-2,0) -- node [above] {$\gamma_2$} (-1,0) -- (2,0);
\draw [suture] (-60:2) -- (-60:1) arc (30:150:0.57735) -- node [above left] {$\gamma_1$} (240:2);

\foreach \point in {a,b,c,d,e,f}
\fill [red] (\point) circle (1pt);

\end{tikzpicture}

\caption{Bypass disc in lemma \ref{lem:distinct_sutures_surgery}}
\label{fig:distinct_sutures_surgery}
\end{center}
\end{figure}

Suppose $\delta_1$ and $\delta_2$ are both arcs. Then, after surgery (in either direction), all of the sutures intersecting $D$ connect to an endpoint of $\delta_1$ or $\delta_2$, and hence must be arcs. As all other sutures are unchanged, there are no new closed sutures, and hence the sutures remain nontrivial. A similar argument applies when any two of the  $\delta_i$ are arcs; in these cases bypass surgery also produces nontrivial sutures.

Next suppose $\delta_1$ is an arc but $\delta_2$ and $\delta_3$ are closed curves. Then we observe that after surgery (in either direction), these three sutures are merged into one arc. So no new closed curve arises, and the sutures remain nontrivial. A similar argument applies when any of the $\delta_i$ is an arc, and the other two are closed curves.

We may now assume that all the $\delta_i$ are closed curves. Then performing surgery (in either direction) merges the $\delta_i$ into one closed curve $\delta$. If this curve is contractible, then it bounds an embedded disc $E$ in $\Sigma$; so $\delta$ bounds an embedded disc on one side. This disc $E$ passes through the bypass disc $D$, and is cut by it into four components, which we label $E_1, E_2, E_3, E_4$ as in figure \ref{fig:closed_curve_surgery}. Each of these regions $E_i$ must themselves be embedded discs in $\Sigma$. But this implies that at least one of the original curves was contractible. For the arrangement in figure \ref{fig:closed_curve_surgery}, the disc $E_4$ implies $\delta_1$ is contractible; if $\delta$ bounds a disc on the other side, or we perform surgery in the other direction, a similar argument applies.

\begin{figure}
\begin{center}

\begin{tikzpicture}[
scale=1.2, 
suture/.style={thick, draw=red}]

\draw (0,0) circle (1 cm);
\draw [suture] (120:1) arc (-150:-30:0.57735);
\draw [suture, dotted] (60:1) arc (-30:210:0.57735);
\draw [suture] (-1,0) -- (1,0);
\draw [suture, dotted, rounded corners] (1,0) -- (2.5,0) arc (0:180:2.5) -- (-1,0);
\draw [suture] (-60:1) arc (30:150:0.57735);
\draw [suture, dotted] (240:1) arc (-210:30:0.57735);

\draw[->] (3,0) -- node [above] {Up} (4,0);

\draw [xshift = 7 cm] (0,0) circle (1 cm);
\draw [xshift = 7 cm, suture] (60:1) arc (150:270:0.57735);
\draw [xshift = 7 cm, suture, dotted] (60:1) arc (-30:210:0.57735);
\draw [xshift = 7 cm, suture] (120:1) -- (-60:1);
\draw [xshift = 7 cm, suture, dotted, rounded corners] (1,0) -- (2.5,0) arc (0:180:2.5) -- (-1,0);
\draw [xshift = 7 cm, suture] (-120:1) arc (-30:90:0.57735);
\draw [xshift = 7 cm, suture, dotted] (240:1) arc (-210:30:0.57735);

\draw (5.5,1) node {$E_2$};
\draw (6.8, -0.3) node {$E_3$};
\draw (7, -1.4) node {$E_4$};
\draw (7.7, 0.4) node {$E_1$};

\end{tikzpicture}

\caption{Surgery in lemma \ref{lem:distinct_sutures_surgery} when all $\gamma_i$ are closed curves.}
\label{fig:closed_curve_surgery}
\end{center}
\end{figure}
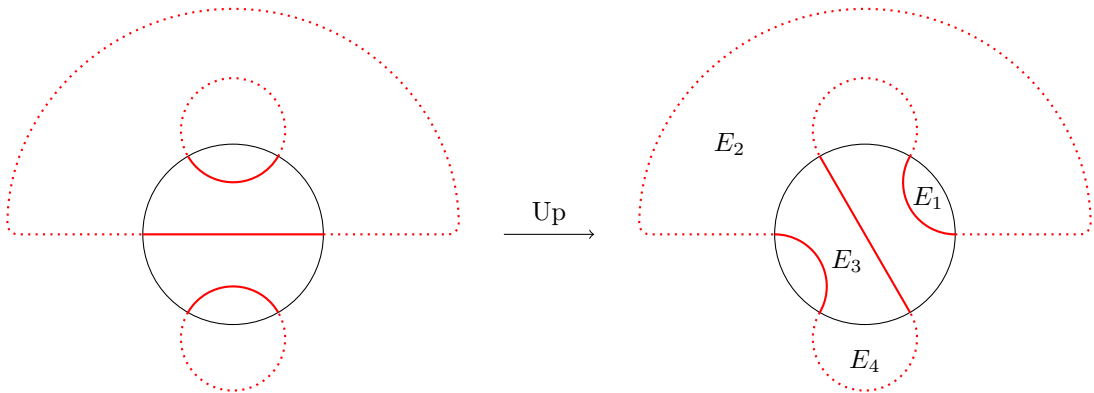

\end{proof}

More generally, attaching arcs which intersect sutures ``efficiently'' maintain nontrivial sutures. An attaching arc $c$ is \emph{inefficient} if there is an embedded disc in $\Sigma$ bounded by a segment of $c$ and a segment of $\Gamma$; otherwise it is \emph{efficient}. Bypass discs can thereby also be called efficient or inefficient.

This use of ``efficient'' corresponds to the usual meaning of minimal number of intersections, in the following sense. Let $C$ be a simple closed curve or properly embedded arc in $\Sigma$ which intersects $\Gamma$ transversely and efficiently, i.e. $|C \cap \Gamma|$ is minimal in the isotopy class of $C$. Then any sub-arc $c$ of $C$ which is an attaching arc is efficient: if $c$ were inefficient, then the disc bounded by $c \cup \Gamma$ would allow us to isotope sutures off $C$ and reduce $|C \cap \Gamma|$.

\begin{prop}
\label{prop:efficient_surgery}
If $(\Sigma, \Gamma)$ is a surface with nontrivial sutures, then bypass surgery along an efficient embedded bypass disc $(D, \Gamma_3)$, in either direction, results in a nontrivial set of sutures.
\end{prop}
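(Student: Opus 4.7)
The plan is to proceed by contradiction. Suppose that bypass surgery on the efficient bypass disc $(D,\Gamma_3)$, in whichever direction, produces a trivial suture, so that some closed component $\delta'$ of the new sutures bounds an embedded disc $E \subset \Sigma$. The goal is to use $E$ to exhibit an embedded disc in $\Sigma$ bounded by a sub-arc of the attaching arc $c$ and a sub-arc of a single suture component of $\Gamma$, contradicting efficiency of $c$.

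By Lemma~\ref{lem:distinct_sutures_surgery}, if the three sutures $\delta_1,\delta_2,\delta_3$ of $\Gamma$ containing $\gamma_1,\gamma_2,\gamma_3$ are distinct, the surgery already produces nontrivial sutures, so we may assume at least two of the $\delta_i$ coincide. Also, $\delta'$ cannot lie entirely in $\Sigma \setminus D$ (otherwise $\delta'$ would be a pre-existing suture of $\Gamma$, contradicting nontriviality of $\Gamma$). Hence $\delta'$ meets $\partial D$ transversely in a non-empty finite set of points, and decomposes as an alternating concatenation of arcs inside $D$ (each a component of the post-surgery sutures $\Gamma_3'$) and arcs outside $D$ (each a sub-arc of some single $\delta_i$, since $\Gamma \cap D = \Gamma_3$).

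I would then put $E$ in general position with respect to $D$ and, by standard innermost-disc arguments, eliminate circle components of $E \cap D$ so that $E \cap D$ is a disjoint union of properly embedded arcs with endpoints on $\delta' \cap \partial D$. These arcs partition $E$ into subdiscs; pick an innermost one, $E_0 \subseteq E$. Its boundary is a concatenation of an arc $\eta \subset E \cap D$ and a sub-arc $\beta \subset \delta'$, and $\beta$ lies entirely on one side of $\partial D$. Consider the case $\beta \subset \Sigma \setminus D$: then $\beta$ is a sub-arc of some $\delta_i$, and its endpoints $q_a,q_b \in \partial D$ are endpoints of two bypass arcs $\gamma_k,\gamma_\ell \subset \Gamma_3$; the coincidence among the $\delta_j$ ensures $\gamma_k,\gamma_\ell \subset \delta_i$. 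Construct a path $\tau \subset D$ from $q_a$ to $q_b$ by going along $\gamma_k$ to its intersection point with $c$, then along $c$ to its intersection with $\gamma_\ell$, then along $\gamma_\ell$ to $q_b$. The closed curve $\beta \cup \tau$ then decomposes as a sub-arc of $c$ joined with a sub-arc of $\delta_i$, and gluing $E_0$ to the subdisc of $D$ cut off by $\eta$ and $\tau$ yields an embedded disc in $\Sigma$ bounded by $\beta \cup \tau$, contradicting efficiency of $c$. The case $\beta \subset D$ is dual: one attaches the subdisc of $\Sigma \setminus D$ bounded by $\eta$ to an appropriately chosen subdisc of $D$ bounded by sub-arcs of $c$ and of $\Gamma_3$.

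The main obstacle lies in the combinatorial bookkeeping: one must verify, across all sub-cases (which pairs of $\delta_j$ coincide, whether the shared $\delta_i$ is a properly embedded arc or a closed curve, which complementary piece of $\Sigma \setminus D$ contains $E_0$, and whether $\beta$ sits inside or outside $D$), that the endpoints of $\beta$ really are endpoints of bypass arcs lying in a single $\delta_i$, and that the constructed path $\tau$ together with $\beta$ produces a closed curve whose bounded disc realizes an inefficiency. The situations in which all three $\delta_i$ coincide, or where $\delta_i$ is a properly embedded arc and some outside sub-arcs escape to $\partial \Sigma$, require separate verification, but in each case the same innermost-disc construction can be adapted to produce the desired disc violating efficiency.
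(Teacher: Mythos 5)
Your strategy is genuinely different from the paper's: instead of the paper's finite case analysis on which of the $\delta_i$ coincide (figures \ref{fig:efficient_surgery_1} and \ref{fig:efficient_surgery_2}), you try a general-position argument, cutting the disc $E$ bounded by the would-be trivial suture $\delta'$ along $\partial D$ and extracting an inefficiency disc. Your ``outside'' case is essentially correct: if an innermost subdisc $E_0$ has its $\delta'$-side $\beta$ in $\Sigma\setminus D$, then $\beta$ is a sub-arc of a single $\delta_i$, the path $\tau$ through $D$ along $\gamma_k$, $c$, $\gamma_\ell$ is embedded, and $E_0$ glued to the subdisc of $D$ cut off by $\eta\cup\tau$ is an embedded disc bounded by a segment of $c$ and a segment of $\Gamma$, contradicting efficiency (note that this needs no coincidence among the $\delta_j$ at all, and in the degenerate sub-case $\gamma_k=\gamma_\ell$, where the $c$-segment is empty, the contradiction is instead with nontriviality of $\Gamma$).

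The genuine gap is the case $\beta\subset D$. There $E_0$ lies inside $D$ and is bounded by an arc $\eta$ of $\partial D$ together with one of the three post-surgery arcs; this configuration is completely generic --- every post-surgery arc cuts off such a subdisc of $D$ --- and carries no contradiction on its own, because inside $D$ the attaching arc $c$ meets the sutures in the standard, efficient bypass position, so no inefficiency disc can be assembled from material inside $D$ alone. Your one-sentence ``dual'' construction does not parse: when $\beta\subset D$ there is no ``subdisc of $\Sigma\setminus D$ bounded by $\eta$'' ($\eta$ is just an arc of $\partial D$, and $E_0$ is not outside $D$). Nor can you always force the good case: $\delta'$ may cross $D$ along two of the new arcs with the disc $E$ positioned so that both inside pieces of $E$ are the innermost subdiscs and the single outside piece of $E$ meets $\partial D$ in two arcs, so every innermost subdisc is of the bad type. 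Closing this case requires a further idea --- either an analysis of outside subdiscs whose boundary contains several arcs of $\partial D$, or the orientation/coincidence constraints on the $\delta_i$ that the paper exploits --- and deferring ``which $\delta_j$ coincide'' to bookkeeping hides exactly this: your productive case never uses that information, which signals that the real content of proposition \ref{prop:efficient_surgery} sits in the case your argument leaves open.
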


\begin{proof}
If all the $\delta_i$ are distinct, then by lemma \ref{lem:distinct_sutures_surgery}, the result of bypass surgery is nontrivial. So we may assume that at least two of the $\delta_i$ coincide.

First suppose that $\delta_1 = \delta_2$ but these do not coincide with $\delta_3$. If $\delta_1, \delta_3$ are both arcs then (considering how sutures respect orientations) the situation is as in figure \ref{fig:efficient_surgery_1} (or a reflection thereof). Bypass surgery along $D$ in one direction produces no sutures which are closed curves, so the result is nontrivial; in the other direction (depicted in the figure), part of $\delta_1$ is closed off to form a suture $\delta'$. If the result is trivial, then $\delta'$ bounds a disc; but then there was a disc bounded by $\delta_1$ and $c$ before surgery, contradicting efficiency. A similar argument applies if one or both of $\delta_1, \delta_3$ is a closed curve. And a similar argument applies if $\delta_2 = \delta_3 \neq \delta_1$ or if $\delta_1 = \delta_3 \neq \delta_2$. In all these cases the result of bypass surgery is a nontrivial set of sutures.

\begin{figure}
\begin{center}

\begin{tikzpicture}[
scale=1.2, 
suture/.style={thick, draw=red}]

\draw (0,0) circle (1 cm);
\draw [dotted] (0,-0.57735) -- node [above right] {$c$} (0,0.57735);
\draw [suture] (120:1) arc (-150:-30:0.57735);
\draw [suture] (-1,0) -- (1,0);
\draw [suture] (-60:1) arc (30:150:0.57735);
\draw [suture, dotted] (120:2) -- (120:1);
\draw [suture, dotted] (60:2) -- (60:1);
\draw [suture, dotted] (-120:2) -- (-120:1);
\draw [suture, dotted] (180:2) -- (180:1);
\draw [suture, dotted] (-60:1) arc (-150:90:0.57735);

\draw[->] (3,0) -- node [above] {Down} (4,0);

\draw [xshift = 7 cm] (0,0) circle (1 cm);
\draw [xshift = 7 cm, suture] (180:1) arc (-90:30:0.57735);
\draw [xshift = 7 cm, suture] (-120:1) -- (60:1);
\draw [xshift = 7 cm, suture] (0:1) arc (90:210:0.57735);
\draw [xshift = 7 cm, suture, dotted] (120:2) -- (120:1);
\draw [xshift = 7 cm, suture, dotted] (60:2) -- (60:1);
\draw [xshift = 7 cm, suture, dotted] (-120:2) -- (-120:1);
\draw [xshift = 7 cm, suture, dotted] (180:2) -- (180:1);
\draw [xshift = 7 cm, suture, dotted] (-60:1) arc (-150:90:0.57735);

\end{tikzpicture}

\caption{Surgery in proposition \ref{prop:efficient_surgery} when $\delta_1 = \delta_2 \neq \delta_3$.}
\label{fig:efficient_surgery_1}
\end{center}
\end{figure}

Finally, suppose $\delta_1 = \delta_2 =  \delta_3$. Then (up to symmetry of the diagram) the situation is as in figure \ref{fig:efficient_surgery_2}. So all the $\delta_i$ are a single closed suture. If, after surgery, sutures bound a disc, then we obtain a disc (in fact several) bounded by the $\delta_i$ and $c$ before surgery, contradicting efficiency. So again we obtain nontrivial sutures.

\begin{figure}
\begin{center}

\begin{tikzpicture}[
scale=1.2, 
suture/.style={thick, draw=red}]

\draw (0,0) circle (1 cm);
\draw [dotted] (0,-0.57735) -- node [above right] {$c$} (0,0.57735);
\draw [suture] (120:1) arc (-150:-30:0.57735);
\draw [suture] (-1,0) -- (1,0);
\draw [suture] (-60:1) arc (30:150:0.57735);
\draw [suture, dotted] (0:1) arc (-90:150:0.57735);
\draw [suture, dotted] (180:1) arc (90:330:0.57735);
\draw [suture, dotted, rounded corners] (120:1) -- (120:2.5) arc (120:300:2.5) -- (300:1);

\draw[->] (3,0) -- node [above] {Down} (4,0);

\draw [xshift = 7 cm] (0,0) circle (1 cm);
\draw [xshift = 7 cm, suture] (180:1) arc (-90:30:0.57735);
\draw [xshift = 7 cm, suture] (-120:1) -- (60:1);
\draw [xshift = 7 cm, suture] (0:1) arc (90:210:0.57735);
\draw [xshift = 7 cm, suture, dotted] (0:1) arc (-90:150:0.57735);
\draw [xshift = 7 cm, suture, dotted] (180:1) arc (90:330:0.57735);
\draw [xshift = 7 cm, suture, dotted, rounded corners] (120:1) -- (120:2.5) arc (120:300:2.5) -- (300:1);

\end{tikzpicture}

\caption{Surgery in proposition \ref{prop:efficient_surgery} when $\delta_1 = \delta_2 \neq \delta_3$.}
\label{fig:efficient_surgery_2}
\end{center}
\end{figure}

\end{proof}

\section{Quadrangulations and sutures}
\label{sec:quadrangulations_and_sutures}

Both quadrangulations and sutures involve ``joining the dots'' on a surface, where ``dots'' mean vertices $V$ of an occupied surface or points $F$ of a background surface. Moreover, occupied surfaces and sutured background surfaces have very similar definitions; in fact, they equivalent structures. However different rules apply in joining vertices of $V$ with the arcs of a quadrangulation, to those applying in joining points of $F$ with sutures. We now consider these relationships in more detail.

\subsection{Sutures on occupied surfaces}

The structure of a background surface is equivalent to that of an occupied surface in a natural way. Given a background surface $(\Sigma,F)$, we have signed points $F$ which split $\partial \Sigma$ into arcs $C_\pm$ of alternating orientations; placing one vertex of $V_\pm$ in each component of $C_\pm$, we have an occupied surface. Conversely, given an occupied surface $(\Sigma,V)$, we have vertices around each boundary component of $\Sigma$ of alternating signs. Place one point of $F$ between each pair of consecutive vertices of $V$; each component of $\partial \Sigma \backslash F$ then contains a vertex of $V_\pm$; label components of $\partial \Sigma \backslash F$ in $C_\pm$ accordingly as they contain vertices of $V_\pm$; orient $F$ as $-\partial C_\pm$. In this way we obtain a background surface. It is clear that from an occupied $(\Sigma,V)$, the background surface $(\Sigma,F)$ associated to it is unique up to homeomorphism; similarly from a background $(\Sigma,F)$, the occupied $(\Sigma,V)$ is unique up to homeomorphism; and these two constructions are inverses up to homeomorphism. See figure \ref{fig:occupied_background}.

\begin{figure}
\centering
\includegraphics[scale=0.4]{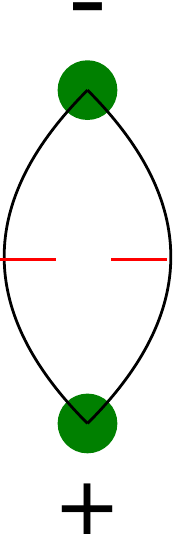} \quad
\includegraphics[scale=0.4]{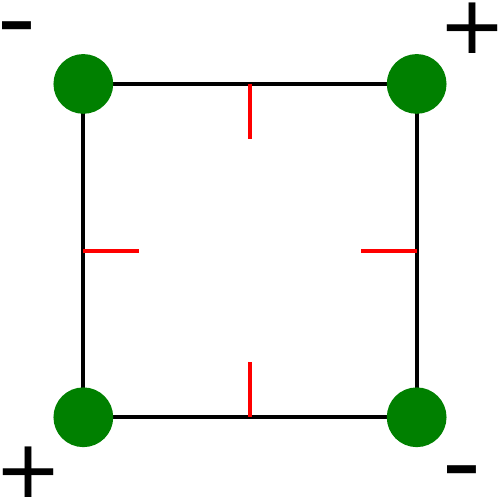} \quad 
\includegraphics[scale=0.4]{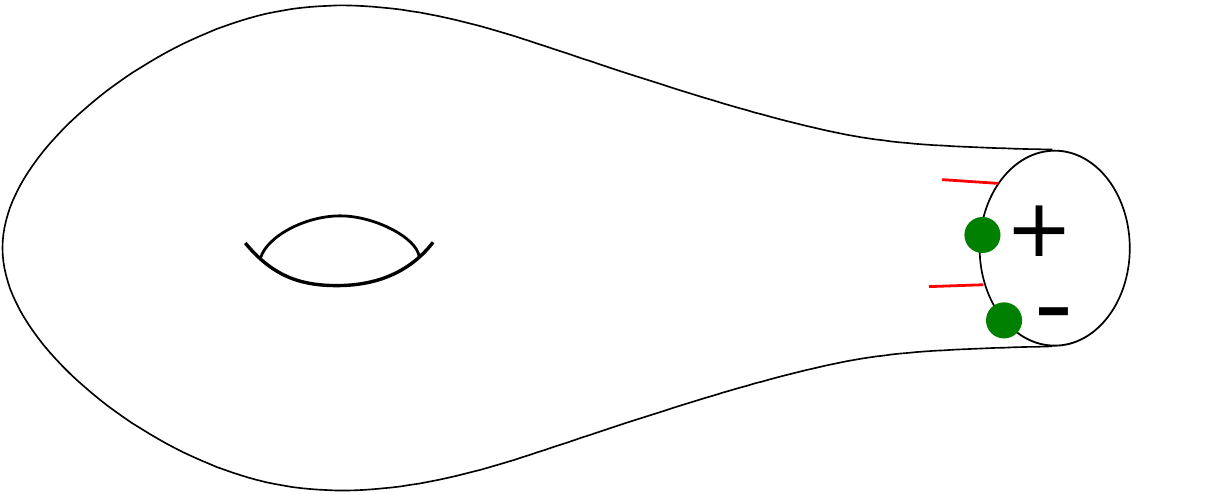}
\caption{Occupied surfaces are equivalent to sutured backgrounds.}
\label{fig:occupied_background}
\end{figure}

Thus we may speak of an \emph{occupied background} $(\Sigma,F,V)$, which is a triple such that $(\Sigma,V)$ and $(\Sigma,F)$ are associated occupied and background surface structures on $\Sigma$. Henceforth when we write $(\Sigma,V)$, it will be implicit that we are also dealing with a sutured background; and when we write $(\Sigma,F)$, it will be implicit that we are also dealing with an occupied surface. The key property connecting occupied and sutured structures is that vertices $V_\pm$ lie in the regions $R_\pm$.

From an occupied surface $(\Sigma,F,V)$, we may ``join the dots'', whether of $V$ to obtain a quadrangulation, or of $F$ to obtain sutures, or both. Thus we may speak of a \emph{quadrangulated background surface} $(\Sigma,F,A)$, a \emph{sutured occupied surface} $(\Sigma,\Gamma,V)$, and a \emph{sutured quadrangulated surface} $(\Sigma,\Gamma,A)$.

\begin{tabular}{cccc}
&& $\To$  & \\
&& Quadrangulate & \\
& Occupied background & & Quadrangulated background \\
& $(\Sigma,F,V)$ & $\To$ & $(\Sigma,F,A)$ \\
$\downarrow$ Suture & $\downarrow$  && $\downarrow$ \\
& Sutured occupied && Sutured quadrangulated \\
& $(\Sigma,\Gamma,V)$ & $\To$ & $(\Sigma, \Gamma, A)$
\end{tabular}

When we consider sutures and quadrangulations together, i.e. a sutured quadrangulated surface $(\Sigma, \Gamma, A)$, we shall usually require that arcs of $A$ and sutures of $\Gamma$ intersect transversely and efficiently.

We can now indicate the origin of the terms incoming and outgoing, applied to boundary edges of an occupied surface.
\begin{lem}
In a sutured occupied surface $(\Sigma, \Gamma, V)$, every suture $\gamma$ of $\Gamma$ which is an arc runs from an incoming boundary edge to an outgoing boundary edge of $(\Sigma, V)$.
\end{lem}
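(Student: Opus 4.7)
The plan is to reduce the statement to a purely local claim at each endpoint of $\gamma$: that a boundary edge of $(\Sigma, V)$ which contains a point $f \in F$ is incoming if $f \in F_-$ and outgoing if $f \in F_+$. Given this dichotomy, because a suture $\gamma$ which is an arc inherits its orientation from $\partial R_+$ (and thus satisfies $\partial\gamma = F$, with one endpoint in $F_-$ and one in $F_+$, directed from the former to the latter as in Figure \ref{fig:neighbourhood_of_arc_suture}), the conclusion is immediate: the starting endpoint sits on an incoming edge and the ending endpoint sits on an outgoing edge.

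To establish the local claim, I would fix a boundary edge $e$ of $(\Sigma, V)$; by definition it runs from some $v_- \in V_-$ to some $v_+ \in V_+$ and, by the compatibility between the occupied and sutured-background structures (alternation of signs of $V$ interspersed with alternation of signs of $F$), it contains a single point $f \in F$ in its interior. The same compatibility places $v_\pm$ in the arcs $C_\pm$, so traversing $e$ from $v_-$ to $v_+$ crosses from $C_-$ into $C_+$ at $f$. Combining the orientation conventions $C_\pm = \pm\partial\Sigma$ and $\partial C_\pm = -F$ (equivalently, reading off the local model of Figure \ref{fig:neighbourhood_of_arc_suture}), one sees that along $\partial\Sigma$ in its natural orientation one passes from $C_+$ into $C_-$ precisely at points of $F_-$ and from $C_-$ into $C_+$ precisely at points of $F_+$. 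Therefore the orientation of $e$ from $v_-$ to $v_+$ agrees with that of $\partial\Sigma$ exactly when $f \in F_+$, yielding $e$ outgoing $\iff f \in F_+$ and $e$ incoming $\iff f \in F_-$.

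The only real obstacle is notational bookkeeping, juggling the orientation of $\partial\Sigma$ against the $V_- \to V_+$ convention for boundary edges and the $\partial\gamma = F$ convention for suture arcs; but this is entirely settled by the local model of Figure \ref{fig:neighbourhood_of_arc_suture}, after which the argument is purely formal and no topological difficulty arises.
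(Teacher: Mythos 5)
Your proof is correct and takes essentially the same route as the paper: both arguments are a local orientation check at the two endpoints of $\gamma$, settled by the model of figure \ref{fig:neighbourhood_of_arc_suture}. The paper phrases the check via $R_\pm$ lying to the left/right of the oriented suture and the signs of the adjacent vertices, while you route it through $\partial\Gamma = F$ and the conventions $C_\pm = \pm\partial\Sigma$, $\partial C_\pm = -F$; the content is the same.
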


\begin{proof}
Traversing the oriented arc $\gamma$ there is a region of $R_+$ to our left, and a region of $R_-$ to our right. Thus, $\gamma$ begins on a boundary edge $e$ of $(\Sigma, V)$ with a positive vertex to its left and a negative vertex to its right. Thus the orientation on $e$ from negative to positive endpoint disagrees with the orientation inherited from $\partial \Sigma$, and $e$ is incoming. Similarly the edge at which $\gamma$ ends is outgoing. See figure \ref{fig:neighbourhood_of_arc_suture}.
\end{proof}

We next consider the possible sutures on the simplest occupied surfaces.

Sutures on the occupied vacuum $(\Sigma^\emptyset, V^\emptyset)$ join two endpoints on the boundary of a disc. Hence there is precisely one nontrivial set of sutures, up to equivalence, which we call \emph{vacuum sutures} or just \emph{the vacuum}. (Any closed curve sutures would be contractible and hence trivial.) The vacuum sutures have Euler class $e=0$.

For the occupied square $(\Sigma^\square, V^\square)$, we have a sutured background disc with $4$ points, and see there are precisely two nontrivial sets of sutures, up to equivalence, as illustrated in figure \ref{fig:sutured_squares}. One of these has $e=1$, which we call the \emph{standard positive sutures} on the occupied square; the other has $e=-1$, the \emph{standard negative sutures}.

\subsection{Sutures on quadrangulated surfaces}

We now consider in more detail sutures on quadrangulated surfaces. We might have a sutured surface and wish to quadrangulate it; or we might have a quadrangulated surface and wish to suture it.

First suppose we have a sutured surface, and wish to quadrangulate it. Note that any decomposing arc $a$ runs between vertices $v_+, v_-$ of opposite sign, hence between regions $R_+, R_-$ of opposite sign. If $a$ is transverse to $\Gamma$ then it intersects an odd number of times. Thus from lemmas 
\ref{lem:cutting_sutures} and \ref{lem:euler_class_invariant}, we immediately have the following.
\begin{lem}
Cutting a sutured occupied surface $(\Sigma, \Gamma, V)$ along a decomposing arc $a$ transverse to $\Gamma$ gives another (possibly disconnected) sutured occupied surface $(\Sigma',\Gamma',V')$ with $e(\Gamma') = e(\Gamma)$.
\qed
\end{lem}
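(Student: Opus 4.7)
The plan is to combine the two lemmas cited immediately before the statement, Lemma \ref{lem:cutting_sutures}(i) (cutting produces a sutured surface) and Lemma \ref{lem:euler_class_invariant}(ii) (cutting along an arc with an odd number of crossings preserves Euler class). Both hypotheses reduce to a single geometric fact: $a$ meets $\Gamma$ transversely in an odd, hence positive, number of interior points.

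First I would check this parity. The endpoints of the decomposing arc $a$ are a vertex $v_- \in V_-$ and a vertex $v_+ \in V_+$, and the key property linking the occupied and sutured structures on an occupied background is $V_\pm \subset R_\pm$. Thus the two endpoints of $a$ lie in regions of opposite sign. As a decomposing arc, $a$ is properly embedded with interior in the interior of $\Sigma$, and $V$ is disjoint from $\Gamma$ (points of $F = \partial \Gamma$ are interior to boundary edges and hence not in $V$), so every point of $a \cap \Gamma$ is a transverse crossing in the interior of $a$. Each such crossing toggles the sign of the ambient region, so an odd number of crossings is forced, and in particular $|a \cap \Gamma| \geq 1$.

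With this in place, Lemma \ref{lem:cutting_sutures}(i) produces a sutured surface $(\Sigma', \Gamma')$. To upgrade it to a sutured occupied surface, I would take $V'$ to consist of the unchanged vertices of $V$ together with the two preimages of each endpoint of $a$ under the cutting map, each retaining its original sign. The boundary of $\Sigma'$ is obtained by inserting two copies of $a$ into $\partial \Sigma$ at the split vertices; each copy of $a$ is a new boundary edge joining a $V'_-$ vertex to a $V'_+$ vertex. Since signs already alternated along $\partial \Sigma$ and the inserted edges join the split vertices to segments carrying the correct signs, the alternation persists along every boundary component of $\Sigma'$. Finally, Lemma \ref{lem:euler_class_invariant}(ii) applies verbatim to give $e(\Gamma') = e(\Gamma)$.

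The only step requiring any thought is the parity argument, and even that is one line once one exploits $V_\pm \subset R_\pm$; the rest is bookkeeping to confirm that the cut sutured surface carries the structure of an occupied surface compatible with its sutured background.
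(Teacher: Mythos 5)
Your proposal is correct and follows essentially the same route as the paper: the parity observation that $a$ runs between vertices, hence regions, of opposite sign (so $|a\cap\Gamma|$ is odd and positive), followed by an appeal to lemma \ref{lem:cutting_sutures} and lemma \ref{lem:euler_class_invariant}. Your extra bookkeeping on the occupied structure of the cut surface is already covered by the paper's earlier discussion of cutting along decomposing arcs, so nothing is missing.
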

We can also consider gluing two boundary edges together. As discussed previously (section \ref{sec:decomposition_and_gluing}), this is simplest when we glue two non-consecutive edges, i.e. perform (section \ref{sec:simple_morphisms}) standard gluing morphisms. 

Note that when gluing boundary edges $e_1, e_2$ of a sutured occupied surface together, the result does not make any sense as a sutured surface unless we glue $e_1 \cap \Gamma$ to $e_2 \cap \Gamma$ and $e_1 \cap R_\pm$ to $e_2 \cap R_\pm$. Henceforth, when considering morphisms which glue edges together (i.e. standard gluings, folds, zips), we will always assume this consistency with sutures.

With this assumption, lemmas \ref{lem:gluing_sutures} and \ref{lem:euler_class_invariant} immediately give:
\begin{lem}
\label{lem:gluing_sutures_preserves_euler}
Performing a standard gluing morphism on a sutured occupied surface $(\Sigma,\Gamma,V)$ gives another sutured occupied surface $(\Sigma',\Gamma',V')$ with $e(\Gamma')=e(\Gamma)$.
\qed
\end{lem}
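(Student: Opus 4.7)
The plan is essentially to verify that a standard gluing morphism matches the hypotheses of case (ii) of Lemma \ref{lem:gluing_sutures} and case (iv) of Lemma \ref{lem:euler_class_invariant}, from which both conclusions follow immediately.

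First I would unpack what a standard gluing does on the level of the sutured background. By definition it identifies two non-consecutive boundary edges $e_1, e_2$ of $(\Sigma,V)$, one incoming and one outgoing (this orientation incompatibility is precisely what made the glued surface orientable). Each boundary edge of $(\Sigma,V)$ runs between a vertex of $V_-$ and a vertex of $V_+$, hence under the associated sutured background structure it contains exactly one point of $F = \partial \Gamma$ in its interior, so $|e_1 \cap \Gamma| = |e_2 \cap \Gamma| = 1$, which in particular is odd.

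Next I would check the two technical hypotheses. (a) The two suture endpoints inside $e_1$ and $e_2$ are non-consecutive in $F$ around $\partial \Sigma$: since $e_1$ and $e_2$ are non-consecutive boundary edges, the boundary arc between them in either direction contains at least one further boundary edge, hence at least one further $F$-point, separating them in $F$. (b) The gluing is orientation-reversing when $e_1, e_2$ are oriented via $\partial \Sigma$: since one of them is incoming and the other outgoing, the boundary orientations on the two edges run in opposite directions relative to the identification, which is exactly the orientation-reversing condition. Finally, the blanket assumption stated just before the lemma guarantees that the gluing map sends $e_1 \cap R_\pm$ to $e_2 \cap R_\pm$ and $e_1 \cap \Gamma$ to $e_2 \cap \Gamma$.

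These checks are precisely the hypotheses of Lemma \ref{lem:gluing_sutures}(ii) and Lemma \ref{lem:euler_class_invariant}(iv). Applying the former yields that $(\Sigma', \Gamma', V')$ is indeed a sutured occupied surface, and applying the latter gives $e(\Gamma') = e(\Gamma)$. The only step requiring minor care is the non-consecutivity translation between boundary edges of $(\Sigma,V)$ and points of $F$, but this is a straightforward consequence of the interleaving of $V$ and $F$ on $\partial \Sigma$ rather than a substantive obstacle; I do not anticipate any serious difficulty, as the argument amounts to a dictionary check reducing the statement to cited results.
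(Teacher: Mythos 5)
Your proposal is correct and follows essentially the same route as the paper, which derives the lemma immediately from lemmas \ref{lem:gluing_sutures} and \ref{lem:euler_class_invariant} under the standing assumption that gluings respect $\Gamma$ and $R_\pm$; you simply make the hypothesis-checking (odd intersection number one, non-consecutive endpoints, orientation-reversal from incoming/outgoing) explicit. The only cosmetic remark is that your non-consecutivity check is phrased for $e_1,e_2$ on the same boundary component; the case of distinct components is even easier and changes nothing.
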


Now suppose, alternatively, that we have a quadrangulated surface $(\Sigma,A)$ and wish to draw sutures $\Gamma$ on it. In general each decomposing arc $a$ of the quadrangulation intersects $\Gamma$ an odd number of times; the simplest way situation is if $|a \cap \Gamma| = 1$. Then on each square of the quadrangulation, the only possible nontrivial sutures are the standard positive or negative sutures.
\begin{defn}
A sutured quadrangulated surface $(\Sigma, \Gamma, A)$ on which each square has standard positive or negative sutures is called \emph{basic}.
\end{defn}

As there are $I(\Sigma,V)$ squares in a quadrangulation of $(\Sigma,V)$, there are precisely $2^{I(\Sigma,V)}$ basic sets of sutures on $(\Sigma,A)$. In this sense we may say that a basic sutured quadrangulated surface $(\Sigma, \Gamma, A)$ holds $I(\Sigma,V)$ bits of information. Obviously the basic sutures form a small subset of the possible sets of sutures.

Each basic square has Euler class $\pm 1$ according to its sign; gluing them together preserves Euler class (lemma \ref{lem:gluing_sutures_preserves_euler}), so the Euler class is given by summing the signs of the basic sutures.

\begin{prop}
\label{prop:sutures_made_basic}
Let $(\Sigma, \Gamma, V)$ be a sutured occupied surface without vacua, with nonconfining sutures $\Gamma$. There exists a quadrangulation $A$ of $(\Sigma, V)$ for which $\Gamma$ is basic.
\end{prop}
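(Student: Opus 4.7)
The plan is to induct on the gluing number $G(\Sigma,V)$, building a basic quadrangulation one decomposing arc at a time. In the base case $G(\Sigma,V) = 0$, the characterisation in section \ref{sec:gluing_number} forces $(\Sigma,V)$, being vacuum-free, to be a disjoint union of occupied squares; on each such square the only equivalence classes of nontrivial nonconfining sutures are the two standard sutures of figure \ref{fig:sutured_squares}, so $\Gamma$ is already basic with respect to the empty quadrangulation.

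In the inductive step $G(\Sigma,V) \geq 1$ I would exhibit a nontrivial decomposing arc $a$ on $(\Sigma,V)$ which meets $\Gamma$ transversely in a single point, such that cutting $(\Sigma,\Gamma,V)$ along $a$ yields a sutured occupied surface $(\Sigma',\Gamma',V')$ without vacua and with nontrivial nonconfining sutures. Lemma \ref{lem:index_invariant} then gives $G(\Sigma',V') = G(\Sigma,V) - 1$, the induction hypothesis provides a quadrangulation $A'$ of $(\Sigma',V')$ on which $\Gamma'$ is basic, and $A = A' \cup \{a\}$ is a quadrangulation of $(\Sigma,V)$ on which $\Gamma$ is basic --- each square of $A$ either coincides with a square of $A'$ or gains a single transverse intersection of $\Gamma$ with its newly-included copy of $a$.

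To construct $a$, choose a component of $(\Sigma,V)$ which is not an occupied square (possible since $G \geq 1$), pick a suture $\gamma$ in it (every component has sutures because every boundary component meets $F$), and choose an interior point $p \in \gamma$ away from $\partial\Sigma$. Let $R^+, R^-$ be the regions of $\Sigma \setminus \Gamma$ on either side of $p$. Nonconfinement forces $\overline{R^\pm}$ to meet $\partial\Sigma$, so each contains at least one vertex $v^\pm \in V_\pm$; join $p$ to $v^\pm$ by an arc $\alpha^\pm$ inside $R^\pm$ disjoint from $\Gamma$ in its interior, and set $a = \alpha^+ \cup \alpha^-$. Then $a \cap \Gamma = \{p\}$ and $a$ is a decomposing arc. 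The key observation is that any trivial decomposing arc is boundary-parallel and so joins two consecutive vertices of $\partial\Sigma$; hence if $(v^+,v^-)$ can be chosen non-consecutive on $\partial\Sigma$, then $a$ is automatically nontrivial.

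The main obstacle is the degenerate case in which $\overline{R^+}\cap V_+ = \{v^+\}$, $\overline{R^-}\cap V_- = \{v^-\}$, and $v^+, v^-$ are consecutive on $\partial\Sigma$ --- the local picture around $\gamma$ then mimics a standard square. I would dispose of this by exploiting the combinatorial room guaranteed by the hypothesis that the component is not a square (higher genus, an extra boundary component, or an extra suture): vary $\gamma$ to another suture, or vary $v^\pm$ among the vertices of $\overline{R^\pm}$, until non-consecutive endpoints are available. Once a nontrivial $a$ is found, verifying the inductive hypotheses is routine: lemma \ref{lem:cutting_sutures} produces a sutured surface $(\Sigma', \Gamma', V')$; nontriviality of $a$ prevents any vacuum from forming; each region of $\Sigma' \setminus \Gamma'$ inherits part of its boundary from the copies of $a$ on $\partial\Sigma'$, preserving nonconfinement; and the only change to $\Gamma$ is the splitting of $\gamma$ at $p$, which cannot create a contractible closed curve --- a closed suture disjoint from $a$ that became contractible in $\Sigma'$ would already bound a disc in $\Sigma$, contradicting nontriviality of $\Gamma$.
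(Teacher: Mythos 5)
Your overall scheme---induct on $G(\Sigma,V)$ by repeatedly finding a nontrivial decomposing arc meeting $\Gamma$ transversely in one point, cutting, and appending the arc to a basic quadrangulation of the cut surface---is exactly the skeleton of the paper's proof (which phrases the induction as repeated cutting via proposition \ref{prop:build_quadrangulation}), and your bookkeeping about the cut surface (no vacua, sutures stay nontrivial and nonconfining, basicness is inherited) is essentially fine. The problem is the one step that carries all the content: certifying that the arc $a=\alpha^+\cup\alpha^-$ you build is \emph{nontrivial}. Your only criterion is ``choose $v^+,v^-$ non-consecutive,'' and your fallback in the degenerate case is to ``vary $\gamma$ or $v^\pm$ until non-consecutive endpoints are available.'' This cannot work in general: non-consecutiveness is sufficient but far from necessary, and there are vacuum-free, non-square occupied surfaces on which it is never available. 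The once-punctured torus with exactly two boundary vertices (figure \ref{fig:punctured_torus_decomposition}) has one vertex of each sign, consecutive along both boundary edges, so \emph{every} decomposing arc on it has consecutive endpoints; yet such surfaces certainly admit nontrivial arcs meeting suitable sutures once. In that situation your construction gives no control: with a careless choice of $\alpha^\pm$ the arc $p\to v^+$, $p\to v^-$ can perfectly well be isotopic to a boundary edge (a trivial arc also meets $\Gamma$ exactly once), in which case cutting peels off a vacuum, violates your inductive hypothesis ``without vacua,'' and makes no progress.

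So the missing ingredient is an actual argument, depending on the topology of $\gamma$ and of the component, that a once-intersecting arc can be chosen non-boundary-parallel. This is precisely what the paper's proof supplies via a case analysis that your single construction skips: if $\gamma$ is a closed suture, an arc crossing it once cannot be boundary-parallel (a boundary-parallel arc cobounds a bigon with a boundary edge, forcing an even number of intersections with a closed curve in the interior); if some suture arc is not boundary-parallel, run the decomposing arc alongside it, crossing once; if all sutures are boundary-parallel, use an extra boundary component or a handle to produce a non-boundary-parallel arc disjoint from $\Gamma$ and perturb it to meet $\Gamma$ once; and finally handle the disc directly. To repair your proof you would need to replace the ``non-consecutive endpoints'' test with an argument of this kind (e.g.\ routing $\alpha^-$ around a handle or to another boundary component, and proving, not just asserting, that the result is not isotopic to a boundary edge); as written, the inductive step fails exactly on the surfaces where all the interesting topology lives.
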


(Note: this statement essentially appears in \cite{HKM08}.)

\begin{proof}
For general $(\Sigma, \Gamma, V)$ without vacua, with nonconfining $\Gamma$ and $(\Sigma, V)$ not a disjoint union of occupied squares, we will find a nontrivial decomposing arc $a$ which intersects $\Gamma$ in one point. Cutting along it gives another sutured occupied surface which is nonconfining and without vacua, and by proposition \ref{prop:build_quadrangulation} repeating this procedure eventually reduces to occupied squares on which $\Gamma$ is basic, giving the desired quadrangulation $A$.

First, suppose there is a suture $\gamma$ which is a closed curve. On either side of $\gamma$ lie components of $R_\pm$; as $\Gamma$ is nonconfining, both these regions intersect $\partial \Sigma$. We thus take $a$ running between vertices on $\partial \Sigma$, and intersecting $\gamma$ precisely once. Now $a$ is not boundary parallel; if it were, we could isotope it off $\gamma$. Hence this $a$ is a nontrivial decomposing arc.

Next suppose we have a non-boundary-parallel arc $\gamma$. We take $a$ running along $\gamma$, but intersecting it once. (See figure \ref{fig:arcs_from_sutures}.) Then $a$ is not boundary parallel, hence a nontrivial decomposition arc.

\begin{figure}
\begin{center}
\begin{tikzpicture}[
scale=2, 
suture/.style={thick, draw=red}, 
decomposition/.style={thick, draw=green!50!black}, 
boundary/.style={ultra thick}
]

\coordinate (tl) at (0,2);
\coordinate (suture top) at (1,2);
\coordinate [label = above:{$+$}] (arc top) at (1.5,2);
\coordinate (tr) at (2,2);
\coordinate (bl) at (0,0);
\coordinate [label = below:{$-$}] (arc bottom) at (0.5,0);
\coordinate (suture bottom) at(1,0);
\coordinate (br) at (2,0);

\draw [boundary] (bl) -- node [midway, below=10pt] {$\partial \Sigma$} (br);
\draw [boundary] (tl) -- node [midway, above=8pt] {$\partial \Sigma$} (tr);
\draw [suture] (suture bottom) -- node[near end, above left] {$\gamma$} (suture top);
\draw [decomposition] (arc bottom) to [bend left=30] (1,1) to [bend right=30] node[midway, right] {$a$} (arc top);
\draw (0,1) node {$-$};
\draw (2,1) node {$+$};

\foreach \point in {suture top, arc top, arc bottom, suture bottom}
\fill [black] (\point) circle (1pt);

\end{tikzpicture}
\caption{Decomposing arcs found near sutures.}
\label{fig:arcs_from_sutures}
\end{center}
\end{figure}
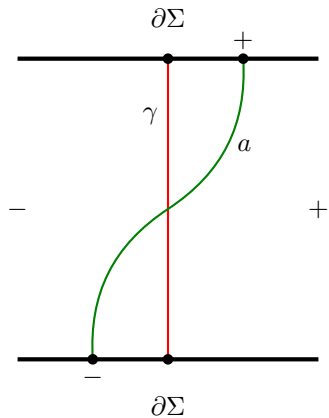

So, we may now assume every suture is boundary-parallel. If $(\Sigma,V)$ has more than one boundary component, we can find an arc running from one boundary component to another, which does not intersect $\Gamma$, and perturb it to intersect $\Gamma$ once. If $(\Sigma,V)$ has genus, we can find an arc running around the handle which does not intersect $\Gamma$, and again perturb for one intersection point.

Thus we may assume $(\Sigma, \Gamma, V)$ is a disc. We may then easily find nontrivial decomposing arcs which intersect $\Gamma$ once.
\end{proof}

The converse is also true. Nonconfining sutures imply the existence of a basic quadrangulation, and the existence of a basic quadrangulation implies nonconfining sutures.
\begin{prop}
Let $(\Sigma, \Gamma, A)$ be a basic sutured quadrangulated surface. Then $\Gamma$ is nonconfining.
\end{prop}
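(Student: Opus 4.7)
The plan is to argue square by square, exploiting the fact that in a bona fide quadrangulation all vertices of $V$ lie on $\partial \Sigma$. Concretely, I would first establish the following local observation: in a standard positively or negatively sutured square (figure \ref{fig:sutured_squares}), each of the three complementary regions of the sutures has a vertex of $V^\square$ in its closure. For the $\0$-square, the central $+$ region contains the two $+$ corners in its closure, while each of the two corner $-$ regions contains one $-$ corner; the $\1$-square is analogous with signs reversed. This is immediate from inspection of the figure.

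Now let $R$ be any connected component of $\Sigma \setminus \Gamma$. Since the squares of $A$ cover $\Sigma$, $R$ must meet the interior of at least one square $(\Sigma^\square, V^\square)$ of the quadrangulation. On this square, $R \cap \Sigma^\square$ is a (nonempty) union of components of $\Sigma^\square \setminus \Gamma$, and by the local observation above, any such component contains some vertex $v \in V^\square$ in its closure. By lemma \ref{lem:arcs_distinct} and the definition of quadrangulation, $v$ lies in $V \subset \partial \Sigma$, so $\overline{R}$ meets $\partial \Sigma$ at $v$. Hence every component of $\Sigma \setminus \Gamma$ reaches $\partial \Sigma$, i.e.\ $\Gamma$ is nonconfining.

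The only mild subtlety I anticipate is ensuring that the local ``touches a vertex'' statement actually certifies the global region $R$ as non-confining: we need $v$ to lie on $\partial \Sigma$ and not in the interior of $\Sigma$. For bona fide (non-slack) quadrangulations this is automatic, because by definition all vertices of the quadrangulation are vertices of the occupied surface $(\Sigma, V)$, and these all lie on $\partial \Sigma$. There is no gluing-together of vertices into the interior to worry about, since that would correspond to swallowed vertices and appears only in slack quadrangulations. Thus the local-to-global passage is direct, and the main (minor) content of the argument is simply the per-square check that each complementary region touches a corner.
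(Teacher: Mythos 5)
Your proof is correct, but it is a genuinely different argument from the paper's. The paper proceeds by contradiction using the \emph{arc-intersection} consequence of basicness: given a confined region $R$, either some decomposing arc $a$ passes through $R$ --- and then, since $a$ ends at vertices on $\partial\Sigma$ while $R$ never reaches $\partial\Sigma$, following $a$ out of $R$ in both directions forces $|a\cap\Gamma|\geq 2$, contradicting $|a\cap\Gamma|=1$ --- or no arc meets $R$, so $R$ sits inside a single square, contradicting the fact that the standard sutures there have no region disjoint from the square's boundary. You instead use the \emph{per-square picture} consequence of basicness, proving the stronger direct statement that every component of $\Sigma\setminus\Gamma$ contains a vertex of $V$: any component meets some open square, the component of $\Sigma^\square\setminus\Gamma$ it contains has a corner, and all corners are boundary vertices because the quadrangulation is bona fide. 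Both are valid and of comparable length; yours yields the slightly stronger ``every region touches a vertex'' conclusion, while the paper's arc-following trick is the one that recurs elsewhere (e.g.\ in producing basic quadrangulations from nonconfining sutures). One small point to tighten: ``$\overline{R}$ meets $\partial\Sigma$'' is not literally what nonconfining requires; you need $R$ itself to meet $\partial\Sigma$. This is immediate because vertices are disjoint from $\Gamma$ (we have $\Gamma\cap\partial\Sigma=F$ and $F\cap V=\emptyset$, as $V_\pm$ lies in $R_\pm$), so the corner in question lies in the complementary region itself, not merely in its closure --- state it that way and the closure language disappears.
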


\begin{proof}
Suppose $\Gamma$ were confining. Consider a confined region $R$ of $\Sigma \backslash \Gamma$. Then some arc $a$ of the quadrangulation must pass through $R$; else $R$ lies entirely in one square of the quadrangulation, contradicting $\Gamma$ being basic. Take a point of $a \cap R$; as $a$ must end at vertices of the occupied surface, proceeding from that point in either direction along $a$, we must intersect $\Gamma$. Thus $|a \cap \Gamma| \geq 2$, contradicting $\Gamma$ being basic.
\end{proof}

In particular, if $(\Sigma, \Gamma, A)$ is basic then $\Gamma$ is nontrivial.

\subsection{Bypass surgeries on quadrangulated sutured surfaces}

There are two types of bypass surgeries nicely adapted to the structure of a quadrangulation.

First, suppose we have a set of sutures $\Gamma$ on the quadrangulated background $(\Sigma, F, A)$. Each internal edge of $A$ intersects $\Gamma$ in an odd number of points. If an edge $a$ of $A$ intersects $\Gamma$ in $3$ or more points, consider a subinterval $c$ of $a$ joining $3$ consecutive points of $a \cap \Gamma$. Then $c$ is an attaching arc, a neighbourhood of $c$ is a bypass disc, and we can perform bypass surgery there. After doing so, and simplifying (isotoping) sutures so as to intersect $a$ efficiently, we have a quadrangulated sutured surface $(\Sigma, \Gamma', A)$ where $|\Gamma' \cap A| < |\Gamma \cap A|$. Continuing in this way, we can reduce to basic sutures.

Moreover, if $\Gamma$ is nontrivial, then since we assume arcs of $A$ to intersect $\Gamma$ efficiently, by proposition \ref{prop:efficient_surgery} the resulting sutures are always nontrivial.
\begin{lem}
\label{lem:sutures_to_basic}
Let $\Gamma$ be a nontrivial set of sutures on the quadrangulated background $(\Sigma,F,A)$. Then there exists a sequence of bypass surgeries (and simplifying isotopies of sutures) which, applied to $\Gamma$, gives a basic set of sutures. Each set of sutures obtained in the process is nontrivial.
\qed
\end{lem}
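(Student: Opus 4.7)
The plan is to induct on the total intersection number $|\Gamma \cap A|$, where we first replace $\Gamma$ by an isotopic representative that intersects every arc of $A$ efficiently. Such an isotopy preserves the isotopy class of $\Gamma$ and hence its nontriviality, and is made only to set up the induction; it is not counted as a surgery. In the base case, every internal edge $a$ of $A$ meets $\Gamma$ in exactly one point: the intersection number is odd (the endpoints of $a$ lie in $R_+$ and $R_-$) and at least one, and if any $a$ had three or more intersections the inductive step would apply. Then on each square of the quadrangulation $\Gamma$ consists of two disjoint arcs joining its four boundary points in one of the two patterns of figure \ref{fig:sutured_squares}, so $(\Sigma, \Gamma, A)$ is basic.

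For the inductive step, pick an internal edge $a$ with $|a \cap \Gamma| \ge 3$ and let $c \subset a$ be the subinterval joining three consecutive points of $a \cap \Gamma$. Then $c$ is an attaching arc, and a small tubular neighbourhood $D$ of $c$ is an embedded bypass disc. Since $a$ intersects $\Gamma$ efficiently in its isotopy class rel $\partial a$, no sub-arc of $a$ can cobound a disc in $\Sigma$ with a sub-arc of $\Gamma$, so $c$ is efficient as an attaching arc and $D$ is an efficient bypass disc. By proposition \ref{prop:efficient_surgery}, bypass surgery at $D$ in either direction yields a nontrivial set of sutures $\Gamma'$.

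It remains to show that after a further isotopy making $\Gamma'$ efficient with $A$, we have $|\Gamma' \cap A| < |\Gamma \cap A|$. The three points of $a \cap \Gamma$ chosen lie in the interior of $a$, so $D$ may be chosen disjoint from the endpoints of $a$; since distinct arcs of $A$ are disjoint in the interior of $\Sigma$, no other arc of $A$ meets $D$. Thus $\Gamma$ and $\Gamma'$ coincide outside $D$, and the intersections with arcs of $A$ other than $a$ are unchanged. Within $D$, the surgery replaces the three strands of $\Gamma$ crossing $c$ by a pattern meeting $c$ (and hence $a$) in a single point, so $|\Gamma' \cap a| = |\Gamma \cap a| - 2$. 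Any subsequent isotopy of $\Gamma'$ to restore efficiency with $A$ can only decrease $|\Gamma' \cap A|$ further, and preserves nontriviality as it preserves the isotopy class. So the induction descends strictly, terminates in a basic set of sutures, and every intermediate stage is nontrivial.

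The step I expect to be the main obstacle is verifying strict decrease cleanly: one must confirm that the bypass disc $D$ may be taken small enough to avoid all other arcs of $A$ and the vertices of $a$, and that the isotopy to restore global efficiency of $\Gamma'$ with $A$ does not inadvertently create new intersections on some other edge $a' \in A$. Both points reduce to the local nature of bypass surgery together with the fact that the three consecutive intersection points chosen on $a$ lie in its interior, so $D$ is contained in the union of the two squares of $A$ adjacent to $a$; this confines all modifications of $\Gamma$ to those two squares and so guarantees a net loss of two intersection points along $a$ with no change on any other edge of $A$.
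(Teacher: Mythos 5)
Your proposal is correct and follows essentially the same route as the paper: perform bypass surgery along a subinterval of an internal edge $a$ joining three consecutive points of $a \cap \Gamma$, use the efficiency of sub-arcs of $a$ together with proposition \ref{prop:efficient_surgery} to keep the sutures nontrivial, and induct on $|\Gamma \cap A|$ (the paper likewise allows the simplifying isotopy after surgery to realise the strict decrease, so your minor caveat about one surgery direction leaving removable intersections is already absorbed by the lemma's wording).
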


The second type of bypass surgery relevant to a quadrangulation is as follows. Suppose the sutured quadrangulated surface $(\Sigma, \Gamma, A)$ has sutures $\Gamma$ basic and nontrivial. Suppose there are two squares $\Sigma^\square_0$ and $\Sigma^\square_1$ which share an edge $a$ of the quadrangulation, such that $\Sigma^\square_0$ has negative sutures $\Gamma^-_0$ and $\Sigma^\square_1$ has positive sutures $\Gamma^+_1$. (The two squares may have other edges in common as well in $\Sigma$.) There is then an attaching arc $c$ joining the sutures in these two squares. Performing bypass surgery there, we obtain a new set of sutures which is basic, nontrivial, and now $\Sigma_0^\square$ has positive sutures and $\Sigma_1^\square$ has negative sutures. See figure \ref{fig:surgery_swapping_signs}. So we can use bypass surgery to swap the signs of sutures in two adjacent squares of a basic quadrangulated sutured surface.

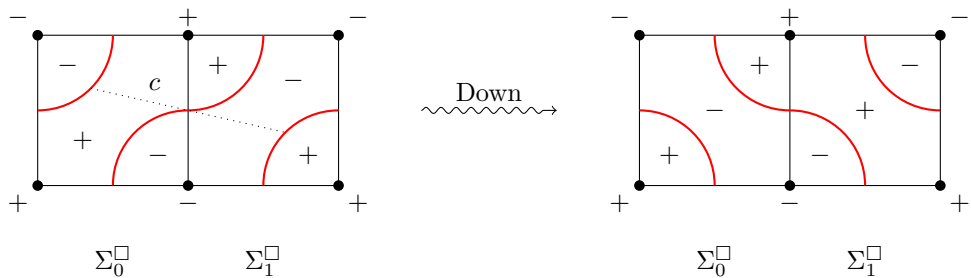
\begin{figure}
\begin{center}
\begin{tikzpicture}[
scale=2, 
fill = gray!10,
suture/.style={thick, draw=red} 
]

\coordinate [label = below left:{$+$}] (1bl) at (0,0);
\coordinate [label = below:{$-$}] (1bm) at (1,0);
\coordinate [label = below right:{$+$}] (1br) at (2,0);
\coordinate [label = above left:{$-$}] (1tl) at (0,1);
\coordinate [label = above:{$+$}] (1tm) at (1,1);
\coordinate [label = above right:{$-$}] (1tr) at (2,1);

\draw (1bl) -- (1br) -- (1tr) -- (1tl) -- cycle;
\draw (1tm) -- (1bm);
\draw [suture] (0,0.5) arc (-90:0:0.5);
\draw [suture] (0.5,0) arc (180:90:0.5) arc (-90:0:0.5);
\draw [suture] (2,0.5) arc (90:180:0.5);
\draw [dotted] ($ (0,1) + (-45:0.5) $) -- node [near start, above right] {$c$} ($ (2,0) + (135:0.5) $);

\draw [shorten >=1mm, -to, decorate, decoration={snake,amplitude=.4mm, segment length = 2mm, pre=moveto, pre length = 1mm, post length = 2mm}] (2.5,0.5) -- node [above] {Down} (3.5,0.5);

\coordinate [label = below left:{$+$}] (2bl) at (4,0);
\coordinate [label = below:{$-$}] (2bm) at (5,0);
\coordinate [label = below right:{$+$}] (2br) at (6,0);
\coordinate [label = above left:{$-$}] (2tl) at (4,1);
\coordinate [label = above:{$+$}] (2tm) at (5,1);
\coordinate [label = above right:{$-$}] (2tr) at (6,1);

\draw (2bl) -- (2br) -- (2tr) -- (2tl) -- cycle;
\draw (2tm) -- (2bm);
\draw [suture] (4,0.5) arc (90:0:0.5);
\draw [suture] (4.5,1) arc (180:270:0.5) arc (90:0:0.5);
\draw [suture] (6,0.5) arc (270:180:0.5);

\draw (0.2,0.8) node {$-$};
\draw (0.3,0.3) node {$+$};
\draw (0.8,0.2) node {$-$};
\draw (1.2,0.8) node {$+$};
\draw (1.7,0.7) node {$-$};
\draw (1.8,0.2) node {$+$};
\draw (4.2,0.2) node {$+$};
\draw (4.5,0.5) node {$-$};
\draw (4.8,0.8) node {$+$};
\draw (5.2,0.2) node {$-$};
\draw (5.5,0.5) node {$+$};
\draw (5.8,0.8) node {$-$};

\draw (0.5,-0.5) node {$\Sigma^\square_0$};
\draw (1.5,-0.5) node {$\Sigma^\square_1$};
\draw (4.5,-0.5) node {$\Sigma^\square_0$};
\draw (5.5,-0.5) node {$\Sigma^\square_1$};

\foreach \point in {1bl, 1bm, 1br, 1tl, 1tm, 1tr, 2bl, 2bm, 2br, 2tl, 2tm, 2tr}
\fill [black] (\point) circle (1pt);

\end{tikzpicture}

\end{center}
\caption{Bypass surgery swapping signs in adjacent squares.}
\label{fig:surgery_swapping_signs}
\end{figure}

In this way we can permute the signs of basic sutures within a connected component of $(\Sigma,V)$. Thus any two basic sets of sutures on a connected quadrangulated surface $(\Sigma, A)$ with the same Euler class are related by a sequence of bypass surgeries, through a sequence of basic sets of sutures. Combining this with the above lemma gives the following.

\begin{prop}
Any two nontrivial sets of sutures $\Gamma_0, \Gamma_1$ with the same Euler class on a connected background $(\Sigma, F)$ are related by a sequence of bypass surgeries (and isotopy). At every stage the set of sutures remains nontrivial.
\end{prop}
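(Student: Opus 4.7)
The plan is to reduce to the basic case, which is essentially established in the discussion immediately preceding the proposition, by using Lemma \ref{lem:sutures_to_basic} to bring both $\Gamma_0$ and $\Gamma_1$ into basic form with respect to a single common quadrangulation $A$. If $(\Sigma, V)$ is the occupied vacuum then, up to isotopy, there is only one nontrivial set of sutures, so nothing needs to be proved. Otherwise, Proposition \ref{prop:build_quadrangulation} supplies a quadrangulation $A$ of $(\Sigma, V)$.

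Next, I would apply Lemma \ref{lem:sutures_to_basic} separately to $\Gamma_0$ and $\Gamma_1$ with respect to this fixed $A$, producing two sequences of bypass surgeries, with each intermediate set of sutures nontrivial, terminating at basic sets $\Gamma_0', \Gamma_1'$ on $(\Sigma, A)$. Proposition \ref{prop:bypass_surgery_preserves_Euler} says bypass surgery preserves the Euler class, so
\[
e(\Gamma_0') = e(\Gamma_0) = e(\Gamma_1) = e(\Gamma_1').
\]
By the sign-swapping argument of the preceding paragraph (using Figure \ref{fig:surgery_swapping_signs}), $\Gamma_0'$ and $\Gamma_1'$ are then related by a sequence of bypass surgeries passing only through basic configurations; since basic sutures are automatically nontrivial, nontriviality is maintained. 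Upward and downward surgery on the same disc are inverse operations, so the sequence from $\Gamma_1$ to $\Gamma_1'$ can be reversed, and concatenating the three pieces yields the desired sequence from $\Gamma_0$ to $\Gamma_1$ consisting entirely of nontrivial sets of sutures.

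I expect the main subtlety to be verifying the sign-permutation step, though most of it is already carried by the discussion above the proposition. To spell it out: a basic configuration on $(\Sigma, A)$ is exactly a $\pm$-labelling of the vertices of the dual graph of $A$ (see Section \ref{sec:ribbon_graphs}); the sign-swap bypass surgery transposes labels across an edge whose endpoints carry opposite signs. Since $\Sigma$ is connected, so is this dual graph, and any two $\pm$-labellings with the same number of $+$'s can be converted into one another by repeatedly sliding a single $+$ across one adjacent $-$ at a time along a path in the dual graph. The constraint $e = \#\{+\} - \#\{-\}$ is exactly the condition that the $+$-counts of the two labellings agree, so the combinatorial step goes through unconditionally. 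The rest is bookkeeping.
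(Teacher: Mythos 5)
Your proposal is correct and follows essentially the same route as the paper: fix a quadrangulation, use Lemma \ref{lem:sutures_to_basic} to reduce both sets of sutures to basic ones through nontrivial stages, note Euler class is preserved, and connect the two basic configurations by the sign-swapping bypass surgeries of Figure \ref{fig:surgery_swapping_signs}, reversing one sequence since up/down surgeries are inverses. Your extra care with the vacuum case and the explicit dual-graph permutation argument only fills in details the paper leaves implicit.
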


\begin{proof}
Take a quadrangulation. Each $\Gamma_i$ can be reduced to basic sutures by bypass surgeries and isotopies. These two basic sets of sutures are related by a sequence of bypass surgeries.
\end{proof}

\subsection{Properties of the Euler class}

\begin{prop}
The Euler class $e(\Gamma)$ of a nontrivial set of sutures $\Gamma$ on a sutured background $(\Sigma, F)$ satisfies
\[
- I(\Sigma,V) \leq e(\Gamma)\leq I(\Sigma, V), \quad e(\Gamma) \equiv I(\Sigma, V) \text{ mod } 2.
\]
\end{prop}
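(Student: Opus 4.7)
The plan is to reduce the general nontrivial case to the basic case via bypass surgery, and then to do a direct count on a basic sutured quadrangulated surface. Since both $I$ and $e$ are additive under disjoint union, I would first dispatch any vacuum components: on the occupied vacuum the only nontrivial sutures are the single arc, for which $R_+$ and $R_-$ are both discs and hence $e = 0 = I$, so the inequality and parity both hold trivially. Thus I may assume $(\Sigma,V)$ has no vacua and can be quadrangulated, using proposition \ref{prop:build_quadrangulation}.

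Next I would fix any quadrangulation $A$ of $(\Sigma,V)$, which exists by proposition \ref{prop:build_quadrangulation} and has exactly $I(\Sigma,V)$ squares. After isotoping $\Gamma$ so that it meets each arc of $A$ efficiently, lemma \ref{lem:sutures_to_basic} provides a finite sequence of bypass surgeries (and isotopies) taking $\Gamma$ to a basic set of sutures $\Gamma'$ on $(\Sigma,A)$; the lemma further guarantees that each intermediate set of sutures is nontrivial, so the surgeries are genuinely applicable at every step. By proposition \ref{prop:bypass_surgery_preserves_Euler}, bypass surgery preserves the Euler class, and isotopy obviously does too, so $e(\Gamma) = e(\Gamma')$.

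Now $\Gamma'$ restricts to one of the two standard sutures (positive or negative) on each of the $I(\Sigma,V)$ squares of $A$, contributing $+1$ or $-1$ respectively to the Euler class of that square. Since the squares are glued together along boundary edges by standard gluings, and such gluings preserve the Euler class (lemma \ref{lem:gluing_sutures_preserves_euler}), the Euler class of $\Gamma'$ is simply the sum of these $\pm 1$ contributions. Hence
\[
e(\Gamma) = e(\Gamma') = \sum_{i=1}^{I(\Sigma,V)} \epsilon_i, \qquad \epsilon_i \in \{+1,-1\},
\]
which immediately yields $|e(\Gamma)| \leq I(\Sigma,V)$ and $e(\Gamma) \equiv I(\Sigma,V) \pmod 2$, as a sum of $I(\Sigma,V)$ odd integers.

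The only real content of the argument lives in the prerequisites (existence of a quadrangulation, reduction to basic sutures through nontrivial states, and invariance of $e$ under bypass surgery and standard gluing); given those, the final step is merely a count, so I expect no significant obstacle beyond confirming that the bypass reduction in lemma \ref{lem:sutures_to_basic} is applicable to whatever quadrangulation we chose — which it is, since that lemma is stated for arbitrary quadrangulated backgrounds with nontrivial sutures.
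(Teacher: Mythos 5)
Your proposal is correct and follows essentially the same route as the paper: dispose of vacuum components (where $e = I = 0$), take a quadrangulation meeting $\Gamma$ efficiently, reduce to basic sutures via lemma \ref{lem:sutures_to_basic} using that bypass surgery preserves the Euler class, and then sum the $\pm 1$ contributions over the $I(\Sigma,V)$ squares. The only difference is that you spell out explicitly the citations (propositions \ref{prop:build_quadrangulation}, \ref{prop:bypass_surgery_preserves_Euler} and lemma \ref{lem:gluing_sutures_preserves_euler}) that the paper leaves implicit in its phrase ``as discussed above.''
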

(Note this applies even if $\Gamma$ is confining.) This is a well-known result in contact geometry; it is essentially the Bennequin inequality (see \cite{Bennequin83}, also \cite{ElMartinet}).

\begin{proof}
For each component of $(\Sigma, F)$ which is a vacuum, we have $I(\Sigma,V) = 0$ and $e(\Gamma) = 0$. For those components which are not vacua, we take any quadrangulation $A$ of $(\Sigma, V)$ which intersects $\Gamma$ efficiently; then by lemma \ref{lem:sutures_to_basic} $\Gamma$ is related via bypass surgeries and isotopies to a nontrivial basic set of sutures for $A$. As discussed above, each square then has standard positive or negative suturing, with Euler class $\pm 1$, and summing these $\pm 1$'s over the $I(\Sigma,V)$ squares of the quadrangulation gives the result.
\end{proof}

In fact we can be a bit more precise and establish relations between Euler characteristics of $R_+$, $R_-$ and $\Sigma$. Write $\chi(\Sigma) = \chi$ and $e(\Gamma) = e$. Suppose we have a basic sutured quadrangulated surface, with $S_+$ positively sutured squares, and $S_-$ negatively sutured squares. Then we have
\[
S_+ + S_- = I(\Sigma,V) = N - \chi, \quad S_+ - S_- = e.
\]
Each $\pm$ square is cut by its sutures into two discs of $R_\pm$ and one disc of $R_\mp$. Letting $(\Sigma', \Gamma')$ be the disjoint union of the squares of the quadrangulation, we then have $\chi(R'_+) = 2S_+ + S_-$ and $\chi(R'_-) = S_+ + 2S_-$. As we then glue edges together to form $(\Sigma,\Gamma)$, along the $G(\Sigma,V) = N - 2\chi$ internal edges, we see $\chi(R_+)$ and $\chi(R_-)$ decrease by $1$ with each gluing. Thus
\begin{align*}
\chi(R_+) &= 2 S_+ + S_- - (N-2\chi) = (N-\chi + e) + \frac{1}{2} (N-\chi-e) - (N-2\chi) = \frac{1}{2}(N+\chi+e) \\
\chi(R_-) &= S_+ + 2S_- - (N-2\chi) = \frac{1}{2} (N-\chi+e) + (N-\chi-e) - (N-2\chi) = \frac{1}{2}(N+\chi-e).
\end{align*}
Corresponding equalities are also true on vacua, and even for nontrivial confining sutures, since they can be reduced to basic sutures by bypass surgeries (lemma \ref{lem:sutures_to_basic}). We obtain the following.
\begin{prop}
\label{prop:chi_relations}
Let $(\Sigma,\Gamma)$ be a sutured surface with $\Gamma$ nontrivial. Writing $\chi(\Sigma) = \chi$ and $e(\Gamma) = e$, we have
\[
2 \chi(R_+) = N + \chi + e, \quad 2 \chi(R_-) = N + \chi - e.
\]
\qed
\end{prop}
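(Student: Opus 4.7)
My plan is to reduce to the case of basic sutures on a quadrangulation, where both sides of the identity can be counted directly. The key input is Proposition \ref{prop:bypass_surgery_preserves_Euler}, which asserts that bypass surgery preserves $\chi(R_+)$, $\chi(R_-)$ and $e$ individually; combined with Lemma \ref{lem:sutures_to_basic}, this means that for nontrivial $\Gamma$ we may replace $\Gamma$ by a basic set of sutures with respect to some quadrangulation without affecting any quantity in the desired identities. Components of $(\Sigma,V)$ which are vacua admit no quadrangulation, but for them $N = 1$, $\chi = 1$, $e = 0$, and $\chi(R_\pm) = 1$, so the formulas hold by inspection; additivity of $N$, $\chi$, $e$, and $\chi(R_\pm)$ over components then lets me assume there are no vacuum components.

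Now assume $\Gamma$ is basic on a quadrangulation $A$, with $S_+$ positively-sutured and $S_-$ negatively-sutured squares. Additivity of $e$, together with $e = \pm 1$ on standard $\pm$ squares, gives $S_+ - S_- = e$; and $S_+ + S_- = I(\Sigma,V) = N - \chi$, so $S_\pm = \tfrac{1}{2}(N - \chi \pm e)$. On a positively-sutured square, $R_+$ consists of two discs at the $+$ corners and $R_-$ of one disc, contributing $(2,1)$ to $(\chi(R_+), \chi(R_-))$; a negatively-sutured square contributes $(1,2)$. Summing, for the disjoint union $(\Sigma', \Gamma')$ of the squares of $A$, we have $\chi(R'_+) = 2S_+ + S_-$ and $\chi(R'_-) = S_+ + 2S_-$.

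Finally I would track what happens as the $G(\Sigma,V) = N - 2\chi$ internal edges of $A$ are glued to reconstruct $(\Sigma,V)$. Each such gluing identifies, inside the closure of $R_+$, two $+$ corners with each other, two suture endpoints with each other, and the two $R_+$-side half-edges between them, for a net change $\Delta\chi(R_+) = (-2) - (-1) = -1$; the same holds for $\chi(R_-)$ by symmetry. Plugging in,
\[
\chi(R_+) = 2S_+ + S_- - (N - 2\chi) = S_+ + \chi = \tfrac{1}{2}(N + \chi + e),
\]
and $\chi(R_-) = S_+ + 2S_- - (N - 2\chi) = S_- + \chi = \tfrac{1}{2}(N + \chi - e)$, as claimed.

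The main obstacle I anticipate is this per-gluing bookkeeping for $\chi(R_\pm)$: the suture endpoint is shared by $\partial R_+$ and $\partial R_-$, so one must be careful not to double-count it when it is identified with its partner during gluing. Fixing a consistent CW decomposition in which $\Gamma$, $V$, and the boundary edges are all subcomplexes on both the disjoint union and the glued surface makes this transparent; once that is in place, the rest of the argument is pure arithmetic.
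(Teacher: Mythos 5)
Your proposal is correct and follows essentially the same route as the paper: reduce to basic sutures on a quadrangulation via Lemma \ref{lem:sutures_to_basic} together with Proposition \ref{prop:bypass_surgery_preserves_Euler}, handle vacuum components by inspection, count $(\chi(R_+),\chi(R_-))$ contributions of $(2,1)$ and $(1,2)$ per signed square, and subtract $1$ from each for every one of the $G(\Sigma,V)=N-2\chi$ internal-edge gluings. Your explicit CW-level bookkeeping of the gluing step is just a more detailed version of the paper's one-line assertion, and your arithmetic agrees with the paper's.
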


Combining the two propositions above gives immediately that 
\[
\chi(\Sigma) \leq \chi(R_\pm) \leq N,
\]
for any nontrivial sutures on a sutured surface. Moreover, equalities hold when $e = \pm I(\Sigma,V)$, or equivalently $\chi(R_\pm) = N$, or equivalently $\chi(R_\mp) = \chi(\Sigma)$. When such equalities hold, and the Euler class is extremal, we say $\Gamma$ is \emph{extremal}.

It is not difficult to see that nonconfining extremal sutures must consist entirely of boundary-parallel sutures connecting consecutive endpoints along each boundary component of $\Sigma$, cutting off $N$ discs of the same sign. (For $\chi(R_+) = N$ in this case, and the nonconfining property implies that $R_+$ has at most $N$ components, each with Euler characteristic $\leq 1$.) Such a $\Gamma$ is basic for \emph{any} quadrangulation.

However it is possible for confining sutures to be extremal; for instance, adding pairs of parallel sutures to any set of sutures does not change the Euler class.

\section{Decorated morphisms}
\label{sec:decorated_morphisms}

\subsection{Definitions}
\label{sec:defns_decorated_morphisms}

The following type of structure follows that of \cite{HKM08}, where one sutured manifold is mapped inside another and a contact structure is taken in the complementary region.
\begin{defn}
A pair $(\phi, \Gamma_c)$, where $\phi$ is an occupied surface morphism and $\Gamma_c$ a set of sutures on its complement, is called a \emph{decorated morphism}.
\end{defn}

Some examples are shown in figure \ref{fig:decorated_morphisms}. Note that when $\phi$ is surjective, its complement is empty, so no sutures are required; a surjective morphism is a decorated morphism. In particular, the identity morphism is a decorated morphism.

\begin{figure}
\begin{center}
\begin{tabular}{c}
\includegraphics[scale=0.3]{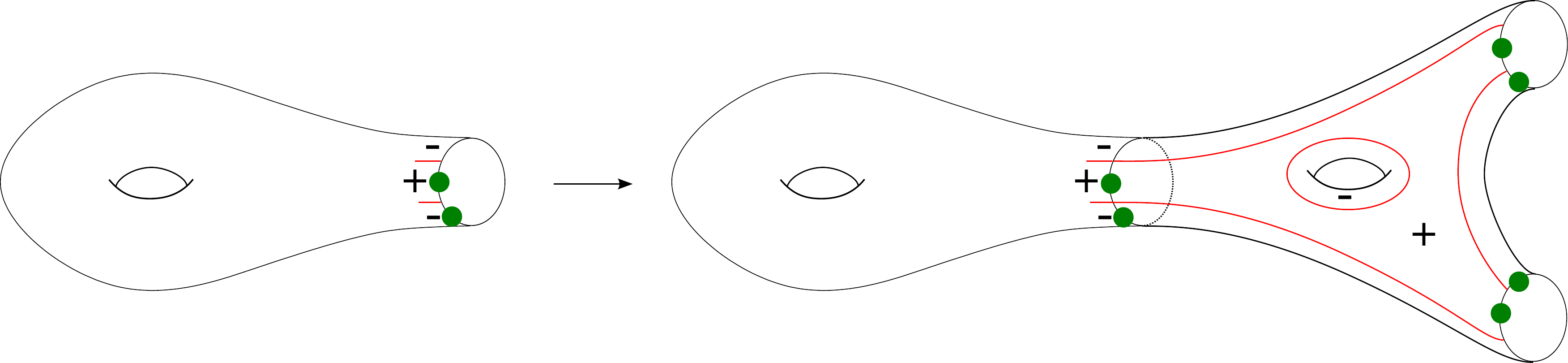}\\
\includegraphics[scale=0.4]{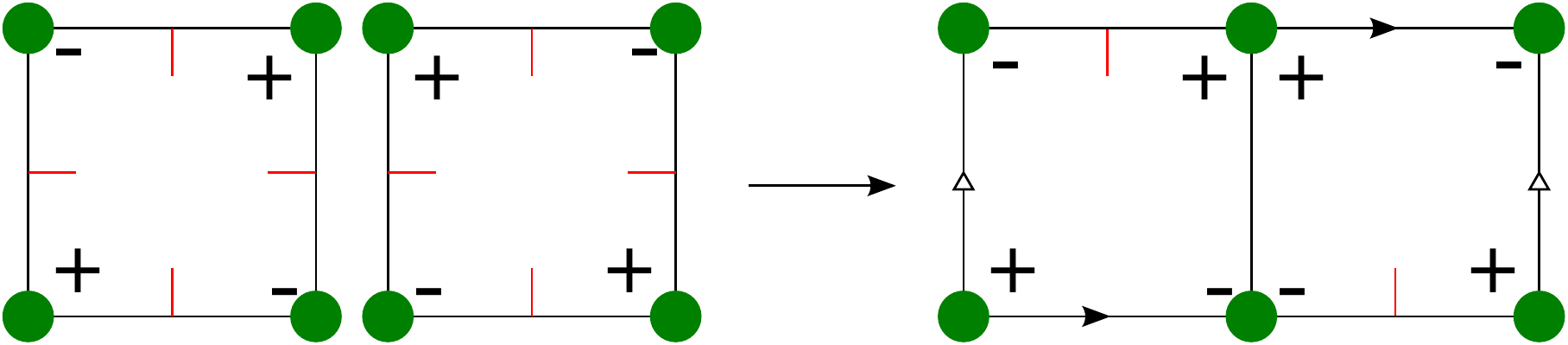}\\
\includegraphics[scale=0.4]{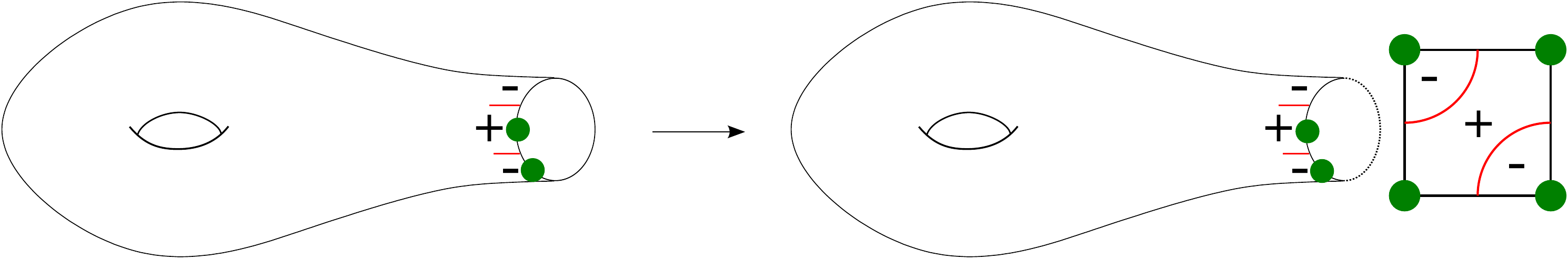}\\
\includegraphics[scale=0.4]{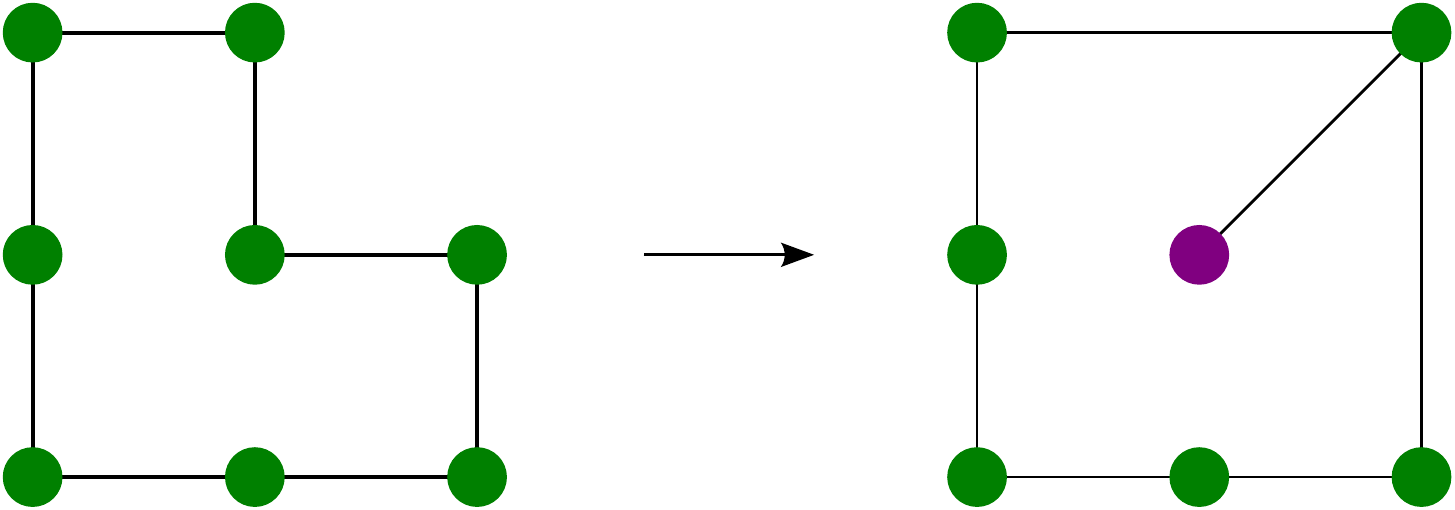}
\end{tabular}
\caption{Examples of decorated morphisms.}
\label{fig:decorated_morphisms}
\end{center}
\end{figure}

If $\phi_1: (\Sigma^1, V^1) \To (\Sigma^2, V^2)$ and $\phi_2: (\Sigma^2, V^2) \To (\Sigma^3, V^3)$ are morphisms, then the complement of $\phi_2 \circ \phi_1$ is the union of the complements of $\phi_1$ and $\phi_2$, viewed on $\Sigma^3$. These two complements intersect only along boundary edges. So if $\Gamma_1$ is a set of sutures on the complement of $\phi_1$, and $\Gamma_2$ is a set of sutures on the complement of $\phi_2$, then $\Gamma_1 \cup \Gamma_2$ is a set of sutures on the complement of $\phi_2 \circ \phi_1$. Thus, the composition of two decorated morphisms has the structure of a decorated morphism.
\begin{prop}
Occupied surfaces and decorated morphisms form a category, called the \emph{decorated occupied surface} category, denoted $\mathcal{DOS}$.
\qed
\end{prop}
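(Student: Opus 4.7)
The plan is to leverage the fact that $\mathcal{OS}$ is already a category (established earlier in section \ref{sec:morphisms}) and check that the decoration data behaves well under composition and identity. Since the underlying morphisms compose associatively in $\mathcal{OS}$, the only substantive content is that the sutured data attached to a composition again forms a valid decoration.

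First, for the identity, I would observe that the identity morphism $\mathrm{id}_{(\Sigma,V)}$ is surjective, so its complement is empty and admits the (unique) empty set of sutures. This gives an identity decorated morphism $(\mathrm{id}_{(\Sigma,V)}, \emptyset)$ on each object.

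Next, for composition of $(\phi_1, \Gamma_1): (\Sigma^1, V^1) \to (\Sigma^2, V^2)$ and $(\phi_2, \Gamma_2): (\Sigma^2, V^2) \to (\Sigma^3, V^3)$, I would define the underlying morphism to be $\phi_2 \circ \phi_1$, already shown to lie in $\mathcal{OS}$. The key geometric fact to establish is that the complement of $\phi_2 \circ \phi_1$ in $\Sigma^3$ decomposes as $\phi_2(\Sigma^c_1) \cup \Sigma^c_2$, where the two pieces meet only along boundary edges of $\phi_2(\Sigma^2) \cap \Sigma^c_2$. This follows because $\phi_2$ embeds the interior of $\Sigma^2$ (hence the interior of $\Sigma^c_1 \subset \Sigma^2$) into the interior of $\phi_2(\Sigma^2)$, while $\Sigma^c_2$ is by definition disjoint from the interior of $\phi_2(\Sigma^2)$. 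Thus pushing $\Gamma_1$ forward via $\phi_2$ (possible since $\phi_2$ is a homeomorphism on the boundary edges along which $\Gamma_1$ has endpoints, by property (ii) of the morphism definition) and adjoining $\Gamma_2$ yields an embedded $1$-submanifold of the complement. The sign data on the regions $R_\pm$ matches across the shared boundary edges because both $\Gamma_1$ and $\Gamma_2$ restrict compatibly to the common boundary (sutures in each complement piece hit boundary points of the occupied structure on $\phi_2(\partial \Sigma^2) \cap \Sigma^c_2$ in a sign-respecting way inherited from $(\Sigma^2, V^2)$); and the condition that every boundary component of the new complement meets the sutures holds because it held separately for $\Gamma_1$ and $\Gamma_2$ on their respective complements.

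Associativity then reduces to two facts: associativity of composition in $\mathcal{OS}$ (already established), and associativity of the union operation on sutures, since for a triple composition $\phi_3 \circ \phi_2 \circ \phi_1$ the decoration is $\Gamma_1 \cup \Gamma_2 \cup \Gamma_3$, independently of how the composition is parenthesized. The identity laws are immediate since composing with an identity morphism does not alter the complement or the sutures on it.

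The main subtlety, and the only step that requires genuine verification rather than bookkeeping, is the claim that $\phi_2$-pushforward of $\Gamma_1$ together with $\Gamma_2$ genuinely forms a valid set of sutures on the complement of $\phi_2 \circ \phi_1$ as an occupied surface. This is where one must use the careful description of the complementary occupied surface $(\Sigma^c, V^c)$ from section \ref{sec:morphisms}, especially the classification of $\Sigma$-type and $\Sigma'$-type edges, to confirm that sutures meet the newly identified boundary edges in precisely the way demanded by the sutured background structure. Once that verification is in hand, the proposition follows immediately.
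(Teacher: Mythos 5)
Your proposal is correct and follows essentially the same route as the paper: the identity is a surjective (hence trivially decorated) morphism, and the complement of $\phi_2 \circ \phi_1$ is the union of the two complements meeting only along boundary edges, so that $\phi_2(\Gamma_1) \cup \Gamma_2$ decorates the composition, with associativity and unit laws inherited from $\mathcal{OS}$ and from unions of sutures. The paper states this argument in a single paragraph preceding the proposition; your extra verification that the combined sutures respect the complementary occupied surface structure is exactly the detail the paper leaves implicit.
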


As a decorated morphism $(\phi, \Gamma_c)$ includes a morphism $\phi$, notions for morphisms (such as isolating, vacuum-leaving, etc.) carry over immediately to decorated morphisms.

Note that if we have a set of sutures $\Gamma$ on $(\Sigma,V)$, and a decorated morphism consisting of $\phi: (\Sigma, V) \To (\Sigma', V')$ and $\Gamma_c$, then $\phi(\Gamma) \cup \Gamma_c$ forms a set of sutures on $(\Sigma',V')$.

This idea gives a notion of \emph{confining decorated morphism}.
\begin{defn}
A decorated morphism $(\phi, \Gamma_c)$ is \emph{confining} if for every set of sutures $\Gamma$ on $(\Sigma,V)$, the set of sutures $\Gamma' = \phi(\Gamma) \cup \Gamma_c$ obtained on $(\Sigma',V')$ is confining.
\end{defn}

It's clear that if $\Gamma_c$ is a confining set of sutures on the complement of $\phi$, then $(\phi, \Gamma_c)$ is confining, but the converse is not true.

The simple occupied surface morphisms all have analogues as decorated morphisms. Standard gluings, folds and zips are all surjective, so are automatically decorated morphisms. With a creation or annihilation, we have a choice of sutures on the new square.
\begin{defn}\
\label{def:decorated_creation_annihilation}
\begin{enumerate}
\item
A \emph{decorated creation} is a creation with a standard basic set of sutures on the created square.
\item
A \emph{decorated annihilation} is an annihilation with a standard basic set of sutures on the annihilator square.
\end{enumerate}
\end{defn}
We say a decorated creation or annihilation is \emph{positive} or \emph{negative} according to the sign of the standard sutures on the created or annihilator square. See figure \ref{fig:decorated_annihilations}.

\begin{figure}
\begin{center}

\begin{tikzpicture}[
scale=1.7, 
boundary/.style={ultra thick}, 
decomposition/.style={thick, draw=green!50!black}, 
vertex/.style={draw=green!50!black, fill=green!50!black},
suture/.style={thick, draw=red}
]

{
\fill [gray!10] (-0.5,0) -- (3.5,0) -- (3.5,1) -- (-0.5,1) -- cycle;
\fill [gray!30] (0,0) arc (-180:0:1.5) -- cycle;

\coordinate (p1) at (0,0);
\coordinate [label = above:{$v_-$}] (v-) at (1,0);
\coordinate [label = above:{$v_+$}] (v+) at (2,0);
\coordinate (p4) at (3,0);

\draw [boundary] (-0.5,0) -- (p1) arc (-180:0:1.5) -- (3.5,0);
\draw (p1) -- node [below left] {$e_1$} (v-) -- node [below left] {$e_2$} (v+) -- node [below left] {$e_3$} (p4);
\draw [suture] (1.5,0) arc (-180:0:0.5);
\draw [suture] (0.5,0) to [bend right = 45] (1,-0.75) to [bend left = 45] (1.5,-1.5);
\draw (-0.2,0.7) node {$\phi(\Sigma)$};
\draw (2,-0.75) node {$(\Sigma^\square, V^\square)$};

\fill [gray!10](4.5,0) -- (8.5,0) -- (8.5,1) -- (4.5,1) -- cycle;
\fill [gray!30](5,0) arc (-180:0:1.5) -- cycle;

\coordinate (q1) at (5,0);
\coordinate [label = above:{$v_-$}] (v-2) at (6,0);
\coordinate [label = above:{$v_+$}] (v+2) at (7,0);
\coordinate (q4) at (8,0);

\draw [boundary] (4.5,0) -- (q1) arc (-180:0:1.5) -- (8.5,0);
\draw (q1) -- node [below left] {$e_1$} (v-2) -- node [below left] {$e_2$} (v+2) -- node [below left] {$e_3$} (q4);
\draw [suture] (5.5,0) arc (-180:0:0.5);
\draw [suture] (7.5,0) to [bend left = 45] (7,-0.75) to [bend right = 45] (6.5,-1.5);
\draw (4.8,0.7) node {$\phi(\Sigma)$};
\draw (6,-0.75) node {$(\Sigma^\square, V^\square)$};

\foreach \point in {p1, v-, v+, p4, q1, v-2, v+2, q4}
\fill [vertex] (\point) circle (2pt);
}

\end{tikzpicture}

\caption{Decorated positive (left) and negative (right) annihilation.}
\label{fig:decorated_annihilations}
\end{center}
\end{figure}
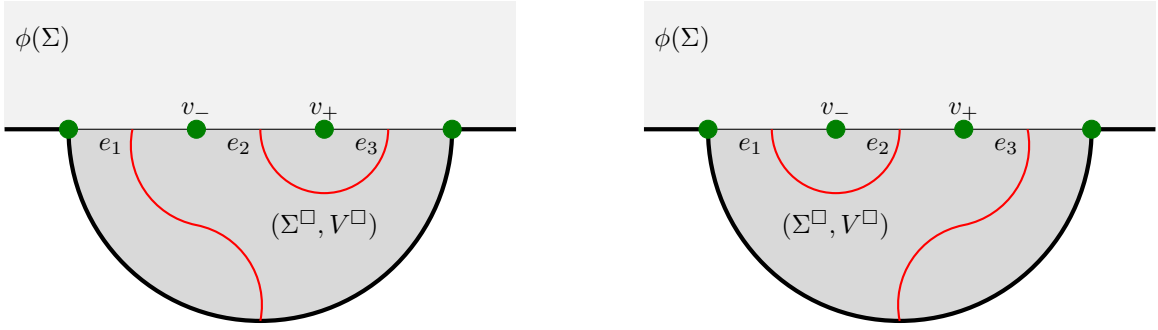

We will consider certain isotopies of decorated morphisms. We may isotope the map $\phi$ and complementary sutures $\Gamma_c$ continuously, through decorated morphisms. but we also allow sutures to be ``pushed away'' in a specific sense.
\begin{defn}
\label{def:decorated_morphism_isotopy}
An \emph{isotopy of decorated morphisms} is a family of decorated morphisms $(\phi_t, \Gamma_t)$, where $\phi_t: (\Sigma,V) \To (\Sigma',V')$ is a family of occupied surface morphisms, and $\Gamma_t$ is a family of sutures on the complement of $\phi_t$, which at all times either varies smoothly or admits the following types of singularities (see figure \ref{fig:decorated_surface_isotopy_singularity}):
\begin{enumerate}
\item
If $\phi_t$ leaves a vacuum $(\Sigma^\emptyset, V^\emptyset)$ with standard vacuum sutures and at least one edge of $\Sigma$-type, then one edge may be pushed on to the other and the suture removed.
\item
Conversely, if $\phi_t$ maps an edge of $(\Sigma,V)$ to an edge of $(\Sigma',V')$, or glues two edges of $(\Sigma,V)$ together, then, holding the vertices constant, one edge may be pushed off the other, leaving a vacuum with standard sutures.
\end{enumerate}
We say $(\phi_0, \Gamma_0)$ and $(\phi_1, \Gamma_1)$ are \emph{decorated-isotopic} morphisms or \emph{isotopic as decorated morphisms}.
\end{defn}

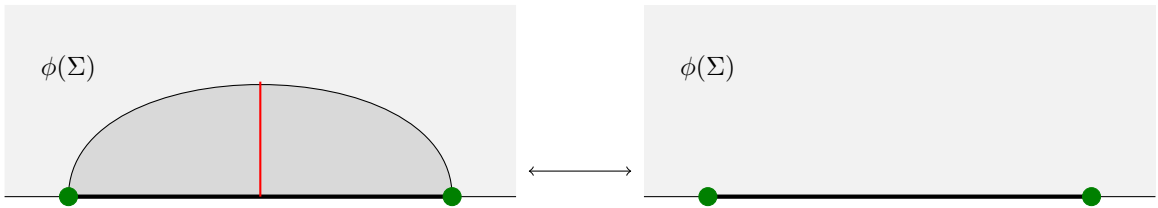
\begin{figure}
\begin{center}

\begin{tikzpicture}[
scale=1.7, 
boundary/.style={ultra thick}, 
vertex/.style={draw=green!50!black, fill=green!50!black},
suture/.style={thick, draw=red}
]

{
\fill [gray!10] (-0.5,0) -- (0,0) to [bend left=90] (3,0) -- (3.5,0) -- (3.5,1.5) -- (-0.5,1.5) -- cycle;
\fill [gray!30] (0,0) to [bend left=90] (3,0) -- cycle;

\draw (-0.5,0) -- (0,0) to [bend left=90] (3,0) -- (3.5,0);
\draw [boundary] (0,0) -- (3,0);
\draw [suture] (1.5,0.9) -- (1.5,0);
\draw (0,1) node {$\phi(\Sigma)$};

\fill [gray!10] (4.5,0) -- (8.5,0) -- (8.5,1.5) -- (4.5,1.5) -- cycle;

\draw (4.5,0) -- (5,0) (8,0) -- (8.5,0);
\draw [boundary] (5,0) -- (8,0);
\draw (5,1) node {$\phi(\Sigma)$};

\draw [<->] (3.6,0.2) -- (4.4,0.2);
}

\fill [vertex] (0,0) circle (2pt);
\fill [vertex] (3,0) circle (2pt);
\fill [vertex] (5,0) circle (2pt);
\fill [vertex] (8,0) circle (2pt);

\end{tikzpicture}

\caption{Singularities in an isotopy of decorated surface morphisms.}
\label{fig:decorated_surface_isotopy_singularity}
\end{center}
\end{figure}

If we have an isotopy $(\phi_t, \Gamma_t)$ of decorated morphisms $(\Sigma,V) \To (\Sigma',V')$, and $\Gamma$ is a set of sutures on $(\Sigma,V)$, then $\phi_t(\Gamma) \cup \Gamma_t$ forms an isotopy of sutures on $(\Sigma',V')$. The singularities permitted in such an isotopy help us to simplify decorated morphisms by collapsing vacua and removing them from the complement.

For instance, figure \ref{fig:decorated_annihilation_as_fold} demonstrates that a positive decorated annihilation is isotopic to a positive fold. The second step in that diagram involves two singularities; this is a decorated version of a slack square collapse, which also collapses sutures in a compatible way.

\begin{figure}
\begin{center}
\begin{tikzpicture}[
scale=1.3, 
fill = gray!10,
vertex/.style={draw=green!50!black, fill=green!50!black},
suture/.style={thick, draw=red},
boundary/.style={ultra thick} ]

\fill (0,0) -- (3,0) -- (3,-1) -- (0,-1) -- cycle;
\fill [gray!30] (0.5,0) arc (180:0:0.75) -- cycle;

\coordinate (s) at (0,0);
\coordinate [label = below:{$+$}] (t) at (0.5,0);
\coordinate [label = below:{$-$}] (u) at (1,0);
\coordinate [label = below:{$+$}] (v) at (1.5,0);
\coordinate [label = below:{$-$}] (w) at (2,0);
\coordinate (x) at (2.5,0);
\coordinate (y) at (3,0);

\draw [boundary] (s) -- (y);
\draw (t) arc (180:0:0.75);
\draw [suture] (1.25,0) arc (180:0:0.25);
\draw [suture] (0.75,0) to [bend left=45] (1,0.375) to [bend right=45] (1.25,0.75);

\draw [shorten >=1mm, -to, decorate, decoration={snake,amplitude=.4mm, segment length = 2mm, pre=moveto, pre length = 1mm, post length = 2mm}] (3.5,0) -- (4.5,0);

\fill (5,0) -- (7.5,0) -- (7.5,-1) -- (5,-1) -- cycle;
\fill [gray!30] (5.5,0) -- (6,0) -- (6.25,-0.75) -- (6.5,0) to [bend right=45] (5.5,0);

\coordinate (s2) at (5,0);
\coordinate [label = below:{$+$}] (t2) at (5.5,0);
\coordinate [label = below :{$-$}] (u2) at (6,0);
\coordinate [label = below:{$+$}] (v2) at (6.25,-0.75);
\coordinate [label = below :{$-$}] (w2) at (6.5,0);
\coordinate (x2) at (7,0);
\coordinate (y2) at (7.5,0);

\draw [boundary] (s2) -- (t2);
\draw [boundary] (w2) -- (y2);
\draw (t2) -- (u2) -- (v2) -- (w2) to [bend right=45] (t2);
\draw [suture] (6.125,-0.375) -- (6.375,-0.375);
\draw [suture] (5.75,0) -- (5.75,0.15);

\draw [shorten >=1mm, -to, decorate, decoration={snake,amplitude=.4mm, segment length = 2mm, pre=moveto, pre length = 1mm, post length = 2mm}] (8,0) -- (9,0);

\fill (9.5,0) -- (11.5,0) -- (11.5,-1) -- (9.5,-1) -- cycle;

\coordinate (s3) at (9.5,0);
\coordinate [label = above:{$+$}] (t3) at (10,0);
\coordinate [label = above:{$-$}] (u3) at (10.5,0);
\coordinate [label = below:{$+$}] (v3) at (10.5,-0.5);
\coordinate (w3) at (10.5,0);
\coordinate (x3) at (11,0);
\coordinate (y3) at (11.5,0);

\draw [boundary] (s3) -- (y3);
\draw (u3) -- (v3);

\foreach \point in {s,t,u,v,w,x,y,s2,t2,u2,v2,w2,x2,y2,x3,s3,t3,u3,v3,w3,x3,y3}
\fill [vertex] (\point) circle (2pt);

\end{tikzpicture}

\end{center}
\caption{A positive decorated annihilation is isotopic to a positive fold.}
\label{fig:decorated_annihilation_as_fold}
\end{figure}
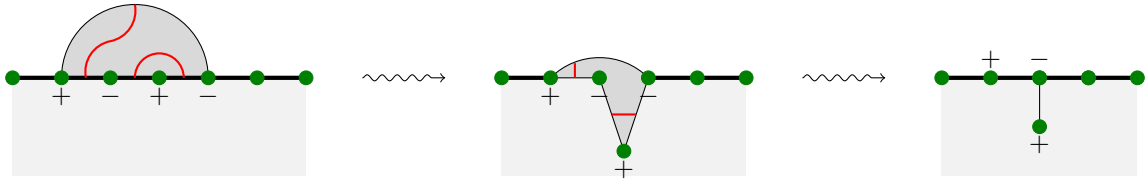

Similarly, whenever there is a suture $\gamma$ in a decorated morphism $(\phi, \Gamma_c)$ which connects two adjacent edges $e_1, e_2$ of $\Sigma$-type by an innermost boundary-parallel suture, we may simplify $(\phi, \Gamma_c)$ via an isotopy which folds $e_1, e_2$ together. See figure \ref{fig:simplifying_decorated_innermost}.

\begin{figure}
\begin{center}

\begin{tikzpicture}[
scale=1.7, 
fill = gray!10,
vertex/.style={draw=green!50!black, fill=green!50!black},
boundary/.style={ultra thick}, 
suture/.style={thick, draw=red}
]

{
\fill (0,0) -- (3,0) -- (3,-1) -- (0,-1) -- cycle;
\fill [gray!30] (0.5,0) -- (2.5,0) arc (0:180:1);

\coordinate (p1) at (0.5,0);
\coordinate (p2) at (1.5,0);
\coordinate (p3) at (2.5,0);

\draw (0,0) -- (p1) -- node [below] {$e_1$} (p2) -- node [below] {$e_2$} (p3) -- (3,0);
\draw [suture] (1,0) arc (180:0:0.5);
\draw (1.5,-0.5) node {$\phi(\Sigma)$};
\draw (0.5,1) node {$(\Sigma^c,V^c)$};

\fill (5,0) -- (8,0) -- (8,-1) -- (5,-1) -- cycle;

\coordinate (q1) at (6.5,0);
\coordinate (q2) at (6.5,-0.6);
\coordinate (s) at (6.5,-0.3);

\draw (5,0) -- (8,0);
\draw (q1) -- node [left] {$e_1$} node [right] {$e_2$} (q2);
\draw (5.5,-0.5) node {$\phi_1(\Sigma)$};

\draw [shorten >=1mm, -to, decorate, decoration={snake,amplitude=.4mm, segment length = 2mm, pre=moveto, pre length = 1mm, post length = 2mm}]
(3.5,0.2) -- (4.5,0.2);

\foreach \point in {p1, p2, p3, q1, q2}
\fill [vertex] (\point) circle (2pt);
\fill [red] (s) circle (2pt);
}

\end{tikzpicture}

\caption{Simplifying a decorated morphism by folding up an innermost boundary-parallel suture.}
\label{fig:simplifying_decorated_innermost}
\end{center}
\end{figure}
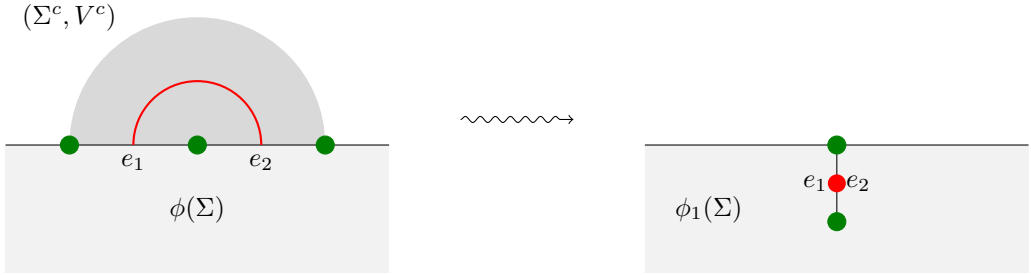

\subsection{Building decorated morphisms}
\label{sec:building_decorated_morphisms}

We now show how decorated morphisms can be built from elementary decorated morphisms.

Suppose we have an occupied surface morphism $\phi: (\Sigma,V) \To (\Sigma',V')$, and sutures $\Gamma_c$ forming a non-confining decorated surface morphism. We proceed as in the proof of proposition \ref{prop:morphism_composition_of_atomics}. 

First, if any boundary edges of $(\Sigma,V)$ are identified under $\phi$, this is expressed as a composition of gluings, folds and zips; and then we may assume $\phi$ does not identify boundary edges.

As $(\phi, \Gamma_c)$ is non-confining, $\Gamma_c$ is a non-confining set of sutures on the complement $(\Sigma^c, V^c)$; so the sutures on vacuum components of $(\Sigma^c, V^c)$ are standard, and on other components we may take a basic quadrangulation. We consider components $(\Sigma^T, V^T)$ of $(\Sigma^c, V^c)$ separately, and construct them one by one. At each stage, we have a previously existing surface $(\Sigma_p, V_p)$ constructed from $(\Sigma,V)$ by previous creations, gluings, folds and zips; to this we adjoin $(\Sigma^T, V^T)$ by more creations, gluings, folds and zips.

Suppose first that $(\Sigma^T, V^T)$ is a vacuum. If both edges are $\Sigma'$-type, the vacuum is a component of $(\Sigma',V')$; we can create a square (of either sign) and then, with a fold of the opposite sign, reduce it to a standard vacuum. If one edge is $\Sigma$- and one $\Sigma'$-type, we create a square (of either sign), attach the $\Sigma$-type edge with a standard gluing, and then perform a fold (of the opposite sign) to reduce to the vacuum. If both edges are $\Sigma$-type, then it is an isolated component, and we create the square, attach one edge with a standard gluing, perform a fold (of the opposite sign), and zip up the final edge.

(These vacuum cases are however avoided in certain circumstances: if $(\Sigma',V')$ is without vacua then the first case does not arise; and by isotopy of $(\phi, \Gamma_c)$ we may avoid vacua with $\Sigma$-type edges.)

We may now assume $(\Sigma^T, V^T)$ is not a vacuum, and as $\Gamma_c$ is nonconfining we may take a basic quadrangulation (proposition \ref{prop:sutures_made_basic}).

As in section \ref{sec:building_morphisms}, we use a lemma to guarantee a way to glue on squares without having to glue along all four edges. The statement is very similar to lemma \ref{lem:building_non-isolating_morphisms}, and the proof is identical; successively remove squares with $\Sigma'$-type edges, and then reverse the order when gluing them on.
\begin{lem}
Let $(\phi, \Gamma_c)$ be a non-isolating non-confining decorated morphism which leaves no vacua, and let $A^c$ be a quadrangulation of $(\Sigma^c, V^c)$ such that $\Gamma_c$ is basic. Then there exists an ordering of the squares $(\Sigma^\square_1, V^\square_1), \ldots, (\Sigma^\square_m, V^\square_m)$ of $A^c$, such that successively gluing the squares on to $(\Sigma, V)$ in order and constructing $(\Sigma', V')$, no square is ever glued in along all four of its edges.
\qed
\end{lem}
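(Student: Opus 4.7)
The plan is to mirror the proof of Lemma \ref{lem:building_non-isolating_morphisms} verbatim, since the sutures $\Gamma_c$ and the basic-ness hypothesis play no role in the combinatorial argument about square orderings; they only guarantee that $A^c$ is a well-defined quadrangulation whose squares can be meaningfully talked about. The entire content reduces to the topology of the complement and the non-isolating/vacuum-free hypotheses on $\phi$.

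First I would iteratively remove squares of $A^c$, always choosing one that has a $\Sigma'$-type edge. At the initial stage, the non-isolating hypothesis says every component of $(\Sigma^c, V^c)$ has at least one $\Sigma'$-type edge, and any such boundary edge lies on the boundary of some square of $A^c$; so at least one square has a $\Sigma'$-type edge, and I designate one as $(\Sigma^\square_m, V^\square_m)$. Removing this square amounts to enlarging $\phi$ to a morphism $\phi'$ absorbing $(\Sigma^\square_m, V^\square_m)$ into its image, with new complement quadrangulated by the remaining squares of $A^c$. The key sub-claim is that $(\phi', \Gamma_c)$ is again non-isolating and leaves no vacua; granted this, I can iterate until all squares have been removed, producing an ordering $(\Sigma^\square_m, V^\square_m), (\Sigma^\square_{m-1}, V^\square_{m-1}), \ldots, (\Sigma^\square_1, V^\square_1)$. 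Reversing the order and gluing squares back onto $(\Sigma, V)$ in the sequence $(\Sigma^\square_1, V^\square_1), \ldots, (\Sigma^\square_m, V^\square_m)$, the edge of $(\Sigma^\square_i, V^\square_i)$ that was $\Sigma'$-type at the time of its removal remains unmatched in the partial construction, so that square is glued along at most three of its four edges.

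The only point requiring care, and the main (minor) obstacle, is verifying that removing a $\Sigma'$-type square preserves the non-isolating and no-vacua properties at each intermediate stage. This is a combinatorial property of the dual adjacency graph of $A^c$: one should select the removed square so as not to cut off an isolated or vacuum-like piece, for instance by preferring a square that behaves like a leaf in a spanning tree of the dual graph of its component of $(\Sigma^{c'}, V^{c'})$. Since this is precisely the observation that underlies Lemma \ref{lem:building_non-isolating_morphisms}, I would simply invoke that argument directly rather than redo it.
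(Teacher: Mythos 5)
Your overall strategy is the paper's: the sutures and the basicness of $\Gamma_c$ play no role in the ordering, and the statement reduces to the removal-and-reversal argument of lemma \ref{lem:building_non-isolating_morphisms}. Indeed, since the underlying morphism $\phi$ is itself non-isolating, leaves no vacua, and has the quadrangulated complement $(\Sigma^c, A^c)$, one can simply apply that lemma to $\phi$; the paper does essentially this, saying the proof is identical.

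However, the inductive step as you spell it out has a real flaw. You model removal as enlarging $\phi$ to $\phi'$ by absorbing the square into its image while keeping the target $(\Sigma',V')$ fixed, and your key sub-claim is that $\phi'$ remains non-isolating, i.e.\ every component of the new complement still has an edge on $\partial\Sigma'$. That invariant can be impossible to maintain for \emph{any} choice of square: if a component of $(\Sigma^c,V^c)$ is, say, a strip of three squares $s_1,s_2,s_3$ whose only $\Sigma'$-type edge lies on $s_1$ (the strip is a ``bite'' out of $\phi(\Sigma)$ meeting $\partial\Sigma'$ in one edge), then $s_1$ is the only admissible first removal, and afterwards $s_2\cup s_3$ has no edge on $\partial\Sigma'$ at all, so your procedure stalls; preferring a leaf of a spanning tree of the dual graph does not help, since the problem is not disconnection but the absence of $\partial\Sigma'$-edges. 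The bookkeeping that makes the argument work --- and that is implicit in lemma \ref{lem:building_non-isolating_morphisms}, modelled on lemma \ref{lem:quadrangulation_ordering} --- is to delete the removed square from the \emph{target}: the edges it shared with remaining squares then become boundary edges of the reduced problem, so a square is removable as soon as it has an edge lying on $\partial\Sigma'$ \emph{or} shared with an already-removed square. With that convention every component of what remains always contains a removable square (each component of the complement of a removed square contains a neighbour of it in the dual graph, or retains a $\Sigma'$-type edge), and in the reversed order the distinguished edge of each square is either on $\partial\Sigma'$ or shared with a later-glued square, hence free at its gluing step --- which is exactly what the lemma requires. In the strip example this gives the order $s_3,s_2,s_1$, each glued along at most three edges.
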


Return to our component $(\Sigma^T,V^T)$ of the complement of $(\phi, \Gamma_c)$. If $(\Sigma^T, V^T)$ is a non-isolating component, then using the lemma, we may successively create squares and glue them in along at most three of their edges. As in the proofs of propositions \ref{prop:morphism_composition_of_atomics} and \ref{prop:quadrangulated_surface_construction}, each new square has three troublesome edges, or two consecutive troublesome edges, or no troublesome edges, and we can attach the edges with standard gluings and folds.

Finally suppose $(\Sigma^T, V^T)$ is an isolating component. Again as in the proof of \ref{prop:morphism_composition_of_atomics}, split apart $(\Sigma_p, V_p)$ and $(\Sigma^T, V^T)$ along a common edge $e$. Then the component is non-isolating and we apply the above argument, before re-attaching the split-open edges with a zip.

Thus we have:
\begin{prop}
\label{prop:decorated_morphism_composition}
A non-confining decorated morphism is a composition of decorated creations, gluings, folds and zips.
\qed
\end{prop}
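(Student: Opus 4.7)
The plan is essentially to revisit the proof of Proposition \ref{prop:morphism_composition_of_atomics}, but keeping track of sutures throughout, and to verify at each step that the simple morphisms produced can be decorated consistently with $\Gamma_c$. The key point making this possible is that a non-confining $\Gamma_c$ admits a basic quadrangulation of $(\Sigma^c, V^c)$ (Proposition \ref{prop:sutures_made_basic}), so that on each square of the quadrangulation the sutures are standard positive or negative; thus when such a square is produced by a creation, the creation is \emph{decorated}, positive or negative according to the sign of its sutures.

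First I would, exactly as in Proposition \ref{prop:morphism_composition_of_atomics}, reduce to the case where $\phi$ does not identify any boundary edges of $(\Sigma, V)$ by peeling off standard gluings, folds, and zips (which are surjective, hence automatically decorated). Then I would handle the components $(\Sigma^T, V^T)$ of the complement $(\Sigma^c, V^c)$ one by one. Vacuum components are dealt with in an ad hoc fashion as indicated in the paragraph preceding the proposition: create a square of either sign and cancel it down to the required standard vacuum by a fold of the opposite sign (and one standard gluing and/or one zip, depending on whether the vacuum has 0, 1, or 2 edges of $\Sigma$-type). Since the cancelling decorated fold can be taken of the sign opposite to the created square, the resulting sutures on the vacuum are exactly the standard vacuum sutures, matching $\Gamma_c$.

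For a non-vacuum component $(\Sigma^T, V^T)$ I would choose a quadrangulation for which $\Gamma_c|_{\Sigma^T}$ is basic (possible by Proposition \ref{prop:sutures_made_basic}, using non-confinement), and then apply the decorated version of Lemma \ref{lem:building_non-isolating_morphisms} stated immediately before the proposition, to obtain an ordering of the squares of this quadrangulation such that no square is glued in along all four of its edges. Proceeding in that order, each new square is produced by a decorated creation (of the sign dictated by its basic sutures), and is then attached to the previously built surface by at most three edge-identifications, which as in the proof of Proposition \ref{prop:quadrangulated_surface_construction} can be realised by standard gluings and folds depending on the troublesome/non-troublesome configuration of its edges. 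Finally, isolating components are reduced to the non-isolating case by the same split-and-zip trick used in Proposition \ref{prop:morphism_composition_of_atomics}: cut a common edge $e$ between $(\Sigma^T, V^T)$ and the previously constructed surface, build the resulting non-isolating morphism by the above, and close up by a zip along $e$.

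The main obstacle is the coherence between the combinatorial decomposition of $\phi$ and the sutures: one has to verify that with the basic quadrangulation in hand, every creation introduces exactly the standard sutures demanded by $\Gamma_c$ on that square, and that the subsequent standard gluings/folds/zips are compatible with $\Gamma_c$ (recall that by our convention in Section \ref{sec:sutures} these morphisms always glue sutures consistently). Once the basic quadrangulation is fixed, this compatibility is automatic edge by edge, and the only real work is the bookkeeping for vacua and for isolating components, which is exactly what the preceding paragraphs provide.
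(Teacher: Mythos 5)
Your proposal is correct and follows essentially the same route as the paper: reduce to the non-identifying case by peeling off surjective gluings/folds/zips, use non-confinement to put a basic quadrangulation on the complement (so each created square is a decorated creation of the sign of its basic sutures), handle vacuum components by a created square cancelled with an opposite-sign fold (plus a gluing and/or zip as dictated by its $\Sigma$-type edges), order the squares via the decorated analogue of Lemma \ref{lem:building_non-isolating_morphisms} and attach them with gluings and folds using the troublesome-edge analysis, and deal with isolating components by the split-and-zip trick. This matches the paper's argument step for step.
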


Keeping track of quadrangulations, we obtain a more detailed description. Suppose we start with a quadrangulation $A$ of $(\Sigma,V)$, in addition to the basic quadrangulation $A^c$ of $(\Sigma^c, \Gamma_c)$. (Note $A \cup A^c$ gives a slack quadrangulation of $(\Sigma',V')$.) In the above, we have expressed $\phi: (\Sigma,V) \To (\Sigma',V')$ as a composition
\[
(\Sigma,V) = (\Sigma_0, V_0) \stackrel{\phi_1}{\To} (\Sigma_1, V_1) \stackrel{\phi_2}{\To} (\Sigma_2, V_2) \stackrel{\phi_3}{\To} \cdots \stackrel{\phi_{n-1}}{\To} (\Sigma_{n-1}, V_{n-1}) \stackrel{\phi_n}{\To} (\Sigma_n, V_n) = (\Sigma', V'),
\]
where each $\phi_i$ is a decorated creation, fold, gluing or zip. Given a quadrangulation $A_{i-1}$ on $(\Sigma_{i-1}, V_{i-1})$, then considering $\phi_i$, we can obtain a quadrangulation $A_i$ of $(\Sigma_i, V_i)$ in a natural way, so as to obtain quadrangulations at every step and eventually a quadrangulation on $(\Sigma',V')$:
\begin{itemize}
\item
If $\phi_i$ is a decorated creation, then adjoining the created square to $A_{i-1}$ naturally gives a quadrangulation $A_i$ of $(\Sigma_i)$.
\item
If $\phi_i$ is a standard gluing, then the quadrangulation $A_{i-1}$ naturally gives a quadrangulation $A_i$ with the same squares.
\item
If $\phi_i$ is a fold, then $A_{i-1}$ naturally gives a slack quadrangulation of $(\Sigma_i, V_i)$ with one internal vertex; proposition \ref{prop:slack_square_collapse_exists} guarantees the existence of a slack square collapse which then produces a quadrangulation $A_i$.
\item
If $\phi_i$ is a zip, then $A_{i-1}$ naturally gives a slack quadrangulation of $(\Sigma_i, V_i)$ with two internal vertices; proposition \ref{prop:slack_square_collapse_exists} guarantees two slack square collapses to produce a quadrangulation $A_i$.
\end{itemize}

Note that in the last two cases, the choice of slack square collapse may not be unique. Moreover, performing a slack square collapse may disrupt the existing sutures from being basic with respect to the quadrangulation.

\section{Representing sutured and occupied surfaces}
\label{sec:SQFT}

\subsection{Definitions and basic notions}
\label{sec:SQFT_defn}

We should now like to represent the geometric data of occupied/background structures, sutures, and quadrangulations, by algebraic data, along the following lines. Here we use $\Z_2$ as our base ring and vector spaces over it, but all of our constructions can be done over more general rings and modules.
\begin{itemize}
\item
To an occupied surface $(\Sigma, V)$, we associate an $\Z_2$-vector space $\V(\Sigma,V)$.
\item
To a decorated morphism $(\phi, \Gamma_c) : (\Sigma, V) \To (\Sigma', V')$ we associate an linear map $\V(\Sigma,V) \To \V(\Sigma',V')$. 
\item
To a quadrangulation $A$ of $(\Sigma, V)$ with squares $(\Sigma^\square_i, V^\square_i)$, we associate a \emph{tensor decomposition}
\[
V(\Sigma,V) = \bigotimes_i \V(\Sigma^\square_i, V^\square_i).
\]
\item
To a set of sutures $\Gamma$ on $(\Sigma, V)$, we associate an \emph{element} $c(\Gamma) \in V(\Sigma,F)$. If the sutures are \emph{basic}, we associate a basis element.
\end{itemize}
Over more general base rings, one suture element will not suffice; for instance, over $\Z$ suture elements must be taken up to sign. See \cite{HKM08, Me10_Sutured_TQFT, Massot09} for details.

The first two requirements above, together with requiring that they respect compositions and identity, say that we want a functor from the category $\mathcal{DOS}$ of decorated occupied surfaces to the category $\Z_2 \mathcal{VS}$ of $\Z_2$-vector spaces. This functor must also respect the tensor structures coming from quadrangulations, and treat suture elements in a natural way. We can make this precise.

\begin{defn}
\label{def:SQFT}
A \emph{sutured quadrangulated field theory} is a collection $(\mathcal{D},c)$ where
\begin{enumerate}
\item
$\mathcal{D}$ is a functor $\mathcal{DOS} \To \Z_2 \mathcal{VS}$, associating to an occupied surface $(\Sigma,V)$ a $\Z_2$-vector space $\V(\Sigma,V)$, and to a decorated morphism $(\phi, \Gamma_c)$ a linear map $\D_{\phi,\Gamma_c} : \V(\Sigma,V) \To \V(\Sigma',V')$,
\item
$c$ assigns to each (isotopy class of) sutures $\Gamma$ on $(\Sigma,V)$ an element $c(\Gamma) \in \V(\Sigma,V)$,
\end{enumerate}
satisfying the following conditions.
\begin{enumerate}
\item
Quadrangulations give tensor decompositions. For any (isotopy class of) quadrangulation $A$ of $(\Sigma,V)$ with squares $(\Sigma^\square_i, V^\square_i)$, 
\[
\V(\Sigma,V) = \bigotimes_i \V(\Sigma^\square_i, V^\square_i).
\]
(This includes the ``null quadrangulation'' on any occupied vacuum component and null tensor decomposition, $\V(\Sigma^\emptyset, V^\emptyset) = \Z_2$.)
If $\Gamma$ is a basic set of sutures on $(\Sigma,V)$, restricting to $\Gamma_i$ on $(\Sigma^\square_i, V^\square_i)$, then
\[
c(\Gamma) = \otimes_i c(\Gamma_i).
\]
\item
Suture elements are respected. For any sutures $\Gamma$ on $(\Sigma, V)$,
\[
\D_{\phi, \Gamma_c} (c(\Gamma)) = c(\Gamma \cup \Gamma_c).
\]
\item
Basic means basic. The suture elements $c(\Gamma_+), c(\Gamma_-)$ of the standard positive and negative sutures $\Gamma_+, \Gamma_-$ on the occupied square $(\Sigma^\square, V^\square)$ give a basis for $\V(\Sigma^\square, V^\square)$.
\item
Euler class gives Euler grading. If $\Gamma$ is a set of sutures with Euler class $e$, then $c(\Gamma)$ has Euler grading $e$.
\end{enumerate}
\end{defn}
We will define the Euler grading in section \ref{sec:grading} below. Note the conditions ``basic means basic'' and ``quadrangulations give tensor decompositions'' imply that the $c(\Gamma)$, over all basic $\Gamma$, form a basis of $\V(\Sigma,V)$, for any quadrangulated $(\Sigma,V)$.

We note some simple properties of SQFT. If $\phi: (\Sigma,V) \To (\Sigma', V')$ is an occupied surface homeomorphism it composes with its inverse to give the identity, and functors preserve identity, so $\D_{\phi, \emptyset}$ is an isomorphism. 

Start from the simplest occupied surface. The occupied vacuum $(\Sigma^\emptyset, V^\emptyset)$ has index $0$ and $\V(\Sigma^\emptyset, V^\emptyset) = \Z_2$. The standard vacuum sutures $\Gamma_\emptyset$ have a suture element which is not zero; for instance, gluing it on to a square with standard basic sutures again yields basic sutures. Thus $c(\Gamma_\emptyset) = 1 \in \Z_2$.

Turning to the occupied square, we can write $\V(\Sigma^\square, V^\square)$ as ${\bf V}$, so ${\bf V}$ has dimension $2$ with basis $c(\Gamma_-), c(\Gamma_+)$. In the ``digital'' notation of the introduction, $c(\Gamma_-) = \0$, $c(\Gamma_+) = \1$; we will also write $c(\Gamma_-) = v_-$, $c(\Gamma_+) = v_+$.

Since any occupied surface $(\Sigma,V)$ decomposes into vacuum components and a quadrangulation with $I(\Sigma,V)$ squares, $\V(\Sigma,V) = {\bf V}^{\otimes I(\Sigma,V)}$ has dimension $2^n$, with basis $\0 \otimes \cdots \otimes \0,$ $\0 \otimes \cdots \otimes \1$, $\ldots$, $\1 \otimes \cdots \otimes \1$.

Consider a standard gluing morphism $\phi: (\Sigma,V) \To (\Sigma',V')$, which being surjective is a decorated morphism. A quadrangulation $A$ of $(\Sigma,V)$ with squares $(\Sigma^\square_i, V^\square_i)$ naturally gives a quadrangulation $A'$ of $(\Sigma',V')$ with squares $(\Sigma^{\square'}_i, V^{\square'}_i)$, and the two sets of of squares are naturally bijective. Moreover $\phi$ sends basic sutures to basic sutures preserving signs.
\begin{lem}
\label{lem:standard_gluing_isomorphism}
If $\phi$ is a standard gluing then $\mathcal{D}_{\phi, \emptyset}$ is the identity
\[
\V(\Sigma,V) = \bigotimes_i \V(\Sigma^\square_i, V^\square_i) = {\bf V}^{\otimes n} \To {\bf V}^{\otimes n} = \bigotimes_i \V(\Sigma^\square_i, V^\square_i) = \V(\Sigma',V')
\]
where $n = I(\Sigma,V) = I(\Sigma',V')$.
\qed
\end{lem}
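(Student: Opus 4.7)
The plan is to reduce the claim to the four SQFT axioms via the following chain of observations. First, a standard gluing morphism $\phi$ glues two non-consecutive boundary edges and is surjective, so its complement is empty and $(\phi,\emptyset)$ is a bona fide decorated morphism. Given a quadrangulation $A$ of $(\Sigma,V)$, the two glued boundary edges belong (by Lemma \ref{lem:arcs_distinct}) to two distinct squares of $A$ and become a single internal edge of a quadrangulation $A'$ of $(\Sigma',V')$; all squares of $A$ survive intact to squares of $A'$, giving a canonical index-preserving bijection. Under this bijection the tensor decomposition axiom identifies both $\V(\Sigma,V)$ and $\V(\Sigma',V')$ with $\mathbf{V}^{\otimes n}$ factor-by-factor.

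Next I would verify that $\phi$ sends basic sutures to basic sutures, preserving the sign of each square. If $\Gamma$ is basic on $(\Sigma,A)$ with restriction $\Gamma_i$ on each square, then each of the two glued boundary edges is transversely intersected by $\Gamma$ in exactly one point, and those points are identified under $\phi$ (this is forced by compatibility of the gluing with the sutured structure, as remarked preceding Lemma \ref{lem:gluing_sutures_preserves_euler}). Thus $\phi(\Gamma)$ meets the new internal edge once and restricts to $\Gamma_i$ on each square of $A'$; it is basic, with sign of each square unchanged.

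With these facts in place, the lemma is immediate from axioms (i) and (ii) of Definition \ref{def:SQFT}. For any basic $\Gamma$ on $(\Sigma,A)$, axiom (ii) with $\Gamma_c=\emptyset$ gives
\[
\mathcal{D}_{\phi,\emptyset}(c(\Gamma)) \;=\; c(\phi(\Gamma)),
\]
and axiom (i) expands both sides as
\[
c(\Gamma) = \bigotimes_i c(\Gamma_i), \qquad c(\phi(\Gamma)) = \bigotimes_i c(\phi(\Gamma)_i) = \bigotimes_i c(\Gamma_i),
\]
under the identification of tensor factors coming from the bijection of squares. Hence $\mathcal{D}_{\phi,\emptyset}$ fixes every basis element $\bigotimes_i c(\Gamma_i)$ of $\mathbf{V}^{\otimes n}$. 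Since the collection of these basis elements (as $\Gamma$ ranges over basic sutures) spans $\V(\Sigma,V)$ by axiom (iii) together with (i), the map $\mathcal{D}_{\phi,\emptyset}$ is the identity.

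The only non-formal step is the local verification that standard gluing preserves basicness and the signs of the squares; this is genuinely easy because the gluing operates on the boundary of $\Sigma$ and leaves the interior of each square, together with its suture type, untouched. Thus I do not anticipate a substantive obstacle, and the lemma will serve as the base case for analyzing the maps associated to folds, zips, creations, and annihilations in subsequent sections.
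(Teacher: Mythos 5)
Your proof is correct and follows essentially the same route as the paper: the paper's argument is exactly the observation preceding the lemma that a standard gluing gives a natural bijection of quadrangulation squares and carries basic sutures to basic sutures preserving signs, after which the axioms (tensor decomposition, preservation of suture elements, basic suture elements forming a basis) make the identity statement immediate. Your appeal to Lemma \ref{lem:arcs_distinct} for the glued edges lying in distinct squares is unnecessary (the argument only needs the bijection of squares), but this does not affect correctness.
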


Also note that if we have an isotopy of decorated morphisms $(\phi_t, \Gamma_t): (\Sigma, V) \To (\Sigma', V')$, together with a set of sutures $\Gamma$ on $(\Sigma,V)$, then we obtain an isotopy of sutures on $(\Sigma',V')$ given by $\phi_t(\Gamma) \cup \Gamma_t$. As a suture element depends only on the isotopy class of sutures, running $\Gamma$ over a set of basic sutures on $(\Sigma,V)$ gives the following.
\begin{lem}
\label{lem:decorated_isotopy_equal_map}
If $(\phi, \Gamma_c)$ and $(\phi', \Gamma'_c)$ are decorated-isotopic then $\D_{\phi, \Gamma_c} = \D_{\phi', \Gamma'_c}$.
\qed
\end{lem}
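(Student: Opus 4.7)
The plan is to reduce the claim to checking equality on a basis. Since the map $\D_{\phi,\Gamma_c}$ is linear and, by the axiom ``basic means basic'' combined with ``quadrangulations give tensor decompositions,'' the suture elements $c(\Gamma)$ of basic sets of sutures form a basis of $\V(\Sigma,V)$ (relative to any chosen quadrangulation $A$ of $(\Sigma,V)$), it suffices to show that $\D_{\phi,\Gamma_c}(c(\Gamma)) = \D_{\phi',\Gamma'_c}(c(\Gamma))$ for each basic $\Gamma$. By the ``suture elements are respected'' axiom, this is equivalent to $c(\phi(\Gamma) \cup \Gamma_c) = c(\phi'(\Gamma) \cup \Gamma'_c)$, and since suture elements depend only on the isotopy class of sutures, it is enough to produce an isotopy in $(\Sigma',V')$ between $\phi(\Gamma) \cup \Gamma_c$ and $\phi'(\Gamma) \cup \Gamma'_c$.

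To build this isotopy, I would use the given decorated-isotopy $(\phi_t, \Gamma_t)$ with $(\phi_0,\Gamma_0) = (\phi,\Gamma_c)$ and $(\phi_1,\Gamma_1) = (\phi',\Gamma'_c)$, and consider the one-parameter family $\phi_t(\Gamma) \cup \Gamma_t$ of subsets of $\Sigma'$. Where $(\phi_t, \Gamma_t)$ varies smoothly, this gives an honest smooth isotopy of sutures. The work is then to check that the two types of singularities allowed in Definition \ref{def:decorated_morphism_isotopy} do not change the isotopy class of the combined set of sutures on $(\Sigma',V')$.

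The main obstacle is precisely this verification at the singular moments, and it is essentially forced by the decoration: when two $\Sigma$-type edges of a vacuum with standard sutures are pushed together, the standard vacuum suture is a single boundary-parallel arc connecting the two edges across the vacuum, and collapsing the vacuum identifies its two edges consistently with how the arc connects their respective marked points; hence the union $\phi_t(\Gamma) \cup \Gamma_t$ before and after the collapse differs only by an isotopy supported in a neighborhood of the vacuum. The reverse singularity (creating a vacuum with standard sutures by pushing apart two identified edges) is analyzed identically. In both cases the total sutures on $(\Sigma',V')$ are the same up to isotopy.

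Putting this together, $\phi_t(\Gamma)\cup\Gamma_t$ is an isotopy of sutures on $(\Sigma',V')$ from $\phi(\Gamma)\cup\Gamma_c$ to $\phi'(\Gamma)\cup\Gamma'_c$, so these two sets of sutures have equal suture elements. Running $\Gamma$ over a basis of basic sutures with respect to the chosen quadrangulation then yields $\D_{\phi,\Gamma_c} = \D_{\phi',\Gamma'_c}$, as required.
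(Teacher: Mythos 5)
Your proposal is correct and follows essentially the same route as the paper: the paper's (one-paragraph) proof is exactly that $\phi_t(\Gamma)\cup\Gamma_t$ gives an isotopy of sutures on $(\Sigma',V')$, that suture elements depend only on isotopy class, and that running $\Gamma$ over basic sutures (a basis, by the axioms) forces equality of the linear maps. Your additional check that the two permitted singularities of a decorated isotopy (collapsing or creating a vacuum with standard sutures) do not alter the isotopy class of the combined sutures is a welcome explicit verification of a point the paper leaves implicit.
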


We pause to describe an alternative, equivalent formulation of SQFT, using \emph{surjective} morphisms instead of decorated morphisms. Surjective occupied surface morphisms are a subset of decorated morphisms, and form a subcategory $\mathcal{SOS}$ of $\mathcal{DOS}$. We can restrict the above definition of SQFT to this subcategory in a natural way to obtain a definition of ``surjective SQFT'', which is in fact a simpler formulation, since there are never any complementary sutures to consider. 

Furthermore, it is possible to recover the full structure of SQFT from its restriction to surjective morphisms. To see why, take a decorated surface morphism given by $\phi: (\Sigma,V) \To (\Sigma',V')$ and sutures $\Gamma_c$ on the complement $(\Sigma^c, V^c)$. There is a surjective occupied surface morphism $\Phi: (\Sigma,V) \sqcup (\Sigma^c,V^c) \To (\Sigma',V')$ which simply glues the $(\Sigma^c,V^c)$ on to $(\Sigma,V)$ as specified by $\phi$. From the surjective SQFT map map $\D_\Phi: \V(\Sigma,V) \otimes \V(\Sigma^c, V^c) \To \V(\Sigma',V')$, we can then define a linear map $\D_{\phi, \Gamma_c}: \V(\Sigma,V) \To \V(\Sigma',V')$ for $(\phi, \Gamma_c)$ via
\[
\D_{\phi, \Gamma_c} (c(\Gamma)) = \D_{\Phi}  \left( c(\Gamma \sqcup \Gamma_c) \right) = \D_{\Phi} \left( c(\Gamma) \otimes c(\Gamma_c) \right)
\]
for sutures $\Gamma$ on $(\Sigma,V)$. It is not difficult to prove that the structures of SQFT and surjective SQFT are equivalent in this way.

For the rest of this paper, however, we stick with SQFT as defined above.

\subsection{Sutured Floer homology}

We now construct an example of SQFT from sutured Floer homology, using the TQFT structure introduced by Honda--Kazez--Mati\'{c} in \cite{HKM08}, proving theorem \ref{thm:SFH_gives_SQFT}.

We recall some facts about $SFH$ and refer to the papers cited for details. As defined by Juh\'{a}sz in \cite{Ju06}, $SFH$ associates to a balanced sutured 3-manifold $(M,\Gamma)$ a $\Z_2$-vector space $SFH(M,\Gamma)$ (much more general coefficients can be used), with various properties. To a contact structure $\xi$ on $M$ with convex boundary and dividing set $\Gamma$ is associated a \emph{contact element} $c(\xi) \in SFH(-M,-\Gamma)$ \cite{HKMContClass, OSContact}. Honda--Kazez--Mati\'{c} in \cite{HKM08} showed that an inclusion of a sutured manifold $(M,\Gamma)$ into the interior of another sutured manifold $(M',\Gamma')$, together with a contact structure $\xi'$ on $(M' \backslash \text{Int} \; M, \Gamma \cup \Gamma')$, induces a natural map
\[
\Phi_{\xi'} \; : \; SFH(-M,-\Gamma) \To SFH(-M',-\Gamma') \otimes {\bf V}^{\otimes m},
\]
where $m$ is the number of isolated components of $M' \backslash \text{Int} \; M$ and ${\bf V}$ is a $2$-dimensional vector space over $\Z_2$. This map preserves contact elements: if $\xi$ is a contact structure on $(M,\Gamma)$ then
\[
\Phi_{\xi'} (c(\xi)) = c(\xi \cup \xi') \otimes x^{\otimes m}
\]
where $x$ is a particular element of ${\bf V}$.

We now proceed to construct an SQFT in several steps.

\begin{enumerate}
\item
First, to an occupied surface $(\Sigma,V)$, with corresponding sutured background $(\Sigma,F)$, we assign $\V(\Sigma,V) = SFH(-\Sigma \times S^1, -V \times S^1)$.
\item
Next, to a set of sutures we assign a suture element. A set of sutures $\Gamma$ on $(\Sigma,F)$ describes $\Sigma$ as a convex surface and hence describes an $I$-invariant contact structure on $\Sigma \times I$. Gluing $\Sigma \times \{0\}$ to $\Sigma \times \{1\}$ gives a contact structure on $\Sigma \times S^1$; the boundary $\partial \Sigma \times S^1$ can be taken to be convex with dividing set $V \times S^1$. We then obtain the contact invariant $c(\xi) \in SFH(-\Sigma \times S^1, -V \times S^1) = \V(\Sigma,V)$, which over $\Z_2$-coefficients is a well-defined element. (This is not true over more complicated coefficients.) So we set the suture element $c(\Gamma)$ equal to the contact invariant.
\item
A quadrangulation $A$ of $(\Sigma,V)$ gives a tensor decomposition of $\V(\Sigma,V)$. Let $A$ cut $(\Sigma,V)$ into squares $(\Sigma^\square_i, V^\square_i)$; then $A \times S^1$ cuts $(\Sigma \times S^1, V \times S^1)$ into sutured manifolds $(\Sigma^\square_i \times S^1, V^\square_i \times S^1)$. As explained in \cite[lemma 7.2]{HKM08}, using results of Juh\'{a}sz on surface decompositions \cite{Ju08}, decomposing $\Sigma \times S^1$ along the annuli $A \times S^1$ gives a tensor decomposition over the squares of the quadrangulation
\[
\V(\Sigma,V) = SFH(-\Sigma \times S^1, -V \times S^1) = \bigotimes_i SFH(-\Sigma^\square_i \times S^1, -V^\square_i \times S^1) = \bigotimes_i \V(\Sigma^\square_i, V^\square_i).
\]
The map which glues the squares together is an isomorphism, and thus if we have basic sutures $\Gamma$ restricting to $\Gamma_i$ on square $(\Sigma^\square_i, V^\square_i)$,
\[
c(\Gamma) = \otimes_i c(\Gamma_i).
\]
\item
The basic sets of sutures $\Gamma^+, \Gamma^-$ on the square $(\Sigma^\square, V^\square)$ give a basis $c(\Gamma^-), c(\Gamma^+)$ for $\V(\Sigma^\square,V^\square) = SFH(\Sigma^\square \times S^1, V \times S^1)$,  so (iv) is satisfied. This was discussed in \cite[lemma 7.2]{HKM08} and later in \cite{Me09Paper, Me10_Sutured_TQFT}. Thus each $\V(\Sigma,V)$ has a basis of basic suture elements.
\item
To a decorated morphism $(\phi, \Gamma_c) \; : \; (\Sigma,V) \To (\Sigma',V')$ we associate a map $\D_{\phi, \Gamma_c} \; : \; \V(\Sigma,V) \To \V(\Sigma',V')$. A decorated morphism can always be isotoped so that $\phi$ has an embedding into the interior of $(\Sigma',V')$: in such an isotopy (definition \ref{def:decorated_morphism_isotopy}) we are permitted to push apart edges which are glued together under $\phi$, and insert a standard set of sutures in the new complementary region; then we may also push apart vertices which are glued together. So there is an isotopic decorated morphism $(\phi^*, \Gamma^*_c)$ which is an embedding into the interior of $(\Sigma',V')$. From this we obtain an inclusion of $(\Sigma \times S^1, V \times S^1)$ into the interior of $(\Sigma' \times S^1, V' \times S^1)$, with a contact structure $\xi^*$ on  $(\Sigma' - \text{Int} \; \Sigma, (V \cup V') \times S^1)$ given by $\Gamma^*_c$. We then have the map
\[
\Phi_{\xi^*} \; :  \; \V(\Sigma,V) \To \V(\Sigma', V') \otimes {\bf V}^{\otimes m}
\]
where $m$ is the number of isolated components of $\phi$. Now $\Phi_{\xi^*}$ is natural with respect to contact elements, and so for any contact structure $\xi$ on $(\Sigma \times S^1, V \times S^1)$ we have $\Phi_{\xi^*} (c(\xi)) = c(\xi \cup \xi^*) \otimes x^{\otimes m}$. As $\V(\Sigma,V)$ has a basis of basic suture elements, hence also a basis of contact elements, we see that $\Phi_{\xi^*}$ has image lying in $\V(\Sigma', V') \otimes x^{\otimes m}$ and in fact $\Phi_{\xi^*} = \D_{\phi^*,\Gamma^*_c} \otimes x^{\otimes m}$, where $\D_{\phi^*, \Gamma^*_c}$ is a linear map $\V(\Sigma,V) \To \V(\Sigma',V')$. Now as isotopic decorated morphisms do not affect the isotopy class of sutures, this $\D_{\phi^*, \Gamma^*_c}$ does not depend on the particular choice $(\phi^*, \Gamma^*_c)$ of decorated morphism isotopic to $(\phi, \Gamma_c)$. Hence we may assign a well-defined map $\D_{\phi,\Gamma_c} =\D_{\phi^*, \Gamma^*_c}$ to $(\phi, \Gamma_c)$, and it respects suture elements,
\[
\D_{\phi, \Gamma_c} (c(\Gamma)) = c(\Gamma \cup \Gamma_c).
\]
\item
Theorem 6.1 of \cite{HKM08} establishes that the maps assigned respect identities. In particular, Honda--Kazez--Mati\'{c} prove that an inclusion $(\Sigma,V)$ into its own interior by shrinking a little near the boundary, with complement $\partial \Sigma \times I$, and with straight complementary sutures connecting points of $\partial \Sigma \times \{0\}$ to $\partial \Sigma \times \{1\}$, gives the identity map on $SFH$ (in the mod $2$ case; it is more complicated otherwise). This is a decorated morphism isotopic to the identity, and so our assignments send the identity morphism to the the identity on $\V(\Sigma,V)$.
\item
Theorem 6.2 of \cite{HKM08} establishes that the maps assigned respect composition. When inclusions of sutured manifolds, with complementary contact structures, are composed, the map on $SFH$ assigned to their composition is the composition of the $SFH$ maps assigned to the individual inclusions.
\item
It is also explained in \cite{Me09Paper, Me10_Sutured_TQFT} how Euler class corresponds to an Euler grading on $\V(\Sigma,V)$. In particular, a bypass relation holds, and so by the contact element of a set of sutures $\Gamma$ with Euler class $e$ is a sum of basic contact elements with Euler grading $e$. See also sections \ref{sec:grading} and \ref{sec:alg_bypasses} below for further details.
\end{enumerate}

We have now shown that the above assignments give a well-defined functor $\mathcal{DOS} \To \Z_2 \mathcal{VS}$, and a well-defined assignment of suture elements, satisfying the conditions of definition \ref{def:SQFT}. This completes the proof of theorem \ref{thm:SFH_gives_SQFT}.

\begin{cor}
A sutured quadrangulated field theory exists.
\qed
\end{cor}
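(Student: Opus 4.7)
The plan is to observe that the corollary is essentially immediate from Theorem \ref{thm:SFH_gives_SQFT}, which has just been proved in the preceding eight-step construction. That theorem asserts that sutured Floer homology of product manifolds $(\Sigma \times S^1, F \times S^1)$ with $\Z_2$ coefficients, together with contact elements and inclusion-induced maps, fits the axiomatic framework of Definition \ref{def:SQFT}. So the plan is simply to exhibit this SFH-based construction as the required example.

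More concretely, I would set $\V(\Sigma,V) = SFH(-\Sigma \times S^1, -V \times S^1)$ and $c(\Gamma)$ equal to the contact invariant of the $I$-invariant contact structure on $\Sigma \times S^1$ determined by $\Gamma$, and to a decorated morphism $(\phi, \Gamma_c)$ assign the Honda--Kazez--Mati\'c map $\D_{\phi, \Gamma_c}$ obtained after isotoping $\phi$ to an interior embedding and reading off the factor of $\Phi_{\xi^*}$ landing in $\V(\Sigma',V') \otimes x^{\otimes m}$. The verification of axioms (i)--(iv) of Definition \ref{def:SQFT} is precisely what steps (3)--(8) of the preceding construction accomplish: tensor decompositions along annular decompositions come from \cite[Lemma 7.2]{HKM08} via Juh\'asz's surface decomposition theorem; naturality of contact elements under gluing gives the suture-element axiom; the two standard basic sutures on $(\Sigma^\square, V^\square)$ give a basis by \cite[Lemma 7.2]{HKM08}; and the Euler grading axiom follows from the SpinC-grading on $SFH$ as discussed in \cite{Me09Paper, Me10_Sutured_TQFT}.

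Therefore the proof is a single line: take the SQFT produced by Theorem \ref{thm:SFH_gives_SQFT}. I do not anticipate any real obstacle, since all the work has already been done; the only subtlety worth flagging is that well-definedness of $\D_{\phi,\Gamma_c}$ required choosing an isotopy representative $(\phi^*, \Gamma_c^*)$ with $\phi^*$ an interior embedding, but this was handled in step (5) and is independent of the choice because the contact structure on the complement depends only on the isotopy class of complementary sutures. Hence the existence of a sutured quadrangulated field theory is established.
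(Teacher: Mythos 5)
Your proposal is correct and matches the paper exactly: the corollary is stated with no separate proof precisely because it follows immediately from the eight-step SFH construction establishing Theorem \ref{thm:SFH_gives_SQFT}, which is the same route you take. Your flagged subtlety about well-definedness of $\D_{\phi,\Gamma_c}$ via an isotopy representative is the same point the paper handles in step (5).
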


\subsection{Grading}
\label{sec:grading}

The Euler class of sutures gives a grading on SQFT.

On the occupied square $(\Sigma^\square, V^\square)$, we have $\V(\Sigma^\square, V^\square) = {\bf V}$ with basis $c(\Gamma_-) = v_- = \0$ and $c(\Gamma_+) = v_+ = \1$. By grading the basis with integers, we obtain a grading on all tensor products ${\bf V}^{\otimes n}$. Let us define gradings as follows:
\begin{enumerate}
\item
$\0$-grading $n_\0$: $\0 = v_-$ has grading $1$ and $\1 = v_+$ has grading $0$.
\item
$\1$-grading $n_\1$: $\0 = v_-$ has grading $0$ and $\1 = v_+$ has grading $1$.
\item
Euler grading $e$: $\0 = v_-$ has grading $-1$ and $\1 = v_+$ has grading $1$.
\end{enumerate}
In \cite{Me10_Sutured_TQFT} these gradings were called $n_x, n_y$ and $e$ respectively. Obviously $e = n_\1 - n_\0$, so any two of these gradings determines the third. (Unfortunately, the ``digital'' analogy forces us to use $\0$ and $\1$, while the Euler class grading, which is topologically the natural one, forces $\0$ to have grading $-1$! We would prefer to take our binary ``digits'' to be $-$ and $+$, rather than $\0$ and $\1$, but this would be a little too unorthodox.)

These gradings naturally extend to any tensor product ${\bf V}^{\otimes n}$. A basis element $v_\pm \otimes v_\pm \otimes \cdots \otimes v_\pm$ has grading equal to the number of $v_-$'s, number of $v_+$'s, or their difference, according to $n_\0, n_\1$ and $e$ respectively.

Suppose now we have a quadrangulated surface with squares $(\Sigma^\square_i, V^\square_i)$. Letting $\Gamma^\pm_i$ denote the basic $\pm$ sutures on $(\Sigma^\square_i, V^\square_i)$, a basic set of sutures can be denoted by $\Gamma^{\bf e} = \cup_i \Gamma^{e_i}_i$, where ${\bf e} = (e_i)$ denotes the sign of the basic sutures on each square, $e_i = \pm 1$. Then $\Gamma^{\bf e}$ has Euler class $\sum_i e_i$, and suture element $c(\Gamma^{\bf e}) = \otimes_i v_{e_i}$, which has Euler grading $\sum_i e_i$.

Thus, without using the ``Euler class gives Euler grading'' axiom of SQFT, it follows that a basis element $c(\Gamma^{\bf e})$ has Euler grading equal to the Euler class of the basic sutures $\Gamma^{\bf e}$. We shall however need the Euler axiom in order to understand bypass surgeries, as we see next.

Note that any ${\bf V}^{\otimes n}$ decomposes as a direct sum of subspaces with Euler gradings from $-n$ to $n$. For a decorated morphism $(\phi, \Gamma_c)$ from $(\Sigma, V) \To (\Sigma',V')$, sutures $\Gamma$ on $(\Sigma,V)$ give sutures $\Gamma' = \phi(\Gamma) \cup \Gamma_c$ on $(\Sigma',V')$ with Euler class $e(\Gamma') = e(\Gamma) + e(\Gamma_c)$. So $(\phi, \Gamma_c)$ adjusts Euler class by $e(\Gamma_c)$, and hence $\D_{\phi,\Gamma_c}$ adjusts Euler grading by $e(\Gamma_c)$. In particular, $\D_{\phi, \Gamma_c}$ decomposes as a direct sum over Euler-graded summands.

\subsection{Algebra of bypass surgeries}
\label{sec:alg_bypasses}

Let $(\Sigma,V)$ be a hexagon, i.e. a disc with $6$ vertices. Take a quadrangulation with two squares, $(\Sigma^\square_0, V^\square_0)$ and $(\Sigma^\square_1, V^\square_1)$, as shown in figure \ref{fig:order_3_morphism_sutures}, and consider the morphism $\phi: (\Sigma,V) \To (\Sigma,V)$ which, if the hexagon is drawn symmetrically, rotates $120^\circ$ anticlockwise. As $\phi$ is a homeomorphism, the map $\D_{\phi, \emptyset}$ is an isomorphism $\V(\Sigma,V) \To \V(\Sigma,V)$. The vector space decomposes as
\[
\V(\Sigma,V) = \V(\Sigma^\square_0, V^\square_0) \otimes \V(\Sigma^\square_1, V^\square_1) = {\bf V} \otimes {\bf V}.
\]
As $\phi$ has order $3$, so too does $\D_{\phi, \emptyset}$.

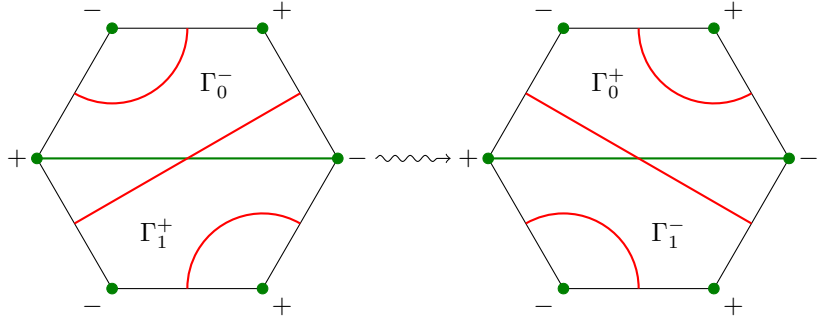
\begin{figure}
\begin{center}

\begin{tikzpicture}[
scale=2, 
boundary/.style={ultra thick}, 
vertex/.style={draw=green!50!black, fill=green!50!black},
decomposition/.style={thick, draw=green!50!black}, 
suture/.style={thick, draw=red}
]

\foreach \x/\rot in {0/0, 3 cm/-60}
{
\draw [xshift=\x, rotate=\rot] (0:1) -- (60:1) -- (120:1) -- (180:1) -- (240:1) -- (300:1) -- cycle;
\draw [xshift=\x, decomposition] (0:1) -- (180:1);
\draw [xshift=\x, rotate=\rot, suture] ($ 0.5*(60:1) + 0.5*(120:1) $) arc (0:-120:0.5);
\draw [xshift=\x, rotate=\rot, suture] ($ 0.5*(0:1) + 0.5*(-60:1) $) arc (60:180:0.5);
\draw [xshift=\x, rotate=\rot, suture] ($ 0.5*(0:1) + 0.5*(60:1) $) -- ($ 0.5*(180:1) + 0.5*(240:1) $);

\foreach \angle in {0, 60, 120, 180, 240, 300}
\fill [vertex, xshift=\x, rotate=\rot] (\angle:1) circle (1pt);

\draw [xshift=\x] (0:1) node [right] {$-$};
\draw [xshift=\x] (60:1) node [above right] {$+$};
\draw [xshift=\x] (120:1) node [above left] {$-$};
\draw [xshift=\x] (180:1) node [left] {$+$};
\draw [xshift=\x] (240:1) node [below left] {$-$};
\draw [xshift=\x] (300:1) node [below right] {$+$};
}

\draw (0.2, 0.5) node {$\Gamma^-_0$};
\draw (-0.2,-0.5) node {$\Gamma^+_1$};
\draw (2.8,0.5) node {$\Gamma^+_0$};
\draw (3.2,-0.5) node {$\Gamma^-_1$};

\draw [shorten >=1mm, -to, decorate, decoration={snake,amplitude=.4mm, segment length = 2mm, pre=moveto, pre length = 1mm, post length = 2mm}]
(1.2,0) -- (1.8,0);

\end{tikzpicture}

\caption{Order $3$ morphism and effect on sutures.}
\label{fig:order_3_morphism_sutures}
\end{center}
\end{figure}

Now we consider the effect of $\phi$ on sutures and $\D_{\phi,\emptyset}$ on suture elements. (Note that we consider the quadrangulation to be fixed, as $\phi$ rotates the hexagon and moves sutures.) Write $\Gamma^\pm_i$ for the standard $\pm$ sutures on $(\Sigma^\square_i, V^\square_i)$. We see that $\phi$ takes $3$ of the $4$ basis sutures to basis sutures:
\[
\begin{array}{ccc}
\Gamma_0^- \cup \Gamma_0^- & \mapsto & \Gamma_0^- \cup \Gamma_0^- \\
\Gamma_0^- \cup \Gamma_1^+ & \mapsto & \Gamma_0^+ \cup \Gamma_1^- \\
\Gamma_0^+ \cup \Gamma_1^+ & \mapsto & \Gamma_0^+ \cup \Gamma_1^+,
\end{array}
\]
however $\Gamma_0^+ \cup \Gamma_1^-$ is not taken to basic sutures.

Thus $\D_{\phi, \emptyset}$ is the identity on the Euler-graded $-2$ and $2$ summands of $\V(\Sigma,V)$, i.e. $\D_{\phi, \emptyset}$ takes $v_- \otimes v_- \mapsto v_ \otimes v_-$ and $v_+ \otimes v_+ \mapsto v_+ \otimes v_+$. On the Euler-graded $0$ summand, with respect to the basis $(v_- \otimes v_+, v_+ \otimes v_-)$, $\D_{\phi, \emptyset}$ has matrix
\[
\begin{bmatrix} 0 & \alpha \\ 1 & \beta \end{bmatrix}
\]
for some $\alpha, \beta \in \Z_2$. As $\phi$ has order $3$ the only possibility is that $\alpha = \beta = 1$. 

Thus, the unique nontrivial non-basis sutures on $(\Sigma,V)$, being given by $\phi(\Gamma_0^+ \cup \Gamma_1^-)$, must have suture element $v_- \otimes v_+ + v_+ \otimes v_- = \0 \otimes \1 + \1 \otimes \0$, i.e. is a ``superposition'' of the two basis elements.

Thus, the three nontrivial sets of sutures of Euler class $0$ on $(\Sigma,V)$ have suture elements $v_- \otimes v_+$, $v_+ \otimes v_-$, and $v_- \otimes v_+ + v_+ \otimes v_-$. So they sum to zero. These three sets of sutures form a bypass triple.

Now consider a decorated morphism $(\phi, \Gamma_c)$ mapping the hexagon $(\Sigma,V)$ into an occupied surface $(\Sigma',V')$. The bypass triple of sutures on $(\Sigma,V)$ combines with $\Gamma_c$ to form a bypass triple of sutures on $(\Sigma',V')$; and in fact any bypass triple of sutures on $(\Sigma',V')$ can be obtained this way. As the three suture elements in $\V(\Sigma,V)$ sum to zero, their images under $\D_{\phi, \Gamma_c}$ do too, and we have proved the following.
\begin{prop}[Bypass relation]
In an SQFT, if $\Gamma_0, \Gamma_1, \Gamma_2$ are a bypass triple of sutures then
\[
c(\Gamma_0) + c(\Gamma_1) + c(\Gamma_2) = 0.
\]
\qed
\end{prop}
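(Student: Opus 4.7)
The plan is to reduce the general bypass relation to a single model computation on the hexagon, and then transfer to general bypass triples by functoriality of SQFT. Let $(\Sigma, V)$ be the hexagon with the two-square quadrangulation of figure \ref{fig:order_3_morphism_sutures}, so $\V(\Sigma, V) = \mathbf{V} \otimes \mathbf{V}$. The key input is the rotation morphism $\phi: (\Sigma, V) \To (\Sigma, V)$ by $120^\circ$, which is a homeomorphism of order $3$; functoriality then forces $\D_{\phi, \emptyset}$ to be an automorphism of $\mathbf{V} \otimes \mathbf{V}$ of order $3$.

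Next I would use the Euler-grading decomposition of $\V(\Sigma, V)$ into summands of Euler-degrees $-2, 0, 2$, with dimensions $1, 2, 1$ respectively. Since $\phi$ leaves $\Gamma_c = \emptyset$ unchanged, $\D_{\phi, \emptyset}$ preserves Euler grading, and on the one-dimensional summands it must be the identity. The interesting piece is the Euler-$0$ summand, with basis $(v_- \otimes v_+, v_+ \otimes v_-)$ given by the two basic suture elements. I would check by inspection of figure \ref{fig:order_3_morphism_sutures} that $\phi$ sends the basic sutures $\Gamma_0^- \cup \Gamma_1^+$ to $\Gamma_0^+ \cup \Gamma_1^-$, pinning down the first column of the matrix of $\D_{\phi, \emptyset}|_{\text{Euler } 0}$.

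The main computation — and the only real algebraic obstacle — is then to solve $M^3 = I$ for the matrix
\[
M = \begin{pmatrix} 0 & \alpha \\ 1 & \beta \end{pmatrix}
\]
over $\Z_2$; a direct calculation forces $\alpha = \beta = 1$. This identifies the suture element of the unique remaining nontrivial Euler-$0$ set of sutures (the image under $\phi$ of $\Gamma_0^+ \cup \Gamma_1^-$) as $v_- \otimes v_+ + v_+ \otimes v_-$. The three nontrivial Euler-$0$ sets of sutures on the hexagon thus have suture elements $v_- \otimes v_+$, $v_+ \otimes v_-$, and $v_- \otimes v_+ + v_+ \otimes v_-$, which sum to $0$ over $\Z_2$. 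A quick check against figure \ref{fig:bypass_relation} confirms that these three sets of sutures are precisely a bypass triple.

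Finally, for a general bypass triple $\Gamma_0, \Gamma_1, \Gamma_2$ on any occupied surface $(\Sigma', V')$: the three sets of sutures differ only inside an embedded hexagonal bypass disc, and agree on its complement. Hence there is a decorated morphism $(\psi, \Gamma_c): (\Sigma, V) \To (\Sigma', V')$ with $\psi$ embedding the hexagon as the bypass disc and $\Gamma_c$ the fixed sutures outside, such that $\Gamma_i = \psi(\Gamma_i^{\text{hex}}) \cup \Gamma_c$ for the three hexagon bypass sutures $\Gamma_i^{\text{hex}}$. Applying the linear map $\D_{\psi, \Gamma_c}$ to the hexagon identity $c(\Gamma_0^{\text{hex}}) + c(\Gamma_1^{\text{hex}}) + c(\Gamma_2^{\text{hex}}) = 0$, and using the axiom $\D_{\psi, \Gamma_c}(c(\Gamma)) = c(\psi(\Gamma) \cup \Gamma_c)$, yields $c(\Gamma_0) + c(\Gamma_1) + c(\Gamma_2) = 0$ and completes the proof.
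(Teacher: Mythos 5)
Your proposal is correct and follows essentially the same route as the paper: the model computation on the quadrangulated hexagon using the order-$3$ rotation morphism (identity on the Euler $\pm 2$ summands, matrix of order $3$ forcing $\alpha=\beta=1$ on the Euler $0$ summand), followed by transfer to an arbitrary bypass triple via a decorated morphism embedding the hexagon and the axiom $\D_{\psi,\Gamma_c}(c(\Gamma)) = c(\psi(\Gamma)\cup\Gamma_c)$. No gaps to flag.
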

(Note that this argument is heavily dependent on $\Z_2$ coefficients. Over $\Z$ coefficients, the situation is a little more complicated; see \cite{Me10_Sutured_TQFT, Massot09}.)

This last result gives us an effective way to compute suture elements in general: by successively applying several bypass surgeries, reducing to several sets of basic sutures.

Consider, for instance, a set of trivial sutures $\Gamma$. Let $\gamma$ be an innermost contractible component of $\Gamma$, i.e. $\gamma$ bounds a disc in $\Sigma \backslash \Gamma$. Take an attaching arc $c$ which intersects $\gamma$ twice and then intersects some other component of $\Gamma$. We see that the effect of bypass surgery along $c$, in either direction, gives the same result, which is identical to the original $\Gamma$. Thus $3c(\Gamma) = 0$, so immediately over $\Z_2$:
\begin{prop}
If $\Gamma$ is a trivial set of sutures, then $c(\Gamma)=0$.
\qed
\end{prop}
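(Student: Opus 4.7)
My approach is to apply the bypass relation to a triple in which all three sets of sutures coincide with $\Gamma$, so that $3c(\Gamma) = 0$ and hence, working over $\Z_2$, $c(\Gamma) = c(\Gamma) + c(\Gamma) + c(\Gamma) = 0$. By hypothesis $\Gamma$ contains a contractible closed component; among all such components choose one, $\gamma$, which is \emph{innermost}, so that the disc $D \subset \Sigma$ it bounds satisfies $\mathrm{int}(D) \cap \Gamma = \emptyset$. Since $\Sigma$ has nonempty boundary, every boundary component meets $\Gamma$, and $\gamma \cap \partial\Sigma = \emptyset$, there exists some other component $\gamma'$ of $\Gamma$.

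Next I would construct the attaching arc $c$ indicated in the sketch preceding the statement. The arc is disjoint from $\mathrm{int}(D)$, has both endpoints on $\gamma$, and meets $\gamma'$ transversely at a single interior point (and is otherwise disjoint from $\Gamma$). Concretely, one runs out from a point of $\gamma$ to $\gamma'$, crosses $\gamma'$ transversely, and returns along a nearly parallel path to a second point of $\gamma$; the two endpoints plus the single interior crossing give $|c \cap \Gamma| = 3$, as required of an attaching arc.

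The heart of the argument, and what I expect to be the main obstacle, is to verify that bypass surgery along $c$ in \emph{both} directions yields sutures isotopic to $\Gamma$. In the bypass disc $N(c)$ the three local sutures consist of two arcs of $\gamma$ (near the endpoints) and one arc of $\gamma'$ (at the interior crossing). After either upward or downward surgery these arcs are recombined; but because $D$ is a disc bounded by $\gamma$ containing no other sutures, the resulting curves can be isotoped back to the original configuration by sweeping the modified portions across $D$. Carrying this out cleanly requires a small case analysis depending on how $c$ approaches $\gamma$ and on the signs of $R_\pm$ near $D$, and in particular on checking that no spurious new closed curve survives after the sweep.

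Granting this geometric claim, the three (isotopy classes of) sutures form a bypass triple, so the bypass relation proved in section \ref{sec:alg_bypasses} yields $c(\Gamma) + c(\Gamma) + c(\Gamma) = 0$. Over $\Z_2$ this collapses to $c(\Gamma) = 0$, completing the proof.
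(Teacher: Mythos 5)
Your overall strategy is exactly the paper's: find an attaching arc along which bypass surgery in either direction returns $\Gamma$ up to isotopy, so that the bypass relation gives $3c(\Gamma)=0$ and hence $c(\Gamma)=0$ over $\Z_2$. The gap is in the construction of the attaching arc itself: the arc you describe cannot exist. If $c$ has both endpoints on the innermost contractible suture $\gamma$, its single interior intersection on a \emph{different} component $\gamma'$, and is otherwise disjoint from $\Gamma$, then the crossing point divides $c$ into two subarcs, each lying in a single complementary region whose closure meets both $\gamma$ and $\gamma'$. The only regions adjacent to $\gamma$ are the suture-free disc $\mathrm{int}(D)$ (which meets no other suture, so cannot reach $\gamma'$) and the single outer region $R$; but the two subarcs emanate from opposite sides of $\gamma'$, and the two sides of a suture lie in $R_+$ and $R_-$ respectively, so they cannot both lie in $R$. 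Equivalently, in your putative bypass disc the four complementary regions would fail to alternate in sign (the region between the middle chord of $\gamma'$ and a side chord of $\gamma$ would be forced to carry the same sign as the regions on both its far sides), contradicting the definition of a sutured surface. So the configuration ``two arcs of $\gamma$ at the ends, one arc of $\gamma'$ in the middle'' is not realizable, and the verification you defer could never be carried out.

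The correct configuration, which is what the paper uses, is an arc meeting $\gamma$ twice --- once at an endpoint and once at its single \emph{interior} point --- with the subarc between these two intersections running through the disc $D$, and with the other endpoint on a suture $\gamma'$ adjacent to the outer region of $\gamma$. Such an arc always exists (the outer region must abut some other component of $\Gamma$ or an arc suture reaching $\partial\Sigma$, since every boundary component meets $\Gamma$), its bypass disc has correctly alternating signs, and then your ``sweep across $D$'' idea does work: in either direction the surgery produces one small contractible curve bounding a suture-free subdisc of $D$ together with a curve made of the remainder of $\gamma$, part of $\gamma'$, and two new chords, and both can be isotoped back to $\gamma$ and $\gamma'$ through regions free of other sutures. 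With that replacement (and with the isotopy check actually carried out rather than ``granted''), your argument becomes the paper's proof.
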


In general, if we have a sutured quadrangulated $(\Sigma,\Gamma,A)$, we successively simplify $\Gamma$ along the arcs $a$ of $A$ by bypass surgeries. If an arc $a$ of $A$ has $|a \cap \Gamma| > 1$, then in fact $|a \cap \Gamma| \geq 3$, and we consider bypass surgeries along $a$. Any such surgery, in either direction, reduces $|a \cap \Gamma|$ and, assuming $a$ and $\Gamma$ intersect efficiently, does not introduce trivial sutures (proposition \ref{prop:efficient_surgery}), and $c(\Gamma)$ is equal to the sum of the two suture elements obtained from bypass surgeries. After performing enough such surgeries (and simplifying isotopies at each stage), we express $c(\Gamma)$ as a sum of basic suture elements. 

Performing the above procedure on a disjoint union of occupied surfaces gives:
\begin{prop}
If $\Gamma_1, \Gamma_2$ are sets of sutures on $(\Sigma_1, V_1), (\Sigma_2, V_2)$ respectively, then 
\[
c(\Gamma_1 \sqcup \Gamma_2) = c(\Gamma_1) \otimes c(\Gamma_2) \in \V((\Sigma_1, V_1) \sqcup (\Sigma_2, V_2)) = \V(\Sigma_1, V_1) \otimes \V(\Sigma_2, V_2).
\]
\qed
\end{prop}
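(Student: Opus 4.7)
The plan is to reduce the statement to the case of basic sutures, where it follows immediately from the axiom ``quadrangulations give tensor decompositions,'' and then propagate the result to general sutures via the bypass relation established above.

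First, I would handle the trivial case. If either $\Gamma_i$ is trivial, then so is $\Gamma_1 \sqcup \Gamma_2$, so both sides of the asserted equality vanish by the preceding proposition. Hence we may assume both $\Gamma_1$ and $\Gamma_2$ are nontrivial. Discard any vacuum components and choose a quadrangulation $A_i$ of each $(\Sigma_i, V_i)$ whose arcs intersect $\Gamma_i$ transversely and efficiently. Then $A_1 \sqcup A_2$ is a quadrangulation of the disjoint union, and the axiom gives
\[
\V\bigl((\Sigma_1,V_1) \sqcup (\Sigma_2,V_2)\bigr) \;=\; \V(\Sigma_1,V_1) \otimes \V(\Sigma_2,V_2),
\]
with basic suture elements of the disjoint union factoring as $\otimes$-products of basic suture elements of the components. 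So the proposition holds whenever both $\Gamma_i$ are basic for $A_i$.

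Next I would apply the bypass reduction procedure of the previous section. On each $(\Sigma_i, V_i)$ separately, whenever some arc $a \in A_i$ satisfies $|a \cap \Gamma_i| \geq 3$, pick three consecutive intersection points and perform bypass surgery in the resulting disc; by proposition \ref{prop:efficient_surgery} the two surgered sutures are nontrivial, and by the bypass relation $c(\Gamma_i)$ equals the sum of their suture elements. Iterating (with simplifying isotopies) yields an expression
\[
c(\Gamma_i) \;=\; \sum_{\alpha} c\bigl(\Gamma_i^{(\alpha)}\bigr),
\]
where each $\Gamma_i^{(\alpha)}$ is basic for $A_i$.

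Finally, I would combine these two reductions. The crucial observation is that bypass surgeries on $\Sigma_1$ occur inside $\Sigma_1$ and leave $\Gamma_2$ untouched, and vice versa, so the same surgeries (performed on each component in turn) realize
\[
c(\Gamma_1 \sqcup \Gamma_2) \;=\; \sum_{\alpha,\beta} c\bigl(\Gamma_1^{(\alpha)} \sqcup \Gamma_2^{(\beta)}\bigr).
\]
Each summand on the right is basic for $A_1 \sqcup A_2$, so by the basic case already settled it equals $c(\Gamma_1^{(\alpha)}) \otimes c(\Gamma_2^{(\beta)})$. Expanding and using bilinearity of the tensor product gives
\[
c(\Gamma_1 \sqcup \Gamma_2) \;=\; \Bigl(\sum_\alpha c(\Gamma_1^{(\alpha)})\Bigr) \otimes \Bigl(\sum_\beta c(\Gamma_2^{(\beta)})\Bigr) \;=\; c(\Gamma_1) \otimes c(\Gamma_2).
\]
The main obstacle I anticipate is a bookkeeping subtlety: ensuring that the bypass reductions on the two components can genuinely be carried out independently and simultaneously, i.e.\ that a surgery performed inside $\Sigma_1$ neither creates a trivial suture on $\Sigma_2$ (it cannot, since $\Sigma_2$ is untouched) nor disrupts efficiency of $A_2$ with respect to $\Gamma_2$. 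Both issues are automatic because the arcs of $A_1 \sqcup A_2$ lie entirely within one component, and efficiency was assumed at the outset on each side separately.
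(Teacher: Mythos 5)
Your proposal is correct and follows essentially the same route as the paper, which obtains this proposition precisely by running the bypass-reduction procedure (reduce each nontrivial set of sutures to a sum of basic suture elements along an efficiently intersecting quadrangulation, then invoke the tensor-decomposition axiom for the disjoint quadrangulation) on each component of the disjoint union. Your separate treatment of the trivial case and the remark on independence of the surgeries in the two components are harmless elaborations of the same argument.
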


In \cite[theorem 16]{Massot09}, Massot shows that, for confining sutures $\Gamma$, $c(\Gamma) = 0$. (In fact we only need his result with $\Z_2$ coefficients; he proves the result over $\Z$ and indeed with twisted coefficients.) Given an isolated region, he shows that one can find bypass discs such that, performing bypass surgeries in either direction, we obtain an isolated region with simpler topology. By the bypass relation, if these simpler sutures have zero suture element, so do the original ones. Massot shows how to reduce to specific simple confining cases (annulus, punctured torus, disc) where one can verify suture elements are zero.
\begin{thm}[Massot \cite{Massot09}]
\label{thm:confining_zero}
If $\Gamma$ is confining then $c(\Gamma)=0$.
\end{thm}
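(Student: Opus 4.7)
The plan is to mimic Massot's reduction strategy: repeatedly apply the bypass relation derived in section \ref{sec:alg_bypasses} to simplify a confined region until one reaches either a trivial suture or a model case whose suture element can be computed directly. Concretely, given a confining $\Gamma$ on $(\Sigma,V)$, pick a component $R$ of $\Sigma \backslash \Gamma$ disjoint from $\partial \Sigma$, and let $\overline{R}$ denote its closure in $\Sigma$. I would induct on a lexicographic complexity such as $\mathcal{C}(R) = (-\chi(\overline{R}),\, b(\overline{R}),\, |\partial \overline{R} \cap \Gamma|)$, where $b$ counts boundary components.

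For the base, if $\Gamma$ contains a trivial closed suture — in particular when $\overline{R}$ is a disc bounded by a single suture — then $c(\Gamma) = 0$ by the argument given immediately before the theorem statement. A few simple confining model cases (an isolated annulus; an isolated once-punctured torus) can be handled by reducing through one or two explicit bypass surgeries to a configuration containing a trivial suture.

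For the inductive step, the aim is to exhibit an efficient embedded bypass disc $(D, \Gamma_3)$ whose attaching arc lies in a neighbourhood of $\overline{R}$, such that both upward and downward surgeries produce sets $\Gamma', \Gamma''$ that remain confining with strictly smaller complexity. Granting this, the bypass relation gives
\[
c(\Gamma) = c(\Gamma') + c(\Gamma'') = 0
\]
by the inductive hypothesis. Candidate attaching arcs arise from three topological moves on $\overline{R}$: (i) if $b(\overline{R}) \geq 2$, an arc joining two distinct components of $\partial \overline{R}$, pushed to cross three sutures, merging two boundary sutures of $\overline{R}$ into one; (ii) if $\overline{R}$ has positive genus, an arc running around a non-separating loop, reducing genus; (iii) an arc reducing the number of sutures met by $\partial \overline{R}$. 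Efficiency of the bypass, combined with proposition \ref{prop:efficient_surgery}, guarantees that no trivial sutures are introduced at any stage, so the inductive hypothesis applies in its pure ``confining'' form and not just to the strictly stronger ``trivial'' case.

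The main obstacle is the case analysis showing that both bypass surgeries, for some suitable choice of attaching arc, preserve the confining property and strictly decrease complexity. Since the attaching arc is chosen in a neighbourhood of $\overline{R}$ and $R$ is disjoint from $\partial \Sigma$, neither surgery can create a path from the former isolated region to $\partial \Sigma$, so one of the newly formed pieces on either side of the bypass remains isolated; but one still has to verify that this isolated piece is the ``right'' one, and that $\mathcal{C}$ genuinely drops rather than shifting. This careful combinatorial-topological bookkeeping on $\overline{R}$ is the heart of Massot's argument, and the step where the proof cannot be appreciably shortened.
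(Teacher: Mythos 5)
Your proposal is an outline of exactly the strategy the paper attributes to Massot (and the paper itself does not reprove the result; it imports it as \cite[Theorem 16]{Massot09} and only sketches the reduction). As a standalone proof, however, it has a genuine gap, and you flag it yourself: the entire content of the theorem lies in the inductive step you leave unverified, namely that for every confining $\Gamma$ one can \emph{always} find an efficient embedded bypass disc near the confined region $\overline{R}$ such that \emph{both} resolutions are again trivial or confining and strictly decrease your complexity $\mathcal{C}$. Listing candidate attaching arcs (joining boundary components of $\overline{R}$, cutting genus, reducing the number of boundary sutures) is not enough: bypass surgery rearranges the complementary regions globally, and the assertion that ``one of the newly formed pieces on either side of the bypass remains isolated'' is precisely what must be proved, for each of the two surgery directions, in each case of the analysis. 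Without that verification the identity $c(\Gamma)=c(\Gamma')+c(\Gamma'')$ cannot be fed into the induction, and nothing in the paper's own machinery (lemma \ref{lem:distinct_sutures_surgery}, proposition \ref{prop:efficient_surgery}) supplies it, since those results only control triviality, not confinement.

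Two smaller points. First, the base cases are also asserted rather than done: for an isolated annulus or once-punctured torus you claim ``one or two explicit bypass surgeries'' reach a trivial suture, but these computations are exactly the model cases Massot works out, and they are not immediate in the SQFT formalism (note also that a trivial suture appearing after surgery is not a problem to be avoided but a help, since that term vanishes outright; your use of proposition \ref{prop:efficient_surgery} to \emph{exclude} trivial sutures points the concern in the wrong direction). Second, your complexity $\mathcal{C}(R)=(-\chi(\overline{R}), b(\overline{R}), |\partial\overline{R}\cap\Gamma|)$ needs repair: $\partial\overline{R}$ is a union of sutures, so the last entry should count components of $\partial\overline{R}$ (or intersections with a fixed quadrangulation), and you must check that your surgeries cannot increase the earlier entries while decreasing a later one. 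In short, the approach is the right one, but what you have written is a restatement of the cited theorem's proof strategy, not a proof; either carry out Massot's case analysis in full or, as the paper does, cite it.
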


We also note that from the above it is simple to compute the effect of a diagonal slide. A diagonal slide does not change the occupied surface or sutures on it, and hence does not change $\V(\Sigma,V)$ or suture elements, but does give a different quadrangulation. It only involves two joined squares, whose union is a hexagon; let the squares before the slide be $(\Sigma^\square_0, V^\square_0)$ and $(\Sigma^\square_1, V^\square_1)$, and afterwards $(\Sigma^\square_2, V^\square_2)$ and $(\Sigma^\square_3, V^\square_3)$. So a diagonal slide effectively gives an isomorphism
\[
{\bf V} \otimes {\bf V} = \V(\Sigma^\square_0, V^\square_0) \otimes \V(\Sigma^\square_1, V^\square_1) \To \V(\Sigma^\square_2, V^\square_2) \otimes \V(\Sigma^\square_3, V^\square_3)
\]
which we can compute by examining suture elements. We find the isomorphism fixes summands of Euler class $\pm 2$ and on the Euler class $0$ summand is given by a $2 \times 2$ matrix of order $3$.

\subsection{Spin networks}
\label{sec:spin_networks}

Many of the structures found here suggest a connection with \emph{spin networks} \cite{Penrose71, Major99, Kauffman_Lins94, Frenkel_Khovanov97}. We do not pursue these questions here, but we superficially note some connections.

Recall a spin network is essentially a graph with a Lie group representation attached to every edge and an intertwiner associated to every vertex. Taking the relevant group to be $SL(2,\C)$, we have a single irreducible representation of every positive integer dimension, and every finite-dimensional representation decomposes into a direct sum of such irreducibles. We write $V_n$ for the unique irreducible representation of dimension $n$. Spin networks can be generalised to quantum groups \cite{Frenkel_Khovanov97, Kauffman_Lins94} and in many other directions.

We can regard the fundamental 2-dimensional vector space ${\bf V}$ associated to an occupied square as the irreducible 2-dimensional representation. All the vector spaces we have obtained are of the form ${\bf V}^{\otimes n}$, which decomposes as a direct sum of representations of dimensions up to $n+1$.

A common way to represent spin networks is via diagrams (rather like sutures, though without the orientation requirements, sometimes with intersections, sometimes with over- and under-crossings). They are often drawn in the plane, but they can also be drawn on graphs. Each curve represents a ${\bf V}$ tensor factor, and various operations are assigned according to the geometry and topology of these curves, and give maps ${\bf V}^{\otimes m} \To {\bf V}^{\otimes n}$. Boxes are often drawn in the diagram to project to other $V_j$ factors, representing \emph{Jones-Wenzl projectors}. It is therefore possible to obtain from a sutured surface a map between $SL(2)$-representations. 

Moreover, a quadrangulation on the surface gives an explicit graph structure, as mentioned in section \ref{sec:ribbon_graphs}, the dual graph is a ribbon graph. Each edge of this graph corresponds to an arc of the quadrangulation, which intersects the sutures in an odd number of points, which can be interpreted as an $SL(2)$-representation. Each vertex of this graph corresponds to an occupied square with (potentially complicated) sutures which can be interpreted as intertwiners.

In the original $SL(2)$ spin networks of Penrose, a closed loop is given the value $-2$. In the quantum $SL(2)$ formulation of Frenkel--Khovanov, a closed loop is given the value $-q-q^{-1}$. We have the value of $0$ (mod $2$). The bypass relation seems to tell us that all Jones-Wenzl projectors onto dimension $3$ or higher, are zero. These are matters for further investigation.

\subsection{Creation and simple annihilation}

We now consider the operations that arise from creations and annihilations between occupied surfaces. 

Recall a creation (definition \ref{def:creation}) is a morphism which disjointly adds a square, $\phi: (\Sigma,V) \To (\Sigma,V) \sqcup (\Sigma^\square, V^\square)$. A decorated creation (definition \ref{def:decorated_creation_annihilation}) places basic sutures $\Gamma^\pm$ on the created square. The corresponding linear map in an SQFT is
\[
\D_{\phi, \Gamma^\pm} \; : \; \V(\Sigma,V) \To \V(\Sigma,V) \otimes \V(\Sigma^\square, V^\square) = \V(\Sigma,V) \otimes {\bf V}
\]
and since $\D_{\phi, \Gamma^\pm}$ must send
\[
c(\Gamma) \mapsto c(\Gamma \sqcup \Gamma^\pm) = c(\Gamma) \otimes c(\Gamma^\pm) = c(\Gamma) \otimes v_\pm
\]
and so (by varying $\Gamma$ over a basis of sutures), setting we have the descriptions
\[
\begin{array}{ccc}
\D_{\phi, \Gamma^-} \; : \; x &\mapsto& x \otimes v_- = x \otimes \0, \\
\D_{\phi, \Gamma^+} \; : \; x &\mapsto& x \otimes v_+ = x \otimes \1, 
\end{array}
\]
which are precisely digital creation operators as given in the introduction.

Before we consider annihilations in general, we consider a simple special case. Recall an annihilation (definition \ref{def:annihilation}) is a map $\phi: (\Sigma,V) \To (\Sigma',V')$ which adjoins an annihilator square $(\Sigma^\square, V^\square)$ along $3$ consecutive edges of the boundary. A decorated annihilation (definition \ref{def:decorated_creation_annihilation}) places basic sutures $\Gamma^\pm$ on the annihilator square. There are two vertices of $\Sigma$-type, one of each sign, $v_-, v_+$, and three edges of $\Sigma$-type, $e_1, e_2, e_3$ as in figure \ref{fig:annihilation_basic_sutures}.

In this simple case, suppose we have a quadrangulation $A$ of $(\Sigma,V)$ which has a square $(\Sigma^\square_1, V^\square_1)$ with $e_1, e_2, e_3$ as consecutive edges, which we call the ``annihilated square''. From our quadrangulation $A$, we obtain a quadrangulation $A'$ of $(\Sigma,V)$ simply by removing all the edges of the annihilated square, along with the internal vertices. The squares of $A$ (minus the annihilated square) and $A'$ are naturally bijective, though one square of $A'$ is isotoped across the vacuum left by annihilator-annihilated annihilation.

(In general, there may be many edges of $A$ incident to $v_-$ and $v_+$. The notion of slack square collapse, discussed in the next section, can deal with the general situation.)

If the annihilated and annihilator squares both have standard positive sutures, then the sutures obtained on $(\Sigma', V')$ are trivial; similarly if both squares have standard negative sutures. However if annihilator and annihilated squares have standard sutures of opposite sign, then the sutures obtained on $(\Sigma',V')$ form vacuum sutures on the vacuum background consisting of the union of annihilator and annihilated backgrounds. See figure \ref{fig:annihilation_basic_sutures}.

\begin{figure}
\begin{center}

\begin{tikzpicture}[
scale=1.7, 
fill = gray!10,
boundary/.style={ultra thick}, 
vertex/.style={draw=green!50!black, fill=green!50!black},
decomposition/.style={thick, draw=green!50!black}, 
suture/.style={thick, draw=red}
]

{
\fill (-0.5,0) -- (0,0) arc (-180:0:1.5) -- (3.5,0) -- (3.5,1.8) -- (-0.5,1.8) -- cycle;

\coordinate (p1) at (0,0);
\coordinate [label = above:{$v_-$}] (v-) at (1,0);
\coordinate [label = above:{$v_+$}] (v+) at (2,0);
\coordinate (p4) at (3,0);

\draw [boundary] (-0.5,0) -- (p1) arc (-180:0:1.5) -- (3.5,0);
\draw (p1) -- node [below left] {$e_1$} (v-) -- node [below left] {$e_2$} (v+) -- node [below left] {$e_3$} (p4);
\draw [decomposition] (p1) arc (180:0:1.5);
\draw [suture] (1.5,0) arc (180:0:0.5);
\draw [suture] (1.5,0) arc (-180:0:0.5);
\draw [suture] (0.5,0) to [bend left = 45] (1,0.75) to [bend right = 45] (1.5,1.5) -- (1.5,1.8);
\draw [suture] (0.5,0) to [bend right = 45] (1,-0.75) to [bend left = 45] (1.5,-1.5);
\draw (-0.2,1.2) node {$\phi(\Sigma)$};
\draw (2,0.75) node {$(\Sigma^\square_1, V^\square_1)$};
\draw (2,-0.75) node {$(\Sigma^\square, V^\square)$};

\fill (4.5,0) -- (5,0) arc (-180:0:1.5) -- (8.5,0) -- (8.5,1.8) -- (4.5,1.8) -- cycle;

\coordinate (q1) at (5,0);
\coordinate [label = above:{$v_-$}] (v-2) at (6,0);
\coordinate [label = above:{$v_+$}] (v+2) at (7,0);
\coordinate (q4) at (8,0);

\draw [boundary] (4.5,0) -- (q1) arc (-180:0:1.5) -- (8.5,0);
\draw (q1) -- node [below left] {$e_1$} (v-2) -- node [below left] {$e_2$} (v+2) -- node [below left] {$e_3$} (q4);
\draw [decomposition] (q1) arc (180:0:1.5);
\draw [suture] (6.5,0) arc (180:0:0.5);
\draw [suture] (5.5,0) arc (-180:0:0.5);
\draw [suture] (5.5,0) to [bend left = 45] (6,0.75) to [bend right = 45] (6.5,1.5) -- (6.5,1.8);
\draw [suture] (7.5,0) to [bend left = 45] (7,-0.75) to [bend right = 45] (6.5,-1.5);
\draw (4.8,1.2) node {$\phi(\Sigma)$};
\draw (7,0.75) node {$(\Sigma^\square_1, V^\square_1)$};
\draw (6,-0.75) node {$(\Sigma^\square, V^\square)$};

\foreach \point in {p1, v-, v+, p4, q1, v-2, v+2, q4}
\fill [vertex] (\point) circle (2pt);
}

\end{tikzpicture}

\caption{Annihilation with basic sutures on annihilator and annihilated squares.}
\label{fig:annihilation_basic_sutures}
\end{center}
\end{figure}
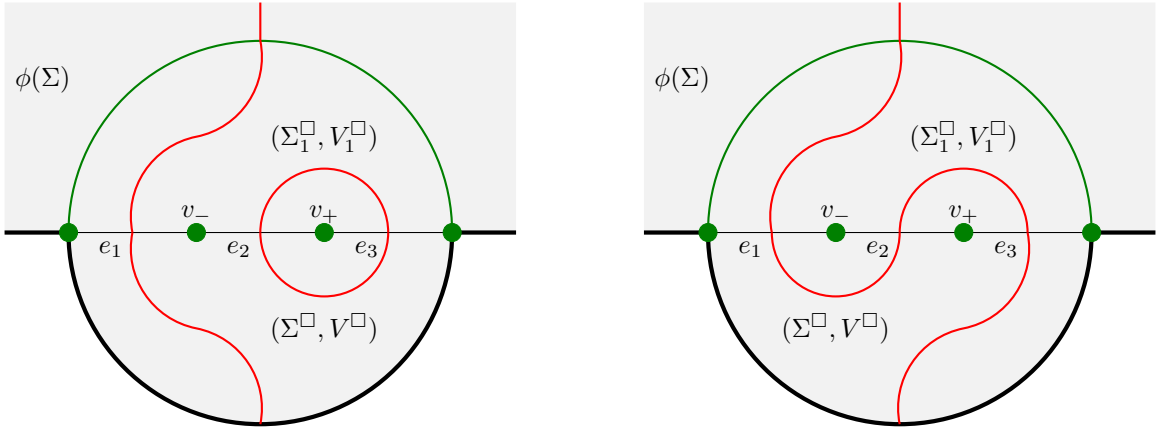

Letting the squares of the quadrangulation $A$ consist of the annihilated square $(\Sigma^\square_1, V^\square_1)$, and other squares $(\Sigma^\square_i, V^\square_i)$, the quadrangulation $A'$ can be written as $(\Sigma^\square_i, V^\square_i)$, given the bijection between squares of the previous paragraph. We thus obtain the SQFT maps $\D_{\phi, \Gamma^\pm}$ as
\[
\begin{array}{rcl}
\D_{\phi, \Gamma^\pm} \; : \; \V(\Sigma^\square_1, V^\square_1) \otimes \bigotimes_i \V(\Sigma^\square_i, V^\square_i) &\To&  \bigotimes_i  \V( \Sigma^{\square}_i, V^{\square}_i )) \\
\D_{\phi,\Gamma^-} \; : \; v_- \otimes x = \0 \otimes x &\mapsto& 0 \\
v_+ \otimes x = \1 \otimes x &\mapsto& x, \\
\D_{\phi,\Gamma^+} \; : \; v_- \otimes x = \0 \otimes x &\mapsto& x \\
v_+ \otimes x = \1 \otimes x &\mapsto& 0.
\end{array}
\]
Thus $\D_{\phi, \Gamma^-} = a_\1 \otimes 1^{\otimes I(\Sigma,V)-1}$ and $\D_{\phi, \Gamma^+} = a_\0 \otimes 1^{\otimes I(\Sigma,V)-1}$, where $a_\0$, $a_\1$ are digital annihilation operators, of the simplest type with only one tensor factor, ${\bf V}^{\otimes 1} \To {\bf V}^{\otimes 0} = \Z_2$. 

(In general, higher order annihilation operators ${\bf V}^{\otimes (n+1)} \To {\bf V}^{\otimes n}$ will occur, according to the number of edges incident at $v_-, v_+$.)

Thus the SQFT maps $\D_{\phi, \Gamma^\pm}$ are general digital annihilation operators. We see that $\D_{\phi, \Gamma^-}$ is $a_\1$ tensor the identity, and the standard negative sutures on the annihilator square (or a $\0$) annihilate standard positive sutures (or delete a $\1$) to give a vacuum; if no such annihilation is possible, $\D_{\phi, \Gamma^-}$ returns $0$. Similarly for $\D_{\phi, \Gamma^+}$ and $a_\0$. So we see the information-processing aspect of a digital annihilation operator arising in the combinatorial way sutures are combined.

We turn next to more complicated square collapses.

\subsection{Square collapse mechanics}
\label{sec:square_collapse_mechanics}

Let us establish the setup for a square collapse. Let $(\Sigma^\square, V^\square)$ be a square in a slack quadrangulation $Q$ of $(\Sigma,V)$, and let $y$ be a slack vertex, which shall be collapsed onto the opposite vertex $x$ in $(\Sigma^\square, V^\square)$. Obviously then $y$ and $x$ have the same sign; without loss of generality let it be positive; the negative case is similar with signs reversed. Let the edges emanating from $y$, in anticlockwise order, be $e_1, \ldots, e_n$, where $e_1$ and $e_n$ are consecutive edges of $(\Sigma^\square, V^\square)$ incident to $v$; write $e_i$, where $i$ is taken mod $n$.) For now we only consider sutures which are basic with respect to the slack quadrangulation. 

First consider the degenerate case $n =1$, so there is only one edge incident to $x$, and the square collapse is as in figure \ref{fig:degenerate_slack_square_collapse}. We see that if $(\Sigma^\square, V^\square)$ has basic positive sutures, then sutures are trivial. If $(\Sigma^\square, V^\square)$ has basic negative sutures, then after collapsing the square we retain basic sutures on all the squares, all of the same sign; we effectively just isotope away a vacuum.

We now assume $n \geq 2$. A neighbourhood of $y$ is split into wedges by the $e_i$; let $w_i$ ($i$ taken mod $n$) be the wedge between $e_i$ and $e_{i+1}$, so $w_0 = w_n$ is a corner of $(\Sigma^\square, V^\square)$ and the $w_i$ are in anticlockwise order. Note that some of the $w_i$ may correspond to opposite corners of the same square. 

The wedge $w_i$ is anticlockwise of $e_i$ and clockwise of $e_{i+1}$; and so we consider the sutures $\gamma_i$ intersecting $e_i$ and $e_{i+1}$ in this region. Since the sutures are basic, each $\gamma_i$ either consists of one suture running from $e_i$ to $e_{i+1}$; or runs from $e_i$ out of the wedge, and from $e_{i+1}$ out of the wedge, and we do not care for the moment about their behaviour further away from $v$. Call $\gamma_i$ \emph{positive} in the first case (since then $\gamma_i$ encloses a positive region in $w_i$ around $v$), and \emph{negative} in the second case. By $\gamma_0 = \gamma_n$ we shall mean the sutures in the collapsed square (not just the wedge near $v$).

(There is a subtlety when two of the wedges $w_i, w_j$ correspond to the same square of the quadrangulation, so that $\gamma_i, \gamma_j$ form part of basic sutures on the same square. We allow that $\gamma_i$ might be negative and $\gamma_j$ positive, but we interpret this as meaning that $\gamma_i$ is ``really'' positive, and that the sutures $\gamma_i$ which run out of the wedge $w_i$ close up further away from $y$. So the $\gamma_i$ are really just taken to represent the sutures in a neighbourhood of $y$, and not necessarily to indicate the overall topology of the sutures, although on a first reading they can be interpreted this way.)

The square collapse operation moves $y$ into $x$, collapses $e_1$ and $e_n$ on to adjacent edges of $(\Sigma^\square, V^\square)$, and modifies the edges $e_2, \ldots, e_{n-1}$ now to end at $x=y$. Call these collapsed edges $e'_1, e'_2, \ldots, e'_n$ in the natural way, so that $e'_i$ is the image of $e_i$ after the collapse. So near $x$ we have edges emanating in anticlockwise order $e'_1, \ldots, e'_n$, and wedge regions $w'_1, \ldots, w'_{n-1}$, where $w'_i$ lies between $e'_i$ and $e'_{i+1}$ and is the image of $w_i$ after the collapse.

We see that if $\gamma_0$, the sutures on the collapsed square, are negative, then the effect on sutures is minimal; each wedge $w'_i$ then has sutures $\gamma'_i$ which behave just as the sutures $\gamma_i$ did in $w_i$. However if the sutures $\gamma_0$ are positive, the situation is much more complicated, the sutures need not remain basic, and the edges $e'_i$ will intersect sutures in three points. See figure \ref{fig:square_collapsing_basic_sutures} for an example. 

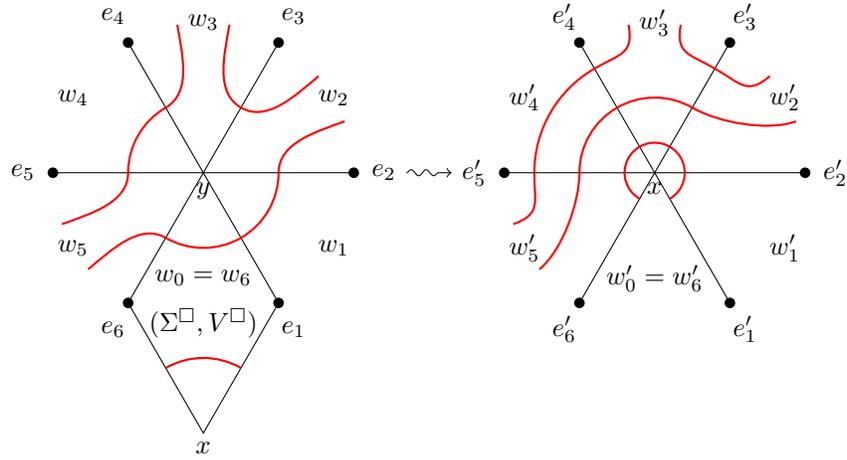
\begin{figure}
\begin{center}

\begin{tikzpicture}[
scale=2, 
boundary/.style={ultra thick}, 
suture/.style={thick, draw=red}
]

\coordinate [label = below:{$y$}] (v) at (0,0);
\coordinate [label = below:{$x$}] (x) at (0,-1.732);
\coordinate [label = below:{$x$}] (x2) at (3,0);

\foreach \angle/\edgelabel in {0/e_2, 60/e_3, 120/e_4, 180/e_5, 240/e_6, 300/e_1}{
\draw (v) -- (\angle:1);
\draw (\angle:1.2) node {$\edgelabel$} ;
\fill [black] (\angle:1) circle (1pt); }

\draw (240:1) -- (x) -- (300:1);

\draw [suture] (220:1) .. controls (220:0.8) and ($ (240:0.5) + (150:0.2) $) .. (240:0.5);
\draw [suture] (-60:0.5) arc (-60:0:0.5);
\draw [suture] (0:0.5) .. controls ($ (0:0.5) + (90:0.2) $) and (20:0.8) .. (20:1);
\draw [suture] (60:0.5) .. controls ($ (60:0.5) + (-30:0.2) $) and (40:0.8) ..  (40:1);
\draw [suture] (60:0.5) .. controls ($ (60:0.5) + (150:0.2) $) and (80:0.8) .. (80:1);
\draw [suture] (120:0.5) .. controls ($ (120:0.5) + (30:0.2) $) and (100:0.8) .. (100:1);
\draw [suture] (120:0.5) arc (120:180:0.5);
\draw [suture] (180:0.5) .. controls (-0.5,-0.2) and (200:0.8) .. (200:1);
\draw [suture] (240:0.5) arc (240:300:0.5);
\draw [suture] ($ 0.5*(240:1) + 0.5*(x)  $) arc (120:60:0.5); 

\draw (-30:1) node {$w_1$};
\draw (30:1) node {$w_2$};
\draw (90:1) node {$w_3$};
\draw (150:1) node {$w_4$};
\draw (210:1) node {$w_5$};
\draw (270:0.7) node {$w_0 = w_6$};
\draw (270:1) node {$(\Sigma^\square, V^\square)$};

\draw [shorten >=1mm, -to, decorate, decoration={snake,amplitude=.4mm, segment length = 2mm, pre=moveto, pre length = 1mm, post length = 2mm}]
(1.3,0) -- (1.7,0);

\foreach \angle/\edgelabel in {0/e'_2, 60/e'_3, 120/e'_4, 180/e'_5, 240/e'_6, 300/e'_1}{
\draw[xshift=3cm] (0,0) -- (\angle:1);
\draw[xshift=3cm] (\angle:1.2) node {$\edgelabel$} ;
\fill[xshift=3cm] [black] (\angle:1) circle (1pt); }

\draw [xshift=3cm, suture] (-60:0.2) arc (-60:240:0.2);
\draw [xshift=3cm, suture] (20:1) .. controls (20:0.8) and ($ (60:0.5) + (-30:0.2) $) .. (60:0.5);
\draw [xshift=3cm, suture] (40:1) .. controls (40:0.8) and ($ (60:0.8) + (-30:0.2) $) .. (60:0.8);
\draw [xshift=3cm, suture] (60:0.5) arc (60:120:0.5);
\draw [xshift=3cm, suture] (80:1) .. controls (80:0.8) and ($ (60:0.8) + (150:0.2) $) .. (60:0.8);
\draw [xshift=3cm, suture] (100:1) .. controls (100:0.8) and ($ (120:0.8) + (30:0.2) $) .. (120:0.8);
\draw [xshift=3cm, suture] (120:0.5) arc (120:180:0.5);
\draw [xshift=3cm, suture] (120:0.8) arc (120:180:0.8);
\draw [xshift=3cm, suture] (200:1) .. controls (200:0.8) and (-0.8,-0.2) .. (180:0.8);
\draw [xshift=3cm, suture] (220:1) .. controls (220:0.8) and (-0.5,-0.2) .. (180:0.5);

\draw [xshift=3cm](-30:1) node {$w'_1$};
\draw [xshift=3cm](30:1) node {$w'_2$};
\draw [xshift=3cm](90:1) node {$w'_3$};
\draw [xshift=3cm](150:1) node {$w'_4$};
\draw [xshift=3cm](210:1) node {$w'_5$};
\draw [xshift=3cm](270:0.7) node {$w'_0 = w'_6$};

\end{tikzpicture}

\caption{Effect of square collapsing on basic sutures. Here the sutures $\gamma_i$ in the wedges $w_i$ are as follows: $s_1 = +$, $s_2 = -$, $s_3 = -$, $s_4 = +$, $s_5 = -$.}
\label{fig:square_collapsing_basic_sutures}
\end{center}
\end{figure}

Let $s_i$ be the sign of the sutures $\gamma_i$, so each $s_i = \pm$. We are now assuming that $s_0 = +$. Note that if all $s_i = +$ then we have trivial sutures; so assume at least one $s_i = -$. Let $m$ be the minimal positive integer such that $s_m = -$. If $\gamma_m$ are the only negative sutures, so that all other $s_i = +$, then after the collapse we still have basic sutures $\gamma'_i$ on each square, which are now all positive. See figure \ref{fig:square_collapsing_all_except_one_positive}.

\begin{figure}
\begin{center}

\begin{tikzpicture}[
scale=2, 
boundary/.style={ultra thick}, 
suture/.style={thick, draw=red}
]

\coordinate [label = above:{$v$}] (v) at (0,0);
\coordinate [label = below:{$x$}] (x) at (0,-1.732);

\foreach \angle in {0, 60, 120, 180, 240, 300}{
\draw (v) -- (\angle:1);
\fill [black] (\angle:1) circle (1pt); }

\draw (240:1) -- (x) -- (300:1);

\draw [suture] (60:0.5) arc (60:-240:0.5);
\draw [suture] (60:0.5) .. controls ($ (60:0.5) + (150:0.2) $) and (80:0.8) .. (80:1);
\draw [suture] (120:0.5) .. controls ($ (120:0.5) + (30:0.2) $) and (100:0.8) .. (100:1);
\draw [suture] ($ 0.5*(240:1) + 0.5*(x) $) arc (120:60:0.5);

\draw (-30:1) node {$w_1$};
\draw (90:1) node {$w_m$};
\draw (270:0.7) node {$w_0 = w_6$};
\draw (270:1) node {$(\Sigma^\square, V^\square)$};

\draw [shorten >=1mm, -to, decorate, decoration={snake,amplitude=.4mm, segment length = 2mm, pre=moveto, pre length = 1mm, post length = 2mm}]
(1.2,0) -- (1.8,0);

\foreach \angle in {0, 60, 120, 180, 240, 300}{
\draw[xshift=3cm] (0,0) -- (\angle:1);
\fill[xshift=3cm] [black] (\angle:1) circle (1pt); }

\draw [xshift=3cm, suture] (-60:0.5) arc (-60:240:0.5);
\draw [xshift=3cm, suture] (80:1) .. controls (80:0.8) and (100:0.8) .. (100:1);

\end{tikzpicture}

\caption{Effect of square collapsing when all $s_i$ except $s_m$ are positive: sutures all become positive.}
\label{fig:square_collapsing_all_except_one_positive}
\end{center}
\end{figure}
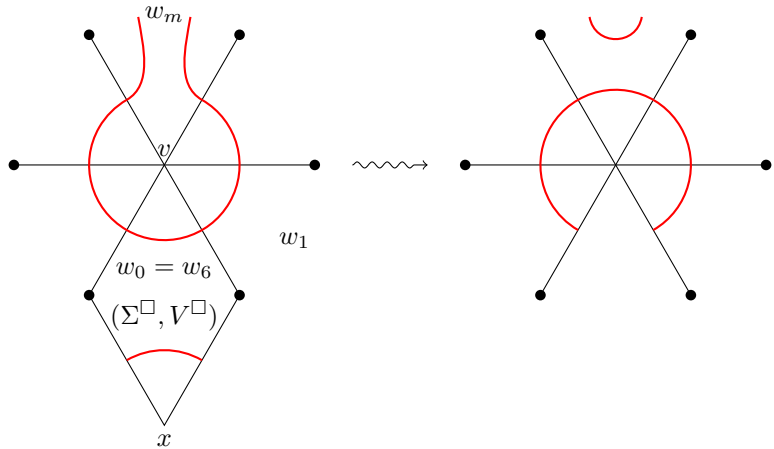

So we now assume that at least two of the $s_i$ are negative; let $p$ be the next integer after $m$ such that $s_p = -$.

Observe that $e'_1, e'_2, \ldots, e'_m$ all intersect the sutures in one point, but $e'_{m+1}$ intersects the sutures in three points. (See figure \ref{fig:bypass_simplifying_collapsed_sutures}.) Perform bypass surgery along the interval of $e'_{m+1}$ intersecting these points, in both directions. 

If $s_m, s_p$ are the only negative signs, then the results of both bypass surgeries are basic: in one case we obtain all $\gamma'_i$ basic with $\gamma'_m$ negative, $\gamma'_p$ positive; in the other case we obtain $\gamma'_m$ positive and $\gamma'_m$ negative; in both cases all $\gamma'_i$ other than $\gamma'_m, \gamma'_p$ have the same sign as the original $\gamma_i$. 

If $s_m, s_p$ are not the only negative signs, then for one surgery (depicted on the right of figure \ref{fig:bypass_simplifying_collapsed_sutures}), we actually obtain basic sutures: the sutures on $w'_m$ are positive, and on all other $w'_i$ the sutures simplify to have the same sign as the original $\gamma_i$. In the diagram, we depict an arc of sutures in $w'_p$ which can be isotoped off the diagram; this isotopy successively simplifies the sutures in each wedge.

(In the subtle case where $\gamma_m$ is ``really'' positive, but the sutures just close up further away from the wedge, we see that these sutures on the right of figure \ref{fig:bypass_simplifying_collapsed_sutures} will be trivial, and so make no contribution to contact elements. So we obtain the same result as if $\gamma_m$ were drawn ``truthfully'' as positive.)

For the other surgery (depicted on the left of figure \ref{fig:bypass_simplifying_collapsed_sutures}), sutures remain basic and positive on $w'_1, \ldots, w'_{m-1}$, negative on $w'_m$, and positive on $w'_{m+1}, \ldots, w'_{p-1}$. However the sutures on $w'_p$ are not basic. Nevertheless, we have essentially reduced to a smaller case of the situation we started with; we consider the situation as if $w'_p$ was the first negative wedge and proceed as before, performing bypass surgeries along $e'_{p+1}$. As we proceed, and perform bypass surgeries, we obtain sutures on the collapsed quadrangulation which are basic. The basic sets of sutures so obtained each consist of basic sutures $w'_i$ on each wedge, of the same sign as the corresponding original wedge $w_i$, except that for precisely one wedge, the sign is changed from $-$ to $+$. 

Using the fact that bypass triples have suture elements summing to zero, we can summarise the above discussion by the following proposition. Let $\gamma_i^{s_i}$ denote the $s_i$-signed sutures on the wedge $w_i$, and $\gamma_i^{s'_i}$ denote $s_i$ signed sutures on the wedge $w'_i$. This also covers the degenerate case $n=1$.
\begin{prop}
\label{prop:square_collapse_map}
If a collapsed square $(\Sigma^\square, V^\square)$, with collapsed positive vertices, has basic sutures $\gamma_0^\pm$, and the wedges $w_i$ around $v$ have basic sutures $\gamma_i^{s_i}$ on the wedges $w_i$ around $v$ (for $i=1, \ldots, n-1)$, then
\begin{align*}
c(\gamma_0^- \cup \gamma_1^{s_1} \cup \cdots \cup \gamma_{n-1}^{s_{n-1}}) &= c(\gamma_1^{s'_1} \cup \cdots \cup \gamma_{n-1}^{s'_{n-1}}), \\
c(\gamma_0^+ \cup \gamma_1^{s_1} \cup \cdots \cup \gamma_{n-1}^{s_{n-1}}) &=
\sum_{i \; : \; s_i = -} c(\gamma_1^{s'_1} \cup \cdots \cup \gamma_{i-1}^{s'_{i-1}} \cup \gamma_i^+ \cup \gamma_{i+1}^{s'_{i+1}} \cup \cdots \cup \gamma_{n-1}^{s'_{n-1}})
\end{align*}
where we assume the sutures are held constant outside of these wedges.
\qed
\end{prop}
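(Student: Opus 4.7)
The plan is to verify the proposition by a direct geometric/algebraic analysis, breaking into cases on the sign of the collapsed square and then on the configuration of the signs $s_i$, and applying the bypass relation iteratively. Throughout, I will use the fact that suture elements depend only on isotopy class, that bypass triples have suture elements summing to zero, and that trivial sutures have suture element $0$.

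First I would dispose of the degenerate case $n=1$ by direct inspection of figure \ref{fig:degenerate_slack_square_collapse}: a negative $\gamma_0^-$ just isotopes a vacuum away, preserving all other suture elements, while a positive $\gamma_0^+$ produces trivial sutures and hence zero, matching the empty sum on the right-hand side. For $n \geq 2$ with $s_0 = -$, the discussion in section \ref{sec:square_collapse_mechanics} observes that the collapse is essentially an isotopy in a neighbourhood of $y$: in each wedge $w'_i$, the sutures $\gamma'_i$ have the same sign as the original $\gamma_i$ in $w_i$, and remain basic on the collapsed quadrangulation. Thus $c(\gamma_0^- \cup \bigcup_i \gamma_i^{s_i}) = c(\bigcup_i \gamma_i^{s'_i})$ immediately, which is the first claim.

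The main content is the case $s_0 = +$. If every $s_i = +$ then the sutures after collapse bound a disc near $x$ and are trivial, so both sides vanish. Otherwise, let $m$ be the least $i \geq 1$ with $s_m = -$. After collapsing, the edge $e'_{m+1}$ meets sutures in exactly three points (as in figure \ref{fig:bypass_simplifying_collapsed_sutures}), giving an efficient attaching arc; I apply the bypass relation on this arc. One of the two bypass resolutions yields a basic configuration in which $w'_m$ has been flipped from $-$ to $+$ and every other wedge retains its original sign — this is precisely the $i=m$ term in the right-hand sum. The other resolution returns a configuration identical to the current one except that the sutures on $w'_m$ have effectively become positive, while the first un-flipped negative wedge is now at the next index $p>m$ with $s_p = -$. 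So this resolution is the same type of problem with one fewer negative wedge to the left of the next bypass arc, and I recurse, applying the bypass relation at $e'_{p+1}$, and so on. Each recursion produces one basic term flipping a single $s_i = -$ to $+$, and the process terminates after all negative indices are exhausted (with a trivial, hence zero, residual term coming from the final resolution in which every wedge has been made positive).

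The main obstacle I expect is verifying that at each recursive stage the relevant attaching arc is efficient and the bypass resolutions have the claimed form, and that the subtle ``identification'' case — when two wedges $w_i, w_j$ actually lie in the same square of the ambient quadrangulation so that ``sign of $\gamma_i$'' is only local near $y$ — does not spoil the induction. For this I would invoke proposition \ref{prop:efficient_surgery} to ensure we never produce trivial sutures along the way except in the genuinely trivial terminal case, and use the observation made parenthetically in section \ref{sec:square_collapse_mechanics} that in the identification case the seemingly anomalous local configurations produce closed contractible suture components away from $y$, whose suture element is zero by the bypass relation argument following proposition \ref{prop:efficient_surgery}. A clean way to package this is to phrase the recursion as an induction on the number of indices $i \in \{1,\ldots,n-1\}$ with $s_i = -$, with the base case (all $s_i = +$, trivial sutures) and inductive step (peel off one bypass, get one basic term plus a smaller instance) together assembling into the claimed sum.
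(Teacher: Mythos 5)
Your overall architecture matches the paper's: dispose of the degenerate $n=1$ case, observe that a negative collapsed square acts as an isotopy on sutures (giving the first identity), note that the all-positive case yields trivial sutures, and otherwise induct on the number of negative wedges by applying the bypass relation along the subarc of $e'_{m+1}$ meeting the sutures three times, taking care about efficiency (proposition \ref{prop:efficient_surgery}) and the case of two wedges lying in the same square. So far so good.

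However, your description of the key recursive step is wrong, and in a way that changes the answer. You claim that the non-basic bypass resolution is ``identical to the current one except that the sutures on $w'_m$ have effectively become positive,'' and that the recursion ends with a trivial residual ``in which every wedge has been made positive.'' In fact, in the non-basic resolution the wedge $w'_m$ retains \emph{negative} basic sutures (the same sign as the original $\gamma_m$); what changes is only that the non-basic locus moves from $e'_{m+1}$ to $e'_{p+1}$, past the intermediate (already positive) wedges. Consequently each basic term extracted at a later stage flips exactly \emph{one} of the original negative signs, with all previously processed negatives still negative, which is what the stated formula requires. Under your description, the term produced at the $j$-th stage would have the first $j$ negative wedges all flipped to positive, so your induction would prove $c(\gamma_0^+\cup\cdots)=\sum_j c(\text{flip }s_{m_1},\ldots,s_{m_j})$ rather than the claimed sum of single flips. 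These are different linear combinations of basis elements, and your version is in fact impossible: terms flipping $j$ negatives have Euler grading $\sum_i s_i + 2j$, so they cannot all occur in the (Euler-homogeneous) suture element $c(\Gamma')$, whereas the single-flip terms all share the grading $\sum_i s_i+2$ forced by the collapsed square's $+1$. Relatedly, your terminal step is not how the induction ends: there is no vanishing all-positive residual. The recursion bottoms out either in the one-negative case, where the collapse already yields basic sutures with that single wedge flipped (figure \ref{fig:square_collapsing_all_except_one_positive}) and no bypass is needed, or in the case of exactly two remaining negatives $s_m,s_p$, where \emph{both} resolutions of the bypass at $e'_{m+1}$ are already basic and supply the last two single-flip terms (flip $m$ keeping $p$, and flip $p$ keeping $m$). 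To repair your argument you should re-examine the local picture of figure \ref{fig:bypass_simplifying_collapsed_sutures}: one surgery gives basic sutures with $w'_m$ turned positive and all other wedges at their original signs; the other keeps $w'_m$ negative, leaves $w'_1,\ldots,w'_{p-1}$ basic at their original signs, and reproduces the collapsed-type non-basic configuration starting at $w'_p$, which is the correct smaller instance for the induction.
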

Note that this statement holds even if distinct wedges $w_i, w_j$ are corners of the same square of the quadrangulation (so that the same applies to $w'_i, w'_j$), as all calculations are local. If $s_i = s_j = -1$ then we obtain two separate terms in the sum; one turns $s_i$ to $1$, and then $\gamma_j$ ``looks negative but is really positive''; the other turns $s_j$ to $1$ and $\gamma_i$ feigns negativity.

The corresponding statement is clear when the collapsed vertices $y$ and $x$ have negative sign.

\begin{figure}
\begin{center}

\begin{tikzpicture}[
scale=2, 
boundary/.style={ultra thick}, 
suture/.style={thick, draw=red}
]

\coordinate [label = below:{$e'_1$}] (e1) at (0:1);
\coordinate [label = above right:{$e'_m$}] (em) at (60:1);
\coordinate [label = above:{$e'_{m+1}$}] (em1) at (90:1);
\coordinate [label = above left:{$e'_p$}] (ep) at (150:1);
\coordinate [label = below:{$e'_{p+1}$}] (ep1) at (180:1);
\coordinate (v) at (0,0);

\foreach \angle in {0, 30, 60, 90, 120, 150, 180}{
\draw (v) -- (\angle:1);
\fill [black] (\angle:1) circle (1pt); }

\draw [suture] (0:0.2) arc (0:180:0.2);
\draw [suture] (70:1) .. controls (70:0.8) and (0.2,0.5) .. (0,0.5);
\draw [suture] (80:1) .. controls (80:0.8) and (0.2,0.8) .. (0,0.8);
\draw [suture] (90:0.5) arc (90:180:0.5);
\draw [suture] (90:0.8) arc (90:150:0.8);
\draw [suture] (150:0.8) .. controls ($ (150:0.8) + (240:0.2) $) and (160:0.8)  .. (160:1);
\draw [suture] (170:1) .. controls (170:0.8) and (-0.8,0.2) .. (-0.8,0);

\draw [shorten >=1mm, -to, decorate, decoration={snake,amplitude=.4mm, segment length = 2mm, pre=moveto, pre length = 1mm, post length = 2mm}]
(1.2,0) -- (1.8,0);

\foreach \angle in {0, 30, 60, 90, 120, 150, 180}{
\draw[xshift=3cm] (0,0) -- (\angle:1);
\fill[xshift=3cm] [black] (\angle:1) circle (1pt); }

\draw [xshift=3cm, suture] (0:0.5) arc (0:150:0.5);
\draw [xshift=3cm, suture] (70:1) .. controls (70:0.8) and (80:0.8) .. (80:1);
\draw [xshift=3cm, suture] (150:0.5) .. controls ($ (150:0.5) + (240:0.2) $) and (160:0.8) .. (160:1);
\draw [xshift=3cm, suture] (170:1) .. controls (170:0.8) and (-0.8,0.2) .. (-0.8,0);
\draw [xshift=3cm, suture] (-0.2,0) .. controls (-0.2,0.1) and (-0.5,0.1) .. (-0.5,0);

\draw [shorten >=1mm, -to, decorate, decoration={snake,amplitude=.4mm, segment length = 2mm, pre=moveto, pre length = 1mm, post length = 2mm}]
(-1.2,0) -- (-1.8,0);

\foreach \angle in {0, 30, 60, 90, 120, 150, 180}{
\draw[xshift=-3cm] (0,0) -- (\angle:1);
\fill[xshift=-3cm] [black] (\angle:1) circle (1pt); }

\draw [xshift=-3cm, suture] (0.5,0) arc (0:60:0.5);
\draw [xshift=-3cm, suture] (60:0.5) .. controls ($ (60:0.5) + (150:0.1) $) and (70:0.8) .. (70:1);
\draw [xshift=-3cm, suture] (80:1) .. controls (80:0.8) and (0.1,0.5) .. (0,0.5);
\draw [xshift=-3cm, suture] (0,0.5) .. controls (-0.2,0.5) and (-0.3,0.2) .. (-0.3,0);
\draw [xshift=-3cm, suture] (160:1) .. controls (160:0.8) and (-0.5,0.2) .. (-0.5,0);
\draw [xshift=-3cm, suture] (170:1) .. controls (170:0.8) and (-0.8,0.2) .. (-0.8,0);

\end{tikzpicture}
\caption{Bypass surgeries near the first negative wedge $w_m$ simplify sutures.}
\label{fig:bypass_simplifying_collapsed_sutures}
\end{center}
\end{figure}
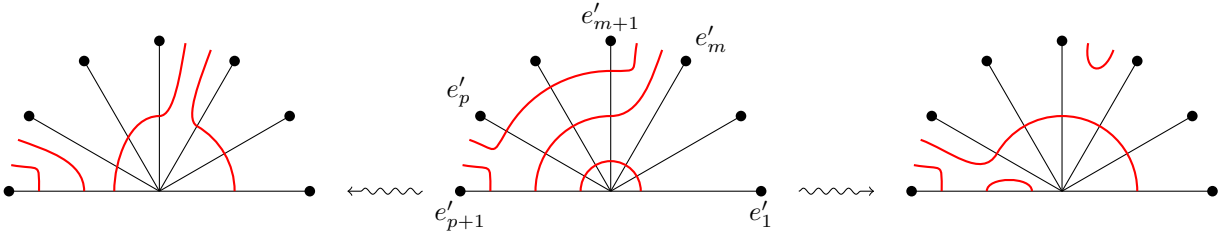

\subsection{Elementary morphism operators}

We consider the SQFT maps associated to the elementary decorated morphisms $(\phi, \Gamma_c): (\Sigma,V) \To (\Sigma',V')$. We have already considered decorated creations; the associated maps are digital creation operators. And we have seen (lemma \ref{lem:standard_gluing_isomorphism}) that a standard gluing gives a bijection between squares of a quadrangulation, so acts as the identity map on corresponding tensor decompositions. It remains to consider decorated annihilations in general, folds and zips.

Suppose we have a quadrangulation $A$ of $(\Sigma,V)$. We recall the discussion at the end of section \ref{sec:building_decorated_morphisms}, which describes how to obtain a quadrangulation $A'$ on $(\Sigma',V')$.

\bigskip

\noindent \emph{Annihilations and folds.} As mentioned in section \ref{sec:defns_decorated_morphisms} (figure \ref{fig:decorated_annihilation_as_fold}), $\pm$ decorated annihilations are decorated-isotopic to $\pm$ folds, and hence (lemma \ref{lem:decorated_isotopy_equal_map}) give equal SQFT maps. So let $\phi$ be a fold, with folded vertex $y$, say positive. The quadrangulation $A$ gives a slack quadrangulation $Q$ of $(\Sigma',V')$ with one internal vertex $y$; the squares of $A$ and $Q$ are naturally bijective. Write the squares of $A$ or $Q$ around $y$ as $(\Sigma^\square_0, V^\square_0), \ldots, (\Sigma^\square_n, V^\square_n)$. Denote the other squares by $(\Sigma_j, V_j)$ and $(\Sigma'_j, V'_j)$.

Suppose we have basic sutures $\Gamma$ on $(\Sigma,V)$ restricting to $\Gamma_i^{s_i}$ on $(\Sigma^\square_i, V^\square_i)$ (where $s_- = \pm$). Let the rest of the surface be $(\Sigma_j, V_j)$, with sutures $\Gamma_j$. So
\[
\begin{array}{cll}
c(\Gamma) &= c(\Gamma_0^{s_0} \cup \cdots \cup \Gamma_n^{s_n} \cup \Gamma_j) &\in \V(\Sigma,V) \\
&= c(\Gamma_0^{s_0}) \otimes \cdots \otimes c(\Gamma_n^{s_n}) \otimes c(\Gamma_j) &\in \V(\Sigma^\square_0, V^\square_0) \otimes \cdots \otimes \V(\Sigma^\square_n, V^\square_n) \otimes \V(\Sigma_j, V_j).
\end{array}
\]
Now we perform a slack square collapse to remove the positive internal vertex $y$ and apply proposition \ref{prop:square_collapse_map}. First suppose all the squares $(\Sigma^\square_i, V^\square_i)$ around $y$ are distinct.
We can assume that the collapsed square is $(\Sigma^{\square'}_0, V^{\square'}_0)$. After the collapse, we have a quadrangulation $A'$ of $(\Sigma',V')$ given by $(\Sigma^{\square'}_1, V^{\square'}_1), \ldots, (\Sigma^{\square'}_n, V^{\square'}_n)$, naturally corresponding with the pre-existing non-collapsed squares; and the previous $(\Sigma_j, V_j)$. The notation is then as in the proposition, so depending on the sign $s_0$, we have
\begin{align*}
c(\Gamma_0^{-} \cup \Gamma_1^{s_1} \cup \cdots \cup \Gamma_n^{s_n} \cup \Gamma_j) &= c(\Gamma_1^{s'_1} \cup \cdots \cup \Gamma_n^{s'_n}), \\
c(\Gamma_0^+ \cup \Gamma_1^{s_1} \cup \cdots \cup \Gamma_n^{s_n} \cup \Gamma_j) &= \sum_{i : s_i = -} c(\Gamma_1^{s'_1} \cup \cdots \cup \Gamma_{i-1}^{s'_{i-1}} \cup \Gamma_i^{+'} \cup \Gamma_{i+1}^{s'_{i+1}} \cup \cdots \cup \Gamma_n^{s'_n} \cup \Gamma_j)
\end{align*}
where $\Gamma_i^{s'_i}$ denotes $s_i$ signed sutures on $(\Sigma^{\square'}_i, V^{\square'}_i)$. Writing in terms of tensor powers, we see that $\D_{\phi,\emptyset}$ takes
\begin{align*}
\0 \otimes x_1 \otimes \cdots \otimes x_n \otimes c(\Gamma_j) &\mapsto x_1 \otimes \cdots \otimes x_n \otimes c(\Gamma_j) \\
\1 \otimes x_1 \otimes \cdots \otimes x_n \otimes c(\Gamma_j) &\mapsto \sum_{x_i = \0} x_1 \otimes \cdots \otimes x_{i-1} \otimes \1 \otimes x_{i+1} \otimes \cdots \otimes x_n \otimes c(\Gamma_j).
\end{align*}
Thus $\D_{\phi, \emptyset}$ is of the form $a_\0 \otimes 1$, for a digital annihilation operator $a_\0 : {\bf V}^{\otimes (n+1)} \To {\bf V}^{\otimes n}$, and an identity operator $1 : \V(\Sigma_j, V_j) \To \V(\Sigma_j,V_j)$ --- a general digital annihilation operator.

Now suppose two of the squares found around $y$ are opposite corners of the same square $(\Sigma^\square_i, V^\square_i)$. Only if $(\Sigma^\square_i, V^\square_i)$ has negative sutures does it contribute to the sum, so assume the sutures are negative. As discussed in section \ref{sec:square_collapse_mechanics}, we have a separate term in the sum for adjusting each corner; and when the signs on the two corners disagree, the $+$ sign describes the sutures on the square. Effectively $(\Sigma^\square_i, V^\square_i)$ contributes two terms to the sum, but the two terms are equal and (mod $2$) cancel. So the tensor factor $\V(\Sigma^\square_i, V^\square_i)$ has no effect on the sum, and can be considered as one of the factors on which $\D_{\phi, \emptyset}$ acts as the identity. We still obtain a general digital annihilation operator.

Obviously, for a negative annihilation or fold, the same argument applies and we again obtain a general digital annihilation operator $a_\1 \otimes 1$.

\bigskip

\noindent \emph{Zips.} If $\phi$ is a zip and $A$ is a quadrangulation of $(\Sigma,V)$, then we obtain a slack quadrangulation on $(\Sigma',V')$ with two internal vertices, one of each sign. We may remove these two internal vertices with two slack square collapses, one of each sign. By the same argument as given above, each square collapse gives a general digital annihilation operator. So $\D_{\phi, \emptyset}$ is a composition of two general digital annihilation operators, one of each sign.

Now we proved in proposition \ref{prop:decorated_morphism_composition} that any non-confining decorated morphism is a composition of decorated creations, gluings, folds and zips. Above we have concluded that every such map gives an SQFT operator which is a digital creation operator, the identity, a general digital annihilation operator, or two digital annihilation operators. 

On the other hand, if $(\phi, \Gamma_c): (\Sigma,V) \To (\Sigma',V')$ is a confining decorated morphism, then for any sutures $\Gamma$ on $(\Sigma,V)$, the sutures $\phi(\Gamma) \cup \Gamma'$ on $(\Sigma',V')$ are confining, and by theorem \ref{thm:confining_zero} have zero suture element; thus $\D_{\phi, \Gamma_c} = 0$, which is certainly achievable as a composition of creation and general annihilation operators (e.g. create a $\1$ and then try to delete a $\0$ on that factor).

We have now proved the following precise version of our main theorem \ref{thm:main_thm}.
\begin{thm}
In an SQFT, for any decorated morphism $(\phi, \Gamma_c)$, $\D_{\phi, \Gamma_c}$ is a composition of digital creation operators and general digital annihilation operators.
\qed
\end{thm}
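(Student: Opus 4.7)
The plan is to split into two cases according to whether $(\phi,\Gamma_c)$ is confining or not, and in the non-confining case to reduce to the elementary morphisms already analyzed.

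First suppose $(\phi,\Gamma_c)$ is confining. Then for every set of sutures $\Gamma$ on $(\Sigma,V)$, the sutures $\phi(\Gamma)\cup\Gamma_c$ on $(\Sigma',V')$ are confining, so by Massot's theorem \ref{thm:confining_zero} we have $c(\phi(\Gamma)\cup\Gamma_c)=0$. Since $\D_{\phi,\Gamma_c}$ sends $c(\Gamma)$ to $c(\phi(\Gamma)\cup\Gamma_c)=0$ on every basic suture element, and basic suture elements form a basis, $\D_{\phi,\Gamma_c}=0$. This is trivially realized as a composition of creation and annihilation operators (e.g.\ $a_\0^{} \circ a^*_\1$ on an extra factor, extended by identity), so the theorem holds in this case.

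For the main case, assume $(\phi,\Gamma_c)$ is non-confining. By proposition \ref{prop:decorated_morphism_composition}, write it as a composition of decorated creations, standard gluings, folds and zips. Since $\D$ is a functor, it suffices to show that each of these elementary decorated morphisms is sent to a digital creation or a general digital annihilation operator (or a composition of such). The four elementary cases are handled independently. A decorated creation maps $c(\Gamma)\mapsto c(\Gamma)\otimes v_\pm$, which by definition is the digital creation operator $a^*_\0$ or $a^*_\1$ tensored with the identity on the other factors. A standard gluing gives, by lemma \ref{lem:standard_gluing_isomorphism}, a natural bijection between the squares of the source and target quadrangulations respecting basic suture elements, so the associated map is the identity ${\bf V}^{\otimes n}\to{\bf V}^{\otimes n}$ (hence trivially a composition of the desired operators).

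The heart of the argument is the analysis of folds and zips using the square collapse mechanics of section \ref{sec:square_collapse_mechanics}. For a fold $\phi$, a quadrangulation $A$ of $(\Sigma,V)$ induces a slack quadrangulation of $(\Sigma',V')$ with one internal vertex, and proposition \ref{prop:square_collapse_map} describes exactly what the associated slack square collapse does to suture elements: on basic sutures with $s_0=-$ the collapse ``absorbs'' the collapsed square trivially, while for $s_0=+$ the result is the sum over all wedges of opposite sign, each converted to positive. Translating through the tensor decompositions of the quadrangulations before and after the collapse, these two formulas are precisely the action of $a_\0$ or $a_\1$ on the ``annihilated'' factor, tensored with the identity on the remaining squares --- that is, a general digital annihilation operator of the matching sign (corresponding to the sign of the folded vertex). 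The one subtle point is when two wedges around the internal vertex correspond to the same square of the quadrangulation: in that case the two contributions to the sum are equal and cancel mod $2$, so the offending tensor factor passes through unaffected, still yielding a general digital annihilation operator. For a zip, the induced slack quadrangulation has two internal vertices of opposite sign; two slack square collapses, one of each sign, apply the argument twice and produce a composition of two general digital annihilation operators of opposite sign.

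The main obstacle, and the place requiring the most care, is the fold analysis: verifying that proposition \ref{prop:square_collapse_map} --- derived through a sequence of bypass surgeries along the edges incident to the collapsed vertex --- cleanly matches the combinatorial formula for $a_\0$ and $a_\1$ given in section \ref{sec:digital_creation_annihilation}, especially in the degenerate configurations where squares around the collapsed vertex coincide. Once this identification is made the rest is bookkeeping: combining the four elementary cases along the decomposition of proposition \ref{prop:decorated_morphism_composition} expresses $\D_{\phi,\Gamma_c}$ as a composition of digital creation operators and general digital annihilation operators, completing the proof.
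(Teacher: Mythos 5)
Your proposal is correct and follows essentially the same route as the paper: decompose a non-confining decorated morphism via proposition \ref{prop:decorated_morphism_composition}, identify creations, gluings, folds and zips with creation operators, the identity, and general annihilation operators via proposition \ref{prop:square_collapse_map} (including the mod $2$ cancellation when wedges share a square), and dispose of the confining case with theorem \ref{thm:confining_zero}. Nothing is missing.
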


Combined with theorem \ref{thm:SFH_gives_SQFT}, this immediately gives \ref{cor:SFH_digital}.

\addcontentsline{toc}{section}{References}

\small

\bibliography{danbib}

\providecommand{\bysame}{\leavevmode\hbox to3em{\hrulefill}\thinspace}
\providecommand{\MR}{\relax\ifhmode\unskip\space\fi MR }
\providecommand{\MRhref}[2]{%
  \href{http://www.ams.org/mathscinet-getitem?mr=#1}{#2}
}
\providecommand{\href}[2]{#2}
\begin{thebibliography}{10}

\bibitem{Baez_Stay11}
J.~Baez and M.~Stay, \emph{Physics, topology, logic and computation: a
  {R}osetta {S}tone}, New structures for physics, Lecture Notes in Phys., vol.
  813, Springer, Heidelberg, 2011, pp.~95--172. \MR{2767046}

\bibitem{Bennequin83}
Daniel Bennequin, \emph{Entrelacements et \'equations de {P}faff}, Third
  {S}chnepfenried geometry conference, {V}ol. 1 ({S}chnepfenried, 1982),
  Ast\'erisque, vol. 107, Soc. Math. France, Paris, 1983, pp.~87--161.
  \MR{753131 (86e:58070)}

\bibitem{ElMartinet}
Yakov Eliashberg, \emph{Contact {$3$}-manifolds twenty years since {J}.
  {M}artinet's work}, Ann. Inst. Fourier (Grenoble) \textbf{42} (1992),
  no.~1-2, 165--192. \MR{MR1162559 (93k:57029)}

\bibitem{Finkelstein69}
David Finkelstein, \emph{Space-time code}, Phys. Rev. (2) \textbf{184} (1969),
  1261--1271. \MR{0253703 (40 \#6917)}

\bibitem{Freedman_et_al03}
Michael~H. Freedman, Alexei Kitaev, Michael~J. Larsen, and Zhenghan Wang,
  \emph{Topological quantum computation}, Bull. Amer. Math. Soc. (N.S.)
  \textbf{40} (2003), no.~1, 31--38 (electronic), Mathematical challenges of
  the 21st century (Los Angeles, CA, 2000). \MR{1943131 (2003m:57065)}

\bibitem{Frenkel_Khovanov97}
Igor~B. Frenkel and Mikhail~G. Khovanov, \emph{Canonical bases in tensor
  products and graphical calculus for {$U_q(\mathfrak{sl}_2)$}}, Duke Math. J.
  \textbf{87} (1997), no.~3, 409--480. \MR{1446615 (99a:17019)}

\bibitem{Gi91}
Emmanuel Giroux, \emph{Convexit\'e en topologie de contact}, Comment. Math.
  Helv. \textbf{66} (1991), no.~4, 637--677. \MR{MR1129802 (93b:57029)}

\bibitem{Golovko09}
Roman Golovko, \emph{The embedded contact homology of sutured solid tori {I}},
  {\small{\url{http://arxiv.org/abs/0911.0055}}}, 2009.

\bibitem{Golovko10}
\bysame, \emph{The cylindrical contact homology of universally tight sutured
  contact solid tori}, {\small{\url{http://arxiv.org/abs/1006.4073}}}, 2010.

\bibitem{Hon00I}
Ko~Honda, \emph{On the classification of tight contact structures. {I}}, Geom.
  Topol. \textbf{4} (2000), 309--368 (electronic). \MR{MR1786111 (2001i:53148)}

\bibitem{HKM08}
Ko~Honda, William~H. Kazez, and Gordana Mati\'{c}, \emph{Contact structures,
  sutured {F}loer homology and {TQFT}},
  {\small{\url{http://arxiv.org/abs/0807.2431}}}, 2008.

\bibitem{HKMContClass}
Ko~Honda, William~H. Kazez, and Gordana Mati{\'c}, \emph{On the contact class
  in {H}eegaard {F}loer homology}, J. Differential Geom. \textbf{83} (2009),
  no.~2, 289--311. \MR{2577470}

\bibitem{Ju06}
Andr{\'a}s Juh{\'a}sz, \emph{Holomorphic discs and sutured manifolds}, Algebr.
  Geom. Topol. \textbf{6} (2006), 1429--1457 (electronic). \MR{MR2253454
  (2007g:57024)}

\bibitem{Ju08}
\bysame, \emph{Floer homology and surface decompositions}, Geom. Topol.
  \textbf{12} (2008), no.~1, 299--350. \MR{MR2390347}

\bibitem{Kauffman_Lins94}
Louis~H. Kauffman and S{\'o}stenes~L. Lins, \emph{Temperley-{L}ieb recoupling
  theory and invariants of {$3$}-manifolds}, Annals of Mathematics Studies,
  vol. 134, Princeton University Press, Princeton, NJ, 1994. \MR{1280463
  (95c:57027)}

\bibitem{Kock04}
Joachim Kock, \emph{Frobenius algebras and 2{D} topological quantum field
  theories}, London Mathematical Society Student Texts, vol.~59, Cambridge
  University Press, Cambridge, 2004. \MR{2037238 (2005a:57028)}

\bibitem{Major99}
Seth~A. Major, \emph{A spin network primer}, Amer. J. Phys. \textbf{67} (1999),
  no.~11, 972--980. \MR{1718371 (2001j:83023)}

\bibitem{Massot09}
Patrick Massot, \emph{Infinitely many universally tight torsion free contact
  structures with vanishing {O}zsv\'{a}th--{S}zab\'{o} contact invariants},
  {\small{\url{http://arxiv.org/abs/0912.5107}}}, 2009.

\bibitem{MyThesis}
Daniel Mathews, \emph{Chord diagrams, contact-topological quantum field theory,
  and contact categories}, Ph.D. thesis, Stanford University, 2009, available
  at {\small{\url{http://www.danielmathews.info/research}}}.

\bibitem{Me09Paper}
\bysame, \emph{Chord diagrams, contact-topological quantum field theory, and
  contact categories}, Algebraic \& Geometric Topology \textbf{10} (2010),
  no.~4, 2091--2189.

\bibitem{Me10_Sutured_TQFT}
\bysame, \emph{Sutured floer homology, sutured {TQFT} and non-commutative
  {QFT}}, {\small{\url{http://arxiv.org/abs/1006.5433}}}, 2010.

\bibitem{Nakamoto96}
Atsuhiro Nakamoto, \emph{Diagonal transformations in quadrangulations of
  surfaces}, J. Graph Theory \textbf{21} (1996), no.~3, 289--299. \MR{1374901
  (97a:05074)}

\bibitem{Nakamoto_Suzuki10}
Atsuhiro Nakamoto and Yusuke Suzuki, \emph{Diagonal slides and rotations in
  quadrangulations on the sphere}, Yokohama Math. J. \textbf{55} (2010), no.~2,
  105--112. \MR{2666141 (2011d:52033)}

\bibitem{Negami_Nakamoto93}
Seiya Negami and Atsuhiro Nakamoto, \emph{Diagonal transformations of graphs on
  closed surfaces}, Sci. Rep. Yokohama Nat. Univ. Sect. I Math. Phys. Chem.
  (1993), no.~40, 71--97. \MR{1260904 (95d:05045)}

\bibitem{OSContact}
Peter Ozsv{\'a}th and Zolt{\'a}n Szab{\'o}, \emph{Heegaard {F}loer homology and
  contact structures}, Duke Math. J. \textbf{129} (2005), no.~1, 39--61.
  \MR{MR2153455 (2006b:57043)}

\bibitem{Penner87}
R.~C. Penner, \emph{The decorated {T}eichm\"uller space of punctured surfaces},
  Comm. Math. Phys. \textbf{113} (1987), no.~2, 299--339. \MR{919235
  (89h:32044)}

\bibitem{Penrose71}
Roger Penrose, \emph{Angular momenum: an approach to combinatorial space-time},
  Quantum Theory and Beyond, Cambridge University Press, Cambridge, 1971.

\bibitem{Wendl10}
Chris Wendl, \emph{A hierarchy of local symplectic filling obstructions for
  contact 3-manifolds}, {\small{\url{http://arxiv.org/abs/1009.2746}}}, 2010.

\bibitem{Wheeler90}
John~Archibald Wheeler, \emph{Information, physics, quantum: the search for
  links}, Foundations of quantum mechanics in the light of new technology
  ({T}okyo, 1989), Phys. Soc. Japan, Tokyo, 1990, pp.~354--368. \MR{1105973}

\bibitem{Witten88}
Edward Witten, \emph{Topological quantum field theory}, Comm. Math. Phys.
  \textbf{117} (1988), no.~3, 353--386. \MR{953828 (89m:57037)}

\bibitem{Zarev09}
Rumen Zarev, \emph{Bordered floer homology for sutured manifolds},
  {\small{\url{http://arxiv.org/abs/0908.1106}}}, 2009.

\bibitem{Zarev10}
\bysame, \emph{Joining and gluing sutured floer homology},
  {\small{\url{http://arxiv.org/abs/1010.3496}}}, 2010.

\end{thebibliography}
\bibliographystyle{amsplain}

\end{document}